\numberwithin{equation}{section}
\newtheorem{thm}{Theorem}[subsection]
\newtheorem{cor}[thm]{Corollary}
\newtheorem{prop}[thm]{Proposition}
\newtheorem{lem}[thm]{Lemma}
\newtheorem{conj}[thm]{Conjecture}
\newtheorem{prob}[thm]{Problem}
\theoremstyle{definition}
\newtheorem{defn}[thm]{Definition}
\theoremstyle{remark}
\newtheorem{rmk}[thm]{Remark}
\newtheorem{exam}[thm]{Example}
\newcommand{\co}{\colon\thinspace}
\newcommand{\mb}[1]{\mathbb{#1}}
\newcommand{\mf}[1]{\mathfrak{#1}}
\newcommand{\HF}{{\rm H}{\mb F}}
\newcommand{\Unstop}{\Lambda}
\newcommand{\unstop}{\lambda}
\newcommand{\xx}{\overline{\xi}}
\newcommand{\bcQ}{{\overline{Q}}}
\newcommand{\mQ}{{\widehat Q}}
\newcommand{\too}{\xrightarrow}
\newcommand{\rH}{{\widetilde H}}
\newcommand{\pow}[1]{[\![{#1}]\!]}
\newcommand{\teth}[1]{\stackrel{#1}{\leftrightsquigarrow}}
\newcommand{\ppa}[1]{{#1}^{\pm}}
\newcommand{\wt}[1]{\widetilde{#1}}
\newcommand{\hash}{\mathbin{\#}}
\DeclareMathOperator{\Ext}{Ext}
\DeclareMathOperator{\Hom}{Hom}
\DeclareMathOperator{\Map}{Map}
\DeclareMathOperator{\Tor}{Tor}
\DeclareMathOperator{\Pic}{Pic}
\DeclareMathOperator{\BP}{BP}
\DeclareMathOperator*{\into}{\hookrightarrow}
\DeclareMathOperator*{\sma}{\wedge}
\DeclareMathOperator*{\tens}{\otimes}
\DeclareMathOperator*{\ctens}{\hat\otimes}
\DeclareMathOperator*{\Hash}{\scalebox{1.5}{\raisebox{-0.2ex}{$\#$}}}
\title{Secondary power operations and the Brown--Peterson spectrum at
  the prime $2$}
\author{Tyler Lawson\thanks{The author was partially supported by NSF
    grant 1610408.}}
\begin{document}
\maketitle

\begin{abstract}
  The dual Steenrod algebra has a canonical subalgebra isomorphic to
  the homology of the Brown--Peterson spectrum. We will construct a
  secondary operation in mod-2 homology and show that this canonical
  subalgebra is not closed under it. This allows us to conclude that
  the 2-primary Brown--Peterson spectrum does not admit the structure
  of an $E_n$-algebra for any $n \geq 12$, answering a question of May
  in the negative.
\end{abstract}

\section{Introduction}

The following appeared as Problem 1 in J.P. May's ``Problems in
infinite loop space theory'' {\cite{may-problems}}.
\begin{prob}
  \label{prob:mainprob}
  For any prime $p$, does the $p$-local Brown--Peterson spectrum $BP$
  of \cite{brown-peterson-bp} admit the structure of an
  $E_\infty$-algebra?
\end{prob}
Our goal in this paper is to address this question when $p=2$. We will
construct a secondary operation in the homology of $E_\infty$-algebras
at the prime $2$ and show, with an analysis that begins with the
calculations of Johnson--Noel \cite{noel-johnson-ptypical}, that the
homology $H_* BP$ cannot admit such a secondary operation. Thus,
Problem~\ref{prob:mainprob} has a {\em negative} answer at the prime
$2$.

\subsection{Background}

Coherently commutative multiplication structures have a long history
in homotopy theory, originating in the study of the cup product. The
cup product in the cohomology of a space $X$ comes from the structure
of a differential graded algebra on the cochains $C^*(X)$, and while
there are many variants on this algebra structure that all give rise
to the cup product there is no natural cochain-level cup product that
is graded-commutative. Instead, the cup product
$\alpha \mathop{\smile} \beta$ and its reverse
$\pm \beta \mathop{\smile} \alpha$ are chain homotopic by a natural
operation $\alpha \mathop{\smile}_1 \beta$, called the cup-1
product. The cup-1 product is not graded-commutative either, but
differs from its reverse by an operation called the cup-2 product, and
these constructions extend out both to arbitrarily high ``coherences''
(giving cup-$i$ products for all $i$) and to operations accepting
arbitrarily many inputs (giving a more complicated set of operations
of several variables discussed in \cite{mcclure-smith}). The result is
called an {\em $E_\infty$-algebra structure} and Steenrod's reduced
power operations in the cohomology of spaces are built from it
\cite{steenrod-epstein}.

Since then, these coherently commutative multiplications have been
recognized in many other areas: iterated loop spaces, monoidal
structures on categories, structures in mathematical physics related
to string theory, and multiplications in cohomology theories. By
contrast with algebra, where commutativity is simply a {\em property}
of a ring, coherent multiplications come in a hierarchy: there are
$E_1$-algebra structures that correspond to associative products and
there are $E_\infty$-algebra structures that correspond to commutative
products, but there are also $E_n$-algebra structures for
$1 < n < \infty$ that interpolate between these concepts.

When we switch from ordinary cohomology to generalized cohomology
theories, chain complexes become replaced with {\em spectra}. A {\em
  ring spectrum} $R$ is a representing object for a cohomology theory
$R^*(-)$ so that cohomology with coefficients in $R$ naturally takes
values in rings. This was refined to the concept of an $E_n$-algebra
structure on the spectrum $R$ in \cite[I.4]{bmms-hinfty}, and these
more refined algebras have come to occupy a central role because
$E_n$-algebra structures produce concrete tools that are not available
to an ordinary ring spectrum
\cite{mandell-derivedsmash,lurie-higheralgebra}.
\begin{itemize}
\item An $E_1$-algebra $R$ can be given categories of {\em left
    $R$-modules} and {\em right $R$-modules}, whose homotopy
  categories are triangulated categories. These enjoy several forms of
  compatibility as $R$ varies, extend to categories of bimodules, and
  have relative smash products $\sma_R$ with properties much like the
  tensor product.
\item The category of left modules over an $E_2$-algebra $R$ is
  canonically equivalent to the category of right modules, and the
  smash product $\sma_R$ makes the category of left $R$-modules into a
  monoidal category. The homotopy category of left $R$-modules has the
  structure of a (neither symmetric nor braided) tensor triangulated
  category.
\item The homotopy category of left modules over an $E_3$-algebra has
  the structure of a braided monoidal category.
\item The homotopy category of left modules over an $E_4$-algebra has
  the structure of a symmetric monoidal category.
\item The category of modules over an $E_\infty$-algebra $R$ has
  homotopy-theoretic versions of symmetric power operations, making
  it possible to discuss a relative version of the above: we can
  define $E_n$ $R$-algebras which satisfy all of the above
  properties.
\item An $E_\infty$-algebra $R$ has, for any principal $\Sigma_n$-bundle
  $P \to B$, natural {\em geometric power operations} $R^0(X) \to R^0(P
  \times_{\Sigma_n} X^n)$ and $R^0(X) \to R^0(B \times X)$ in
  $R$-cohomology that enhance the multiplicative structure. When $P
  = \Sigma_n$ these recover the operation that sends a class to its
  $n$th power.
\end{itemize}
Many examples of $E_\infty$-algebras exist. Commutative rings $A$
produce $E_\infty$-algebras $HA$ via the Eilenberg--Mac Lane
construction; the spectra $KO$ and $KU$, representing real and complex
$K$-theory, have $E_\infty$-algebra structures whose origin is the
tensor product of vector bundles; bordism spectra like $MO$, $MSO$,
$MU$, and the like have $E_\infty$-algebra structures whose origin is
the product structure on manifolds; if $Y$ is an infinite loop space,
then there is a spherical group algebra $\mb S[Y] = \Sigma^\infty_+ Y$
with an $E_\infty$-algebra structure; and if $R$ is an
$E_\infty$-algebra and $X$ is a space, there is a spectrum $R^X$
(playing the role of ``cochains on $X$ with coefficients in $R$'')
with an $E_\infty$-algebra structure that combines the multiplication
on $R$ with the diagonal map $X \to X \times X$.

Problem~\ref{prob:mainprob} dates back to the first systematic studies
of $E_\infty$-algebras. Understanding why this result is so desirable
requires knowing a little about what the Brown--Peterson spectrum is
and how important it is in stable homotopy theory.

The complex bordism spectrum $MU$ has an $E_\infty$-algebra structure
and it is central to Quillen's relation between stable homotopy theory
and formal group laws \cite{quillen-fgl}, which initiated the subject
of chromatic homotopy theory. However, while almost the entirety of
chromatic theory is possible to phrase in terms of $MU$, the
$p$-localization $MU_{(p)}$ decomposes into summands equivalent to
this irreducible Brown--Peterson spectrum $BP$. The Brown--Peterson
spectrum has simpler cohomology and homotopy groups than $MU$ and has
canonical descriptions that are internal to the stable homotopy
category \cite{priddy-cellularBP}. The Brown--Peterson spectrum also
exhibits the close connection between $p$-local stable homotopy theory
and the theory of formal group laws, but with the added benefit that
nearly every deep structural property of chromatic homotopy theory or
formal group law theory is made more concise and more conceptually
accessible through the eyes of $BP$-theory (see, for example,
\cite{ravenel-greenbook} for extensive applications).

The existence of an $E_\infty$-algebra structure on $BP$ would be
useful in several ways.
\begin{itemize}
\item The Adams--Novikov spectral sequence is a method for calculating
  the set of homotopy classes of maps between two spectra $X$ and $Y$
  and can be derived from either their $MU$-homology or
  $BP$-homology. The computational tools using $MU$-theory (such as
  the cobar complex) are well behaved with respect to the geometric
  power operations discussed earlier, which appear in places such as
  the construction of manifolds of Kervaire invariant one
  \cite{bruner-extendedpowers}. If $BP$ had an $E_\infty$-algebra
  structure then computations of these geometric power operations
  using $MU$-theory could instead be related to simpler computations
  in $BP$-theory.
\item Such a structure would allow more concise constructions of many
  important objects in chromatic theory, such as the Morava
  $K$-theories $K(n)$ and the truncated Brown--Peterson spectra
  $\BP\langle n\rangle$, as $BP$-algebras rather than as
  $MU$-algebras.
\item These algebra structures would mean that several computations
  with these ring spectra could be governed by the computations for
  $BP$-theory, such as computations of topological Hochschild homology
  and topological cyclic homology that are important to current work
  in algebraic $K$-theory \cite{ausoni-rognes}. These can also be
  extended by relative computations in $BP$-modules, which are much
  simpler than the relative calculations in $MU$-modules.
\item Perhaps most importantly, the Brown--Peterson spectrum is one of
  the most prominent examples of an important homology theory where
  our knowledge of geometric interpretations (e.g. via Baas--Sullivan
  theory \cite{baas-singularitybordism}) lags far behind our algebraic
  knowledge.\footnote{From \cite{may-problems}: ``The point here is
    that the notion of an $E_\infty$ ring spectrum seems not to be a
    purely homotopical one; good concrete geometric models are
    required, and no such model is known for $BP$.''} Many of the
  prominent examples of $E_\infty$-algebras, such as $K$-theory and
  bordism theory, originate in cohomology theories with geometric
  cycles or cocycles that have a product. The existence of an
  $E_\infty$-algebra structure on $BP$ would be a good indicator that
  a strong geometric interpretation existed.
\end{itemize}

This problem has generated a great deal of interesting research. The
existence of multiplication structures in the homotopy category has a
long history (for example, see the introduction of
\cite{strickland-products}). Several forms of obstruction theory have
been developed which showed that many spectra constructed by
Baas--Sullivan theory admit $E_1$-algebra structures
\cite{robinson-moravaassociativity, lazarev-ainfty,
  baker-jeanneret-hopfalgebroid, angeltveit}. More sophisticated
obstruction theory has appeared for $E_\infty$-algebras
\cite{robinson-gammahomology,goerss-hopkins-summary}, and Richter
obtained lower bounds on the amount of commutativity present in $BP$
based on Robinson's obstruction theory
\cite{richter-BPcoherences}. Techniques such as localization and
idempotent splitting were developed in \cite{may-idempotent} to handle
additive and multiplicative versions of the construction of $BP$. More
recently Basterra--Mandell showed that $BP$ is a split summand of
$MU_{(p)}$ as an $E_4$-algebra \cite{basterra-mandell-BP}, and so the
homotopy category of $BP$-modules has a symmetric monoidal structure;
Chadwick--Mandell used idempotent splittings to show that this could
be done with the Quillen idempotent as $E_2$-algebras
\cite{chadwick-mandell-genera}. Both Hu--Kriz--May
\cite{hu-kriz-may-cores} and Baker \cite{baker-closeencounters} gave
iterative constructions by methods that kill torsion, producing two
different types of closest possible torsion-free $E_\infty$-algebra to
$BP$. An unpublished paper of Kriz attempted to prove that $BP$ admits
an $E_\infty$-algebra structure, and Basterra developed the theory of
topological Andr\'e-Quillen (TAQ) cohomology based on his ideas---this
theory allows the construction of $E_\infty$-algebras by
systematically lifting the $k$-invariants in the Postnikov tower from
ordinary cohomology to TAQ-cohomology
\cite{kriz-towers,basterra-andrequillen}. Kriz's original program
foundered on a technical detail, but TAQ has been central in a great
deal of research since.\footnote{The problem, insofar as the author
  understands, was establish certain elements in the Miller spectral
  sequence computing TAQ-cohomology needed to be shown to be permanent
  cycles, but the operations used to establish this were
  insufficiently compatible with the differentials.}

However, the hope that Problem~\ref{prob:mainprob} has a positive
solution perhaps originated in a time of much greater optimism, and
the intervening years have shown that the additive and multiplicative
structure of a spectrum are difficult to untangle from each
other. Indeed, there is something closer to a reciprocity
relationship, where requirements of the additive structure are
rewarded with constraints on the multiplicative and vice versa. In
line with this, there have been several more recent calculations
showing that desirable properties of a multiplication on $BP$ cannot
be realized. Hu--Kriz--May showed that there cannot be a map of
$E_\infty$-algebras $BP \to MU_{(p)}$ because it conflicts with
calculation of Dyer--Lashof operations in their homology, despite the
presence of the Quillen idempotent which describes such a splitting
additively and algebraically \cite{hu-kriz-may-cores}.  In the reverse
direction, Johnson--Noel showed with hard calculations that the
particular map of ring spectra $MU_{(p)} \to BP$ employed to great
effect in chromatic theory cannot be a map of $E_\infty$-algebras for
$p \leq 13$ \cite{noel-johnson-ptypical}, based on a power operation
criterion due to McClure \cite[VIII.7.7, 7.8]{bmms-hinfty}.

The Hu--Kriz--May result seems more decisive mainly because it uses
structure that is forced. The mod-$p$ homology groups $H_* BP$ are
identified as a canonical subalgebra of the dual Steenrod algebra
$H_* \HF_p$, and this means that the ring structure on $H_* BP$ and
operations coming from any $E_n$-algebra structure (including the
Dyer--Lashof operations mentioned above) are completely determined by
those in the dual Steenrod algebra. It is straightforward to show that
$H_* BP$ is closed in $H_*\HF_p$ under the Dyer--Lashof operations,
and so we cannot exclude the possibility that $BP$ is an
$E_\infty$-algebra using a relation between these primary
operations. This paper shows that, at the prime $2$, there does exist
a contradiction for a more subtle reason: while $H_* BP$ is closed
under primary operations, it is not closed under secondary
operations. This parallels Adams' solution of the Hopf invariant one
problem using secondary cohomology operations \cite{adams-J(X)IV}. The
proof will critically rely on Johnson--Noel's calculation of power
operations in complex bordism.

\begin{thm}[{\ref{prop:bigoperation}, \ref{cor:A-detectsagain}}]
  \label{thm:mainsecondary}
  There exists a natural secondary operation in the mod-$2$ homology
  of $E_{12}$-algebras with the following properties. For $R$ an
  $E_{12}$-algebra, the secondary operation is defined on the subset of
  $H_2 R$ satisfying certain identities between Dyer--Lashof
  operations in $H_k R$ for $5 \leq k \leq 13$, and the secondary
  operation takes values in a quotient of $H_{31} R$. This operation
  is preserved by maps $R \to R'$ of $E_{12}$-algebras.

  In the dual Steenrod algebra $H_*\HF_2 \cong \mb
  F_2[\xi_1,\xi_2,\dots]$, this operation is defined on the element
  $\xi_1^2 \in H_2(\HF_2)$ and, mod decomposables, unambiguously takes
  the value $\xi_5 \in H_{31}(\HF_2)$.
\end{thm}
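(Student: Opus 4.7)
The plan is to mirror Adams' construction of secondary cohomology operations, replacing Steenrod operations on spaces by Dyer-Lashof operations on $E_n$-algebras. A secondary operation is manufactured from a universal primary relation
\[
P_0 \;=\; \sum_i P_i \circ Q_i
\]
among Dyer-Lashof operations valid on all $E_{12}$-algebras, together with an explicit chain-level null-homotopy of this relation in the operadic chains $C_*(\mathcal{C}_{12})$ of the little $12$-disks operad. For an $E_{12}$-algebra $R$ and $x \in H_2 R$ at which the primary vanishings $Q_i(x) = 0$ hold in $H_{k_i} R$ for $k_i \in [5,13]$, chain-level null-witnesses of these vanishings can be fed through the chain-level null-homotopy of the relation to produce a class in $H_{31} R$, well-defined modulo the indeterminacy generated by the images of the $P_i$. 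Naturality in $E_{12}$-maps is automatic from this operadic origin.

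To identify a suitable relation I would exploit the fact that $\xi_5 \in H_{31}\HF_2$ is, modulo decomposables, the iterated primary operation $Q^{16}Q^{8}Q^{4}Q^{2}\xi_1$, by repeated application of Kochman's formulas $Q^{2^i}\xi_i \equiv \xi_{i+1}$. Since $\xi_1^2 = Q^1\xi_1$ in $H_2\HF_2$, a secondary operation detecting $\xi_5$ starting from the input $\xi_1^2$ must ``unravel the $Q^1$'', in spirit analogous to Adams' use of secondary Steenrod operations to detect Hopf invariant one. The required universal relation is then produced by combining Adem relations, the Cartan formula, and the Dyer-Lashof structure available at $n=12$ but not below; the intermediate operations $Q_i$ land naturally in dimensions $5$ through $13$. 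Its chain-level null-homotopy in $C_*(\mathcal{C}_{12})$ can then be built by standard operadic methods.

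To evaluate the operation on $\xi_1^2$, observe that $\HF_2$ is $E_\infty$, hence $E_{12}$, and the hypotheses hold on $\xi_1^2$ by Kochman's formulas. Feeding an explicit chain-level cofibrant model of $\HF_2$ through the constructed null-homotopy yields a class in $H_{31}\HF_2$; tracing the iterated Dyer-Lashof chain $\xi_1 \mapsto \xi_1^2 \mapsto \xi_2 \mapsto \cdots \mapsto \xi_5$ through the construction shows that the output equals $\xi_5$ modulo decomposables.

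The hardest step is the interlocked problem of choosing the relation and controlling its indeterminacy simultaneously: the relation must yield a secondary value on $\HF_2$ that survives modulo decomposables, which requires precise chain-level control over both the null-homotopy in $C_*(\mathcal{C}_{12})$ and the images of the auxiliary operations $P_i$. This is analogous to the chain-level subtleties that frustrated Kriz's original TAQ-cohomology program, and is where the bulk of the technical work would reside.
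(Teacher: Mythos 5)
Your outline of the \emph{construction} is broadly consonant with the paper's: a universal relation among Dyer--Lashof operations on a degree-$2$ class (with intermediate targets in degrees $5$ through $13$), realized by a nullhomotopy of maps of free $E_{12}$ $\HF_2$-algebras, yields a natural secondary operation valued in a quotient of $H_{31}$, with indeterminacy controlled by the suspended relation; this is essentially Proposition~\ref{prop:bigoperation}. The gap is in the evaluation on $\xi_1^2$. You propose to compute the value by ``tracing the iterated Dyer--Lashof chain $\xi_1 \mapsto \xi_1^2 \mapsto \xi_2 \mapsto \cdots \mapsto \xi_5$ through the construction,'' i.e.\ by primary Dyer--Lashof operation data in $H_*\HF_2$. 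This cannot work, and not merely for technical reasons: the value of a secondary operation is precisely the information \emph{not} determined by primary operations, and if it were so determined the theorem would be false --- $H_*BP$ is closed under the primary Dyer--Lashof operations (as the introduction stresses), so any value computable from them on $\xi_1^2 \in H_*BP$ would again lie in $H_*BP$, whereas $\xi_5$ does not (mod decomposables). Relatedly, the fact that $\xi_5 \equiv Q^{16}Q^8Q^4Q^2\xi_1$ mod decomposables (Steinberger's formulas, not Kochman's, which concern $H_*MU$) gives no handle on a secondary operation whose input is $\xi_1^2$.

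The missing ingredient is an external primary computation in a \emph{different} category that the secondary operation can be juggled into. The paper runs the bracket $\langle \xi_1^2, Q, R\rangle$ through the factorization $\xi_1^2 = p\, b_1$ with $p\co H\sma MU \to H\sma H$, reduces via Peterson--Stein formulas to a functional operation $\langle p, f, \bcQ\rangle$ for $MU \to \HF_2$, identifies its value with a suspension class in the $MU$-dual Steenrod algebra $\pi_*(H\sma_{MU}H)$, and there evaluates the genuinely new input $Q^{10}(\sigma x_2) = \sigma x_7$. That single Dyer--Lashof operation is itself deduced from the Ravenel--Wilson Hopf ring for $MU$, the unstable homology invariant, and Johnson--Noel's computation of the tom Dieck--Quillen power operation $P(\mb{CP}^2) \equiv x_7\alpha^3$ mod decomposables. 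Only after landing on $\sigma x_7 \equiv i(\xi_4)$ does the primary relation $Q^{16}\xi_4 \equiv \xi_5$ finish the job. Your proposal contains no analogue of this input, and ``feeding a chain-level cofibrant model of $\HF_2$ through the nullhomotopy'' is a definition of the value, not a method for computing it; as written, the evaluation step does not go through.
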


With this theorem, we can exclude the existence of an
$E_\infty$-algebra structure on the $2$-local Brown--Peterson spectrum
and several related objects (e.g. the generalized $BP\langle k\rangle$
whose cohomology is discussed in \cite[4.3]{tmforientation} and whose
additive uniqueness is discussed in \cite{angeltveit-lind-bpn}).
\begin{thm}[{\ref{thm:mainthm-redux}, \ref{thm:nobp-redux}}]
  \label{thm:nobp}
  Suppose that $R$ is a connective $E_{12}$-algebra with a ring
  homomorphism $\pi_0 R \to \mb F_2$ such that the induced map on
  mod-$2$ homology $H_* R \to H_* \HF_2$ is injective in degrees $5$
  through $13$. If $\xi_1^2$ is in the image of $H_2(R)$, then the
  element $\xi_5$ is in the image of $H_{31} R$ mod decomposables.

  In particular, the $2$-local Brown--Peterson spectrum $BP$, the
  (generalized) truncated Brown--Peterson spectra $BP\langle k\rangle$
  for $k \geq 4$, and their $2$-adic completions do not admit the
  structure of $E_n$-algebras for any $12 \leq n \leq \infty$.
\end{thm}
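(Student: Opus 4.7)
The plan is to derive both assertions from Theorem~\ref{thm:mainsecondary} once a comparison map $f\co R\to\HF_2$ of $E_{12}$-algebras is in hand. Since $R$ is a connective $E_{12}$-algebra and $12\geq 2$, the commutative ring $\pi_0 R$ receives an $E_{12}$-algebra map $R\to H\pi_0 R$ via Postnikov truncation; I would compose this with the Eilenberg--Mac Lane functor applied to the given ring map $\pi_0 R\to\mb F_2$ to obtain $f$. Naturality then ensures that $f$ preserves the Dyer--Lashof operations and the secondary operation of Theorem~\ref{thm:mainsecondary}.

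To prove the first assertion, I would choose $x\in H_2 R$ lifting $\xi_1^2$. The secondary operation is defined on $x$ provided certain identities among Dyer--Lashof operations hold in $H_k R$ for $5\le k\le 13$; these identities hold for $\xi_1^2$ in $H_k\HF_2$ by Theorem~\ref{thm:mainsecondary}, and the injectivity hypothesis on $f_*$ in this range pulls them back to $H_* R$. The secondary operation is therefore defined on $x$, and naturality identifies its value in $H_{31}R$ modulo decomposables with the value on $\xi_1^2$, namely $\xi_5$.

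For the second assertion, I would specialize to $R=BP$, to the generalized $BP\langle k\rangle$ with $k\ge 4$, or to their $2$-adic completions. In each case the mod-$2$ Hurewicz map identifies $H_* R$ with a polynomial subalgebra of $H_*\HF_2$, so the injectivity hypothesis in degrees $5$--$13$ is automatic and the generators contributing in total degree $\le 31$ are $\xi_1^2,\xi_2^2,\xi_3^2,\xi_4^2$ of degrees $2,6,14,30$. Since these are all even and $31$ is odd, $H_{31}R=0$. The first assertion would then place $\xi_5\in H_{31}\HF_2$ in the zero subspace modulo decomposables, contradicting the fact that $\xi_5$ is a polynomial generator; hence no such $E_{12}$-algebra structure exists, and therefore none for any $12\le n\le\infty$. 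The main checkpoints I anticipate are (a) verifying that Postnikov truncation yields an $E_{12}$-algebra map in this setup, and (b) confirming the relevant range of $H_*BP\langle k\rangle$ for the generalized spectra of \cite{angeltveit-lind-bpn,tmforientation}.
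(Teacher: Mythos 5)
Your proposal is correct and follows essentially the same route as the paper: an $E_{12}$-algebra map $R \to \HF_2$ obtained from the Postnikov tower, the injectivity hypothesis in degrees $5$--$13$ used to pull back the vanishing of the classes $y_i$ so that the secondary operation is defined on a lift of $\xi_1^2$, and naturality forcing $\xi_5$ into the image of $H_{31}R$ mod decomposables. The only cosmetic difference is in the final contradiction, where you observe $H_{31}R = 0$ by parity of the generators' degrees while the paper notes that the image of $H_{31}R$ consists entirely of decomposables; both observations are valid and immediate from the same description of $H_*BP\langle k\rangle$.
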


\subsection{Remarks on obstruction theory}

The secondary operation we will define is determined by a relation
between Dyer--Lashof operations (the full relation is rather large,
and is displayed in Proposition~\ref{prop:bigrelation}). For us, this
relation is not obvious; it is not obvious that this particular
relation is relevant; and it is not obvious that the resulting
secondary operation is calculable. We did not find this relation by
trial and error---or, more accurately, we tried to find relevant
secondary operations by trial and error and failed. All of our
preliminary attempts resulted in combinations that were excluded by
necessity of compatibility with the Steenrod operations. In this
section we will indicate a little about how the main result of this
paper was found, as opposed to how it is written.\footnote{A more detailed
explanation of this calculation is now in \cite{bpobstructions}.}

The obstruction theory of Goerss--Hopkins
\cite{goerss-hopkins-moduliproblems} takes as input a simplicial
operad, an appropriate homology theory $E_*$, and an algebra $A$ for
this simplicial operad in $E_* E$-comodules. From this, it produces an
obstruction theory to calculate the moduli space of algebras over the
geometric realization of this operad whose $E$-homology is $A$. Senger
specialized this to the case where $E$ is mod-$p$ homology and the
operad is a constant $E_\infty$-operad \cite{senger-obstr}. His work
produced an obstruction theory whose input is an algebra $A$ with
Steenrod operations and Dyer--Lashof operations satisfying instability
relations and Nishida relations, whose obstruction groups are
$\Ext$-groups in this category, and which calculated the moduli space
of $E_\infty$-algebras whose mod-$p$ homology is $A$. He also
developed several tools for reducing these calculations to more
tractable $\Ext$-groups that could, in the case of $BP$ or
$BP\langle n\rangle$, be calculated with a Koszul resolution
\cite{priddy-koszul}. By construction, this obstruction theory
remembers that the Nishida relations will exclude a number of possible
obstructions. (The problem encountered in Kriz's preprint
\cite{kriz-towers} could be viewed as the accidental exclusion of too
many obstructions in this fashion.)

In the case of the $2$-primary Brown--Peterson spectrum, calculations
with this obstruction theory indicated two first potential nonzero
obstruction classes. We can define $y$, $R^n$, and $v_m$ to be,
respectively, $\Ext$-Koszul dual to the generator
$\xi_1^2 \in H_2(BP)$, the Dyer--Lashof operation $Q^{n-1}$, and the
Milnor primitive $Q_{m-1}$ in the Steenrod algebra. (The $R^n$ are
closely related to unpublished work of Basterra--Mandell on operations
in TAQ-cohomology.) Then, using this notation, the first possible
obstruction classes are $v_3^2 R^{19} R^9 y$ and $v_4 R^{21} R^9
y$. Under the yoga of secondary operations described by Adams
\cite{adams-J(X)IV}, the potential obstruction class
$v_4 R^{21} R^9 y$ would detect an obstruction from a secondary
operation whose value involved $\xi_5$ (detected by the Milnor
primitive), combining relations that (at least) involved the Adem
relations for $Q^{20} Q^8$ and an identity satisfied by $Q^8 \xi_1^2$.

Indeed, our main result is that this is the case. However, much of the
progress in this paper traces its origin back to the actual
calculation of these relations. After determining the needed
identities in Proposition~\ref{prop:bigrelation}, we could identify
most of the relations in $H_* BP \subset H_* \HF_2$ as already holding
true in $H_* MU$, making it possible to begin juggling this secondary
operation through much simpler ones passing through $H_* MU$.

\subsection{Further questions}

In the original version of this paper, we expressed the following
strong belief that the $2$-primary Brown--Peterson spectrum was not
unique in failing to admit an $E_\infty$-algebra structure.
\begin{conj}
  \label{conj:counterconj}
  For any odd prime $p$, the $p$-local Brown--Peterson spectrum does
  not admit the structure of an $E_\infty$-algebra.
\end{conj}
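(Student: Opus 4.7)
The plan is to parallel the argument behind Theorem~\ref{thm:nobp}: for each odd prime $p$, construct a natural secondary operation in the mod-$p$ homology of $E_n$-algebras (for some sufficiently large finite $n$) that is defined on a suitable element of $H_* BP$ and whose value, mod decomposables, is a generator of $H_* \HF_p$ that does not lie in the polynomial subalgebra $H_* BP \subset H_* \HF_p$. At an odd prime one has $H_* \HF_p \cong \mb F_p[\xi_1,\xi_2,\dots] \otimes \Unstop(\tau_0,\tau_1,\dots)$ while $H_* BP$ is the polynomial subalgebra on the $\xi_i$, so the most natural target for the value of the secondary operation is one of the exterior generators $\tau_k$: landing on $\tau_k$ mod decomposables would immediately contradict the existence of an $E_n$-algebra refinement of $BP$.

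First I would rerun the Senger--Goerss--Hopkins obstruction theory at an odd prime, using an odd-primary Koszul resolution analogous to the one mentioned in the remarks on obstruction theory, and identify the first potential nonzero obstruction class in the relevant $\Ext$-groups. In the $p=2$ case this class was $v_4 R^{21} R^9 y$, detecting a secondary operation landing on $\xi_5$; at odd primes one expects an analogous class involving both the $Q^s$ and $\beta Q^s$ Dyer--Lashof operations and an element with Milnor-primitive support on some $\tau_k$. Once the right obstruction class is isolated, one extracts the corresponding Dyer--Lashof relation and builds the secondary operation directly, as in Theorem~\ref{thm:mainsecondary}; the Nishida relations provide a consistency check that the chosen relation is compatible with the Steenrod action.

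The main obstacle is then verifying that this secondary operation takes a nontrivial value on a suitable element of $H_* BP$. At $p=2$ this was accomplished by routing the calculation through $H_* MU$ and invoking the explicit power-operation computations of Johnson--Noel, whose results only cover $p \leq 13$; a direct imitation should handle those small primes, but the bulk of Conjecture~\ref{conj:counterconj} demands an argument uniform in $p$. Since the degrees and complexity of the relevant Dyer--Lashof relations grow with $p$, one would hope to find a secondary operation whose value can be extracted abstractly from the Quillen splitting $MU_{(p)} \to BP$ together with the $E_\infty$-structure on $MU$, rather than from case-by-case calculations at each odd prime. Constructing such a uniform relation, and ruling out accidental cancellations at some odd prime that would still permit an $E_\infty$-structure, is where I expect the genuine difficulty to lie.
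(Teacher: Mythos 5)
The first thing to say is that the paper does not prove this statement: it appears only as a conjecture, and the sole proof on record is external --- the paper itself notes that Senger has since established it, showing in fact that $BP$ (and $BP\langle n\rangle$ for $n \geq 4$) admit no $E_{2(p^2+2)}$-algebra structure at any prime $p$. So there is no argument in the paper to compare yours against; what you have written is a research plan for exactly the extension the author left open.

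As a plan it is the right one, and it is essentially the route Senger took: rerun the mod-$p$ obstruction theory to locate the first potential nonzero obstruction class, extract the corresponding Dyer--Lashof relation (now involving $\beta Q^s$ as well as $Q^s$), and arrange for the resulting secondary operation to land on an exterior generator $\tau_k$ mod decomposables, which cannot lie in $H_* BP = \mb F_p[\xi_1,\xi_2,\dots]$. But as written it is an outline, not a proof: you have not produced the odd-primary analogue of Proposition~\ref{prop:bigrelation}, not verified that the relation actually vanishes on the relevant class of $H_* BP$ (the analogues of Propositions~\ref{prop:A-identities} and \ref{prop:MU-identities}), and not carried out the odd-primary power-operation computation that plays the role of Theorem~\ref{thm:johnsonnoel}. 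You correctly identify the last of these as the crux, and your worry about uniformity in $p$ is well founded. The paper's Appendix~\ref{sec:powerops} is the relevant hint for closing it: there the Johnson--Noel computation is reworked inside a torsion-free quotient of the Lazard ring (namely $\mb Z[v_3]/(v_3^2)$) using the logarithm and the isogeny $g(x,\alpha)$, and it is this logarithmic method --- rather than the original case-by-case, computer-assisted calculation --- that admits a uniform-in-$p$ treatment. Until those three ingredients are supplied, your write-up establishes a strategy but not the conjecture.
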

Senger has already extended the methods of this paper to prove
Conjecture~\ref{conj:counterconj}, showing that $BP$ (and
$BP\langle n\rangle$ for $n \geq 4$) do not admit the structure of
$E_{2(p^2 + 2)}$-algebras at any prime $p$ \cite{senger-bp}.

Our keystone computation in this paper is a Dyer--Lashof operation in
a version of the 2-primary dual Steenrod algebra for $MU$-modules.
\begin{prob}
  Determine how the Dyer--Lashof operations act on the $p$-primary
  $MU$-dual Steenrod algebras $\pi_*(\HF_p \sma_{MU} \HF_p)$.
\end{prob}
Baker has shown in \cite{baker-power-operations-coactions} how to
derive the Nishida relations, describing the interaction between
cohomology operations and Dyer--Lashof operations, from the
Dyer--Lashof operations in the ordinary dual Steenrod algebra. This
suggests that a solution to the previous problem would give additional
constraints on $MU$-algebras by describing additional relations that
have to hold in their mod-$p$ homology relative to $MU$.
\begin{prob}
  Determine analogues of Nishida relations between the homology
  operations on $H_*^{MU}R = \pi_* (\HF_p \sma_{MU} R)$ and the
  Dyer--Lashof operations.
\end{prob}
In particular, Remark~\ref{rmk:noorientation} describes how the
Dyer--Lashof operation that we have calculated seem to place a cap on
multiplicative structure for the map $MU \to BP$ at the prime $2$---a
stronger cap than the one we have shown for the amount of
multiplicative structure on $BP$.
\begin{prob}
  Find constraints on the values of $n$ for which the $p$-local
  Brown--Peterson spectrum can admit the structure of an $E_n$
  $MU$-algebra.
\end{prob}
Again, in the time since we raised this question, Senger has shown
that $BP$ does not admit the structure of an $E_{2p+3}$
$MU$-algebra at any prime $p$ \cite{senger-bp}.

The calculations of this paper deduce our unexpected Dyer--Lashof
operation in the $MU$-dual Steenrod algebra from a multiplicative
Dyer--Lashof operation in the Hopf ring for $MU$, and an induced
operation in the homology of the space $SL_1(MU) \subset GL_1(MU)$ of
strict units. This is a first step towards determining the homology of
the spectrum $gl_1(MU)$, about which very little is known, using the
Miller spectral sequence \cite{miller-delooping}.
\begin{prob}
  Determine multiplicative Dyer--Lashof operations in the Hopf
  ring for $MU$ and in the homology of $GL_1(MU)$. Determine 
  homology groups of the unit spectrum $gl_1(MU)$ and the Picard
  spectrum $pic(MU)$, as well as information about their homotopy
  types.
\end{prob}

Remark~\ref{rmk:infinitybrackets} points out that our description of
secondary operations and Toda brackets is not optimal. For example, it
sometimes requires strict basepoints for mapping spaces, strict
unitality, and strict initial and terminal objects, all of which are
not invariant under unbased homotopy equivalences between objects and
not invariant under Dwyer--Kan equivalences between topological
categories. However, the tools should apply in much wider generality;
investigations in this direction have been carried out by Bhattacharya
and Hank.
\begin{prob}
  Develop a homotopical framework for secondary operations.
\end{prob}
For example, a combinatorial framework analogous to quasicategories
that encodes the notion of a category enriched in based spaces,
equivalent to that introduced by Gepner--Haugseng
\cite{gepner-haugseng-enriched}, would be extremely useful in this
direction. Ideally, this should make Cohen--Jones--Segal's
construction of filtered spectra from coherent chain complex objects
\cite[\S 5]{cohenjonessegal-floerhomotopy} part of an equivalence
between filtered objects and coherent chain complex objects in a
stable $\infty$-category, extending Lurie's version of the Dold--Kan
correspondence \cite[1.2.4]{lurie-higheralgebra}.

Our calculations with power operations in the Hopf ring make use of
the $H_\infty^2$-algebra structure on $MU$, a concept from
\cite{bmms-hinfty} that has been largely neglected in the modern
literature. It should be possible to describe a fully coherent version
of this structure using the language of Picard spaces and Picard
spectra \cite{mathew-stojanoska-pictmf}.
\begin{prob}
  Give a systematic development of $E_\infty^d$-algebras as homotopy
  coherent versions of $H_\infty^d$-ring spectra, and show that the
  $H_\infty^d$-structures on classical Thom spectra constructed in
  \cite[VIII.5.1]{bmms-hinfty} lift to $E_\infty^d$-algebra structures.
\end{prob}
An $E_\infty^d$-structure on an $E_\infty$-algebra $R$ should be a
lift of the map of spaces
\[
  d\mb Z \subset \mb Z \subset \Pic(\mb S) \to \Pic(R)
\]
to a map of $E_\infty$-spaces, corresponding to a functor of symmetric
monoidal $\infty$-categories. In close analogy with the work of
Ando--Blumberg--Gepner--Hopkins--Rezk \cite{abghr-infinity-bundles},
the point $\{dk\} \into \Pic(\mb S)$ representing $S^{dk}$ gives rise
(via the $E_\infty$-space structure) to a diagram
$B\Sigma_m \to \Pic(\mb S)$, whose homotopy colimit is a Thom
spectrum on $B\Sigma_m$ representing the extended power construction
on $S^{dk}$. An $E_\infty^d$-structure on $R$ should then make the
resulting diagram $B\Sigma_m \to \Pic(R)$ factor through a constant
diagram with value $S^{dkm}$, allowing us to conclude that the smash
product of $R$ with the Thom spectrum has an equivalence to 
$R \sma (B\Sigma_m)_+ \sma S^{dkm}$.

\subsection{Outline of proof}

We will begin by calculating a Dyer--Lashof operation in the $MU$-dual
Steenrod algebra $\pi_* (H \sma_{MU} H)$, where $H$ is the
Eilenberg--Mac Lane spectrum $\HF_2$. This has maps in from the dual
Steenrod algebra $\pi_* (H \sma H)$ and out to the homology of $SU$
which become a left exact sequence
\[
0 \to Q\pi_* (H \sma H) \to Q \pi_*\left(H \sma_{MU} H\right) \to Q H_* SU
\]
on indecomposables. The Dyer--Lashof operations on the left are known
by work of Steinberger, and were employed by Tilson
\cite{tilson-kunneth} to calculate operations in the middle term. The
Dyer--Lashof operations on the right are known by work of Kochman. The
operation we will calculate is the first possible hidden extension
and it turns out to be nontrivial.

To carry out this calculation we rely on calculations of {\em
  unstable} multiplicative Dyer--Lashof operations. This uses
Ravenel--Wilson's description of Hopf ring structure on the homology
of the spaces in the $\Omega$-spectrum for $MU$
\cite{ravenel-wilson-hopfring} and a comparison between Dyer--Lashof
operations and the tom Dieck--Quillen power operations in
$MU$-cohomology. The relevant portion of this extension is ultimately
determined by the calculation of Johnson--Noel discussed earlier
\cite{noel-johnson-ptypical}. We will make extensive use of the
results of Bruner--May--McClure--Steinberger in doing this calculation
\cite{bmms-hinfty}.

We will then give an alternative description of this operation in the
$MU$-dual Steenrod algebra as a Dyer--Lashof operation applied to the
result of a secondary operation. This allows us to use juggling
formulas for secondary operations to determine a more complicated
secondary operation in the dual Steenrod algebra, showing that
$H_* BP$ is not closed under secondary operations. (We are fortunate
in this regard that most of our calculations can be carried out mod
decomposable elements.) In order to work with this we will describe a
framework for secondary operations in
Section~\ref{sec:secondary-operations} based on Harper's book
\cite{harper-secondaryoperations}, with our emphasis shifted from
suspension and loop operators to loops inside mapping spaces.

\subsection{Terminology}

The notation $\Map$ always denotes a space, or simplicial set, of
maps. We will refer to a diagram as {\em homotopy commutative} if it
commutes in the homotopy category, and {\em homotopy coherent} if we
have further chosen compatible homotopies and higher homotopies to
recover a coherent diagram \cite{vogt-hocolim,lurie-htt}.

We will adhere to the standard conventions for function composition
and path composition, even though they make no sense. Maps in a
category are written using arrows $X \to Y$, and given $f\co X \to Y$
and $g\co Y \to Z$ there is a composite $gf$. Paths in a space are
written using arrows $p \Rightarrow q$, and given
$h\co p \Rightarrow q$ and $k\co q \Rightarrow r$ there is a path
composite $h \cdot k$.

Throughout this paper, we will write $H_*(X;R)$ for the homology
groups of $X$ with coefficients in $R$, and similarly for
cohomology. If $R$ is not specified, we view these as being taken with
coefficients in a fixed finite field $\mb F_p$ of prime
order. Homology and cohomology groups of spaces are unreduced unless
otherwise specified.

When $X$ is a spectrum, $\pi_n(X)$ always denotes the set of maps $S^n
\to X$ in the stable homotopy category.

We will let $MU$ be the complex cobordism spectrum and $F$ be the formal
group law of $MU$, writing it as
$F(x,y) = x +_F y = \sum a_{i,j} x^i y^j$ with
$a_{i,j} \in \pi_{2(i+j-1)} MU$.

\subsection{Framework}

We are in the position that we require tools from both classical and
modern frameworks.

In Section~\ref{sec:secondary-operations}, we will require highly
structured categories of algebras, well-behaved adjunctions between
them, relative smash products, and the like. To our knowledge, the
only literature that accommodate our needs for $E_n$-algebras is due
to Elmendorff--Mandell \cite{elmendorf-mandell-loopspace}, which works
in the category of symmetric spectra of with the positive stable model
structure \cite{hovey-shipley-smith-symmetric, mmss}. We will use the
term {\em commutative ring spectrum} for a commutative monoid in
symmetric spectra, and the term {\em $E_n$-algebra} for an algebra
over a fixed $E_n$-operad in simplicial sets---for this it is
convenient to use the $E_\infty$-operad of Barratt--Eccles
\cite{barratt-eccles-operad} with its filtration by $E_n$-suboperads
due to Berger \cite{berger-barratteccles}. In this framework,
Elmendorf--Mandell show that each category of $E_n$-algebras is a
simplicial model category. For $m \leq n$, the forgetful functors from
$E_n$-algebras to $E_m$-algebras or to symmetric spectra are right
Quillen functors, and there is a Quillen equivalence between
$E_\infty$-algebras and commutative ring spectra \cite[1.3,
1.4]{elmendorf-mandell-loopspace}.

In Section~\ref{sec:hopfrings} and beyond, where we are calculating
with $MU$ and the dual Steenrod algebra, we require classical results:
particularly results of Cohen--Lada--May
\cite{cohen-lada-may-homology}, May--Quinn--Ray
\cite{may-quinn-ray-ringspectra}, Bruner--May--McClure--Steinberger
\cite{bmms-hinfty}, and Ravenel--Wilson
\cite{ravenel-wilson-hopfring}. All of these results rest on the
interaction between a (possibly highly structured) ring spectrum $E$
and the spaces $E_n$ in an $\Omega$-spectrum representing it, an item
not immediately available in the positive stable model structure. Most
of these references use more classical categories of spectra, such as
those from \cite{lewis-may-steinberger}. In particular, comparisons
are easiest to draw to the $\mb S$-modules of \cite{ekmm}, and these
all have homotopically equivalent notions of commutative ring spectra
as shown in \cite{mmss}. This gives us a path to show that operations
and relations between them that we construct in
Section~\ref{sec:secondary-operations} can be related to our
calculations. (We do not mean to assert that the constructions in
Section~\ref{sec:secondary-operations} cannot be carried out within
$\mb S$-modules. To our knowledge, ours is the shortest path without
the hard work involved in creating an equivalent of
\cite{elmendorf-mandell-loopspace}.)

\subsection{Acknowledgements}

The author has benefited from discussions and perspectives provided by
many people in the course of developing this paper. The author would
particularly like to thank
Andrew Baker,
Clark Barwick,
David Benson,
Andrew Blumberg,
Robert Bruner,
John R. Harper,
Fabien Hebestreit,
Mike Hill,
Paul Goerss,
Weinan Lin,
Michael Mandell,
Peter May,
Haynes Miller,
Ulrich Pennig,
Charles Rezk,
Andrew Senger,
Neil Strickland,
Markus Szymik,
Sean Tilson,
Craig Westerland,
Dylan Wilson,
and
Steven Wilson
for their assistance. The anonymous referee also provided a great deal
of useful feedback on this paper.

\section{Secondary operations}
\label{sec:secondary-operations}

A secondary composite is the first basic type of obstruction
encountered when lifting a homotopy commutative diagram to a homotopy
coherent diagram.
\begin{defn}
  \label{def:secondarycomp}
  Let $\mathcal{D}$ be a category enriched in spaces. Suppose that we
  are given the following data:
  \begin{enumerate}
  \item a sequence  $(X_0,X_1,X_2,X_3)$ of objects of $\mathcal{D}$,
  \item maps $f_{ij}\co X_i \to X_j$ for $i < j$, and
  \item paths $h_{ijk}\co f_{jk} f_{ij} \Rightarrow f_{ik}$ in
    $\Map_\mathcal{D}(X_i, X_k)$ for $i < j
    < k$.
  \end{enumerate}
  Then the associated {\em secondary composite} is the element of
  $\pi_1 (\Map_\mathcal{D}(X_0,X_3), f_{03})$ represented by the path composite
  \[
  (h_{023})^{-1} \cdot (f_{23} h_{012})^{-1} \cdot (h_{123} f_{01})
  \cdot h_{013},
  \]
  viewed as a loop based at $f_{03}$.
  \[
    \xymatrix{
      f_{03} \ar@{=>}^-{h_{023}^{-1}}[r] &
      f_{23} f_{02} \ar@{=>}[d]^-{f_{23} h_{012}^{-1}} \\
      f_{13} f_{01} \ar@{=>}[u]^-{h_{013}} &
      f_{23} f_{12} f_{01} \ar@{=>}[l]^-{h_{123} f_{01}}
    }
  \]
\end{defn}
(In Remark~\ref{rmk:infinitybrackets} we will discuss a quasicategorical
expression of this data.)

In the following sections we will describe secondary composites which
are comparable with Massey products or Toda brackets; they rely on the
existence of distinguished ``null'' maps so that we can make sense of
composites being trivial. Our perspective is based on Harper's book
\cite{harper-secondaryoperations}.

\subsection{Secondary operations and brackets}

Throughout this section, let $\mathcal{C}$ be a category enriched in
pointed spaces (or, with appropriate modifications, pointed simplicial
sets) under $\sma$, and write $\Map_\mathcal{C}(x,y)$ for the mapping
space between any pair of objects of $\mathcal{C}$. We refer to the
basepoint of this mapping space as the {\em null map} or $\ast$; null
maps satisfy $f \ast = \ast f = \ast$ for any $f$.\footnote{Strictly
  speaking, the smash product on pointed spaces is nonassociative and
  so does not give rise to a monoidal category \cite[\S
  1.7]{may-sigurdsson-parametrized}. We really mean that we are
  working in an appropriate ``convenient category,'' such as compactly
  generated spaces.}

\begin{defn}
  \label{def:tethering}
  Suppose we have maps
  \[
  X_0 \too{f} X_1 \too{g} X_2
  \]
  in $\mathcal{C}$. A {\em tethering} of this composite is a homotopy
  class of {\em nullhomotopy} of $gf$: a homotopy class of path
  $h\co gf \Rightarrow \ast$ in $\Map_\mathcal{C}(X_0,X_2)$
  (cf. \cite[4.1.2]{harper-secondaryoperations}). We will write
  $g \teth{h} f$ to indicate such a tethering, and $g \teth{} f$ to
  indicate that there is a chosen tethering which is either implicit
  or not important to name.
\end{defn}

\begin{rmk}
  \label{rmk:tripletether}
  If a triple composite $kgf$ is nullhomotopic, then a tethering $kg
  \teth{h} f$ is the same data as a tethering $k \teth{h} gf$.
\end{rmk}

\begin{defn}
  \label{def:tetheredbracket}
  Suppose we have maps
  \[
  X_0 \too{f_{01}} X_1 \too{f_{12}} X_2 \too{f_{23}} X_3,
  \]
  and tetherings $f_{23} \teth{h_{123}} f_{12} \teth{h_{012}}
  f_{01}$. Then we define the element
  \[
  \langle f_{23} \teth{h_{123}} f_{12} \teth{h_{012}} f_{01}\rangle
  \in \pi_1 (\Map_\mathcal{C}(X_0,X_3),\ast)
  \]
  to be the path composite $(f_{23} h_{012})^{-1} \cdot h_{123}
  f_{01}$ obtained by gluing together the two nullhomotopies $f_{23}
  f_{12} f_{01} \Rightarrow \ast$. This is the secondary composite, as
  in Definition~\ref{def:secondarycomp}, obtained by choosing $f_{02}
  = f_{03} = f_{13} = \ast$ and the trivial nullhomotopies $h_{013}$
  and $h_{023}$.
\end{defn}

\begin{defn}
  \label{def:secondaryop}
  Suppose we have maps
  \[
  X_0 \too{f_{01}} X_1 \too{f_{12}} X_2 \too{f_{23}} X_3.
  \]
  If we have chosen a tethering $f_{23} \teth{h} f_{12}$ and $f_{12}
  f_{01}$ is nullhomotopic, we write
  \[
  \langle f_{23} \teth{h} f_{12}, f_{01}\rangle \subset \pi_1 (\Map_\mathcal{C}(X_0,X_3),\ast)
  \]
  for the set of all elements $\langle f_{23} \teth{h} f_{12} \teth{k}
  f_{01}\rangle$ as $k$ ranges over possible tetherings, and refer to
  $\langle f_{23} \teth{h} f_{12}, -\rangle$ as the {\em secondary
    operation} determined by the tethering. The set of maps $f_{01}$
  such that $f_{12} f_{01}$ is nullhomotopic is referred to as the
  {\em domain of definition} of this secondary operation, and the
  possibly multivalued nature of this function as the {\em
    indeterminacy} of the secondary operation.

  The secondary operations $\langle - , f_{12} \teth{}
  f_{01}\rangle$ are defined in the same way.
\end{defn}

\begin{defn}
  \label{def:brackets}
  Suppose we have maps
  \[
  X_0 \too{f_{01}} X_1 \too{f_{12}} X_2 \too{f_{23}} X_3
  \]
  such that the double composites $f_{23} f_{12}$ and $f_{12} f_{01}$
  are nullhomotopic. We define the subset
  \[
  \langle f_{23}, f_{12}, f_{01}\rangle \subset \pi_1 (\Map_\mathcal{C}(X_0,X_3),\ast),
  \]
  or {\em bracket}, to be the set of all secondary composites $\langle
  f_{23} \teth{} f_{12} \teth{} f_{01} \rangle$.
\end{defn}

\begin{prop}
  \label{prop:indeterminacymain}
  Changing the tethering and homotopy class of maps alters the value
  of a secondary composite by multiplication by loops, as follows. If
  $f_{23}$ is homotopic to $f_{23}'$, we have
  \[
  \langle f_{23} \teth{h} f_{12} \teth{k} f_{01} \rangle = 
  \langle f_{23}' \teth{h'} f_{12} \teth{k} f_{01}
  \rangle \cdot (u f_{01})
  \]
  for some $u \in \pi_1 \Map_\mathcal{C}(X_1,X_3)$ that is determined
  by $h$, $h'$, and a homotopy between $f_{23}$ and
  $f_{23}'$. Similarly, if $f_{01}$ is homotopic to $f_{01}'$, we have
  \[
  \langle f_{23} \teth{h} f_{12} \teth{k} f_{01} \rangle = 
  (f_{23} v)  \cdot \langle f_{23} \teth{h} f_{12} \teth{k'} f_{01}'  \rangle
  \]
  for some $v \in \pi_1 \Map_\mathcal{C}(X_0,X_2)$.

  If we replace all three maps with homotopic maps and choose new
  tetherings, we have
  \[
  \langle f_{23} \teth{h} f_{12} \teth{k} f_{01} \rangle = 
  (f_{23} v) \cdot \langle f_{23}' \teth{h'} f_{12}' \teth{k'} f_{01}'
  \rangle \cdot (u f_{01})
  \]
  for some $u \in \pi_1 \Map_\mathcal{C}(X_1,X_3)$ and $v \in \pi_1
  \Map_\mathcal{C}(X_0,X_2)$.
\end{prop}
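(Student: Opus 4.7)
The plan is to reduce everything to one-variable changes and then compose them. The essential structural input is that the pointedness hypothesis $f\ast = \ast f = \ast$ makes pre- and post-composition with a fixed map basepoint-preserving maps of mapping spaces. Thus precomposition with $f_{01}$ induces a homomorphism $\pi_1 \Map_\mathcal{C}(X_1, X_3) \to \pi_1 \Map_\mathcal{C}(X_0, X_3)$ sending $u$ to $uf_{01}$, and postcomposition with $f_{23}$ gives $v \mapsto f_{23}v$. The proposition is precisely the statement that the indeterminacy of the secondary composite flows through these two actions.

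First I would handle the case where only $f_{23}$ varies. Pick a homotopy $H\co f_{23} \Rightarrow f_{23}'$. Both $h$ and the concatenation $(Hf_{12}) \cdot h'$ are paths $f_{23}f_{12} \Rightarrow \ast$ in $\Map_\mathcal{C}(X_1, X_3)$, so they differ by a loop at $\ast$; up to a sign this is the asserted $u$. Substituting this identification into $(f_{23}k)^{-1} \cdot hf_{01}$ replaces it with $(f_{23}k)^{-1} \cdot (Hf_{12}f_{01}) \cdot (h'f_{01}) \cdot (u^{\pm 1} f_{01})$. The crucial step is then to invoke naturality of the composition pairing $\Map_\mathcal{C}(X_2,X_3) \sma \Map_\mathcal{C}(X_0,X_2) \to \Map_\mathcal{C}(X_0,X_3)$ applied to the paths $H$ and $k$: the resulting square has one edge equal to $H\ast = \ast$, hence degenerates and yields $(f_{23}k)^{-1} \cdot (Hf_{12}f_{01}) \simeq (f_{23}'k)^{-1}$ rel endpoints. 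The remaining factors regroup to $\langle f_{23}' \teth{h'} f_{12} \teth{k} f_{01}\rangle \cdot (uf_{01})$. The case of varying only $f_{01}$ is symmetric: the analogous naturality square now involves the paths $h$ and a homotopy $f_{01} \Rightarrow f_{01}'$, and produces a correction term $(f_{23}v)$ applied on the left.

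For the general statement where all three maps change, I would handle the change of $f_{12}$ separately. A homotopy $G\co f_{12} \Rightarrow f_{12}'$ produces two nullhomotopies $(f_{23}G) \cdot h'$ and $h$ of $f_{23}f_{12}$, and two nullhomotopies $(Gf_{01}) \cdot k'$ and $k$ of $f_{12}f_{01}$; each pair differs by a loop at $\ast$, contributing respectively to the postcomposition correction $f_{23}v$ and the precomposition correction $uf_{01}$. Stringing together the three one-variable changes then gives the stated formula, with $u \in \pi_1 \Map_\mathcal{C}(X_1, X_3)$ and $v \in \pi_1 \Map_\mathcal{C}(X_0, X_2)$ absorbing the contributions from the chosen homotopies.

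The main obstacle will be the bookkeeping of path concatenations and verifying that the naturality squares for the composition pairing really do commute up to homotopy rel endpoints in the right direction. These squares collapse along null edges precisely because of pointedness ($f\ast = \ast f = \ast$), and this is also what ensures that the comparison loops actually lie over the basepoint $\ast$; without it, the ``corrections'' would not be loops at all. Once this is set up cleanly the statement follows, and the classes $u$, $v$ are not canonical---they depend on the specific homotopies chosen between the maps, exactly as the proposition states.
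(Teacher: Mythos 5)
Your argument is correct and is essentially the paper's own proof: the same key step (the square obtained by composing the homotopy $f_{23}\Rightarrow f_{23}'$ with the nullhomotopy $k$, which degenerates along the edge landing on $\ast$) yields the identity $(f_{23}'k)^{-1}\simeq (f_{23}k)^{-1}\cdot(Hf_{12}f_{01})$, and the correction loop $u=((Hf_{12})\cdot h')^{-1}\cdot h$ is exactly the one the paper produces. The paper proves only the first case and declares the second symmetric, so your explicit treatment of the middle-map change (where the whiskered path $f_{23}Gf_{01}$ cancels directly, contributing to both the left and right corrections) is a harmless elaboration rather than a different route.
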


In particular, this describes completely the indeterminacy in
secondary operations and brackets, and shows that (up to this
indeterminacy) a secondary operation or a bracket is well-defined on
homotopy classes of maps.

\begin{proof}
  We will prove the first identification, as the second is
  symmetric. Since $f_{23}$ and $f'_{23}$ are homotopic, there is a
  path $j\co f_{23} \Rightarrow f'_{23}$ in
  $\Map_{\mathcal{C}}(X_2,X_3)$. The composition
  \[
  jk\co \Delta^1 \times \Delta^1 \xrightarrow{j \times k} \Map_{\mathcal{C}}(X_2,
  X_3) \times \Map_{\mathcal{C}}(X_0,X_2) \to
  \Map_{\mathcal{C}}(X_0,X_3)
  \]
  determines a homotopy from $f_{23} k$ to $(j f_{12} f_{01})
  \cdot (f_{23}' k)$, making them equal in the fundamental groupoid of
  $\Map_{\mathcal{C}}(X_0,X_3)$.

  In this fundamental groupoid, we then have the following sequence of
  identities:
  \begin{align*}
  \langle f_{23} \teth{h} f_{12} \teth{k} f_{01} \rangle
    &= (f_{23} k)^{-1} \cdot (h f_{01})\\
    &= (f_{23}' k)^{-1} \cdot (j f_{12} f_{01})^{-1} \cdot (h f_{01})\\
    &= (f_{23}' k)^{-1} \cdot (h' f_{01}) \cdot (h' f_{01})^{-1} \cdot
      (j f_{12} f_{01})^{-1} \cdot (h f_{01})\\
    &= \langle f_{23}' \teth{h'} f_{12} \teth{k} f_{01} \rangle \cdot
      [(j f_{12} \cdot h')^{-1} \cdot h] f_{01}
  \end{align*}
  Letting $u = (j f_{12} \cdot h')^{-1} \cdot h \in \pi_1
  \Map_{\mathcal{C}}(X_0,X_2)$ gives the desired result.
\end{proof}

\begin{cor}
  \label{cor:secondaryindeterminacy}
  A secondary operation $\langle f_{23} \teth{h} f_{12}, -\rangle$ determines a
  well-defined map $\Phi$ on $\ker f_{12} \subset \pi_0
  \Map_\mathcal{C}(X_0,X_1)$ whose values are right cosets:
  \[
    \ker f_{12} \too{\Phi} (f_{23} \pi_1 \Map_\mathcal{C}(X_0, X_2))
    \Big \backslash \pi_1 \Map_{\cal C}(X_0, X_3).
  \]
  If two tetherings $h$, $h'$ give rise to operations $\Phi$,
  $\Phi'$, then there exists an element $u \in \pi_1 \Map_{\cal
    C}(X_1,X_3)$ such that
  \[
  \Phi x = \Phi' x \cdot (u x)
  \]
  for all $x \in \ker f_{12} \subset \pi_0 \Map_\mathcal{C}(X_0, X_1)$.

  Dual results hold for $\langle -, f_{12} \teth{} f_{01}\rangle$.
\end{cor}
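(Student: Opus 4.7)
The plan is to deduce both assertions formally from Proposition~\ref{prop:indeterminacymain}, treating the corollary as a bookkeeping reformulation that isolates the parts of the indeterminacy lying on the two sides of the secondary composite.

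First, to show that $\Phi$ descends to a well-defined function on $\ker f_{12}$ with values in the specified right-coset set, I would verify two independence properties. For a fixed representative $f_{01}$, any two tetherings $k, k'$ of $f_{12} f_{01}$ differ by a loop $k^{-1} \cdot k'$ in $\Map_\mathcal{C}(X_0, X_2)$ based at $\ast$, and a direct calculation from Definition~\ref{def:tetheredbracket} gives
\[
\langle f_{23} \teth{h} f_{12} \teth{k'} f_{01}\rangle = f_{23}(k^{-1} \cdot k') \cdot \langle f_{23} \teth{h} f_{12} \teth{k} f_{01}\rangle,
\]
so the secondary composite is well-defined modulo $f_{23} \pi_1 \Map_\mathcal{C}(X_0, X_2)$. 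For a homotopic replacement $f_{01} \simeq f_{01}'$ (with any choice of tethering $k'$ of $f_{12} f_{01}'$), the second identity of Proposition~\ref{prop:indeterminacymain} gives
\[
\langle f_{23} \teth{h} f_{12} \teth{k} f_{01}\rangle = (f_{23} v) \cdot \langle f_{23} \teth{h} f_{12} \teth{k'} f_{01}'\rangle,
\]
so the value in the quotient depends only on $[f_{01}] \in \pi_0 \Map_\mathcal{C}(X_0, X_1)$. Together these two facts define $\Phi$.

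Second, to compare $\Phi$ and $\Phi'$ built from tetherings $h$ and $h'$ of the same composite $f_{23} f_{12}$, I would specialize the first identity of Proposition~\ref{prop:indeterminacymain} by taking $f_{23}' = f_{23}$ with the constant homotopy $j$. The formula $u = (j f_{12} \cdot h')^{-1} \cdot h$ from its proof then reduces to $u = (h')^{-1} \cdot h \in \pi_1 \Map_\mathcal{C}(X_1, X_3)$, an element depending only on $h$ and $h'$. Evaluating on any $x = [f_{01}] \in \ker f_{12}$ and passing to cosets yields $\Phi x = \Phi' x \cdot (u x)$, as claimed. The dual statements for $\langle -, f_{12} \teth{} f_{01}\rangle$ follow from the symmetric specialization.

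There is no essential obstacle: given Proposition~\ref{prop:indeterminacymain} the corollary is purely formal. The only minor points to verify are that $f_{23} \pi_1 \Map_\mathcal{C}(X_0, X_2)$ really is a subgroup of $\pi_1 \Map_\mathcal{C}(X_0, X_3)$ (it is the image of the group homomorphism given by post-composition with $f_{23}$, so the notation $A \backslash G$ makes sense) and that the element $u$ in the comparison statement genuinely depends only on the pair of tetherings and not on the chosen homotopy used to realize $f_{23}' = f_{23}$, which is immediate once the constant homotopy is selected.
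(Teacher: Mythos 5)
Your deduction is correct and is exactly the argument the paper intends: the corollary is left as an immediate formal consequence of Proposition~\ref{prop:indeterminacymain}, and your two independence checks (varying the tethering $k$, which changes the value by an element of $f_{23}\pi_1\Map_\mathcal{C}(X_0,X_2)$, and varying $f_{01}$ within its homotopy class) together with the specialization $f_{23}'=f_{23}$, $j=\mathrm{const}$, $u=(h')^{-1}\cdot h$ are precisely the intended reading. The only nitpick is the orientation of the loop ($k^{-1}\cdot k'$ versus $(k')^{-1}\cdot k$ under the paper's path-composition convention), which is immaterial since only membership in the subgroup $f_{23}\pi_1\Map_\mathcal{C}(X_0,X_2)$ is used.
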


\begin{cor}
  \label{cor:bracketindeterminacy}
  Suppose we have maps
  \[
  X_0 \too{f_{01}} X_1 \too{f_{12}} X_2 \too{f_{23}} X_3
  \]
  such that the double composites $f_{23} f_{12}$ and $f_{12} f_{01}$
  are nullhomotopic. Then the bracket
  $\langle f_{23},f_{12},f_{01} \rangle$ depends only on the homotopy
  classes of $f_{i,i+1}$ and is a well-defined double coset in
  \[
  (f_{23} \pi_1 \Map_\mathcal{C}(X_0,X_2)) \Big \backslash \pi_1 \Map_{\cal
    C}(X_0,X_3) \Big / (\pi_1 \Map_\mathcal{C}(X_1,X_3) f_{01}).
  \]
\end{cor}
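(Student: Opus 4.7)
The plan is to leverage Proposition~\ref{prop:indeterminacymain} directly, and to supplement it with a ``conversely'' step that shows the full double coset is attained. There are three things to verify: (i) for fixed choices of the $f_{i,i+1}$, the set of secondary composites arising from varying the tetherings $h,k$ is exactly a double coset; (ii) replacing each $f_{i,i+1}$ by a homotopic map (with correspondingly new tetherings) keeps the collection of secondary composites inside that same double coset; and (iii) the labelling of the double coset depends only on the homotopy classes of $f_{23}$ and $f_{01}$.

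For (i), I would fix tetherings $h_0 \co f_{23} f_{12} \Rightarrow \ast$ and $k_0 \co f_{12}f_{01}\Rightarrow \ast$ and compare against arbitrary other tetherings $h,k$. Since the set of homotopy classes of nullhomotopies of a given map is a torsor over the appropriate $\pi_1$, any other tethering of $f_{23}f_{12}$ is of the form $h_0 \cdot u$ for a unique $u \in \pi_1\Map_{\mathcal C}(X_1,X_3)$, and similarly $k = k_0 \cdot w$ for $w \in \pi_1\Map_{\mathcal C}(X_0,X_2)$. A direct substitution into the formula $\langle f_{23}\teth{h} f_{12}\teth{k} f_{01}\rangle = (f_{23}k)^{-1}\cdot(h f_{01})$ from Definition~\ref{def:tetheredbracket} then gives
\[
  \langle f_{23}\teth{h} f_{12}\teth{k} f_{01}\rangle
   = (f_{23} w)^{-1}\cdot\langle f_{23}\teth{h_0} f_{12}\teth{k_0} f_{01}\rangle\cdot(u f_{01}),
\]
so as $u,w$ range freely I realise exactly the double coset of the base secondary composite.

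For (ii), I would apply Proposition~\ref{prop:indeterminacymain} in its full form: replacing all three maps with homotopic ones and choosing new tetherings changes the secondary composite by left multiplication by some $f_{23} v$ with $v\in\pi_1\Map_{\mathcal C}(X_0,X_2)$ and right multiplication by some $u f_{01}$ with $u\in \pi_1\Map_{\mathcal C}(X_1,X_3)$. Both factors lie in the two subgroups defining the double coset, so the bracket of the new maps lands in the same double coset as that of the original.

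For (iii), I note that the subgroup $f_{23}\pi_1\Map_{\mathcal C}(X_0,X_2)$ is the image of $\pi_1\Map_{\mathcal C}(X_0,X_2)$ under the map induced by post-composition with $f_{23}$, and this induced map on $\pi_1$ depends only on the homotopy class of $f_{23}$; likewise on the right with $f_{01}$. Combining the three points, the bracket $\langle f_{23},f_{12},f_{01}\rangle$ is well defined as a subset of $\pi_1\Map_{\mathcal C}(X_0,X_3)$ depending only on the homotopy classes, and it coincides with a single double coset in the target. The only nontrivial obstacle is the bookkeeping in step (i) — verifying that the formal substitution really does yield the claimed factorisation with $u$ on the right and $f_{23}w$ on the left, rather than conjugated into a different position — and this is purely a matter of tracing the definition of the path composite in Definition~\ref{def:tetheredbracket}.
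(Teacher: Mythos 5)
Your argument is correct and follows the route the paper intends: Corollary~\ref{cor:bracketindeterminacy} is stated without a separate proof as a packaging of Proposition~\ref{prop:indeterminacymain}, and your steps (ii) and (iii) are exactly that deduction. Your step (i) — the torsor argument showing that varying the tetherings realises the \emph{entire} double coset, not merely a subset of one — is a point the paper leaves implicit, and you supply it correctly via the substitution $(f_{23}(k_0\cdot w))^{-1}\cdot((h_0\cdot u)f_{01}) = (f_{23}w)^{-1}\cdot\langle f_{23}\teth{h_0}f_{12}\teth{k_0}f_{01}\rangle\cdot(uf_{01})$.
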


\begin{rmk}
  \label{rmk:infinitybrackets}
  A more flexible version of the above constructions should exist,
  where basepoints are replaced by some appropriate system of maps
  $E_{i,j} \to \Map_\mathcal{C}(X_i, X_j)$ from contractible spaces
  $E_{i,j}$, together with appropriate lifts of the composition
  maps. For example, the category of diagrams of spaces $E \to X$ is a
  monoidal category under the pushout-product, and so we could ask for
  $\mathcal{C}$ to be enriched in this category with the constraint
  that the space $E$ is always contractible. We might instead try to
  find an appropriate analogue in terms of quasicategories satisfying
  certain basepoint conditions: in the notation of \cite{lurie-htt},
  the data to describe a secondary composite in
  Definition~\ref{def:secondarycomp} defines a map of enriched
  categories $\mf C[\partial \Delta^3] \to \mathcal{D}$, and the
  secondary composite is the obstruction to extending it to a map
  $\mf C[\Delta^3] \to \mathcal{D}$ (a homotopy coherent triple composite).
  Both of these constructions would apply more widely, but involve more
  bookkeeping and possibly require a more advanced technical
  framework. We have elected to use constructions in categories where
  this will not be necessary in order to minimize the technical load.
\end{rmk}

The definitions of secondary operations and brackets are preserved in
an obvious way under functors between enriched categories.
\begin{prop}
  \label{prop:functorialbrackets}
  Suppose $F\co \mathcal{C} \to \mathcal{C}'$ is an enriched functor
  between categories enriched in pointed spaces. Then any tethering
  $g \teth{h} f$ in $\mathcal{C}$ induces a tethering $Fg \teth{Fh}
  Ff$ in $\mathcal{C}'$. We have an equality
  \[
  F(\langle f_{23} \teth{h} f_{12} \teth{k} f_{01} \rangle) =
  \langle F f_{23} \teth{Fh} Ff_{12} \teth{Fk} Ff_{01} \rangle,
  \]
  and we have containments as follows:
  \begin{align*}
  F(\langle f_{23} \teth{h} f_{12}, f_{01} \rangle) &\subset
  \langle F f_{23} \teth{Fh} Ff_{12}, Ff_{01} \rangle\\
  F(\langle f_{23},  f_{12} \teth{k} f_{01} \rangle) &\subset
  \langle F f_{23}, Ff_{12} \teth{Fk} Ff_{01} \rangle\\
  F(\langle f_{23}, f_{12}, f_{01} \rangle) &\subset
  \langle F f_{23}, Ff_{12}, Ff_{01} \rangle\\
  \end{align*}
\end{prop}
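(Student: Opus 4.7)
The plan is a direct verification from the definitions, resting on two properties of an enriched functor $F$ between categories enriched in pointed spaces: (i) the induced maps on mapping spaces
\[
F\co \Map_\mathcal{C}(X_i,X_j) \to \Map_{\mathcal{C}'}(FX_i,FX_j)
\]
are pointed continuous maps, so $F(\ast) = \ast$; and (ii) these maps strictly intertwine the enriched composition, so for composable maps $f,g$ and a path $\gamma$ in a mapping space one has $F(gf) = Fg \cdot Ff$ and $F(g\gamma) = Fg \cdot F\gamma$, and symmetrically on the other side.

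First I would derive the induction of tetherings. By Definition~\ref{def:tethering}, a tethering $g \teth{h} f$ is a homotopy class of path $h\co gf \Rightarrow \ast$ in $\Map_\mathcal{C}(-,-)$. Applying $F$ to a representative yields a path $Fh\co F(gf) \Rightarrow F(\ast)$; by (i) and (ii) this is a path $Fg \cdot Ff \Rightarrow \ast$, that is, a tethering $Fg \teth{Fh} Ff$. Independence of the representative follows from the continuity of $F$ on mapping spaces.

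Second I would check the equality of secondary composites. By Definition~\ref{def:tetheredbracket}, $\langle f_{23} \teth{h} f_{12} \teth{k} f_{01}\rangle$ is represented by the path composite $(f_{23} k)^{-1} \cdot (h f_{01})$, where $f_{23} k$ and $h f_{01}$ are whiskerings performed through the enriched composition of $\mathcal{C}$. Property (ii) gives $F(f_{23} k) = Ff_{23} \cdot Fk$ and $F(h f_{01}) = Fh \cdot Ff_{01}$, and the continuity of $F$ on mapping spaces ensures that $F$ commutes with path composition and inversion. Hence $F$ carries the above path composite to $(Ff_{23} \cdot Fk)^{-1} \cdot (Fh \cdot Ff_{01})$, which is exactly a representative of $\langle Ff_{23} \teth{Fh} Ff_{12} \teth{Fk} Ff_{01}\rangle$. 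Since $F$ is pointed, both loops are based at the null map, so the equality holds in $\pi_1 \Map_{\mathcal{C}'}(FX_0, FX_3)$.

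Finally I would deduce the containments by varying the inputs. The set $\langle f_{23} \teth{h} f_{12}, f_{01}\rangle$ is indexed by tetherings $k$ of $f_{12} f_{01}$; by the two previous steps, the image under $F$ of the secondary composite for $k$ is the secondary composite determined by $Fk$, which lies in $\langle Ff_{23} \teth{Fh} Ff_{12}, Ff_{01}\rangle$. The symmetric argument handles $\langle f_{23},\, f_{12} \teth{k} f_{01}\rangle$, and letting both tetherings vary handles the bracket $\langle f_{23}, f_{12}, f_{01}\rangle$. Containment rather than equality is optimal because a tethering of $Ff_{12} \cdot Ff_{01}$ need not be of the form $Fk$. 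There is no real obstacle: the whole argument is naturality, and the only care required is the bookkeeping that separates the strict preservation of enriched composition from the homotopy-level constructions built upon it.
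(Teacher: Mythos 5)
Your proof is correct and is exactly the direct verification from Definitions~\ref{def:tethering} and~\ref{def:tetheredbracket} that the paper has in mind; the paper itself omits the argument, remarking only that the constructions are ``preserved in an obvious way'' under enriched functors. The one point worth tightening is notational: you use $\cdot$ both for whiskering through the enriched composition and for path concatenation, whereas the paper reserves $\cdot$ for the latter, but this does not affect the mathematics.
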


There is a further extension in the case where we have an
\emph{enriched adjunction}. An example of such a result appears below.

\begin{prop}
\label{prop:adjunctionsecondary}
  Suppose that we have an enriched adjoints $F\co \mathcal{C} \to
  \mathcal{D}$ and $G\co \mathcal{D} \to \mathcal{C}$, encoded by a
  natural based homeomorphism
  \[
  \theta\co \Map_{\mathcal{C}}(X,GY) \cong \Map_{\mathcal{C}}(FX,Y).
  \]
  Given maps
  \[
  X_0 \too{f_{01}} X_1 \too{f_{12}} X_2 \too{g} G Y
  \]
  and tetherings
  \[
  g \teth{h} f_{12} \teth{k} f_{01},
  \]
  the map $\theta$ induces an identity
  \[
  \theta_* \langle g \teth{h} f_{12} \teth{k} f_{01}\rangle = \langle
  \theta g \teth{\theta h} F f_{12} \teth{Fk} F f_{01}\rangle.
  \]
\end{prop}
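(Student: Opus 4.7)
The plan is to unpack the definition of the secondary composite as a formula in the fundamental groupoid of a mapping space, then verify that each constituent operation (pointwise pre-composition, pointwise post-composition, path reversal, path concatenation, and the basepoint) is preserved by $\theta$. By Definition~\ref{def:tetheredbracket}, the secondary composite is the class of
\[
(gk)^{-1} \cdot (h f_{01}) \in \pi_1\bigl(\Map_\mathcal{C}(X_0, GY),\ast\bigr),
\]
where $gk\co \Delta^1 \to \Map_\mathcal{C}(X_0, GY)$ is the pointwise post-composition $t \mapsto g \circ k(t)$ and $h f_{01}$ is the pointwise pre-composition $t \mapsto h(t) \circ f_{01}$. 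Since $\theta$ is a based homeomorphism, it sends $\ast$ to $\ast$, induces an isomorphism $\theta_*$ on $\pi_1$ based at $\ast$, and commutes with path concatenation and reversal. Therefore verifying the claimed identity reduces to verifying the two pointwise identities
\[
\theta(gk) = \theta(g) \cdot Fk \qquad \text{and} \qquad \theta(h f_{01}) = \theta(h) \cdot F f_{01}
\]
as paths in $\Map_\mathcal{D}(FX_0, Y)$.

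Both pointwise identities are instances of the naturality of $\theta$ in its source variable, which is part of the enriched adjunction data and captures compatibility of $\theta$ with pre-composition in $\mathcal{C}$. Concretely, this naturality yields, for every $f\co X' \to X$ in $\mathcal{C}$ and every $\phi\co X \to GY$, the identity $\theta(\phi \circ f) = \theta(\phi) \circ Ff$. Applying this pointwise with $\phi = g$ and $f = k(t)$ gives $\theta(gk)(t) = \theta(g) \circ Fk(t)$; applying it with $\phi = h(t)$ and $f = f_{01}$ gives $\theta(h f_{01})(t) = \theta(h)(t) \circ F f_{01}$. The basepoint conditions $\theta(g f_{12}) = \theta(g) \cdot F f_{12}$ and $F(f_{12} f_{01}) = Ff_{12} \cdot Ff_{01}$ (the first a specialization of the same naturality, the second the functoriality of $F$) certify $\theta(h)$ as a tethering $\theta(g) \teth{\theta h} F f_{12}$ and $Fk$ as a tethering $F f_{12} \teth{Fk} F f_{01}$, so that the right-hand side of the claimed identity is well-defined.

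Assembling these steps gives
\begin{align*}
\theta_* \langle g \teth{h} f_{12} \teth{k} f_{01}\rangle
  &= \theta\bigl((gk)^{-1} \cdot (h f_{01})\bigr) \\
  &= \bigl(\theta(g) \cdot Fk\bigr)^{-1} \cdot \bigl(\theta(h) \cdot F f_{01}\bigr) \\
  &= \langle \theta g \teth{\theta h} F f_{12} \teth{Fk} F f_{01}\rangle,
\end{align*}
which is the desired conclusion. There is no real obstacle: the entire argument is a careful translation of the enriched adjunction axioms through the path-level formula defining a secondary composite, and no new ideas are required beyond those already present in the definitions. The main thing to be careful about is ensuring that the naturality invoked is strong enough to control paths pointwise, which is automatic from the fact that $\theta$ is given as a (continuous) homeomorphism of mapping spaces rather than merely a bijection on $\pi_0$.
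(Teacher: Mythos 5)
Your proof is correct; the paper states this proposition without proof, treating it as the routine verification you have carried out, and your argument (unwinding the secondary composite to $(gk)^{-1}\cdot(hf_{01})$ and using naturality of $\theta$ in the source variable together with $\theta(\ast)=\ast$) is exactly the intended one. The only cosmetic issue is the overloading of ``$\cdot$'' for both whiskering and path concatenation in your final display, but your pointwise explanation makes the meaning unambiguous.
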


\begin{cor}
\label{cor:adjunctionbracket}
  There are containments
  \[
  \theta_* \langle g, f_{12} \teth{k} f_{01}\rangle \subset \langle
  \theta g, F f_{12} \teth{Fk} F f_{01}\rangle
  \]
  and
  \[
  \theta_* \langle g, f_{12}, f_{01}\rangle \subset \langle
  \theta g, F f_{12}, F f_{01}\rangle.
  \]
\end{cor}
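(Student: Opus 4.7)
The plan is to derive both containments directly from Proposition~\ref{prop:adjunctionsecondary} by quantifying over the appropriate tetherings, using Definitions~\ref{def:secondaryop} and~\ref{def:brackets}, which exhibit the secondary operation and the bracket as sets of secondary composites as the tetherings vary.

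For the first containment, suppose $\alpha \in \theta_* \langle g, f_{12} \teth{k} f_{01}\rangle$. Unfolding Definition~\ref{def:secondaryop}, there exists a tethering $h\co g f_{12} \Rightarrow \ast$ with $\alpha = \theta_* \langle g \teth{h} f_{12} \teth{k} f_{01}\rangle$. Proposition~\ref{prop:adjunctionsecondary} then rewrites this as
\[
\alpha = \langle \theta g \teth{\theta h} F f_{12} \teth{Fk} F f_{01}\rangle.
\]
Because $\theta$ is a \emph{based} homeomorphism, it sends the distinguished null map in $\Map_{\mathcal{C}}(X_1,GY)$ to the null map in $\Map_{\mathcal{D}}(FX_1,Y)$, so $\theta h$ is a valid tethering of $(\theta g)(F f_{12}) = \theta(g f_{12})$. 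Hence $\alpha$ lies in $\langle \theta g, F f_{12} \teth{Fk} F f_{01}\rangle$ by the definition of the secondary operation, giving the first inclusion.

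The second containment is proved by the same argument, but with both tetherings allowed to vary: any $\beta \in \langle g, f_{12}, f_{01}\rangle$ is a secondary composite $\langle g \teth{h} f_{12} \teth{k} f_{01}\rangle$ for some pair $(h,k)$ of tetherings (Definition~\ref{def:brackets}), and Proposition~\ref{prop:adjunctionsecondary} identifies $\theta_*\beta$ with $\langle \theta g \teth{\theta h} F f_{12} \teth{Fk} F f_{01}\rangle$, which belongs to $\langle \theta g, F f_{12}, F f_{01}\rangle$ by construction. There is no substantive obstacle: all of the content is carried by the preceding proposition, and the only verification required is that $\theta h$ and $F k$ remain tetherings after transport, which is immediate because $\theta$ and $F$ preserve null maps. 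The inclusions need not be equalities in general, because $F$ may fail to induce a surjection on homotopy classes of nullhomotopies, so there can be tetherings of $(F f_{12})(F f_{01})$ not of the form $Fk$.
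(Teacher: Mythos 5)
Your proof is correct and follows exactly the route the paper intends: the paper states this corollary without proof as an immediate consequence of Proposition~\ref{prop:adjunctionsecondary}, obtained by letting the tetherings range over all choices, which is precisely what you do. Your added observations---that $\theta h$ remains a valid tethering because $\theta$ is a natural based homeomorphism, and that the containments can be strict because not every tethering downstairs need be of the form $Fk$---are accurate.
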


\subsection{Pointings and augmentations}

In this section we let $\mathcal{D}$ be a category enriched in
spaces (now assumed to have no basepoint). In this section we indicate
a construction that replaces $\mathcal{D}$ with a category enriched in
pointed spaces.

\begin{defn}
  \label{def:ppa}
  An {\em augmented object} of $\mathcal{D}$ is an object $X \in
  \mathcal{D}$ equipped with a map $Y \to \emptyset$ to an initial
  object of $\mathcal{D}$. The space of maps between two augmented
  objects is the subspace of ordinary maps that commute with the
  augmentations.

  A {\em pointed object} of $\mathcal{D}$ is an object $Z \in
  \mathcal{D}$ equipped with a map $\ast \to Z$ from a terminal object
  of $\mathcal{D}$. The space of maps between two pointed objects is
  the subspace of ordinary maps that commute with the pointings.
\end{defn}

\begin{defn}
  Suppose $\mathcal{D}$ is a category enriched in spaces. We define
  $\ppa{\mathcal{D}}$, the category of {\em possibly pointed or
    augmented objects of $\mathcal{D}$}, to be the following category
  enriched in based spaces.

  An object of $\ppa{\mathcal{D}}$ is one of three types:
  \begin{enumerate}
  \item an augmented object $X \to \emptyset$ of $\mathcal{D}$,
  \item an ordinary object $Y$ of $\mathcal{D}$, or
  \item a pointed object $\ast \to Z$ of $\mathcal{D}$.
  \end{enumerate}
  The mapping spaces in $\ppa{\mathcal{D}}$ are given as
  follows.
  \begin{enumerate}
  \item The space of maps between two augmented objects $X \to
    \emptyset$, $X' \to \emptyset'$ is the space of maps of augmented
    objects, with basepoint given by the composite $X \to \emptyset \to X'$.
  \item The space of maps between two pointed objects $\ast \to Z$,
    $\ast' \to Z'$ is the space of maps of pointed objects, with
    basepoint given by the composite $Z \to \ast' \to Z'$.
  \item The space of maps between two ordinary objects $Y, Y'$ is the
    based space $\Map_{\mathcal{D}}(Y, Y')_+$, whose disjoint basepoint
    is called the {\em formal null map}.
  \item The space of maps from an augmented object $X \to \emptyset$
    to an ordinary object $Y$ is the space of maps $X \to Y$, with
    basepoint given by the map $X \to \emptyset \to Y$.
  \item The space of maps from an ordinary object $Y$ to a pointed
    object $\ast \to Z$ is the space of maps $Y \to Z$, with basepoint
    given by the map $Y \to \ast \to Z$.
  \item The space of maps from an augmented object $X \to \emptyset$
    to a pointed object $\ast \to Z$ is the space of maps $X \to Z$,
    with basepoint given by the canonical map factoring through either
    $\emptyset$ or $\ast$ in the commutative diagram
    \[
    \xymatrix{
      X \ar[r] \ar[d] & \emptyset \ar[dl] \ar[d] \\
      \ast \ar[r] & Z.
    }
    \]
  \item All other mapping spaces are one-point spaces---there are no
    non-basepoint maps from ordinary objects to augmented ones, or
    from pointed objects to ordinary ones. We also refer to these as
    {\em formal null maps}.
  \end{enumerate}
  We have full subcategories of $\ppa{\mathcal{D}}$ spanned by fewer
  than all three of these types of objects: for example, we have the
  categories of {\em augmented objects}, {\em pointed objects}, {\em
    possibly augmented objects}, and {\em possibly pointed
    objects} of $\mathcal{D}$.
\end{defn}

\begin{prop}
  \label{prop:ppapointed}
  The category $\ppa{\mathcal{D}}$ is enriched in pointed spaces under
  $\sma$.

  In $\ppa{\mathcal{D}}$, if a composite $X \to Y \to Z$ is
  nullhomotopic then $X$ is augmented, $Z$ is pointed, or
  one of the maps is a formal null map (in which case there is a
  canonical tethering).
\end{prop}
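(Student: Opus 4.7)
The plan is to handle the two assertions in sequence, reducing each to a finite check of cases indexed by the types of the objects involved.

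For the first assertion, enrichment in pointed spaces under $\sma$ amounts to verifying that the composition map
\[
  \Map_{\ppa{\mathcal{D}}}(Y,Z) \times \Map_{\ppa{\mathcal{D}}}(X,Y) \to \Map_{\ppa{\mathcal{D}}}(X,Z)
\]
sends the wedge to the basepoint, i.e., that $g \ast = \ast$ and $\ast f = \ast$ for basepoints on either factor. I would proceed by case analysis on whether each of $X$, $Y$, $Z$ is augmented (A), ordinary (O), or pointed (P). Each case is immediate from one of the following: (i) composition with a formal null map is, by construction, again a formal null map; (ii) any map out of an initial object $\emptyset$ of $\mathcal{D}$ is unique, so a composite of the form $\bullet \to \emptyset \to \bullet \to \bullet$ collapses to $\bullet \to \emptyset \to \bullet$; (iii) the dual statement for maps into a terminal object $\ast$; and (iv) morphisms of augmented (respectively pointed) objects strictly commute with the augmentation (respectively pointing). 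Associativity and unitality for the enriched structure are inherited from $\mathcal{D}$ on real maps, and any composite involving a formal null is itself a formal null; the identities are real since they commute trivially with augmentations and pointings.

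For the second assertion, I argue the contrapositive. Assume $X$ is not augmented, $Z$ is not pointed, and neither $f \co X \to Y$ nor $g \co Y \to Z$ is a formal null. By inspecting which mapping spaces are one-point (formal null only), the existence of a real $f$ forces $Y \in \{O, P\}$ when $X = O$ and $Y = P$ when $X = P$, while the existence of a real $g$ forces $Y \in \{A, O\}$ when $Z = O$ and $Y = A$ when $Z = A$. Among the four pairs $(X, Z) \in \{(O,O), (O,A), (P,O), (P,A)\}$, only $(O, O)$ admits a consistent $Y$, and the only such $Y$ is $O$. In that remaining case, $gf$ is a genuine composite in $\mathcal{D}$, while $\Map_{\ppa{\mathcal{D}}}(X, Z) = \Map_{\mathcal{D}}(X, Z)_+$ has the formal null as a \emph{disjoint} basepoint; hence $gf$ is not in the path component of the basepoint and therefore not nullhomotopic. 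When instead $f$ or $g$ is a formal null, the composite $gf$ is by definition the formal null basepoint of $\Map_{\ppa{\mathcal{D}}}(X, Z)$, so the constant path provides the canonical tethering.

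There is no substantive homotopical obstacle here: the argument is entirely a matter of bookkeeping the definitions of the mapping spaces and basepoints in $\ppa{\mathcal{D}}$, together with the universal properties of initial and terminal objects in $\mathcal{D}$. The one point that deserves care is that in the unique surviving case $(O, O)$ the basepoint of $\Map_{\ppa{\mathcal{D}}}(X,Z)$ is a disjoint point rather than a map factoring through some distinguished object, which is what actually rules out nullhomotopy for real composites.
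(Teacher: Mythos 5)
Your proof is correct, and it is exactly the routine verification the paper has in mind: the proposition is stated without proof, being regarded as immediate from the definition of $\ppa{\mathcal{D}}$, and your case analysis by object type (augmented/ordinary/pointed), together with the observation that the only surviving configuration for a real composite with $X$ non-augmented and $Z$ non-pointed is ordinary-to-ordinary-to-ordinary, where the disjoint basepoint rules out nullhomotopy, is the intended argument.
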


This construction makes it possible to take a category $\mathcal{D}$
and sensibly talk about secondary operations and brackets for a
composite $X_0 \to X_1 \to X_2 \to X_3$ in $\mathcal{D}$ if the first
map is a map of augmented objects, if the last map is a map of pointed
objects, or if the first object is augmented and the last object is
pointed. (If the maps arise from $\mathcal{D}$ then a formal null
map cannot appear.)

\begin{exam}
  \label{exam:hocolimbracket}
  If $\mathcal{C}$ has homotopy pushouts and we have augmented
  objects $X_0 \to X_1 \to \emptyset$, the bracket can be identified
  with an element in $\pi_0 \Map_\mathcal{C}(\Sigma X_0, X_3)$,
  represented by the outside rectangle in the homotopy coherent
  diagram
  \[
  \xymatrix{
  X_0 \ar[r] \ar[d] & X_1 \ar[r] \ar[d] & \emptyset \ar[d] \\
  \emptyset \ar[r] \ar@{}[ur]|-*[@]{\Rightarrow}
  & X_2 \ar[r] \ar@{}[ur]|-*[@]{\Rightarrow} & X_3.
  }
  \]
  The indeterminacy in the bracket is given by path concatenation with
  composites of either of the following forms:
  \[
  \xymatrix{
    \Sigma X_0 \ar[r]^v & X_2 \ar[r]^-{f_{23}} & X_3 &
    \Sigma X_0 \ar[r]^{\Sigma f_{01}} & \Sigma X_1 \ar[r]^-u & X_3
  }
  \]
  Dual results hold if we are given pointed objects $\ast \to X_2 \to
  X_3$, so that the bracket can be identified with an element in
  $\pi_0 \Map_\mathcal{C}(X_0, \Omega X_3)$. To avoid grief in these
  identifications, especially with respect to a loop-suspension
  adjunction, it is important to pay attention to the orientation of
  $S^1$ as detailed at length in
  \cite{harper-secondaryoperations}. This is why we have indicated
  directions for 2-cells.

  In the ``mixed'' case, there is little profound that we can say
  other than identification of a element in the bracket with the loop
  determined by a homotopy coherent diagram
  \[
  \xymatrix{
  X_0 \ar[r] \ar[d] & X_1 \ar[r] \ar[d] & \ast \ar[d] \\
  \emptyset \ar[r] \ar@{}[ur]|-*[@]{\Rightarrow}
  & X_2 \ar[r] \ar@{}[ur]|-*[@]{\Rightarrow} & X_3.
  }
  \]
\end{exam}

\subsection{Juggling and Peterson--Stein formulas}

In this section we return to assuming that we have a category
$\mathcal{C}$ enriched in based spaces.

There are several ``juggling'' formulas that describe the relationship
between brackets and function composition. All of them are obtained by
choosing representative nullhomotopies and composing them
appropriately, as in the Peterson--Stein formulas
\cite{peterson-stein-twoformulas}.
\begin{lem}
  \label{lem:tetheredps1}
  Suppose we have a sequence of objects $(X_0,\dots,X_4)$, together
  with maps $f_{i,i+1}\co X_i \to X_{i+1}$ and tetherings
  \[
  f_{34} \teth{} f_{23} \teth{} f_{12} \teth{} f_{01}.
  \]
  Then there is an identity
  \[
  f_{34}\langle f_{23} \teth{} f_{12} \teth{} f_{01}\rangle^{-1} =
  \langle f_{34} \teth{} f_{23} \teth{} f_{12} \rangle f_{01}
  \]
  in $\pi_1 \Map_\mathcal{C}(X_0, X_4)$.
\end{lem}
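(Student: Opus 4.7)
The plan is to unfold both sides using the definition of the tethered bracket and reduce the identity to a statement about two paths in $\Map_\mathcal{C}(X_0, X_4)$ being equal in the fundamental groupoid. Name the three tetherings $h_{234}$, $h_{123}$, $h_{012}$. Using Definition~\ref{def:tetheredbracket}, $\langle f_{23} \teth{h_{123}} f_{12} \teth{h_{012}} f_{01}\rangle = (f_{23} h_{012})^{-1} \cdot (h_{123} f_{01})$, so its inverse is $(h_{123} f_{01})^{-1} \cdot (f_{23} h_{012})$. Post-composing with $f_{34}$ yields
\[
  f_{34}\langle f_{23} \teth{h_{123}} f_{12} \teth{h_{012}} f_{01}\rangle^{-1} = (f_{34} h_{123} f_{01})^{-1} \cdot (f_{34} f_{23} h_{012}).
\]
Similarly, pre-composing the other bracket with $f_{01}$ gives
\[
  \langle f_{34} \teth{h_{234}} f_{23} \teth{h_{123}} f_{12}\rangle f_{01} = (f_{34} h_{123} f_{01})^{-1} \cdot (h_{234} f_{12} f_{01}).
\]

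Both loops share the initial segment $(f_{34} h_{123} f_{01})^{-1}$ (a path from $\ast$ to $f_{34} f_{23} f_{12} f_{01}$), so the lemma reduces to showing that the two paths $f_{34} f_{23} h_{012}$ and $h_{234} f_{12} f_{01}$ from $f_{34} f_{23} f_{12} f_{01}$ to $\ast$ agree in the fundamental groupoid of $\Map_\mathcal{C}(X_0, X_4)$. Note that $h_{123}$ drops out entirely at this stage, which is what makes the juggling identity possible.

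For this, I would use bilinearity of composition. Consider the map
\[
  \Delta^1 \times \Delta^1 \xrightarrow{h_{234} \times h_{012}} \Map_\mathcal{C}(X_2, X_4) \times \Map_\mathcal{C}(X_0, X_2) \to \Map_\mathcal{C}(X_0, X_4),
\]
sending $(s,t)$ to the composite of $h_{234}(s)$ with $h_{012}(t)$. The four corners are $f_{34} f_{23} f_{12} f_{01}$ at $(0,0)$ and $\ast$ at the three other corners (because $\ast$ absorbs composition on either side). Two of the edges are exactly $f_{34} f_{23} h_{012}$ and $h_{234} f_{12} f_{01}$, and the remaining two are constant at $\ast$. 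This square therefore provides a homotopy rel endpoints between the two paths, giving the required equality.

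The proof is essentially bookkeeping: the only real content is the two-variable naturality of composition, which is built into the hypothesis that $\mathcal{C}$ is enriched in pointed spaces. I do not anticipate a substantial obstacle beyond being careful with path orientations and making sure the shared segment $(f_{34} h_{123} f_{01})^{-1}$ is identified correctly on both sides.
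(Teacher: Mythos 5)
Your argument is correct and is exactly the content the paper intends: the paper does not write out a proof of this lemma (it only remarks that all the juggling formulas "are obtained by choosing representative nullhomotopies and composing them appropriately," and Example~\ref{exam:tetheredps1} encodes the same interchange square in its $3\times 3$ diagram). Your reduction to cancelling the shared segment $(f_{34} h_{123} f_{01})^{-1}$ and then filling the remaining two paths with the bilinearity square $h_{234}\times h_{012}$, whose other two edges are constant at $\ast$ because null maps absorb composition, is precisely the standard argument.
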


\begin{exam}
  \label{exam:tetheredps1}
  In the case where $X_2 \to X_3 \to X_4$ are maps of pointed objects in a
  category $\mathcal{D}$, this Peterson--Stein relation expresses that
  both loops in $\Map_\mathcal{\ppa{D}}(X_0,X_4)$ are homotopic to the
  loop determined by the following homotopy coherent diagram:
  \[
  \xymatrix{
  X_0 \ar[r] \ar[d]
  & \ast \ar[d] \ar@{=}[dr] \\
  X_1 \ar[d] \ar[r] \ar@{}[ur]|-*[@]{\Rightarrow}
  & X_2 \ar[r] \ar[d]
  & \ast \ar[d] \\
  \ast  \ar@{}[ur]|-*[@]{\Rightarrow} \ar[r]
  &X_3 \ar[r]  \ar@{}[ur]|-*[@]{\Rightarrow}
  &X_4
  }
  \]
  Similarly, in the mixed case we will need to derive Peterson--Stein
  relations from diagrams such as the following:
  \[
  \xymatrix{
  X_0 \ar[r] \ar[d]
  & \emptyset \ar[d] \ar[dr] \\
  X_1 \ar[d] \ar[r] \ar@{}[ur]|-*[@]{\Rightarrow}
  & X_2 \ar[r] \ar[d]
  & \ast \ar[d] \\
  \emptyset  \ar@{}[ur]|-*[@]{\Rightarrow} \ar[r]
  &X_3 \ar[r]  \ar@{}[ur]|-*[@]{\Rightarrow}
  &X_4
  }
  \]
\end{exam}

\begin{lem}
  \label{lem:tetheredps2}
  Each of the following juggling formulas holds whenever defined.
  \begin{align*}
    f_{34}\langle f_{23}\teth{h} f_{12} \teth{k} f_{01}\rangle &=
    \langle f_{34} f_{23} \teth{f_{34} h} f_{12} \teth{k} f_{01} \rangle\\
    \langle f_{34} f_{23}\teth{h} f_{12} \teth{k} f_{01}\rangle &=
    \langle f_{34} \teth{h} f_{23} f_{12} \teth{f_{23} k} f_{01} \rangle\\
    \langle f_{34} \teth{h f_{12}} f_{23} f_{12} \teth{k} f_{01}\rangle &=
    \langle f_{34} \teth{h} f_{23} \teth{k} f_{12} f_{01} \rangle\\
    \langle f_{34} \teth{h} f_{23}\teth{k f_{01}} f_{12} f_{01} \rangle &=
    \langle f_{34} \teth{h} f_{23}\teth{k} f_{12} \rangle f_{01}
  \end{align*}
\end{lem}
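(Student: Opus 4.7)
The plan is to unpack the definitions and verify each identity by direct computation at the level of paths in the mapping spaces, using only associativity of enriched composition and Remark~\ref{rmk:tripletether}. By Definition~\ref{def:tetheredbracket}, a secondary composite has the explicit representative
\[
\langle f_{23} \teth{h} f_{12} \teth{k} f_{01}\rangle = (f_{23} k)^{-1} \cdot (h f_{01}),
\]
a loop at $\ast$ in $\Map_{\mathcal C}(X_0,X_3)$. Each of the four formulas should reduce to an equality of such loops, so the entire lemma will ultimately follow from the fact that in a category enriched in based spaces, composition is associative and acts continuously (hence on paths) in each slot.

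For the first identity I would post-compose the LHS representative by $f_{34}$, observing that $f_{34}$ acts continuously on loops in $\Map_{\mathcal C}(X_0,X_3)$ and commutes with path inversion and concatenation. Associativity then gives $f_{34}(f_{23} k) = (f_{34}f_{23}) k$ and $f_{34}(h f_{01}) = (f_{34} h) f_{01}$, and the RHS is by definition the path composite $((f_{34} f_{23}) k)^{-1} \cdot ((f_{34}h) f_{01})$. For the fourth identity, the symmetric computation applies: the RHS is $[(f_{34} k)^{-1} \cdot (h f_{12})] f_{01}$, and by associativity this equals $(f_{34}(k f_{01}))^{-1} \cdot (h (f_{12} f_{01}))$, which is the LHS.

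For the middle two identities the only subtle point is the reinterpretation of a tethering across a rebracketing of its target composite. By Remark~\ref{rmk:tripletether}, a tethering of $f_{34}f_{23}$ over $f_{12}$ is the same data as a tethering of $f_{34}$ over $f_{23}f_{12}$, with the same homotopy $h\co f_{34}f_{23}f_{12} \Rightarrow \ast$. Similarly $f_{23}k$ may be read as a tethering either of $f_{23}f_{12}$ over $f_{01}$ or of $f_{12}$ over $f_{01}$ post-composed with $f_{23}$. Unpacking both sides of the second formula gives the common loop $(f_{34}(f_{23}k))^{-1} \cdot (h f_{01})$; unpacking the third formula similarly produces $(f_{34} k)^{-1} \cdot (h(f_{12}f_{01}))$ on both sides.

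The main obstacle is bookkeeping, not depth: one must be careful about which of the two readings of a composed tethering one is using, and about signs/orientations when inverting and concatenating paths (this is where the conventions in Definition~\ref{def:tetheredbracket} matter). Once the definitions are unfolded, each equation is a formal consequence of associativity of composition and the continuity of the left- and right-composition maps $f_{34}\cdot(-)$ and $(-)\cdot f_{01}$ on mapping spaces, so no homotopy needs to be constructed and no indeterminacy arguments are required.
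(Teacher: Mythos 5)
Your proposal is correct and is essentially the argument the paper intends: the paper offers no written proof of this lemma, instead relying on the preceding remark that all the juggling formulas ``are obtained by choosing representative nullhomotopies and composing them appropriately,'' which is exactly your unfolding of $\langle f_{23} \teth{h} f_{12} \teth{k} f_{01}\rangle = (f_{23}k)^{-1}\cdot(h f_{01})$ together with associativity of enriched composition and the reinterpretation of tetherings from Remark~\ref{rmk:tripletether}. All four verifications check out as you describe.
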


As we range over possible choices of tethering, these lemmas
expressing equality of secondary composites become containment
relations for secondary operations and brackets.

\begin{prop}
  \label{prop:secondaryjuggles}
  Each of the following juggling formulas for secondary operations
  holds whenever both sides are defined:
  \begin{align*}
    \langle f_4 \teth{} f_3, f_2\rangle f_1 &=  f_4 \langle f_3, f_2,
    f_1\rangle^{-1} \\
    f_4 \langle f_3 \teth{h} f_2, f_1\rangle &= \langle f_4f_3
    \teth{f_4 h} f_2, f_1\rangle\\
    \langle f_4f_3 \teth{h} f_2, f_1\rangle &\subset \langle f_4
    \teth{h} f_3f_2, f_1\rangle\\ 
    \langle f_4\teth{hf_2} f_3f_2, f_1\rangle &= \langle f_4
    \teth{h} f_3, f_2f_1\rangle\\
    \langle f_4\teth{h} f_3, f_2 f_1\rangle &\supset \langle f_4
    \teth{h} f_3, f_2\rangle f_1
  \end{align*}
  Dual results hold for secondary operations $\langle -, f_2 \teth{}
  f_1\rangle$. 
\end{prop}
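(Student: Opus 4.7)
The plan is to derive each of the five juggling formulas directly from the corresponding pointwise identities for individual secondary composites in Lemma~\ref{lem:tetheredps1} and Lemma~\ref{lem:tetheredps2}, by carefully tracking how both sides of each formula arise as images of maps parametrized by tethering choices. Explicitly, by Definition~\ref{def:secondaryop} the secondary operation $\langle f_{23} \teth{h} f_{12}, f_{01}\rangle$ is the image of the assignment $k \mapsto \langle f_{23} \teth{h} f_{12} \teth{k} f_{01}\rangle$ as $k$ ranges over tetherings of $(f_{12}, f_{01})$, and by Definition~\ref{def:brackets} the bracket $\langle f_3, f_2, f_1\rangle$ is the image of the analogous two-variable assignment. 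Each formula will thus reduce to comparing two such parametrized ranges of elements of $\pi_1 \Map_\mathcal{C}$.

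For the three equalities I would proceed as follows. For formula one, I would apply Lemma~\ref{lem:tetheredps1} pointwise: the identity $f_4\langle f_3 \teth{k} f_2 \teth{m} f_1\rangle^{-1} = \langle f_4 \teth{h} f_3 \teth{k} f_2\rangle f_1$ shows that every element of the right-hand bracket-pre-multiplied-by-$f_4$ equals an element of the left-hand operation-post-multiplied-by-$f_1$. The ``whenever defined'' hypothesis guarantees that $f_2 f_1$ admits some tethering $m$, so conversely every $k$ on the left lifts to some pair $(k,m)$ on the right; since $m$ does not appear in the right-hand expression, the two ranges coincide. As a byproduct the left-hand side is independent of the fixed tethering $h$, which is consistent with Corollary~\ref{cor:secondaryindeterminacy} because $f_2 f_1 \simeq \ast$ kills the relevant indeterminacy. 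Formula two is then an immediate pointwise application of the first line of Lemma~\ref{lem:tetheredps2} for each $k$. Formula four is the subtler case: I invoke Remark~\ref{rmk:tripletether} to identify a tethering of $(f_3 f_2, f_1)$ with a tethering of $(f_3, f_2 f_1)$ as the same data, namely a nullhomotopy of the triple composite $f_3 f_2 f_1$, and then the third line of Lemma~\ref{lem:tetheredps2} yields the required pointwise equality under this identification.

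For the two containments, the second and fourth identities of Lemma~\ref{lem:tetheredps2} induce only injections, rather than bijections, of tethering-parameter spaces. In formula three a tethering $k$ of $(f_2, f_1)$ is sent to the tethering $f_3 k$ of $(f_3 f_2, f_1)$, which need not hit every nullhomotopy of $f_3 f_2 f_1$; analogously in formula five the assignment $k \mapsto k f_1$ from tetherings of $(f_3, f_2)$ to tetherings of $(f_3, f_2 f_1)$ need not be surjective. Hence each right-hand side contains the corresponding left-hand side as a subset, in general strictly. The dual results for $\langle -, f_2 \teth{} f_1\rangle$ follow by a symmetric argument, reading the same lemmas from the opposite side. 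The main place requiring care will be the identification of tetherings underlying formula four; the remaining four formulas are essentially mechanical expansions of the definitions combined with the two Peterson--Stein lemmas.
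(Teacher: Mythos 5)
Your proof is correct and follows essentially the same route as the paper's: both reduce each formula to the pointwise identities of Lemmas~\ref{lem:tetheredps1} and~\ref{lem:tetheredps2} and then compare how the two sides are parametrized by choices of tethering (the paper writes out only the first formula, phrasing the comparison as an explicit coset computation via Proposition~\ref{prop:indeterminacymain} and dismissing the rest as ``similar but less complex,'' while you argue directly that the two parameter ranges exhaust the same set and also spell out the remaining four cases, correctly isolating Remark~\ref{rmk:tripletether} as what upgrades formula four to an equality and the non-surjectivity of $k \mapsto f_3 k$ and $k \mapsto k f_1$ as the reason formulas three and five are only containments). One small slip in wording: your summary sentence that ``each right-hand side contains the corresponding left-hand side'' is backwards for formula five as written, where the $\supset$ places the larger set on the left; your identification of the non-surjective parameter map nevertheless gives the containment in exactly the direction the proposition asserts.
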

\begin{proof}
  We will give the argument for the first statement, as the others are
  similar but less complex. Given fixed tetherings $f_4 \teth{h} f_3
  \teth{k} f_2 \teth{\ell} f_1$, we find that the left-hand side
  consists of elements of the following form:
  \begin{align*}
    \langle f_4 \teth{h} f_3\teth{k'} f_2 \rangle f_1
    &= \left[(f_4 v) \cdot \langle f_2 \teth{h} f_3 \teth{k}
      f_2\rangle \right] f_1\\
    &= (f_4 v f_1) \cdot \langle f_2 \teth{h} f_3 \teth{k}
      f_2\rangle f_1
  \end{align*}
  The right-hand side consists of elements of the following form:
  \begin{align*}
    f_4 \langle f_3 \teth{k'} f_2\teth{\ell'} f_1 \rangle^{-1}
    &= f_4 \left[(f_3 w) \cdot \langle f_2 \teth{h} f_3 \teth{k}
      f_2 \rangle \cdot (u f_1) \right]^{-1}\\
    &= f_4 \left[(u^{-1} f_1) \cdot \langle f_2 \teth{h} f_3 \teth{k}
      f_2 \rangle^{-1} \cdot (f_3 w^{-1})\right]\\
    &= (f_4 u^{-1} f_1) \cdot f_4 \langle f_2 \teth{h} f_3 \teth{k}
      f_2 \rangle^{-1} \cdot (f_4 f_3 w^{-1})
  \end{align*}
  However, $f_4 f_3 w^{-1}$ is always trivial because $f_4 f_3$ is
  nullhomotopic, and so the two sets coincide by
  Lemma~\ref{lem:tetheredps1}.
\end{proof}

\begin{prop}
  \label{prop:bracketjuggles}
  Each of the following juggling formulas for brackets holds whenever
  both sides are defined:
  \begin{align*}
    \langle f_4, f_3, f_2\rangle f_1  &= f_4 \langle f_3, f_2, f_1\rangle^{-1} \\
    f_4\langle f_3, f_2, f_1\rangle &\subset \langle f_4 f_3, f_2,
    f_1 \rangle\\
    \langle f_4 f_3, f_2, f_1\rangle &\subset \langle f_4, f_3 f_2,
    f_1 \rangle\\
    \langle f_4, f_3 f_2, f_1\rangle &\supset \langle f_4, f_3, f_2
    f_1 \rangle\\
    \langle f_4, f_3, f_2 f_1\rangle &\supset \langle f_4, f_3,
    f_2\rangle f_1
  \end{align*}
\end{prop}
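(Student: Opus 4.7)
The plan is to derive each of the five bracket identities by unpacking the definition of a bracket $\langle f_3, f_2, f_1\rangle$ as the union of all secondary composites $\langle f_3 \teth{h} f_2 \teth{k} f_1\rangle$ ranging over the available tetherings, and then applying the corresponding identities for secondary composites established in Lemma~\ref{lem:tetheredps1} and Lemma~\ref{lem:tetheredps2}. The overall strategy parallels the proof of Proposition~\ref{prop:secondaryjuggles}, except that instead of varying a single tethering and tracking its indeterminacy coset, I will range over all tethering choices on both sides simultaneously.

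First I would handle the equality $\langle f_4, f_3, f_2\rangle f_1 = f_4 \langle f_3, f_2, f_1\rangle^{-1}$. The hypothesis that both sides are defined forces all three double composites $f_4 f_3$, $f_3 f_2$, and $f_2 f_1$ to be nullhomotopic, so tetherings $h_1$, $h_2$, $h_3$ exist in all three slots. Lemma~\ref{lem:tetheredps1} supplies the identity $\langle f_4 \teth{h_1} f_3 \teth{h_2} f_2\rangle f_1 = f_4 \langle f_3 \teth{h_2} f_2 \teth{h_3} f_1\rangle^{-1}$ for every triple $(h_1, h_2, h_3)$. Given any element of the left-hand bracket, determined by a pair $(h_1, h_2)$, I would pick any auxiliary $h_3$ and invoke the identity to place the element in the right-hand bracket; given any element of the right-hand bracket, determined by $(h_2, h_3)$, I would pick any auxiliary $h_1$ and run the identity in reverse. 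The two sets therefore coincide.

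For the four containments I would apply the four identities of Lemma~\ref{lem:tetheredps2} in turn. For example, the containment $f_4 \langle f_3, f_2, f_1\rangle \subset \langle f_4 f_3, f_2, f_1\rangle$ follows from the first identity $f_4 \langle f_3 \teth{h} f_2 \teth{k} f_1\rangle = \langle f_4 f_3 \teth{f_4 h} f_2 \teth{k} f_1\rangle$, which presents every element on the left as a secondary composite contributing to the bracket on the right, with specific induced tethering $f_4 h$. Since not every tethering of $f_4 f_3$ with $f_2$ need be of the form $f_4 h$, one gets containment rather than equality. The remaining three containments proceed by the same mechanism, using Remark~\ref{rmk:tripletether} to freely reinterpret tetherings $f_4 f_3 \teth{h} f_2$ as tetherings $f_4 \teth{h} f_3 f_2$ wherever needed.

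There is no serious obstacle here; the main risk is purely bookkeeping. For each containment one has to verify that when unioning over all tetherings on the smaller side, the derived tetherings (of the forms $f_4 h$, $h f_2$, $f_3 k$, or $k f_1$) form in general only a subset of all tetherings available on the larger side, which is precisely the source of the containment direction rather than equality. Once these correspondences are spelled out, the proposition falls out as a direct union of the secondary-composite equalities; no new homotopical input beyond Lemmas~\ref{lem:tetheredps1} and \ref{lem:tetheredps2} is required.
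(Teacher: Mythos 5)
Your proposal is correct and is exactly the argument the paper intends: the proposition is stated without an explicit proof, being the instance of the remark preceding Proposition~\ref{prop:secondaryjuggles} that ``as we range over possible choices of tethering, these lemmas expressing equality of secondary composites become containment relations,'' applied to Lemmas~\ref{lem:tetheredps1} and~\ref{lem:tetheredps2} together with Remark~\ref{rmk:tripletether}. Your identification of the induced tetherings ($f_4 h$, $f_3 k$, $h f_2$, $k f_1$) as the source of the one-sided containments, and the use of an auxiliary third tethering to get the two-sided equality in the first formula, match the paper's explicit treatment of the analogous statements for secondary operations.
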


We end with a remark on adjunctions. In the presence of an (enriched)
adjunction between categories $\mathcal{C}$ and $\mathcal{D}$, we can
describe relationships between secondary operations. Recall that an
enriched functor $F\co \mathcal{C} \to \mathcal{D}$ with enriched left
adjoint $G$ determines (and is determined by) an enriched category
$\mathcal{E}$ with object set $Ob(\mathcal{C}) \cup Ob(\mathcal{D})$,
such that:
\[
\Map_\mathcal{E}(x,y) = \begin{cases}
  \Map_\mathcal{C}(x,y) &\text{if }x,y \in \mathcal{C}\\
  \Map_\mathcal{D}(x,y) &\text{if }x,y \in \mathcal{D}\\
  \Map_\mathcal{C}(x,Gy) \cong \Map_\mathcal{D}(Fx,y) &\text{if }x \in \mathcal{C},
  y \in \mathcal{D}\\
  \emptyset &\text{otherwise}
\end{cases}
\]
This allows us to describe augmented and pointed objects in the
presence of an adjunction and define brackets even amongst objects in
categories related by adjunctions. We could, if desired, rephrase
several of our constructions in these terms, in particular with
respect to brackets that involve maps out of free objects.

\subsection{Additive structures}

In prominent examples, some of the mapping spaces in $\mathcal{C}$
have natural ``addition'' structures.

\begin{defn}\label{def:hspace}
  An object $Y \in \mathcal{C}$ is an {\em H-object} if
  $\Map_{\mathcal{C}}(-,Y)$ naturally takes values in $H$-spaces: it
  is equipped with a natural homotopy-unital binary operation $+$
  whose unit is the basepoint. A {\em map of H-objects} is a map $Y
  \to Y'$ preserving this structure.

  An object $X \in \mathcal{C}$ is an {\em co-H-object} if
  $\Map_{\mathcal{C}}(X,-)$ naturally takes values in $H$-spaces: it
  is equipped with a natural homotopy-unital binary operation $+$
  whose unit is the basepoint. A {\em map of co-H-objects} is a map $X
  \to X'$ preserving this structure.
\end{defn}

\begin{prop}
\label{prop:addtether}
  Suppose $X$ is a co-H-object in $\mathcal{C}$ and that we have maps
  $f, f'\co X \to Y$ and $g\co Y \to Z$, together with tetherings $g 
  \teth{h} f$ and $g \teth{h'} f'$. Then the pointwise product on
  paths in $\Map_{\mathcal{C}}(X,-)$ gives a tethering $g \teth{h +
    h'} (f+f')$.
\end{prop}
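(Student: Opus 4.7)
The plan is to assemble the tethering from the pointwise H-space sum of chosen representative paths. Because $X$ is a co-H-object, the mapping space $\Map_\mathcal{C}(X,Z)$ carries a natural homotopy-unital binary operation $+$ with unit $\ast$. I would choose representative paths $h\co gf \Rightarrow \ast$ and $h'\co gf' \Rightarrow \ast$ of the two tetherings and define $(h + h')(t) := h(t) + h'(t)$; this is a path in $\Map_\mathcal{C}(X,Z)$ from $gf + gf'$ to $\ast + \ast$.

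To interpret this as a nullhomotopy of $g(f + f')$, I would splice on two short comparison paths. Naturality of $\Map_\mathcal{C}(X,-)$ as a functor to H-spaces says that post-composition $g_* \co \Map_\mathcal{C}(X,Y) \to \Map_\mathcal{C}(X,Z)$ is an H-map, so there is a path $e \co g(f + f') \Rightarrow gf + gf'$. The homotopy-unit axiom provides a path $u \co \ast + \ast \Rightarrow \ast$. Then the concatenation $e \cdot (h + h') \cdot u$ is a nullhomotopy of $g(f + f')$, and its homotopy class is the sought tethering $g \teth{h + h'} (f + f')$.

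The only thing to verify is well-definedness on homotopy classes. Since the H-space operation is continuous, replacing $h$ or $h'$ by a homotopic representative yields a homotopy between the corresponding sums, so the homotopy class of $h + h'$ depends only on the input tetherings. The auxiliary paths $e$ and $u$ are each canonical up to homotopy rel endpoints, so once fixed they contribute no genuine ambiguity in the resulting homotopy class. I do not expect a real obstacle here; the argument is a direct compatibility check between the co-H-structure on $X$ and post-composition in $\mathcal{C}$.
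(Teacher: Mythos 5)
The paper states this proposition without proof, and your argument is exactly the intended one: the tethering is the pointwise $H$-space sum of the two chosen nullhomotopies, a path from $gf + gf'$ to $\ast$. One caveat: since the paper's definition makes the $H$-space structure on $\Map_{\mathcal{C}}(X,-)$ natural in the target with the null map as unit, post-composition by $g$ distributes over $+$ and the comparison paths $e$ and $u$ you splice in are really constant (this strictness is what the proof of the following proposition on addition formulas relies on); in a genuinely non-strict reading, your claim that $u$ is canonical up to homotopy rel endpoints would require the unit homotopy to be part of the specified structure, as an arbitrary choice could shift the resulting class by a loop at $\ast$.
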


\begin{prop}
\label{prop:addbrackets}
  Each of the following addition formulas holds whenever both sides
  are defined and the source object is an co-H-object in
  $\mathcal{C}$:
  \begin{align*}
    \langle f_3 \teth{h} f_2 \teth{k+k'} (f_1+f_1')\rangle &=
    \langle f_3 \teth{h} f_2 \teth{k} f_1\rangle + 
    \langle f_3 \teth{h} f_2 \teth{k'} f_1'\rangle \\
    \langle f_3 \teth{h} f_2 , (f_1+f_1')\rangle &=
    \langle f_3 \teth{h} f_2 , f_1\rangle + 
    \langle f_3 \teth{h} f_2 , f_1'\rangle \\
    \langle f_3, f_2 , (f_1+f_1')\rangle &\subset
    \langle f_3, f_2 , f_1\rangle + 
    \langle f_3, f_2 , f_1'\rangle
  \end{align*}
  Dual results hold for $H$-objects.
\end{prop}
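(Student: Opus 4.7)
The plan is to derive all three identities from the naturality of the H-space structure combined with an Eckmann--Hilton argument. Since $X_0$ is a co-H-object, for any map $g\co X_i \to X_j$ the postcomposition map $g_*\co \Map_\mathcal{C}(X_0,X_i) \to \Map_\mathcal{C}(X_0,X_j)$ is a natural H-map, and applied pointwise in the path parameter this yields the strict equalities $f_3(k+k') = f_3 k + f_3 k'$ and $h(f_1+f_1') = h f_1 + h f_1'$ of paths in $\Map_\mathcal{C}(X_0,X_3)$.

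For the first identity, I would expand the defining path composite as
\[
\langle f_3 \teth{h} f_2 \teth{k+k'} (f_1+f_1')\rangle = (f_3(k+k'))^{-1} \cdot h(f_1+f_1') = (f_3 k + f_3 k')^{-1} \cdot (h f_1 + h f_1'),
\]
viewed as a loop in $\Map_\mathcal{C}(X_0,X_3)$ based at $\ast$. On the fundamental group of an H-space the loop-concatenation product $\cdot$ and the pointwise H-space operation $+$ agree and are commutative by Eckmann--Hilton, and the H-space inverse agrees with the reversed path. Distributing and rearranging on $\pi_1$ then converts the expression into $((f_3 k)^{-1} \cdot h f_1) + ((f_3 k')^{-1} \cdot h f_1')$, which is exactly $\langle f_3 \teth{h} f_2 \teth{k} f_1\rangle + \langle f_3 \teth{h} f_2 \teth{k'} f_1'\rangle$.

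The second identity follows from the first and Corollary~\ref{cor:secondaryindeterminacy}: each secondary operation is a coset of $f_3 \pi_1 \Map_\mathcal{C}(X_0,X_2)$ represented by the secondary composite for any single choice of tethering. So, choosing fixed tetherings $k_0$ of $f_2 f_1$ and $k_0'$ of $f_2 f_1'$, the sum $k_0+k_0'$ is a tethering of $f_2(f_1+f_1')$, and the first identity writes the representative of $\langle f_3 \teth{h} f_2, f_1+f_1'\rangle$ as a sum of representatives of the two summand operations; the sum of two cosets of the same subgroup of an abelian group is again a single coset, so equality holds. For the third identity, an arbitrary representative $\langle f_3 \teth{h} f_2 \teth{m} f_1+f_1'\rangle$ of the left-hand bracket can, after a change of second tethering absorbed into the indeterminacy $f_3 \pi_1 \Map_\mathcal{C}(X_0,X_2)$, be put in the form of the first identity and then exhibited as a sum of bracket representatives with matched outer tetherings $h_1=h_2=h$; this gives the containment in the right-hand side, but not equality, since the right side permits independent choices of these outer tetherings.

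I expect no substantial obstacle here: the arguments parallel the classical additivity of Toda brackets, and the only point requiring care is distinguishing the strict path-level equalities supplied by naturality from the homotopical identities valid only in $\pi_1$. The dual statements for an H-object $X_3$ follow by the same argument applied to precomposition in $\Map_\mathcal{C}(-,X_3)$, which furnishes natural H-maps when $X_3$ is an H-object.
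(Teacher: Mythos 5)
Your proof is correct and follows essentially the same route as the paper's: the first identity via the strict naturality of the pointwise $H$-space structure together with the Eckmann--Hilton interchange of path concatenation and $+$ on $\pi_1$, and the second and third via the observation that both sides are (unions of) cosets of $f_3 \pi_1 \Map_{\mathcal{C}}(X_0,X_2)$. The paper phrases the second identity as a containment of cosets of the same subgroup forcing equality, which is logically the same as your ``coset plus coset is a coset'' argument, and it likewise obtains only a containment in the third identity because the right-hand side permits independent choices of the outer tethering.
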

Here the addition on paths is the pointwise $H$-space structure. The
addition on $\pi_1 \Map_{\mathcal{C}}(X_0,X_3)$ is, by the
Eckmann--Hilton argument, equivalent to either path concatenation or
the pointwise $H$-space structure on paths, and makes this group
abelian.

\begin{proof}
  The first identity is expressed by the following interaction between
  path composition and the pointwise $H$-space structure:
  \begin{align*}
    [f_3 (k + k')]^{-1} \cdot [h (f_1 + f_1')]
    &= [(f_3 k)^{-1} + (f_3 k')^{-1}] \cdot [h f_1 + h f_1']\\
    &= [(f_3 k)^{-1} \cdot h f_1] + [(f_3 k')^{-1} \cdot h f_1']
  \end{align*}
  Letting $k$ and $k'$ vary over possible tetherings, this then shows
  that
  \[
  \langle f_3 \teth{h} f_2 , (f_1+f_1')\rangle \supset \langle f_3 \teth{h}
  f_2 , f_1\rangle + \langle f_3 \teth{h} f_2 , f_1'\rangle.
  \]
  The indeterminacy on the left-hand side consists precisely of adding
  elements of the form $f_3 u$, while on the right-hand side it
  consists of adding elements of the form
  $f_3 v + f_3 v' = f_3(v+v')$. Because the indeterminacy group is the
  same, this containment must be an equality of cosets.

  Now letting $h$ vary over possible tetherings (which produces a
  restricted set of elements on the right-hand side), we obtain the
  third identity.
\end{proof}

\subsection{Model categories}

Working in a model category often requires attention to
objects that are not cofibrant or fibrant, and function spaces for
such objects are poorly behaved. In this section we will spell out
adjustments to the construction of secondary operations which are more
convenient but equivalent to our standard construction.

Let $\mathcal{M}$ be a model category. Associated to this data there
is a \emph{hammock localization} $L^H\mathcal{M}$
\cite{dwyer-kan-calculatinglocalizations}. This is a simplicial
category with a functor $\mathcal{M} \to L^H\mathcal{M}$, bijective on
objects, that turns weak equivalences into homotopy equivalences. In
\cite{dwyer-kan-functioncomplexes} it is shown that $L^H\mathcal{M}$
recovers the homotopy theory of $\mathcal{M}$: it is invariant under
Quillen equivalence, the homotopy category of $L^H \mathcal{M}$ is
localization of $\mathcal{M}$ with respect to weak equivalences, and
if $\mathcal{M}$ is a simplicial model category there is a chain of
weak equivalences between $L^H \mathcal{M}$ and the simplicial
category of cofibrant-fibrant objects of $\mathcal{M}$.

With this in mind, for (possibly pointed or augmented) objects of
$\mathcal{M}$ it makes sense to calculate secondary composites
and brackets in either $\mathcal{M}$ or $L^H \mathcal{M}$. There are
natural maps
\[
\pi_k \Map_\mathcal{M}(X,Y) \to \pi_k \Map_\mathcal{M}(X_{cof},
Y_{fib}) \cong \pi_k \Map_{L^H\mathcal{M}} (X,Y),
\]
where the first is an isomorphism if $X$ is cofibrant and $Y$ is
fibrant. This natural map is compatible with function composition.

This means that a tethering, secondary composite, secondary operation,
or bracket in $\mathcal{M}$ determines a compatible one in
$L^H \mathcal{M}$. This use of $L^H \mathcal{M}$ then allows us to
discuss brackets, and identities between them, for maps in the
homotopy category of $\mathcal{M}$ without the inconvenience of using
cofibrant or fibrant replacements to obtain maps in
$\mathcal{M}$. When discussing secondary composites in $\mathcal{M}$,
we will regard this process as implicit.

\subsection{Secondary power operations}

The study of secondary operations can now be specialized to homotopy
operations for algebras over a fixed commutative ring spectrum $A$.

\begin{defn}
  \label{def:freealgebra}
  Given a commutative ring spectrum $A$, we let $\mb P_A^{E_n}$ be the
  left adjoint to the forgetful functor from $E_n$ $A$-algebras to
  spectra; if $n = \infty$ we simply write $\mb P_A$, and if $A = \mb
  S$ then we will omit $A$ from the notation.
\end{defn}

In particular, there is an isomorphism
\[
\mb P_A^{E_n}(X) \cong \bigvee A \sma \left(E_n(k)_+ \sma_{\Sigma_k} X^{\sma
  k}\right),
\]
where the spaces $E_n(k)$ are the terms in our chosen $E_n$-operad,
and the set of homotopy classes of maps of $E_n$ $A$-algebras $\mb
P_A^{E_n}(\vee S^{k_i}) \to C$ is naturally isomorphic to $\prod
\pi_{k_i} C$. The natural map $X \to \ast$ becomes a natural
augmentation $\mb P_A^{E_n}(X) \to A$, and a pinch map $X \to X \vee
X$ gives $\mb P_A^{E_n} (X)$ the structure of a co-H-object.

\begin{defn}
  \label{def:homotopyops}
  A {\em homotopy operation} on $E_n$ $A$-algebras is a natural
  transformation of functors
  \[
  \prod \pi_{k_i}(-) \to \pi_j(-),
  \]
  represented by a homotopy class of map of $E_n$ $A$-algebras
  \[
  \mb P_A^{E_n}(S^j) \to\mb P_A^{E_n}(\vee S^{k_i})
  \]
  or equivalently an element of
  \[
  \pi_j \mb P_A^{E_n}(\vee S^{k_i}) \cong \pi_j (A \sma \mb
  P^{E_n}(\vee S^{k_k}))
  \]
  If this operation preserves the zero element, we view it as
  determined by a map of augmented objects via the canonical
  projection to $A$; if it preserves addition, we view it as
  determined by a map of co-H-objects.

  Similarly, if $B$ is an $E_n$ $A$-algebra, a {\em homotopy operation
    on $E_n$ $A$-algebras under $B$} is a natural transformation in
  the homotopy category of $E_n$ $A$-algebras under $B$, represented
  by a homotopy class of map
  \[
  B \amalg \mb P_A^{E_n}(S^j) \to B \amalg \mb P_A^{E_n}(\vee S^{k_i}).
  \] 
  Here the coproduct $\amalg$ takes place in the category of
  $E_n$-algebras. If this operation preserves the zero element, we
  view it as determined by a map of augmented objects via the
  canonical projection to $B$; if it preserves addition, we view it as
  determined by a map of co-H-objects.
\end{defn}

Taking $B = A$ shows that the first type of operations are a special
case of the second, so there is no loss of generality in restricting
our attention to operations in the relative case. If $n = \infty$,
then conversely $E_\infty$ $A$-algebras under $B$ are equivalent to
$E_\infty$ $B$-algebras.

\begin{exam}
  \label{exam:scalarop}
  For any $b \in \pi_k(B)$ and any $n > 0$, multiplication by $b$
  determines an additive homotopy operation on $E_n$ $A$-algebras
  under $B$.
\end{exam}

\begin{rmk}
  \label{rmk:oppositecat}
  As above, the Yoneda lemma allows homotopy operations to be
  expressed as {\em pre}-composition with maps of free algebras. We
  usually write precomposition on the right, but this is at odds with
  the standard convention of writing operators (such as the
  Dyer--Lashof operations) on the left. We could attempt to solve this
  in many ways. One would be to work in an opposite category so that
  function application is on the right. One would be to notationally
  distinguish between maps between free algebras (operations), maps
  from free algebras to ordinary algebras (homotopy elements), and
  maps between ordinary algebras (maps). One is to accept the state of
  affairs, and resist the urge to use the same names for a
  Dyer--Lashof operation $Q^n$ and the map
  $\mb P_H(S^{j+n}) \to \mb P_H(S^j)$ that represents it. None of
  these solutions are good, but we have adopted the third because (in
  all honesty) it has confused us the least.
\end{rmk}

Relations between homotopy operations allow us to define secondary
operations in the following way.

\begin{defn}
  \label{def:secondarypower}
  Let $A$ be a commutative ring spectrum and $B$ an $E_n$
  $A$-algebra. Suppose we have homotopy operations $Q_i\co \prod_s
  \pi_{l_{i,s}} \to \pi_{k_i}$ and $R\co \prod_i \pi_{k_i} \to \pi_j$
  that preserve zero such that $R \circ (\prod_i Q_i) = 0$, realized
  by a homotopy coherent diagram
  \[
  \xymatrix{
    B \amalg \mb P_A^{E_n}(S^j) \ar[r]^-R \ar[d] &
    B \amalg \mb P_A^{E_n}(\vee_i S^{k_i}) \ar[d]^{Q}\\
    B \ar[r]  \ar@{}[ur]|-*[@]{\Rightarrow} &
    B \amalg \mb P_A^{E_n}(\vee_{i,s} S^{l_{i,s}})
  }
  \]
  of augmented $E_n$ $A$-algebras under $B$. We refer to $R$ as a {\em
    relation} between the operations $Q_i$. The coherence produces a
  tethering homotopy $h$, and the {\em secondary operation} associated to
  this relation is $\langle -, Q \teth{h} R\rangle$.
\end{defn}

\begin{prop}
  \label{prop:secondarypowerprops}
  Given $C$ any $E_n$ $A$-algebra under $B$, the domain of definition of the
  secondary operation $\langle -, Q \teth{h} R\rangle$ is the subset
  of $\prod \pi_{l_{i,s}} C$ of collections of elements $x_{i,s} \in
  \pi_{l_{i,s}} C$ such that $Q_i(x_{i,s}) = 0$ for all $i$. These are
  represented by homotopy commutative diagrams
  \[
  \xymatrix{
    B \amalg \mb P_A^{E_n}(\vee_i S^{k_i}) \ar[d]^{Q_i} \ar[r] & B \ar[d] \\
    B \amalg \mb P_A^{E_n}(\vee_{i,s} S^{l_{i,s}})\ar[r]_-{x_{i,s}}& C
  }
  \]
  of $E_n$ $A$-algebras under $B$. The value of $\langle -, Q \teth{h}
  R\rangle$ is a subset of $\pi_{j+1} C$, and the indeterminacy
  consists of adding elements in the image of the suspended operation
  $\sigma R\co \prod \pi_{k_i + 1} C \to \pi_{j+1} C$.
\end{prop}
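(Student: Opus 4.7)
The plan is to unpack Definitions~\ref{def:secondarycomp} and~\ref{def:secondarypower} by applying the universal property of the free functor $\mb P_A^{E_n}$, combined with the dual form of Corollary~\ref{cor:secondaryindeterminacy} for the indeterminacy, matching each of the three claims to its abstract counterpart.

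For the domain of definition, I would first apply Definition~\ref{def:secondaryop} to see that $\langle -, Q \teth{h} R\rangle$ is defined on an augmented map $f\co B \amalg \mb P_A^{E_n}(\vee S^{l_{i,s}}) \to C$ precisely when $f \circ Q$ is nullhomotopic in the category of augmented $E_n$ $A$-algebras under $B$. The free-forgetful adjunction identifies such $f$, up to homotopy, with tuples $(x_{i,s}) \in \prod \pi_{l_{i,s}} C$, and under this identification the composite $f \circ Q$ represents the tuple $(Q_i(x_{i,s}))_i \in \prod \pi_{k_i} C$. Nullhomotopy in the augmented category is equivalent to each component vanishing, giving the stated condition.

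For the value, the secondary composite lies, by Definition~\ref{def:secondaryop}, in $\pi_1$ of the mapping space from $B \amalg \mb P_A^{E_n}(S^j)$ to $C$ based at the augmentation. Adjunction identifies the relevant component with the homotopy fiber of $\Omega^{\infty-j} C \to \Omega^{\infty-j} B$, whose $\pi_1$ projects onto a subgroup of $\pi_{j+1} C$; this is the sense in which the value is a subset of $\pi_{j+1} C$. For the indeterminacy, the dual form of Corollary~\ref{cor:secondaryindeterminacy} identifies it with the image under post-composition with $R$ of $\pi_1 \Map(B \amalg \mb P_A^{E_n}(\vee S^{k_i}), C)$ based at zero. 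Applying the adjunction again identifies this $\pi_1$ with $\prod \pi_{k_i+1} C$, and post-composition with the map $R$ applies, by construction, precisely the suspended operation $\sigma R\co \prod \pi_{k_i+1} C \to \pi_{j+1} C$.

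The main obstacle, such as it is, will be bookkeeping rather than substance: we must verify that the mapping spaces in the enriched category of augmented $E_n$ $A$-algebras under $B$ admit these adjunction identifications naturally at the level of $\pi_1$ and compatibly with composition, working in the model-categorical framework of Section~\ref{sec:secondary-operations}. This reduces to the universal property of $\mb P_A^{E_n}$ together with the observation that augmentations of free algebras on spheres are built from the canonical null maps $S^k \to \ast$; no deeper input is required.
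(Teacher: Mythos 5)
The paper states Proposition~\ref{prop:secondarypowerprops} without proof, treating it as a direct unwinding of Definitions~\ref{def:secondaryop} and~\ref{def:secondarypower} together with Corollary~\ref{cor:secondaryindeterminacy}; your proposal carries out exactly that unwinding and is correct in substance. Two small points should be repaired. First, in $\ppa{\mathcal{D}}$ the target $C$ is an \emph{ordinary} object, not an augmented one, so the mapping space out of the augmented object $B \amalg \mb P_A^{E_n}(S^j)$ is not the homotopy fiber of $\Omega^{\infty-j}C \to \Omega^{\infty-j}B$: it is the full mapping space $\Map_{\mathcal{D}}(B \amalg \mb P_A^{E_n}(S^j), C) \simeq \Omega^{\infty-j}C$, merely equipped with the basepoint given by the factorization through the initial object $B$, which corresponds to $0 \in \pi_j C$. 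This is better than what you wrote: $\pi_1$ of this based space is $\pi_{j+1}C$ on the nose, with no subgroup or projection to worry about, and that exact identification is what makes the indeterminacy statement (``adding elements in the image of $\sigma R$ inside $\pi_{j+1}C$'') literally parse. Second, the indeterminacy loops in $\pi_1\Map(B \amalg \mb P_A^{E_n}(\vee_i S^{k_i}), C) \cong \prod_i \pi_{k_i+1}C$ act by \emph{pre}composition with the representing map $R$ (this is the $u f_{01}$ term of Proposition~\ref{prop:indeterminacymain}); since homotopy operations are realized contravariantly, precomposition is what applies the operation, and identifying it with $\sigma R$ uses that the suspension of the augmented object $B \amalg \mb P_A^{E_n}(\vee_i S^{k_i})$ is $B \amalg \mb P_A^{E_n}(\vee_i S^{k_i+1})$, as recorded just before Proposition~\ref{prop:powerstability}. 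With those adjustments your argument is the intended one.
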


\begin{prop}
  \label{prop:secondarypowernatural}
  Maps $f\co C \to D$ of $E_n$ $A$-algebras under $B$ preserve
  secondary operations.
\end{prop}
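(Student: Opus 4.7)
The plan is to deduce naturality directly from the juggling identities of Proposition~\ref{prop:secondaryjuggles}. Given $f\co C \to D$ a map of $E_n$ $A$-algebras under $B$ and an element $x \in \prod \pi_{l_{i,s}} C$ in the domain of definition, we must show
\[
f_* \langle x, Q \teth{h} R\rangle \subset \langle f_* x, Q \teth{h} R\rangle
\]
as cosets in $\pi_{j+1} D$. By Proposition~\ref{prop:secondarypowerprops} both sides are cosets for the image of the suspended operation $\sigma R$, and since $\sigma R$ is itself a natural transformation, $f_*$ sends the indeterminacy on the left into the indeterminacy on the right; the containment above is therefore exactly what ``preserves the secondary operation'' means.

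Represent $x$ by a map $\tilde x\co B \amalg \mb P_A^{E_n}(\bigvee S^{l_{i,s}}) \to C$ of augmented $E_n$ $A$-algebras under $B$, passing to the hammock localization of the Model categories subsection if $C$ is not already cofibrant-fibrant. For any tethering $k$ of $Q \circ \tilde x$, Lemma~\ref{lem:tetheredps2} (equivalently, the dual of the second identity of Proposition~\ref{prop:secondaryjuggles}), applied with $f_{34} = f$, $f_{23} = \tilde x$, $f_{12} = Q$, $f_{01} = R$, gives the identity
\[
f \cdot \langle \tilde x \teth{k} Q \teth{h} R\rangle = \langle f\tilde x \teth{fk} Q \teth{h} R\rangle.
\]
As $k$ ranges over all nullhomotopies of $Q \circ \tilde x$, the left-hand side ranges over $f_*\langle x, Q \teth{h} R\rangle$, while the right-hand side ranges over the subset of $\langle f_* x, Q \teth{h} R\rangle$ whose first tethering has the form $fk$. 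This yields the desired containment.

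The only technical point to verify is that $f$ is compatible with the enriched pointed structure of the category $\ppa{\mathcal M}$ of augmented and pointed $E_n$ $A$-algebras under $B$, in the sense of Proposition~\ref{prop:functorialbrackets}: it must commute with the augmentations to $B$ and send null maps on mapping spaces to null maps, both of which hold automatically because $f$ is a map under $B$. I expect no deeper obstacle to arise; once the juggling identity is unpacked, the proposition is essentially a direct instance of the functorial behaviour of secondary composites under post-composition, as encoded in Proposition~\ref{prop:functorialbrackets}.
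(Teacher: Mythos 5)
Your proof is correct and takes essentially the same route as the paper, which simply cites the juggling formulas of Proposition~\ref{prop:secondaryjuggles}; you have just unpacked the relevant identity (the first identity of Lemma~\ref{lem:tetheredps2}, applied with $f$ post-composed onto the varying slot) and checked the indeterminacy, which is exactly what is intended. The only nitpick is the parenthetical attribution: the containment you need is the dual of the \emph{last} formula of Proposition~\ref{prop:secondaryjuggles} (post-composition on the varying slot), not of the second, but since you invoke Lemma~\ref{lem:tetheredps2} directly this does not affect the argument.
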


\begin{proof}
  This is the statement that
  \[
  f(\langle x, Q \teth{} R\rangle) \subset \langle f(x), Q \teth{}
  R\rangle,
  \]
  which is an application of the juggling formulas from
  Proposition~\ref{prop:secondaryjuggles}.
\end{proof}

\begin{rmk}
  If $n < m \leq \infty$, then the forgetful functors from $E_m$
  $A$-algebras under $B$ to $E_n$ $A$-algebras under $B$ also preserve
  secondary operations in the following sense. The forgetful functor
  $U$ from $E_m$ $A$-algebras under $B$ to $E_n$-algebras under $B$ has a
  left adjoint $F$, giving rise to an enriched adjunction. Since adjoints
  are preserved under composition, it preserves free objects:
  \[
  F\left(B \amalg \mb P_A^{E_n}(X)\right) \cong B \amalg \mb P_A^{E_m}(X).
  \]
  In particular, any homotopy operation
  \[
  Q\co B \amalg \mb P_A^{E_n}(S^{j}) \to B \amalg \mb
  P_A^{E_n}(\vee_i S^{k_{i}})
  \]
  for $E_n$ $A$-algebras under $B$ gives rise to a homotopy operation
  for $E_m$ $A$-algebras under $B$, defined by applying $FQ$ or,
  equivalently, by applying $U$ and then applying $Q$. By
  Corollary~\ref{cor:adjunctionbracket}, the enriched adjunction gives
  us canonical identifications
  \[
  \langle U-, Q \teth{h} R\rangle_{E_n} = \langle -, FQ \teth{Fh}
  FR\rangle_{E_m}
  \]
  showing that secondary operations are preserved by the forgetful
  functor.
\end{rmk}

We can also define functional homotopy operations as the analogues of
Steenrod's functional cohomology operations.
\begin{defn}
  \label{def:functionalpower}
  Suppose $A$ is a commutative ring spectrum and that we have maps $B
  \to C \too{f} D$ of $E_n$ $A$-algebras, making $f\co C \to D$ a map
  under $B$. Suppose that we have a homotopy operation $Q\co
  \prod_s \pi_{l_s} \to \pi_{k}$ for $E_n$ $A$-algebras under $B$ that
  preserves zero, realized by a commutative diagram
  \[
  \xymatrix{
    B \amalg \mb P_A^{E_n}(S^{k}) \ar[r]^-Q \ar[dr] &
    B \amalg \mb P_A^{E_n}(\vee_{s} S^{l_{s}}) \ar[d]\\
    & B.
  }
  \]
  The {\em functional homotopy operation} associated to this relation
  is the bracket $\langle f, -, Q\rangle$.
\end{defn}

\begin{prop}
  \label{prop:functionalpowerprops}
  For any maps of $E_n$ $A$-algebras $B \to C \too{f} D$, the domain of
  definition of the functional operation $\langle f, -, Q \rangle$ is
  the subset of $\prod \pi_{l_{s}} C$ of collections of elements
  $x_{s} \in \pi_{l_{s}} C$ such that $f(x_s) = 0$ and $Q(x_{s}) =
  0$. These are represented by homotopy commutative diagrams
  \[
  \xymatrix{
    B \amalg \mb P_A^{E_n}(S^{k}) \ar[r]^-Q \ar[d] &
    B \amalg \mb P_A^{E_n}(\vee_s S^{l_s}) \ar[d]_{x_s} \ar[r] & B \ar[d] \\
    B \ar[r] &
    C \ar[r]_-f&
    D
  }
  \]
  of $E_n$ $A$-algebras. The value of $\langle f, -, Q \rangle$
  is a subset of $\pi_{k+1} D$, and the indeterminacy consists of
  adding elements in the image of the suspended operation $\sigma Q\co
  \prod \pi_{l_s + 1} D \to \pi_{k+1} D$ and elements in the image of
  $f\co \pi_{k+1} C \to \pi_{k+1} D$.
\end{prop}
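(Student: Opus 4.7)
The plan is to unpack Definition~\ref{def:brackets} in the $\sma$-enriched category of possibly augmented $E_n$ $A$-algebras under $B$, and then apply the general indeterminacy result Corollary~\ref{cor:bracketindeterminacy} after identifying the relevant mapping spaces via the free-forgetful adjunction. Structurally the argument parallels Proposition~\ref{prop:secondarypowerprops}, except that now the ``unknown'' map of the bracket sits in the middle position rather than on the right, so the roles of the two vanishing conditions and the two factors in the indeterminacy are swapped.

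First I would identify the domain of definition. A tuple $(x_s) \in \prod \pi_{l_s} C$ corresponds, via the adjunction between $\mb P_A^{E_n}$ and the forgetful functor to spectra, to a (up to homotopy unique) map $x\co B \amalg \mb P_A^{E_n}(\vee S^{l_s}) \to C$ of $E_n$ $A$-algebras under $B$. For the bracket $\langle f, x, Q\rangle$ to be defined, both double composites $x \circ Q$ and $f \circ x$ must be nullhomotopic in the ambient possibly-augmented enriched category. The first composite is a map of augmented objects (augmented to $B$), and its nullhomotopy is exactly the statement that $Q$ applied to $(x_s)$ vanishes in $\pi_k C$; the second composite being null means $f_*(x_s) = 0$ in $\pi_{l_s} D$. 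Together these two conditions are encoded by the homotopy coherent diagram displayed in the proposition.

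Next I would identify the ambient group. By Definition~\ref{def:brackets} the bracket takes values in $\pi_1 \Map(B \amalg \mb P_A^{E_n}(S^k), D)$ based at the canonical null map, and the same free-forgetful adjunction identifies this with $\pi_1 \Map_{Sp}(S^k, D) \cong \pi_{k+1} D$. I would then apply Corollary~\ref{cor:bracketindeterminacy}, which presents the bracket as an element of a double coset in $\pi_{k+1} D$. The left subgroup is $f_* \pi_1 \Map(B \amalg \mb P_A^{E_n}(S^k), C) \cong f_* \pi_{k+1} C$, matching the image of $f_*\co \pi_{k+1} C \to \pi_{k+1} D$. The right subgroup is the image of precomposition with $Q$ on $\pi_1 \Map(B \amalg \mb P_A^{E_n}(\vee S^{l_s}), D) \cong \prod \pi_{l_s + 1} D$; by the construction of suspended operations this precomposition is exactly $\sigma Q\co \prod \pi_{l_s+1} D \to \pi_{k+1} D$.

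The main content, and the only place that requires care, is the bookkeeping of basepoints in the possibly-augmented enriched category: one must check that the two nullhomotopy conditions really correspond to the two stated vanishing conditions without spurious constraints coming from the augmentation to $B$, and one must verify that precomposition with $Q$ on $\pi_1$ of the mapping space is literally the suspended operation $\sigma Q$ rather than differing from it by a sign or degree convention. Once these two points are settled, the proposition follows as a direct consequence of Corollary~\ref{cor:bracketindeterminacy} together with the free-forgetful adjunction for $E_n$ $A$-algebras under $B$.
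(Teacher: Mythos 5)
Your proposal is correct and matches the paper's (implicit) argument: the paper states this proposition without proof, treating it exactly as you do—as a direct unpacking of Definition~\ref{def:brackets} in the possibly-augmented category of $E_n$ $A$-algebras under $B$, with the domain of definition coming from the two required nullhomotopies, the target group identified via the free-forgetful adjunction, and the indeterminacy read off from Corollary~\ref{cor:bracketindeterminacy} (the identification of precomposition with $Q$ as $\sigma Q$ being handled by Proposition~\ref{prop:powerstability}). No gaps; nothing further is needed.
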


We now specialize the previous discussion to the category of
$E_n$-algebras over the mod-$2$ Eilenberg-Mac Lane spectrum $H$.
As in Example~\ref{exam:scalarop}, multiplication is one classical
example of a homotopy operation. Other examples of homotopy
operations, and relations between them, are furnished by {\em power
  operations}.
\begin{thm}[{\cite[III.3]{bmms-hinfty}}]
  \label{thm:bmmsops}
  For any commutative $H$-algebra $A$, there are homotopy
  operations 
  \[
  Q^s\co \pi_k \to \pi_{k+s}
  \]
  for $E_n$ $A$-algebras when $s < k+n-1$. These satisfy the following relations.
  \begin{enumerate}
  \item The {\em additivity relation}: $Q^s(x+y) = Q^s(x) + Q^s(y)$
  \item The {\em instability relations}: $Q^s x = x^2$ when $|x| = s$,
    $Q^s x = 0$ when $|x| > s$
  \item The {\em Cartan formula}: $Q^s(x y) = \sum_{p+q=s} Q^p(x)
    Q^q(y)$
  \item The {\em Adem relations}: If $r > 2s$, $Q^r Q^s (x) = \sum
    \binom{i-s-1}{2i-r} Q^{r+s-i} Q^i$ 
  \end{enumerate}
  For $m \leq n$, the forgetful map from $E_n$-algebras to
  $E_m$-algebras preserves Dyer--Lashof operations.

\end{thm}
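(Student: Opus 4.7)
The statement is essentially a transcription of \cite[III.3]{bmms-hinfty}, and my plan is to import their extended-power construction into the symmetric-spectrum framework described above, where the $E_n$-operad is the $n$-th Berger filtration of the Barratt-Eccles operad. The key input is that $E_n(2) \simeq S^{n-1}$ as a free $\Sigma_2$-space with antipodal action. Given an $E_n$ $A$-algebra $R$ and a class $x\in \pi_k R$ represented by a map $\Sigma^k A \to R$ of $A$-modules, I form the extended power
\[
D_2 x\co E_n(2)_+ \sma_{\Sigma_2} (\Sigma^k A)^{\sma_A 2} \to R.
\]
Mod $2$ the swap action on $(\Sigma^k A)^{\sma_A 2} \simeq \Sigma^{2k} A$ is trivial, so the source is $A$ smashed with a Thom spectrum of $2k$ copies of the tautological bundle over $E_n(2)/\Sigma_2$. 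Because we work over $H = \HF_2$ the mod-$2$ Thom isomorphism for real bundles is available, splitting the $A$-module as $\bigvee_{i=0}^{n-2} \Sigma^{2k+i} A$, where the range $0\le i\le n-2$ is what the $E_n$-structure supports (extending through the top cell would require further structure). Pushing the $i$-th wedge summand forward along $D_2 x$ defines $Q^{k+i}(x) \in \pi_{2k+i}(R)$, covering precisely the range $s<k+n-1$.

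Next I verify the four algebraic relations by standard equivariant manipulations, following \cite[III.1--3]{bmms-hinfty}. Additivity comes from the pinch map $S^k \to S^k \vee S^k$: the cross terms in $D_2(x+y)$ involving a transposition are orbits of size two, and contribute $(1+\tau)\cdot xy = 2xy = 0$ at the prime $2$. Instability $Q^k x = x^2$ is the identification of the bottom cell of the extended power with the iterated multiplication $\mu(x,x)$, and $Q^s x=0$ for $|x|>s$ is immediate by connectivity. The Cartan formula follows from the diagonal $E_n(2)\to E_n(2)\times E_n(2)$, which converts the extended power of a product into a product of extended powers and applies the Künneth splitting of $H_*(\mb RP^{n-1})$. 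Finally, the forgetful-functor statement is naturality: the operad map $E_m\to E_n$ restricts the cells of $E_n(2)/\Sigma_2$ to those of $E_m(2)/\Sigma_2$, so the operations $Q^s$ for $s<k+m-1$ agree under the forgetful functor.

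The hard part will be the Adem relations, which require analyzing the iterated extended power $D_2 D_2 x$ together with its $\Sigma_4$-equivariant structure relative to the subgroup $\Sigma_2\wr\Sigma_2\subset\Sigma_4$. Concretely, one must compare $H_*(B\Sigma_4;\mb F_2)$ with $H_*(B(\Sigma_2\wr\Sigma_2);\mb F_2)$, analyze the image of the transfer, and extract the binomial coefficients $\binom{i-s-1}{2i-r}$ via the structure of the Steinberg summand. In the $E_n$-setting there is an additional bookkeeping step: whenever $Q^r Q^s$ lies in the permitted range one must check that every summand $Q^{r+s-i}Q^i$ on the right-hand side is also within the range of operations supported by the $E_n$-structure. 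This compatibility check, together with the verification that the extended-power construction can be carried out in the Elmendorf--Mandell model so that the resulting Dyer--Lashof operations agree with those constructed classically in \cite{bmms-hinfty} and are preserved by the forgetful equivalence to $\mb S$-modules, is where essentially all of the technical labor lies; everything else is an unwinding of \cite[III.3]{bmms-hinfty} in the present framework.
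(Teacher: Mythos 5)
The paper does not prove this statement: it is imported wholesale from \cite[III.3]{bmms-hinfty}, and the only thing the paper does on its own behalf is the framework comparison in its introductory ``Framework'' subsection (relating $E_n$-algebras in positive-stable symmetric spectra to the classical categories in which Bruner--May--McClure--Steinberger work). So there is no internal argument to compare against, and your plan has to be judged as a reconstruction of the BMMS construction. As such it has broadly the right shape: the operations on $\pi_k$ do come from the cells of $E_n(2)/\Sigma_2 \simeq \mb{RP}^{n-1}$, the exclusion of the top cell is exactly why the range is $s < k+n-1$ (the top cell gives the badly behaved operation of Remark~\ref{rmk:topopsucks}, which is supported by the $E_n$-structure but fails additivity, rather than being unsupported as you suggest), additivity and instability and Cartan are as you sketch, and the forgetful-functor statement is naturality of the operadic filtration.

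Two details are wrong and the genuinely hard content is deferred rather than done. First, the swap action on $(\Sigma^k A)^{\sma_A 2} \simeq \Sigma^{2k}A$ is through the sign representation on only $k$ of the suspension coordinates, so the source of $D_2x$ is ($k$-fold suspension of) the Thom spectrum of $k$ copies of the tautological line bundle over $\mb{RP}^{n-1}$, not of $2k$ copies; the mod-$2$ cell structure is unaffected, but the statement as written is incorrect and the phrase ``mod $2$ the swap action \dots is trivial'' conflates the action on homology with the action on the spectrum. Second, the Adem relations are not extracted from ``the Steinberg summand'': the classical derivation compares the two composites $D_2D_2 \to D_4$ restricted along $\Sigma_2\wr\Sigma_2 \subset \Sigma_4$ and uses the Adem--Nishida computation of $H_*(B\Sigma_4;\mb F_2)$ relative to $H_*(B(\Sigma_2\wr\Sigma_2);\mb F_2)$; invoking the Steinberg module here is a misattribution and suggests the step you yourself identify as the hard part has not been thought through. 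Finally, the transport of the construction from $H_\infty$-structures on Lewis--May--Steinberger spectra to Elmendorf--Mandell $E_n$-algebras, the agreement of the resulting operations with the classical ones, and the check that every term of an Adem relation stays within the permitted $E_n$-range are exactly the substance of the theorem, and your plan leaves all of them as future labor. Since the paper also declines to do this work and simply cites \cite[III.3]{bmms-hinfty}, your proposal is a reasonable account of where the proof lives, but it is a pointer to a proof rather than a proof.
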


\begin{prop}
  \label{prop:gotalloperations}
  For any commutative $H$-algebra $A$, all homotopy operations for
  $E_\infty$ $A$-algebras $C$ are composites of the following types:
  \begin{enumerate}
  \item the constant operation associated to an element $\alpha \in
    \pi_n A$, which takes no arguments and whose value on $C$ is the
    image of $\alpha$ under the map $\pi_* A \to \pi_* C$;
  \item the Dyer--Lashof operations $Q^s\co \pi_n(C) \to \pi_{n+s}(C)$;
  \item the binary addition operations $\pi_n(C) \times \pi_n(C) \to
    \pi_n(C)$;
  \item the binary multiplication operations $\pi_n(C) \times \pi_m(C)
    \to \pi_{n+m}(C)$.
  \end{enumerate}
\end{prop}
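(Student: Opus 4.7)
The plan is to reduce the claim to the classical computation of the homology of a free $E_\infty$-algebra on a wedge of spheres, via a base-change argument from $H$-algebras to $A$-algebras.

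First, by the Yoneda-style description in Definition~\ref{def:homotopyops}, a homotopy operation
\[
\prod_i \pi_{n_i}(-) \to \pi_j(-)
\]
on $E_\infty$ $A$-algebras is the same datum as an element of $\pi_j \mb P_A^{E_\infty}(\vee_i S^{n_i})$. Thus the proposition amounts to the statement that this graded abelian group is generated, as one closes under sums and products and Dyer--Lashof operations starting from the tautological classes in $\pi_{n_i} \mb P_A^{E_\infty}(\vee_i S^{n_i})$, by the image of $\pi_* A$ under the unit map $A \to \mb P_A^{E_\infty}(\vee_i S^{n_i})$.

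Second, since the forgetful functor from $E_\infty$ $A$-algebras to $E_\infty$ $H$-algebras has left adjoint $A\sma_H (-)$, there is a natural equivalence of $E_\infty$ $A$-algebras
\[
\mb P_A^{E_\infty}(X) \simeq A \sma_H \mb P_H^{E_\infty}(X).
\]
Because $\pi_* H = \mb F_2$ is a field, $\pi_* \mb P_H^{E_\infty}(X)$ is flat over $\pi_* H$, so the K\"unneth spectral sequence collapses and we get a natural isomorphism
\[
\pi_* \mb P_A^{E_\infty}(X) \cong \pi_* A \otimes_{\mb F_2} \pi_* \mb P_H^{E_\infty}(X).
\]
In particular every class is a finite $\pi_* A$-linear combination of classes pulled back from $\pi_* \mb P_H^{E_\infty}(X)$, and the $\pi_* A$-scalars are implemented precisely by the constant operations of type~(1) together with the multiplication of type~(4).

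Third, one invokes the theorem of Cohen--Lada--May \cite{cohen-lada-may-homology}: for $X = \vee_i S^{n_i}$, $\pi_* \mb P_H^{E_\infty}(X)$ is the free graded-commutative $\mb F_2$-algebra on the admissible monomials $Q^I x_i$ of nonnegative excess, where $x_i \in \pi_{n_i} \mb P_H^{E_\infty}(X)$ is the fundamental class of the $i$th sphere. Each generator $Q^I x_i$ is, by construction, obtained from $x_i$ by iteratively applying the Dyer--Lashof operations of Theorem~\ref{thm:bmmsops}, and the whole algebra is generated from these by the commutative product and addition. Combined with the base-change identification above, this expresses an arbitrary element of $\pi_j \mb P_A^{E_\infty}(\vee_i S^{n_i})$ as a $\pi_* A$-linear combination of products of admissible Dyer--Lashof monomials applied to the input generators, which is exactly a composite of operations of types~(1)--(4).

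The only real obstacle is bookkeeping, not mathematics: verifying that the enriched left adjoint $A \sma_H (-)$ truly sends the free $E_\infty$ $H$-algebra to the free $E_\infty$ $A$-algebra in the Elmendorf--Mandell model used in the Framework section, and that the Cohen--Lada--May computation (originally formulated for operadic extended power constructions) is compatible with the model used for $\mb P_H^{E_\infty}$. Both compatibilities are standard and are addressed in the literature cited throughout the paper.
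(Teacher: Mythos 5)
Your proposal is correct and follows essentially the same route as the paper: identify operations with $\pi_*\mb P_A^{E_\infty}(\vee S^{n_i}) \cong \pi_* A \otimes_{\mb F_2} H_*\mb P(\vee S^{n_i})$ by base change over $H$, then quote the classical computation of the homology of a free $E_\infty$-algebra as the free allowable algebra over the Dyer--Lashof operations (the paper cites \cite[IX.2.1]{bmms-hinfty} rather than Cohen--Lada--May, which is the spectrum-level form of the same result). The only blemish is your phrase ``nonnegative excess''---the correct admissibility/excess condition is relative to the degree of $x_i$---but this does not affect the argument, since all that is needed is that the generators are iterated Dyer--Lashof operations on the fundamental classes.
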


\begin{proof}
  The set of homotopy operations $\prod_s \pi_{l_s} \to \pi_*$ in this
  category is isomorphic to
  \[
    \pi_* (A \sma \mb P(\vee_s S^{l_s})) \cong \pi_* A \tens H_* \mb
    P(\vee_s S^{l_s}).
  \]
  Therefore, any homotopy operation is a sum of homotopy operations
  for $H$-algebras multiplied by constants from $\pi_* A$.  However,
  in \cite[IX.2.1]{bmms-hinfty} it is shown that the homology
  $H_* \mb P(X)$ is the free commutative algebra with Dyer--Lashof
  operations (subject to the additivity formula, instability
  relations, Cartan formula, and Adem relations) on $H_* X$, and so
  the homotopy operations for $H$-algebras are generated by constants,
  addition, multiplication, and the Dyer--Lashof operations $Q^s$.
\end{proof}

The category of $E_n$ $A$-algbras under $B$ has suspensions, and the
suspension of the augmented object $B \amalg \mb P_A^{E_n}(\vee_s
S^{l_s})$ is $B \amalg \mb P_A^{E_n}(\vee_s S^{l_s+1})$ 

\begin{prop}
  \label{prop:powerstability}
  The suspension operator $\sigma$, on homotopy operations for $E_n$
  $A$-algebras under $B$, takes zero-preserving homotopy operations
  $\prod \pi_{l_s} \to \pi_k$ to homotopy operations $\prod \pi_{l_s +
    1} \to \pi_{k+1}$. Suspension preserves addition, composition, and
  multiplication by scalars from $B$. Suspension also takes $Q^s$ to
  $Q^s$ and takes the binary multiplication operation $\pi_p \times
  \pi_q \to \pi_{p+q}$ to the trivial operation.
\end{prop}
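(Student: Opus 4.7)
The plan is to identify the operator $\sigma$ with the categorical reduced suspension $\Sigma_{\mathrm{aug}}$ in the pointed category of augmented $E_n$ $A$-algebras under $B$, and then read off each assertion. By combining the adjunction between $\mb P_A^{E_n}(-)$ and the augmentation-ideal functor with the $\Sigma$--$\Omega$ adjunction on $A$-modules, a short Yoneda computation gives
\[
  \Sigma_{\mathrm{aug}}\bigl(B \amalg \mb P_A^{E_n}(X)\bigr)\simeq B \amalg \mb P_A^{E_n}(\Sigma X),
\]
so that applying $\Sigma_{\mathrm{aug}}$ to a zero-preserving operation $R\co B\amalg \mb P_A^{E_n}(S^k)\to B\amalg \mb P_A^{E_n}(\vee_s S^{l_s})$ produces an operation of the stated shape $\sigma R\co B\amalg \mb P_A^{E_n}(S^{k+1})\to B\amalg \mb P_A^{E_n}(\vee_s S^{l_s+1})$.

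The three formal compatibilities are then automatic. Because $\Sigma_{\mathrm{aug}}$ is a functor, it preserves composition. Because it is the suspension in a pointed category, it is additive on the abelian group structure induced by the co-H-object structure coming from the pinch map on free algebras. Finally, scalar multiplication by $b\in\pi_k(B)$ is realized by a map of free algebras under $B$ whose only non-augmentation-trivial component is the free input, so commuting $\sigma$ with it amounts to the same universal adjunction argument: $\sigma(b\cdot R)=b\cdot\sigma R$.

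For the two specific identifications, I would unwind $\sigma$ as the suspension map on homotopy of augmentation ideals,
\[
  \sigma\co \pi_k I\bigl(B\amalg \mb P_A^{E_n}(\vee_s S^{l_s})\bigr)\to \pi_{k+1} I\bigl(B\amalg \mb P_A^{E_n}(\vee_s S^{l_s+1})\bigr),
\]
and appeal to the structure of $H_*\mb P_A^{E_n}(X)$ from \cite[IX]{bmms-hinfty} (in the form already used in Proposition~\ref{prop:gotalloperations}). The operation $Q^s$ is represented by the class $Q^s\iota_k$, which is a Dyer--Lashof indecomposable stable in $k$; the stability of $Q^s$ (Kudo transgression) gives $\sigma(Q^s\iota_k)=Q^s(\sigma\iota_k)=Q^s\iota_{k+1}$, hence $\sigma Q^s=Q^s$. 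The binary multiplication is represented by the class $\iota_p\cdot\iota_q\in\pi_{p+q}\mb P_A^{E_n}(S^p\vee S^q)$, which is a decomposable in positive degrees on both factors; algebraic suspension kills such decomposables, so $\sigma(\iota_p\cdot\iota_q)=0$ and the suspended operation is trivial.

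The main obstacle is honest justification of the free-algebra identification and of the algebraic-suspension identities in the presence of an arbitrary base $B$ and of finite $n$. For $n<\infty$ the weight filtration of $I(B\amalg \mb P_A^{E_n}(X))=\bigvee_{m\geq 1} E_n(m)_+\sma_{\Sigma_m}X^{\sma m}$ is not multiplicatively split, so one has to argue that the comparison map $\Sigma I(\mb P_A^{E_n}(X))\to I(\mb P_A^{E_n}(\Sigma X))$ induced by the adjunction restricts, modulo higher-weight contributions, to the identity on the weight-$1$ summand $X$ and annihilates the weight-$2$ summand $E_n(2)_+\sma_{\Sigma_2}X^{\sma 2}$ in the single-suspension range; this is enough to separate the Dyer--Lashof case (a stable cell in the weight-$2$ summand controlled by the unit $X\to I(\mb P_A^{E_n}(X))$) from the multiplication case (a genuine decomposable that double-suspends rather than single-suspends).
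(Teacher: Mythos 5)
The paper states this proposition without proof, relying only on the immediately preceding observation that the suspension of the augmented object $B \amalg \mb P_A^{E_n}(\vee_s S^{l_s})$ is $B \amalg \mb P_A^{E_n}(\vee_s S^{l_s+1})$; your argument supplies exactly the intended justification along the same lines. The facts you invoke---that the categorical suspension is a functor preserving coproducts, pinch maps, and the $\pi_* B$-module structure (giving compatibility with composition, addition, and scalars), together with Kudo transgression/stability of $Q^s$ and the vanishing of the suspension on decomposables from \cite{cohen-lada-may-homology}---are the standard ingredients, so the proposal is correct and consistent with the paper's approach.
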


\begin{rmk}
  \label{rmk:topopsucks}
  For $E_n$ $H$-algebras, there is also a ``top'' operation
  $\xi_{n-1}$ which, if $C$ extends to an $E_{n+1}$-algebra, agrees
  with to $Q^{k+n-1}$ on classes in $\pi_k C$. However, the top
  operation satisfies less tractable versions of the identities
  enjoyed by the remaining operations---most prominently, additivity
  requires correction by a new binary operation called the Browder
  bracket \cite[III.3.3]{bmms-hinfty}.
\end{rmk}

\subsection{Spectra and geometric realization}

For the following, we note that a tethering of a composite map of
spectra $X \too{f} Y \too{g} Z$ is equivalent to a homotopy class of
extension from the mapping cone $Cf$ to $Z$, up to orientation for the
interval component of the mapping cone.

\begin{prop}
  Suppose $X$, $Y$, and $Z$ are spectra, $X \too{f} Y \too{g} Z$ is
  nullhomotopic, and that $\alpha \in \ker(f) \subset \pi_n(X)$ is
  represented by a map $S^n \to X$. Given any extension
  $Y \to Cf \too{h} Z$ from the mapping cone representing a tethering,
  the secondary operation $\langle g \teth{} f, \alpha\rangle$ is (up
  to sign) the set $h(\partial^{-1} \alpha)$, where
  $\partial\co \pi_{n+1} Cf \to \pi_n X$ is the connecting
  homomorphism in the long exact sequence of homotopy groups.
\end{prop}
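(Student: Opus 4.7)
The plan is to identify the secondary composite $\langle g \teth{h} f \teth{k} \alpha\rangle$, regarded as an element of $\pi_{n+1} Z$ via the canonical isomorphism $\pi_1\Map(S^n,Z)\cong \pi_{n+1}Z$, with $h(\tilde\alpha_k)$ for an explicit lift $\tilde\alpha_k\in\pi_{n+1}Cf$ of $\alpha$ under $\partial$. As $k$ varies over tetherings of $f\alpha$, the class $\tilde\alpha_k$ will range over $\partial^{-1}\alpha$, giving the desired equality of subsets of $\pi_{n+1}Z$.

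First I would translate tetherings into geometry via the cofiber sequence $X \too{f} Y \to Cf \too{p} \Sigma X$. By the preamble, the tethering $h$ is realized as an extension $\bar h\co Cf \to Z$ whose restriction along $Y \into Cf$ is $g$. A tethering $k\co f\alpha \Rightarrow \ast$ is a nullhomotopy, hence a map $k\co CS^n\to Y$ that restricts to $f\alpha$ on $S^n=\partial CS^n$. Gluing $k$ to the cone on $\alpha$, namely $C\alpha \co CS^n \to CX \subset Cf$, along their common boundary (where both agree with $f\alpha \in Y \subset Cf$), produces a map
\[
\tilde\alpha_k \co S^{n+1} = D_+^{n+1}\cup_{S^n} D_-^{n+1}\to Cf,
\]
where $D_+^{n+1}$ is sent via $k$ into $Y\subset Cf$ and $D_-^{n+1}$ is sent via $C\alpha$ into $CX\subset Cf$.

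Next I would verify two things about $\tilde\alpha_k$. First, postcomposing with $p$ collapses the $Y$-hemisphere and takes $CX$ to $\Sigma X$ in the standard way, so $p_*[\tilde\alpha_k]$ is the suspension of $[\alpha]$; equivalently $\partial[\tilde\alpha_k]=\pm[\alpha]$. Second, I would compare $\bar h\,\tilde\alpha_k$ to the secondary composite. By Definition~\ref{def:tetheredbracket}, the latter is the loop $(gk)^{-1}\cdot(h\alpha)$ in $\Map(S^n,Z)$ based at $\ast$, which under $\pi_1\Map(S^n,Z)\cong \pi_{n+1}Z$ corresponds to gluing the two nullhomotopies $gk$ and $h\alpha$ of $gf\alpha$ along $S^n$ into a single map $S^{n+1}\to Z$. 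But on $D_-^{n+1}$ the composite $\bar h\,\tilde\alpha_k$ factors as $CS^n \too{C\alpha} CX\into Cf \too{\bar h} Z$, which is tautologically $h\alpha$ (the restriction of the nullhomotopy $h$ of $gf$ along $\alpha$); and on $D_+^{n+1}$ it factors as $CS^n \too{k} Y\into Cf \too{\bar h} Z$, which is $gk$ since $\bar h|_Y = g$. So $\bar h\,\tilde\alpha_k$ exactly represents the secondary composite, up to the orientation convention flagged before the statement.

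Finally I would check that varying $k$ traces out all of $\partial^{-1}\alpha$. Two tetherings $k,k'$ differ by a loop in $\Map(S^n,Y)$ at $\ast$, i.e.\ an element of $\pi_{n+1}Y$, and that element is precisely the difference $[\tilde\alpha_k]-[\tilde\alpha_{k'}]$ in the image of $\pi_{n+1}Y\to\pi_{n+1}Cf$, which matches the indeterminacy in $\partial^{-1}\alpha$ coming from the long exact sequence. The only delicate step will be the orientation bookkeeping in the identification $S^{n+1}\cong CS^n\cup_{S^n} CS^n$ and in the cone coordinate of $Cf$; this is the source of the sign that the statement already disclaims.
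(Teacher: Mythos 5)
Your proof is correct, and it is the argument the paper intends: the proposition is stated \emph{without proof}, the only indication being the preceding remark that a tethering of $gf$ is the same data as an extension $\bar h\co Cf \to Z$ restricting to $g$ on $Y$. Your gluing construction $\tilde\alpha_k = C\alpha \cup_{S^n} k \co S^{n+1}\to Cf$, the identification of $\bar h\,\tilde\alpha_k$ with the loop $(gk)^{-1}\cdot(h\alpha)$ defining the tethered bracket, and the torsor argument showing that varying $k$ sweeps out the full coset $\partial^{-1}\alpha$ supply exactly the standard details the paper omits, with the sign absorbed into the orientation ambiguity the statement already disclaims.
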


\begin{cor}
  \label{cor:geomreal}
  Suppose that $X_\star$ is a simplicial spectrum with geometric
  realization $|X_\star|$ and that $F$ is the homotopy fiber in the
  sequence $F \too{j} X_1 \too{d_0} X_0$. Then the composite
  $F \too{d_1j} X_0 \too{i} |X_\star|$ has a canonical tethering. If
  $\alpha \in \pi_n(F) \subset \pi_n X_1$ is in the kernel of $d_1$,
  then in the geometric realization spectral sequence
  \[
    H_p(\pi_q X_\star) \Rightarrow \pi_{p+q} |X_\star|
  \]
  the secondary operation $\langle i \teth{} d_1j, \alpha\rangle$ is
  represented (up to sign) by the element $[\alpha] \in H_1(\pi_n
  X_\star)$ in the spectral sequence.
\end{cor}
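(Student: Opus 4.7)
The plan is to apply the preceding proposition with $X = F$, $Y = X_0$, $Z = |X_\star|$, $f = d_1 j$, $g = i$, so that the secondary operation becomes $h_*(\partial^{-1}\alpha)$ for an explicit extension $h\co C(d_1 j) \to |X_\star|$ built from the $1$-skeleton, and then to identify this class with $[\alpha]$ via the skeletal filtration. The first step is to construct the canonical tethering. The $1$-skeleton $|X_\star|^{(1)}$, viewed as the double mapping cylinder of $d_0, d_1\co X_1 \rightrightarrows X_0$, comes with a canonical homotopy $i \circ d_0 \simeq i \circ d_1$ as maps $X_1 \to |X_\star|$ given by the attached $1$-cells. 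Restricting along $j$ gives a homotopy $i d_0 j \simeq i d_1 j$. Since $F$ is the homotopy fiber of $d_0$, there is a canonical nullhomotopy $d_0 j \simeq 0$, which pushes forward along $i$ to a nullhomotopy of $i d_0 j$. Concatenating these two homotopies produces the canonical nullhomotopy of $i d_1 j$, that is, the tethering; by construction it factors through $|X_\star|^{(1)}$.

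This tethering yields an extension $h\co C(d_1 j) \to |X_\star|^{(1)} \to |X_\star|$, so by the preceding proposition $\langle i \teth{} d_1 j, \alpha\rangle$ is represented by $h_*(\partial^{-1}\alpha)$, where $\partial\co \pi_{n+1}C(d_1 j) \to \pi_n F$ is the connecting map of the cofiber sequence $F \to X_0 \to C(d_1 j) \to \Sigma F$ and $\alpha$ lies in its image because $d_1 j\alpha = 0$. To match this with the spectral sequence class $[\alpha]$, I would compare this cofiber sequence with the filtration quotient $X_0 \to |X_\star|^{(1)} \to |X_\star|^{(1)}/X_0 \simeq \Sigma X_1^{\mathrm{nondeg}}$. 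A diagram chase identifies the induced map $\Sigma F \to \Sigma X_1^{\mathrm{nondeg}}$ with $\Sigma j$ (up to orientation). Consequently $h_*(\partial^{-1}\alpha) \in F_1\pi_{n+1}|X_\star|$, and its image in the associated graded piece $E^\infty_{1,n} \subset \pi_n X_1^{\mathrm{nondeg}}$ is $j\alpha$, which is precisely the representative of $[\alpha] \in H_1(\pi_n X_\star)$ in the normalized chain complex.

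The main obstacle is this last identification: verifying that the mapping cone $C(d_1 j)$ and the $1$-skeleton quotient $|X_\star|^{(1)}/X_0$ fit into compatible cofiber sequences with induced map $\Sigma j$, and carefully tracking the orientation conventions so that the sign of the connecting homomorphism matches the sign of $d_0 - d_1$ in the chain complex differential. This is the source of the ``up to sign'' in the statement. The identification of $|X_\star|^{(1)}/|X_\star|^{(0)}$ with $\Sigma X_1^{\mathrm{nondeg}}$ is classical, and the compatibility with the cone extension constructed from the tethering should pose no conceptual difficulty, but requires careful bookkeeping in whichever model of spectra one works.
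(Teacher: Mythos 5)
Your argument matches the paper's proof: the tethering is built exactly as you describe (the $1$-cells of the realization give the homotopy $i d_0 \simeq i d_1$, which is composed with the image under $i$ of the fiber nullhomotopy of $d_0 j$), and the identification then proceeds by recognizing the mapping cone $C(d_1 j)$ as the $1$-skeleton of $|X_\star|$ and reading off the class in the skeletal-filtration exact couple. The only cosmetic difference is that the paper identifies the filtration layer $sk^{(1)}|X_\star|/sk^{(0)}|X_\star|$ directly with $\Sigma F$ (implicitly using that $s_0$ splits $d_0$, so $X_1^{\mathrm{nondeg}} \simeq F$) rather than passing through $\Sigma X_1^{\mathrm{nondeg}}$ and the map $\Sigma j$ as you do; both routes leave the same degeneracy and orientation bookkeeping at the level of a sketch.
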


\begin{proof}
  The $1$-skeleton of $|X_\star|$, by definition, has a canonical
  diagram
  \[
    \xymatrix{
      X_1 \vee X_1 \ar[r] \ar[d]_{d_0 \vee d_1} & X_1 \sma [0,1]_+ \ar[d]\\
      X_0 \ar[r] & sk^{(1)} |X_\star|.
    }
  \]
  This defines a homotopy between the maps $i d_0$ and $i d_1$. The
  map $d_0 j$ has a canonical nullhomotopy by definition, and
  composing these two homotopies gives a canonical tethering
  \[
    i (d_1 j) \Rightarrow i (d_0 j) \Rightarrow *
  \]
  of $i (d_1 j)$. In particular, there is a canonical map
  $Cj \to sk^{(1)} |X_\star|$ from the mapping cone of $j$ to the
  $1$-skeleton of the geometric realization; by more carefully
  understanding the degeneracies, we can show that this map is
  a homotopy equivalence.

  By Proposition~\ref{prop:suspension}, in the resulting long exact
  sequence
  \[
    \dots \pi_{n+1} X_0 \too{i} \pi_{n+1} sk^{(1)} |X_\star| \too{\partial} \pi_n F
    \too{d_1j} \pi_n X_0 \too{i} \dots,
  \]
  any $\alpha \in \pi_n(F)$ which maps to zero under $d_1 j$ has a
  bracket $\langle i \teth{} d_1 j, \alpha\rangle$ in $\pi_{n+1}
  sk^{(n+1)} |X_\star|$, represented by any lift of $\alpha \in
  \pi_n F$, with indeterminacy given by the image of $i$.

  The spectral sequence for the homotopy groups of the geometric
  realization $|X_\star|$ is the spectral sequence associated to the
  following (unrolled) exact couple:
  \[
    \xymatrix{
      \ast \ar[r] & \pi_* sk^{(0)} |X_\star| \ar[r] \ar[d] & \pi_*
      sk^{(1)} |X_\star| \ar[r] \ar[d] & \dots \\
      & \pi_* sk^{(0)} \ar[ul] & \pi_* sk^{(1)} / sk^{(0)}  \ar[ul] &
    }
  \]
  Identifying the $0$-skeleton with $X_0$ and the next layer with the
  suspension of $F$, we obtain our desired identification of the
  element in the $E_1$-term with $\alpha$.
\end{proof}

We will now specialize to discuss how certain elements in a K\"unneth
spectral sequence can be identified with the results of secondary
operations.

\begin{prop}
  \label{prop:suspension}
  Suppose $f\co R \to S$ is a map of commutative ring spectra, and let
  $i = 1 \sma f\co S \sma R \to S \sma S$. Then, in the (pointed)
  category of augmented commutative $S$-algebras, there is a canonical
  tethering $p \teth{t} i$ for the composite
  \[
    S \sma R \too{i} S \sma S \too{p} S \sma_R S.
  \]

  Let $x \in \pi_n (S \sma R)$ map to zero in $\pi_n (S \sma S)$, so
  that $\sigma x = \langle p \teth{t} i, x\rangle \subset \pi_{n+1}S
  \sma_R S$ is defined. Then $\sigma x$ is detected by the image of
  $x$ under $\pi_n (S \sma R) \to \pi_n(S \sma R \sma S)$ in the
  two-sided bar construction spectral sequence
  \[
  H_p \pi_q (S \sma R^{\sma \star} \sma S) \Rightarrow \pi_{p+q} (S
  \sma_R S).
  \]
\end{prop}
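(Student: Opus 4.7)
The plan is to identify $S \sma_R S$ with the geometric realization of the two-sided bar construction $B(S,R,S)$ (whose $p$-simplices are $S \sma R^{\sma p} \sma S$, with face maps given by multiplying adjacent factors using $f$ on the outer ones) and deduce the statement from Corollary~\ref{cor:geomreal}. Let $\phi = 1 \sma \eta_S\co S \sma R \to S \sma R \sma S = X_1$ be the map smashing with the unit $\eta_S\co \mb S \to S$ on the right. A direct calculation shows $d_1 \circ \phi = i$, whereas $d_0 \circ \phi$ equals the composite of the augmentation $\mu_S \circ (1 \sma f)\co S \sma R \to S$ followed by the right unit $S \to S \sma S$, and this composite factors through the zero object $S$ in augmented commutative $S$-algebras. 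The canonical tethering $t$ is then obtained by traversing the simplicial $1$-cell $\phi \sma \Delta^1$ in the $1$-skeleton of $|B(S,R,S)|$, which supplies a homotopy from $p \circ i$ to the zero map $\iota \circ d_0 \circ \phi$, where $\iota\co X_0 \to |B(S,R,S)|$ denotes the inclusion of $0$-simplices.

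For an element $x \in \pi_n(S \sma R)$ with $i_* x = 0$, the hypothesis forces $(d_0 \circ \phi)_*(x) = (1 \sma \eta_S)_* \mu_*(i_* x) = 0$ as well, so $\phi_*(x)$ lifts to an element $\tilde x \in \pi_n(F)$, where $F$ is the homotopy fiber of $d_0\co X_1 \to X_0$; moreover, $d_1 \tilde x = i_* x = 0$. Applying Corollary~\ref{cor:geomreal} to $X_\star = B(S,R,S)$ with $\alpha = \tilde x$, the bracket $\langle \iota \teth{\tau} d_1 j, \tilde x\rangle$ is represented up to sign by the cycle $[\tilde x] = [\phi_*(x)] \in H_1(\pi_n X_\star)$, which is precisely the image of $x$ under $\pi_n(S \sma R) \to \pi_n(S \sma R \sma S)$ in the $E_1$-page of the bar spectral sequence.

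To finish, I would identify $\sigma x = \langle p \teth{t} i, x\rangle$ with $\langle \iota \teth{\tau} d_1 j, \tilde x\rangle$ as elements of $\pi_{n+1}(S \sma_R S)$. The construction of $t$ exhibits it as the pullback of $\tau$ along $\phi$, so the functoriality of secondary composites (Proposition~\ref{prop:functorialbrackets}), combined with the juggling identity that allows replacing $i \circ x = d_1 \circ \phi \circ x$ with $d_1 \circ j \circ \tilde x$, yields the desired equality. The main obstacle is reconciling two distinct senses of ``null'': the tethering in the proposition lives in the pointed category of augmented commutative $S$-algebras (where null means factoring through $S$), whereas Corollary~\ref{cor:geomreal} produces one in (pointed) spectra where null is the basepoint. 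The reconciliation is mediated by the canonical factorization of $\iota \circ d_0 \circ \phi$ through the augmentation $S \sma R \to S$ followed by the unit $S \to S \sma_R S$, and care is needed to check that the two tetherings agree after this identification — essentially a bookkeeping task but one that requires unwinding the enriched structure of $\ppa{\mathcal{D}}$ against the bar-filtration construction.
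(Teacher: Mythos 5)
Your proposal is correct and follows essentially the same route as the paper's proof: both extract the canonical tethering from the $1$-truncation of the two-sided bar construction (using that $d_1$ restricted along $1 \sma 1 \sma \eta_S$ recovers $i$ while $d_0$ factors through the zero object $S$ of augmented commutative $S$-algebras), and both then invoke Corollary~\ref{cor:geomreal} to identify the bracket with the filtration-one class represented by $\phi_*(x) \in \pi_n(S \sma R \sma S)$. The bookkeeping you flag at the end --- checking that the tethering constructed in augmented commutative $S$-algebras forgets to the one Corollary~\ref{cor:geomreal} uses in spectra --- is exactly what the paper handles with its large homotopy coherent diagram, and is the essential point (see the remark following the proposition on why the tethering must come from the algebra level).
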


\begin{proof}
  The relative smash product receives a map from the end of the augmented
  simplicial bar construction
  \[
    \xymatrix{
      S \sma R \sma S \ar@<-.5ex>[r] \ar@<.5ex>[r] & S \sma S \ar[r] &
      S \sma_R S,
    }
  \]
  a diagram of commutative ring spectra. The face maps
  \[
    d_j\co S \sma R \to S \sma R \sma S \to S \sma S
  \]
  are the null map $S \sma R \to S \too{\eta_L} S \sma S$ for $j=0$ and the
  map $S \sma R \too{i} S \sma S$ for $j=1$. Because the two composites $S
  \sma R \to S \sma_R S$ are homotopic, this provides a canonical
  tethering in the category of $S$-algebras.
  
  A homotopy element $x$ as described comes from a homotopy coherent
  diagram as follows:
  \[
    \xymatrix{
      S^n \ar[rr] \ar[dd]\ar[dr] && fib(d_1) \ar[r] \ar[d] & \ast
      \ar[d] \\
      & \mb P_S S^n \ar[r]^x \ar[d] \ar@{}[ur]|-*[@]{\Rightarrow} & S \sma R
      \ar[r] \ar[d]^{i} \ar@{}[ur]|-*[@]{\Rightarrow} & S \ar[d] \\
      \ast \ar[r] & S \ar[r] \ar@{}[ur]|-*[@]{\Rightarrow} &
      S \sma S \ar[r]_p \ar@{}[ur]|-*[@]{\Rightarrow} & S \sma_R S
    }
  \]
  The two lower right-hand squares define the bracket $\langle p, i,
  x\rangle$ in augmented commutative $S$-algebras, while the outside
  of the diagram is made up of two large (2-by-2 and 2-by-1)
  rectangles that are the result of forgetting down to
  spectra. However, by Corollary~\ref{cor:geomreal} the outside square
  determines an element of $\pi_{n+1} (S \sma_R S)$ which lifts to the
  desired element in the two-sided bar construction spectral sequence.
\end{proof}

\begin{rmk}
  The tethering plays an important role here. If we do not impose that
  the tethering $p \teth{t} i$ comes from a tethering in
  $E_\infty$-algebras, rather than spectra, then the indeterminacy for
  the bracket in spectra is too large to determine anything about
  bracket in $E_\infty$-algebras.
\end{rmk}

If $\pi_*(S \sma R)$ is flat over $\pi_* S$, we can identify the
$E_2$-term in the two-sided bar construction spectral sequence: 
\[
E^2_{**} = \Tor^{\pi_*(S \sma R)}_{**}(\pi_*(S \sma S), \pi_* S) \Rightarrow
\pi_*\left(S \sma_R S\right)
\]
The element $x$ gives rise to the corresponding element in
$\Tor_{1,n}$. In particular, we have the following result when the
target is the mod-$2$ Eilenberg--Mac Lane spectrum.

\begin{prop}
  \label{prop:geomdetection}
  Suppose $R \to H$ is a map of $E_\infty$-algebras and $x \in H_nR$
  maps to zero in the dual Steenrod algebra $H_* H$. Then there is an
  element $\sigma x = \langle p \teth{t} i, x\rangle$ in the $R$-dual
  Steenrod algebra $\pi_*(H \sma_R H)$ which is detected by the
  image of $x$ in homological filtration $1$ of the spectral sequence
  \[
  \Tor^{H_* R}_{**}(H_*, H_* H) \Rightarrow \pi_* \left(H \sma_R H\right)
  \]
\end{prop}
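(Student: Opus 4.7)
The plan is to derive this proposition as a direct specialization of Proposition~\ref{prop:suspension}, combined with an identification of the $E_2$-page of the bar construction spectral sequence with the Tor groups.

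First I would apply Proposition~\ref{prop:suspension} with $S = H$ and the given map $f\co R \to H$ of commutative ring spectra. This immediately yields the canonical tethering $p \teth{t} i$ for the composite
\[
H \sma R \too{i} H \sma H \too{p} H \sma_R H
\]
in augmented commutative $H$-algebras, and produces the element $\sigma x = \langle p \teth{t} i, x\rangle \in \pi_{n+1}(H \sma_R H)$ together with the statement that it is detected in filtration $1$ of the bar construction spectral sequence
\[
H_p \pi_q(H \sma R^{\sma \star} \sma H) \Rightarrow \pi_{p+q}(H \sma_R H)
\]
by the image of $x$ under the natural map $\pi_n(H \sma R) \to \pi_n(H \sma R \sma H)$.

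The second step is to identify the $E_2$-page with Tor. Because $\pi_* H = \mb F_2$ is a field, $\pi_*(H \sma R) = H_* R$ and $\pi_*(H \sma H) = H_* H$ are automatically flat over $\pi_* H$, so the Künneth theorem applies degreewise and gives $\pi_*(H \sma R^{\sma k} \sma H) \cong (H_* R)^{\otimes k} \otimes H_* H$. The resulting normalized complex is the standard two-sided bar complex computing $\Tor^{H_*R}_{**}(H_*, H_*H)$, where $H_*$ is regarded as an $H_* R$-module via the augmentation and $H_* H$ via $f_*$. This is precisely the hypothesis in the general flat case quoted just after Proposition~\ref{prop:suspension}.

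Finally I would track the element $x$ through the identification. Under the Künneth isomorphism, the image of $x \in H_n R$ in $\pi_n(H \sma R \sma H)$ corresponds to $x \tens 1 \in H_* R \tens H_* H$, which by our assumption that $f_* x = 0$ is a cycle in the $1$-simplices of the normalized bar complex and hence represents a class in $\Tor^{H_*R}_{1,n}(H_*, H_* H)$. This is the class that detects $\sigma x$. The main obstacle here is purely one of bookkeeping: one must confirm that the edge homomorphism from the $E_1$-page of the bar spectral sequence to the quoted Tor spectral sequence sends the element $x$ from Proposition~\ref{prop:suspension} to the indicated cycle representative, and verify the conventions for the two-sided versus one-sided bar constructions agree. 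Since $\pi_* H$ is a field, no higher Tor terms obstruct the Künneth isomorphism, so this reduces to unwinding definitions.
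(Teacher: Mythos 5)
Your proposal is correct and follows essentially the same route as the paper, whose entire proof is to invoke Proposition~\ref{prop:suspension} (with the Tor identification of the $E_2$-term already set up in the preceding paragraph, where flatness is automatic over the field $\pi_* H$). The only point you elide is that the hypothesis gives a map of $E_\infty$-algebras rather than of commutative ring spectra, so one must first rectify $R \to H$ to a weakly equivalent map of commutative ring spectra before Proposition~\ref{prop:suspension} applies --- this is the one step the paper's proof explicitly records.
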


\begin{proof}
  In this case, we can rectify the map $R \to H$ to a weakly
  equivalent map between commutative ring spectra and apply
  Proposition~\ref{prop:suspension}.
\end{proof}

We now specialize this result to the case where $R$ is the complex
bordism spectrum.
\begin{prop}
\label{prop:mudetection}
  Let $n$ be an integer which is not of the form $2^k -1$ for any $k$,
  so that the corresponding generator $b_n \in H_{2n} MU \cong \mb
  F_2[b_1,b_2,\dots]$ in mod-$2$ homology is the Hurewicz image of the
  generator $x_n \in \pi_{2n} MU \cong \mb Z[x_1,x_2,\dots]$. Then the
  diagram of $E_\infty$ $H$-algebras
  \[
  \mb P_H(S^{2n}) \too{b_n} H \sma MU \too{p} H \sma H \too{i} H \sma_{MU} H,
  \]
  determines a bracket, and $\langle p, i, b_n\rangle \equiv \sigma x_n$
  mod decomposables.
\end{prop}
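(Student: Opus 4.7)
The plan is to apply Proposition~\ref{prop:geomdetection} with $R = MU$, realized as an $E_\infty$-ring spectrum with an $E_\infty$-algebra map $MU \to H$. First I verify that a representative of $b_n$ can be chosen so that the bracket is defined, which requires the composite $p \circ b_n$ to be null. This composite represents the image of $b_n \in H_{2n}MU$ in the dual Steenrod algebra $H_* H = \mb F_2[\xi_1,\xi_2,\dots]$; by Milnor's calculation, the image of $H_*MU \to H_*H$ is the $BP$-subalgebra $\mb F_2[\xi_1^2,\xi_2^2,\dots]$, whose polynomial generators sit in degrees $2(2^i - 1)$. Because $n$ is not of the form $2^k - 1$, the image of $b_n$ in $H_{2n} H$ lies in the decomposable ideal of this subalgebra, and can therefore be written as the image of some decomposable $d \in H_{2n}MU$ (obtained by lifting each $\xi_i^2$ back to $b_{2^i - 1}$ and taking the corresponding polynomial). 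Setting $b'_n := b_n - d$, we obtain a class congruent to $b_n$ modulo decomposables in $H_*MU$ whose image in $H_* H$ is exactly zero, so that the bracket $\langle p, i, b'_n\rangle$ is strictly defined.

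With this choice, Proposition~\ref{prop:geomdetection} produces an element $\sigma b'_n \in \pi_{2n+1}(H \sma_{MU} H)$ detected in Tor-filtration $1$ of the Künneth spectral sequence $\Tor^{H_*MU}_{**}(H_*, H_* H) \Rightarrow \pi_*(H \sma_{MU} H)$ by the class of $b'_n$. Because $b'_n$ and $b_n$ differ by a decomposable in $H_*MU$, and the indeterminacy for the bracket consists of products of positive-degree classes in $\pi_*(H \sma_{MU} H)$, the value $\langle p, i, b_n\rangle$ from the statement coincides with $\sigma b'_n$ modulo decomposables in the target ring.

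To finish, I identify $\sigma b'_n$ with $\sigma x_n$ modulo decomposables. By the definition of the generators $b_n$ as (reductions of) Hurewicz images of $x_n$, one has $\mathrm{Hur}(x_n) \equiv b_n \equiv b'_n$ modulo decomposables in $H_{2n}MU$. The element $\sigma x_n$ arises by applying Proposition~\ref{prop:suspension} to the class $x_n \in \pi_{2n}MU$, which maps to zero in $\pi_{2n} H = 0$, and naturality along the unit $MU \to H \sma MU$ identifies this with $\sigma(\mathrm{Hur}(x_n))$. The difference $\sigma(\mathrm{Hur}(x_n)) - \sigma b'_n$ is then $\sigma$ applied to a decomposable element of $H_*MU$, and by the juggling formulas of Proposition~\ref{prop:bracketjuggles} combined with the suspension-kills-multiplication phenomenon of Proposition~\ref{prop:powerstability} this lies in the decomposables of $\pi_*(H \sma_{MU} H)$. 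The main obstacle is the careful bookkeeping of the $E_\infty$-algebra structure, in particular that the decomposable adjustment $d$ can be realized homotopically by a map of free $E_\infty$ $H$-algebras and that the identification $\sigma x_n = \sigma(\mathrm{Hur}(x_n))$ is valid as brackets rather than merely as elements of the Tor spectral sequence $E_1$-page.
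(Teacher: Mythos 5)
Your proof is correct and follows essentially the same route as the paper: Milnor's computation of $H_*MU \to H_*H$ shows that $b_n$ (after correction by a decomposable, which the paper folds into its choice of polynomial generators) maps to zero so the bracket is defined, Proposition~\ref{prop:geomdetection} identifies the bracket with the filtration-one class of $b_n$ in the K\"unneth spectral sequence over $H_*MU$, and comparison with the spectral sequence over $\pi_*MU$ identifies $\sigma b_n$ with $\sigma x_n$ mod decomposables. The only point you gloss over that the paper makes explicit is the dimension count showing both K\"unneth spectral sequences degenerate, which is what guarantees the filtration-one identification is unambiguous; since the hypothesis $n \neq 2^k-1$ ensures there are no indecomposables in the relevant degree of filtration zero, your "mod decomposables" conclusion still goes through.
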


\begin{proof}
  The map $H_* MU \to H_* H$ is isomorphic to a map of polynomial algebras
  \[
  \mb F_2[b_1,b_2,\dots] \to \mb F_2[\xi_1,\xi_2,\dots]
  \]
  that sends $b_{2^k-1}$ to $\xi_k^2$ and sends the other generators to
  zero \cite[3.1.4]{ravenel-greenbook}. In particular, the K\"unneth
  spectral sequence
  \begin{equation}
  \Tor^{H_* MU}_{**}(H_*, H_* H) \Rightarrow \pi_* \left(H \sma_{MU}
    H\right)\label{eq:kunnethss}
  \end{equation}
  has as $E_2$-term an exterior algebra
  $\Lambda[\xi_k] \tens \Lambda[\sigma b_n \mid k \neq 2^{k}-1]$. By
  comparison with the K\"unneth spectral sequence
  \[
  \Tor^{\pi_* MU}_{**}(H_*, H_*) \Rightarrow \pi_* \left(H \sma_{MU} H\right),
  \]
  which degenerates and has $E_2$-term $\Lambda[\sigma x_k]$ of the
  same (graded) dimension, we find that spectral
  sequence~\eqref{eq:kunnethss} degenerates and that $\sigma b_n$ is
  congruent to $\sigma x_n$ mod decomposables for $n$ not of the form
  $2^k - 1$. We can then apply Proposition~\ref{prop:geomdetection} to
  identify $\sigma x_n$ as a secondary operation.
\end{proof}

\section{Hopf rings}
\label{sec:hopfrings}

\subsection{Background}

In this section we will recall some of the work of Ravenel--Wilson on
Hopf rings \cite{ravenel-wilson-hopfring}.

Let $E$ be a spectrum with a homotopy commutative multiplication and
let $\{E_n\}_{n \in \mb Z}$ be an associated $\Omega$-spectrum. Then
for any ring $R$ the homology groups $H_*(E_\star, R)$ have the
structure of a Hopf ring: they have a coproduct $\Delta$, an additive
product $\hash$, and a multiplicative product $\circ$ satisfying
associativity, commutativity, unitality, and distributivity laws that
make them into a graded ring object in coalgebras
\cite[1.12]{ravenel-wilson-hopfring}.\footnote{Ravenel--Wilson
  write $x \ast y$ for the additive product and $x \circ y$ for the
  multiplicative product, while Cohen--Lada--May
  \cite{cohen-lada-may-homology} write $xy$ for the additive product
  and $x \hash y$ for the multiplicative product.}
The constants $c \in E^{n} = \pi_0 E_{n}$ give rise to elements
$[c] \in H_0(E_n;R)$ under the Hurewicz map.

\begin{defn}
  \label{def:hopfgenerators}
  Suppose $E$ has a complex orientation $x \in \wt E^2(\mb{CP}^\infty)$
  realized by a based map $b\co \mb{CP}^\infty \to E_2$, and let $\beta_i
  \in H_{2i}(\mb{CP}^\infty;R)$ be dual to the generator $t^i \in
  H^{*}(\mb{CP}^\infty;R) \cong R[t]$. We define the classes $b_i \in
  H_{2i}(E_2;R)$ to be the images of $\beta_i$ under $f$.
\end{defn}

\begin{thm}[{\cite[4.6, 4.15, 4.20]{ravenel-wilson-hopfring}}]
  \label{thm:ravenelwilsonbasis}
  Let $\{MU_n\}$ be an $\Omega$-spectrum associated to complex
  cobordism. For any ring $R$ and any $n \in \mb Z$, $H_*(MU_{2n};R)$
  is, as an algebra under $\hash$, the tensor product of the group
  algebra $\mb Z[\pi_{-2n} MU]$ with a polynomial algebra over $R$.

  The even-degree indecomposables $Q^{\hash}\rH_*(MU_{2\star};R)$ under
  the $\hash$-product form a commutative graded ring under $\circ$, with
  relations as follows. If we define a formal power series $b(s) =
  \sum b_i s^i$ and write $x+_F y = \sum a_{i,j} x^i y^j$ for the
  formal group law of $MU_*$, then we have the Ravenel--Wilson
  relations
  \begin{equation}\label{eq:rwrelation}
    b(s+t) = \sum [a_{i,j}] \circ b(s)^{\circ i} \circ b(t)^{\circ j}.
  \end{equation}

  The ring $Q^{\hash} \rH_*(MU_{2\star}; R)$ is a quotient of the
  graded ring
  \[
  R[b_i] \otimes MU^{-2\star}
  \]
  by a regular sequence, determined by the Ravenel--Wilson
  relations. Both $Q^{\hash}\rH_*(MU_{2\star};R)$ and
  $H_*(MU_{2\star};R)$ are free over $R$.
\end{thm}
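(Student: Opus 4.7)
The plan is to establish the three assertions of the theorem in stages. First I would verify the Hopf ring axioms: the coproduct $\Delta$ comes from the diagonal $E_n \to E_n \times E_n$; the additive product $\hash$ from the $H$-space structure on $E_n \simeq \Omega E_{n+1}$; and the multiplicative product $\circ\co H_*(E_m;R) \otimes H_*(E_n;R) \to H_*(E_{m+n};R)$ from the ring spectrum pairing $E_m \sma E_n \to E_{m+n}$. The distributivity law $(x \hash y) \circ z = (x \circ z) \hash (y \circ z)$ encodes bilinearity of the multiplication over the addition and follows from a standard diagram chase, as do the graded commutativity and unitality axioms.

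Second, I would derive the Ravenel--Wilson relation \eqref{eq:rwrelation} from the formal group law. The tensor product of tautological line bundles is classified by a map $\mu\co \mb{CP}^\infty \times \mb{CP}^\infty \to \mb{CP}^\infty$ whose $MU$-orientation satisfies $\mu^*(x) = x_1 +_F x_2 = \sum a_{i,j} x_1^i x_2^j$. Translating this identity of $MU$-cohomology classes into the dual language of homological pushforwards via $b$, and assembling into generating functions $b(s) = \sum b_i s^i$, yields the claimed identity, with the scalars $[a_{i,j}]$ appearing as the Hurewicz images of the coefficients $a_{i,j}$.

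Third, I would compute the $\hash$-algebra structure of $H_*(MU_{2n};R)$. For $n \leq 0$, the space $MU_{2n}$ is an iterated loop space of $MU_0 \simeq \mb Z \times BU$, and iterated bar spectral sequences starting from the classical polynomial Hopf algebra $H_*(BU;R)$ yield the required presentation; for $n > 0$, the canonical map $MU(n) \to MU_{2n}$ together with the Thom isomorphism supply the polynomial generators. In either case the factor $\mb Z[\pi_{-2n} MU]$ records path components, and one checks by inspection that the generators $b_i$ are among those produced.

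Finally, since $\circ$ descends to the indecomposables $Q^{\hash}$ by distributivity, it remains to show that the Ravenel--Wilson relations cut out a regular sequence in $R[b_i] \otimes MU^{-2\star}$. The main obstacle lies here: verifying regularity rather than merely that these are relations. I would attack this by comparing Poincar\'e series of the abstract quotient against the $Q^{\hash} \rH_*(MU_{2\star};R)$ determined by the previous step, with regularity controlled by an inductive filtration argument (on $\hash$-length, say) in which each new relation is shown to introduce only a fresh generator in the appropriate degree. Freeness over $R$ is then automatic: the case $R = \mb Z$ yields a free abelian group, and universal coefficients extends this to arbitrary $R$.
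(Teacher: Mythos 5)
The first thing to say is that the paper does not prove this theorem: it is imported verbatim from Ravenel--Wilson (their 4.6, 4.15, 4.20), so there is no internal argument to compare yours against. Judged on its own terms, your outline does follow the general shape of the Ravenel--Wilson development --- Hopf ring axioms, the main relation obtained by applying the unstable homology invariant to $\mu^*x = x_1 +_F x_2$ on $\mb{CP}^\infty \times \mb{CP}^\infty$ (this is exactly how the paper's Remark~\ref{rmk:unstablepowerseries} describes it), and then a size count against an abstract presentation to get regularity --- and your steps one, two, and four are reasonable sketches of what actually happens.

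The genuine gap is in step three, which is where essentially all the content of the theorem lives. Your base case is false: $MU_0 = \Omega^\infty MU$ is not $\mb Z \times BU$; that is $\Omega^\infty KU$. One sees this already on homotopy groups: $\pi_*(\mb Z \times BU)$ is a single copy of $\mb Z$ in each even degree, while $\pi_*(\Omega^\infty MU) \cong \mb Z[x_1,x_2,\dots]$, so for instance $\pi_4 \cong \mb Z^2$. Consequently the ``iterated bar spectral sequences starting from $H_*(BU;R)$'' compute the wrong thing. There is also a directional confusion: for $n \leq 0$ the spaces $MU_{2n}$ are \emph{loop} spaces of $MU_0$, whereas the bar (Rothenberg--Steenrod) spectral sequence computes homology of \emph{deloopings}, i.e.\ passes from $H_*(MU_{2n})$ to $H_*(MU_{2n+2})$; going down requires Eilenberg--Moore. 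More importantly, the assertion that each $H_*(MU_{2n};\mb Z)$ is a group ring tensored with an even polynomial algebra --- equivalently, that the relevant spectral sequences collapse and everything is torsion-free --- is the hard theorem here (it rests on Wilson's computation of the $\Omega$-spectrum for $BP$ together with the splitting of $MU_{(p)}$), and it is also precisely the input that makes your Poincar\'e-series comparison in step four run: without knowing the size of the target, regularity of the Ravenel--Wilson relations cannot be checked by counting. As written, the proposal reduces the theorem to its hardest ingredient and then gets the starting point of that ingredient wrong.
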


\begin{cor}
  \label{cor:hopfringcoefficients}
  For all $n$ and all primes $p$, we have commutative diagrams of the following form:
  \[
  \xymatrix{
    H_*(MU_{2n}; \mb Q) \ar[d] &
    H_*(MU_{2n}; \mb Z) \ar@{_`->}[l] \ar@{->>}[r] \ar[d]&
    H_*(MU_{2n}) \ar[d]\\
    H_{*-2n}(MU; \mb Q) &
    H_{*-2n}(MU; \mb Z) \ar@{_`->}[l] \ar[r]&
    H_{*-2n}(MU)\\
  }
  \]
\end{cor}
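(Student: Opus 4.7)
The plan is to read off the corollary directly from Theorem \ref{thm:ravenelwilsonbasis} combined with universal coefficients, since the only real content is the existence and compatibility of the stabilization map.

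First, I would construct the vertical arrows. The $\Omega$-spectrum structure gives adjoint structure maps $\Sigma^{2k} MU_{2n} \to MU_{2n+2k}$ for every $k \geq 0$; assembling these into a stable map $\Sigma^\infty MU_{2n} \to \Sigma^{2n} MU$ (working with basepoints, i.e.\ unit components, and then extending to unreduced homology) produces the natural map $H_*(MU_{2n}; R) \to H_{*-2n}(MU; R)$ for every coefficient ring $R$. Naturality of the Hurewicz map in the coefficient ring $R$ ensures that these stabilization maps commute with any change-of-coefficients maps $R \to R'$.

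Next, I would argue that the horizontal arrows are as claimed and are the change-of-coefficients maps. The final part of Theorem \ref{thm:ravenelwilsonbasis} states that $H_*(MU_{2n}; R)$ is free over $R$ for every $R$, and in particular that $H_*(MU_{2n}; \mb Z)$ is free abelian. The universal coefficient theorem then gives $H_*(MU_{2n}; \mb Q) \cong H_*(MU_{2n}; \mb Z) \tens \mb Q$ (so the map from the $\mb Z$-theory is injective, since no $\mb Z$-free module has $\mb Q$-torsion) and $H_*(MU_{2n}; \mb F_p) \cong H_*(MU_{2n}; \mb Z) \tens \mb F_p$ (so the map from the $\mb Z$-theory is surjective). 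This gives the top row. For the bottom row, the classical calculation $H_*(MU; \mb Z) \cong \mb Z[b_1, b_2, \dots]$ with $|b_i| = 2i$ shows $H_*(MU; \mb Z)$ is free abelian, and the same universal coefficient argument produces the injection into rational homology and the reduction to mod-$p$ homology.

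Finally, I would note that commutativity of each of the two squares is automatic from the naturality of the stabilization map in the coefficient ring: both the vertical arrows and the horizontal arrows are instances of applying the functor $H_*(-;R) \to H_*(-;R')$ or the stabilization natural transformation, and these two constructions trivially commute. No obstacle of any substance arises: the only place where anything needs to be checked is the freeness of $H_*(MU_{2n}; R)$, and that is precisely the content of Theorem \ref{thm:ravenelwilsonbasis} that has already been invoked.
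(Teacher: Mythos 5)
Your proposal is correct and matches the paper's (implicit) argument: the corollary is stated without proof precisely because it follows from the freeness of $H_*(MU_{2\star};R)$ over $R$ asserted at the end of Theorem~\ref{thm:ravenelwilsonbasis} (and the classical freeness of $H_*(MU;\mb Z)$), combined with the universal coefficient theorem and the naturality of the stabilization maps in the coefficient ring. The only cosmetic quibble is the phrase ``no $\mb Z$-free module has $\mb Q$-torsion''; what you mean is that a free abelian group is torsion-free and hence injects into its rationalization.
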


\subsection{The unstable homology invariant}

In the following, for spaces $X$ and $Y$ we will find it convenient to
identify $\Hom(H_* X, H_* Y)$ with the isomorphic completed tensor product
\[
H_*(Y) \ctens H^*(X).
\]
Here $H_*(Y)$ is discrete, while $H^*(X)$ inherits an inverse limit
structure dual to the filtration of $H_*(X)$ by finite-dimensional
subspaces.

The invariant below, in a slightly different form, appears as the
``total unstable operation'' in \cite[10.2]{goerss-dieudonne} and is
credited to Strickland.
\begin{defn}
  \label{def:unstableinvariant}
  Let $E$ be a multiplicative generalized cohomology theory
  represented by an $\Omega$-spectrum $\{E_n\}_{n \in \mb Z}$. The
  {\em unstable homology invariant} for $E$-cohomology is the
  collection of natural transformations of sets
  \[
  \Unstop\co E^n(X) = [X,E_n] \to \Hom(H_* X, H_* E_n) \cong H_* E_n
  \ctens H^*(X).
  \]
\end{defn}
\begin{rmk}
  For any $\alpha$, the element $\Unstop(\alpha) \in \Hom(H_* X, H_*
  E_n)$ is a coalgebra map that respects the Steenrod operations. This
  restriction will not be necessary for us to take into account here.
\end{rmk}

The groups $H_* E_n \ctens H^*(X)$ have products $\hash$ and $\circ$,
each individually induced by the corresponding product in the Hopf
ring and the cup product in $H^*(X)$. Using these, we can determine
how $\Unstop$ interacts with the ring structure in $E$-cohomology.
\begin{prop}
  \label{prop:unstableproperties}
  The unstable homology invariant $\Unstop$ satisfies the following
  formulas:
  \begin{align*}
    \Unstop(x+y) &= \Unstop(x) \hash \Unstop(y)\\
    \Unstop(xy) &= \Unstop(x) \circ \Unstop(y)\\
    \Unstop([c]) &= [c] \tens 1
  \end{align*}
  More specifically, for an element $z \in H_k(X)$ with coproduct
  $\Delta z = \sum z' \tens z''$, we have the identities
  \begin{align*}
    \Unstop(x+y)(z) &= \sum (\Unstop x)(z') \hash (\Unstop y)(z''),\\
    \Unstop(xy)(z) &= \sum (\Unstop x)(z') \circ (\Unstop y)(z'').
  \end{align*}
  For $z \in H_k(X)$ with augmentation $\epsilon(z) \in H_k(*)$ and
  $c \in E^{n}$, we have 
  \[
  \Unstop([c])(z) = \epsilon(z) [c].
  \]
\end{prop}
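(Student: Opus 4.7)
The plan is to unpack $\Unstop(\alpha)$ as simply the map on mod-$p$ homology induced by a representing map $\alpha\co X \to E_n$, and then to use the definitions of $\hash$ and $\circ$ on $H_* E_\star$ as the products induced by the addition on $E_n$ and the external ring pairing $E_m \sma E_n \to E_{m+n}$. All three identities should then follow from functoriality of $H_*(-)$ applied to suitably diagonal-composed representing maps.

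More precisely, I would first record the following dictionary. Given maps $x\co X \to E_n$ and $y\co X \to E_n$, the sum $x+y$ in $E^n(X)$ is represented by
\[
X \xrightarrow{\Delta} X \times X \xrightarrow{x \times y} E_n \times E_n \xrightarrow{\mu_+} E_n,
\]
where $\mu_+$ is the loop-space addition. Applying $H_*(-)$ and using the definition of the $\hash$-product on $H_* E_n$ as $(\mu_+)_*$, together with the Eilenberg--Zilber identification $H_*(X\times X)\cong H_*X \tens H_*X$, gives exactly the convolution formula
\[
(x+y)_*(z) = \sum (x_* z') \hash (y_* z''),
\]
which is $\Unstop(x)\hash\Unstop(y)$ evaluated at $z$. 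The same argument, with $\mu_+$ replaced by the external pairing $E_m \sma E_n \to E_{m+n}$ used to define the $\circ$-product, yields the multiplicative formula. Finally, a constant $[c]\in E^n$ is represented by the composite $X \to \ast \xrightarrow{c} E_n$; on homology this sends $z$ to $\epsilon(z)\cdot [c]$, which is $\Unstop([c])(z) = \epsilon(z)[c]$ as claimed.

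I would then assemble these pointwise identities back into the compact form stated in the proposition by remembering that elements of $\Hom(H_* X, H_* E_n) \cong H_* E_n \ctens H^*(X)$ are identified with their coaction on $H_* X$, and that the products $\hash$ and $\circ$ on the completed tensor product $H_* E_n \ctens H^*(X)$ are defined precisely so that they intertwine with the Hopf ring products on $H_* E_n$ through the coproduct on $H_* X$ (equivalently, through the cup product on $H^*(X)$). The coproduct identities displayed in the statement are exactly the pointwise unwinding of the products-of-maps identities.

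I do not expect a serious obstacle here: the content is definitional once the Hopf ring products are recognised as the homology of the addition and external multiplication on $\{E_n\}$. The only mild subtlety is keeping track of diagonals: both $x+y$ and $xy$ require precomposition with $\Delta_X\co X \to X\times X$, and it is this diagonal that turns the external Hopf ring products into the internal convolution products on $H_* E_n \ctens H^*(X)$ appearing in the statement. I would therefore emphasise the commutative diagram
\[
\xymatrix{
H_* X \ar[r]^-{\Delta_*} \ar[dr]_{(x+y)_*} & H_* X \tens H_* X \ar[d]^{x_* \tens y_*} \\
& H_* E_n \tens H_* E_n \ar[d]^{(\mu_+)_*} \\
& H_* E_n
}
\]
and its $\circ$-analogue, from which the stated formulas are immediate.
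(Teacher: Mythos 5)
Your proposal is correct and follows essentially the same route as the paper: represent $x+y$, $xy$, and $[c]$ by the composites through the diagonal (or the point) and the structure maps of the $\Omega$-spectrum, then apply $H_*$ and read off the convolution formulas from the definitions of $\hash$ and $\circ$. The extra detail you supply about the Eilenberg--Zilber identification and the diagonal-chasing diagram is a harmless elaboration of the paper's one-line conclusion.
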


\begin{proof}
  Given elements $x, y \in E^n(X)$, represented by maps $X \to E_n$,
  the sum is represented by the composite
  \[
  \xymatrix{
    X \ar[r]^-\Delta & X \times X \ar[r]^-{(x,y)} & E_n \times E_n
    \ar[r]^-{\hash} & E_n.
  }
  \]
  Similarly, a product is represented by a composite
  \[
  \xymatrix{
    X \ar[r]^-\Delta & X \times X \ar[r]^-{(x,y)} & E_p \times E_q
    \ar[r]^-{\circ} & E_{p+q},
  }
  \]
  and a constant $c \in E^n$ by a composite
  \[
  X \to * \too{c} E_n.
  \]
  The desired identities follow by applying $H_*$. 
\end{proof}

\begin{rmk}
  \label{rmk:unstablepowerseries}
  In particular, for $X = \mb{CP}^\infty$ with mod-$p$ graded
  cohomology ring $\mb F_p[t]$, we can view the unstable homology
  invariant as a map
  \[
  E^n(\mb{CP^\infty}) \to H_*(E_n)\pow{t}.
  \]
  When $E$ is complex oriented, the orientation class $x \in
  E^2(\mb{CP}^\infty)$ is taken to the power series
  \[
  \Unstop(x) = \sum_{i \geq 0} b_i t^i \in H_*(E_2)\pow{t}
  \]
  (Definition~\ref{def:hopfgenerators}) denoted by $b(t)$ in
  \cite{ravenel-wilson-hopfring}. In these terms, Ravenel--Wilson's
  identity
  \[
  b(s+t) = b(s) +_{[F]} b(t) = \Hash_{i,j} [a_{i,j}] \circ b(s)^{\circ i} \circ b(t)^{\circ j}
  \]
  is proved by first applying $\Unstop$ to the identity $m^*(t) = \sum
  a_{i,j} s^i t^j$ in $E^2(\mb{CP}^\infty \times \mb{CP}^\infty)$ and
  then using naturality of $\Unstop$ to write $\Unstop m^*(t) = m^*
  b(t) = b(s+t)$.
\end{rmk}

While we will not require it, it can be clarifying to examine a
``reduced'' version of this invariant, especially in cases where $X$
has a basepoint. We begin by observing that $\Lambda(\alpha) - [0]$
takes values in reduced homology for any $\alpha \in E^*(X)$.

\begin{defn}
  \label{def:reducedunstable}
  Let $E$ be a multiplicative generalized cohomology theory
  represented by an $\Omega$-spectrum $\{E_n\}_{n \in \mb Z}$. The
  {\em reduced unstable homology invariant} for $E$-cohomology is the
  natural transformation of sets
  \[
  \unstop\co E^n(X) = [X,E_n] \to \Hom(H_* X, \rH_* E_n) \cong \rH_*
  (E_n) \ctens H^*(X)
  \]
  given by $\unstop(\alpha) = \Unstop(\alpha) - [0].$
\end{defn}

The identities for the operator $\Unstop$ translate into ones for
$\unstop$ which are particularly transparent when taken mod
decomposables for $\hash$.
\begin{prop}
  \label{prop:reducedproperties}
  The reduced unstable homology invariant $\unstop$ satisfies the
  following formulas:
  \begin{align*}
    \unstop(x+y) &= \unstop(x) + \unstop(y) + \unstop(x) \hash \unstop(y)\\
    \unstop(xy) &= \unstop(x) \circ \unstop(y) \\
    \unstop([c]) &= [c] - [0]
  \end{align*}
  The composite map
  \[
  E^\star(X) \too{\unstop} \rH_*(E_\star) \ctens H^*(X)
  \to (Q^{\hash}\rH_*(E_\star)) \ctens H^*(X)
  \]
  which reduces mod $\hash$-decomposables is a natural homomorphism of
  graded $E^\star$-algebras.
\end{prop}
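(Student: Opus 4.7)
The plan is to derive each identity from the corresponding formula in Proposition~\ref{prop:unstableproperties} by substituting $\Unstop(x) = \unstop(x) + [0] \tens 1$ and expanding. The derivation rests on two elementary Hopf ring identities, both interpreted inside the convolution algebra $H_*(E_\star) \ctens H^*(X)$: first, $[0] \tens 1$ is the two-sided unit for $\hash$, coming from the fact that the addition map $E_n \times E_n \to E_n$ restricts to the identity along $E_n \times \{0\}$; and second, $y \circ [0] = \epsilon(y) \cdot [0]$ for every $y$, coming from the fact that the multiplication $E_p \times E_q \to E_{p+q}$ restricted to $E_p \times \{0\}$ is the constant map at the basepoint, so the induced map on homology factors as $H_*(E_p) \xrightarrow{\epsilon} R \xrightarrow{[0]} H_*(E_{p+q})$. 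As consequences: $\unstop(x) \circ ([0] \tens 1) = 0 = ([0] \tens 1) \circ \unstop(y)$ whenever $\unstop(x)$ and $\unstop(y)$ are reduced, and $([0] \tens 1) \circ ([0] \tens 1) = [0] \tens 1$ (since $[0] \circ [0] = [0 \cdot 0] = [0]$).

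With these in hand, each of the three identities is an algebraic expansion. For addition, $\Unstop(x+y) = \Unstop(x) \hash \Unstop(y)$ expands to
\[
(\unstop(x) + [0] \tens 1) \hash (\unstop(y) + [0] \tens 1) = \unstop(x) \hash \unstop(y) + \unstop(x) + \unstop(y) + [0] \tens 1,
\]
using bilinearity of $\hash$ and the unit property; subtracting $[0] \tens 1$ yields the claim. For multiplication, the analogous expansion of $\Unstop(xy) = \Unstop(x) \circ \Unstop(y)$ produces four terms; the two mixed terms vanish by the second identity above because $\unstop(x)$ and $\unstop(y)$ are reduced, while the fourth term equals $[0] \tens 1$, leaving $\unstop(x) \circ \unstop(y)$ after subtraction. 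The constant formula is immediate from $\Unstop([c]) = [c] \tens 1$.

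For the final claim about the composite to $Q^\hash \rH_*(E_\star) \ctens H^*(X)$, the correction term $\unstop(x) \hash \unstop(y)$ in the additive formula is a $\hash$-product of two reduced classes and so vanishes in $Q^\hash$, giving strict additivity on the quotient; the multiplicative formula already holds on the nose; and scalar multiplication by $c \in E^\star$ is multiplication by $[c] \in E^\star(X)$, which under $\unstop$ becomes $\circ$-action by $\unstop([c]) = [c] - [0]$, so the composite intertwines the $E^\star$-actions once one equips the target with its natural $E^\star$-structure via $c \mapsto [c] - [0]$. Naturality in $X$ is inherited from $\Unstop$. The main place to be careful is in verifying the vanishing identities $\unstop(x) \circ ([0] \tens 1) = 0$ and $([0] \tens 1) \hash \unstop(y) = \unstop(y)$ as convolutions rather than pointwise products; but each reduces, after expanding via the coproduct on $H_*(X)$, to the two Hopf ring identities above combined with counitality of the coproduct.
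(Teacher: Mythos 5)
Your proposal is correct and matches the argument the paper intends: the paper leaves this proof implicit, treating it as the direct translation of Proposition~\ref{prop:unstableproperties} under the substitution $\Unstop = \unstop + [0]$, which is exactly the expansion you carry out. Your verification of the two Hopf ring identities ($[0]$ as the $\hash$-unit and $y \circ [0] = \epsilon(y)[0]$) at the level of convolution products, together with the observation that $\unstop$ takes reduced values, supplies precisely the details the paper omits.
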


Finally, we consider the case of reduced cohomology.

\begin{prop}
\label{prop:reducedtoreduced}
  Suppose $\alpha \in \widetilde E^n(X)$ corresponds to a based map $X
  \to E_n$. Then the reduced unstable invariant $\unstop(\alpha)$
  naturally takes values in $\rH_* E_n \ctens \rH^*(X)$.
\end{prop}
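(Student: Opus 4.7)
The plan is to check that under the identification $\Hom(H_* X, \rH_* E_n) \cong \rH_* E_n \ctens H^*(X)$, lying in the subspace $\rH_* E_n \ctens \rH^*(X)$ is equivalent to vanishing on the image of $H_*(\ast) \to H_* X$ induced by the basepoint, and then to verify this vanishing using that $\alpha$ is based.

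First I would use the canonical splitting $H^*(X) \cong H^*(\ast) \oplus \rH^*(X)$ determined by the inclusion of the basepoint. Tensoring with $\rH_* E_n$ yields
\[
\rH_* E_n \ctens H^*(X) \cong \rH_* E_n \,\oplus\, \bigl(\rH_* E_n \ctens \rH^*(X)\bigr).
\]
Under the corresponding identification with $\Hom(H_* X, \rH_* E_n)$, the first summand consists of those homomorphisms that factor through the augmentation $H_* X \to H_0(\ast)$; equivalently, an element lies in $\rH_* E_n \ctens \rH^*(X)$ if and only if the associated homomorphism annihilates the basepoint class $\ast \in H_0(X)$. So it suffices to show $\unstop(\alpha)(\ast) = 0$.

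Next I would compute this value. Since $\alpha\co X \to E_n$ is a based map, $\alpha(\ast_X) = \ast_{E_n}$, and the basepoint of $E_n$ represents $0 \in \pi_0 E_n = E^n$; its Hurewicz image is the class $[0] \in H_0(E_n)$. Therefore $\Unstop(\alpha)(\ast) = \alpha_*(\ast) = [0]$. Combined with the constant-class identity $\Unstop([0])(\ast) = \epsilon(\ast)[0] = [0]$ from Proposition~\ref{prop:unstableproperties}, this gives
\[
\unstop(\alpha)(\ast) = \Unstop(\alpha)(\ast) - [0] = [0] - [0] = 0,
\]
as required.

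The argument is essentially a bookkeeping exercise, and I do not anticipate any substantive obstacle. The one point requiring care is that the splitting used to realize $\rH_* E_n \ctens \rH^*(X)$ as a subspace of $\rH_* E_n \ctens H^*(X)$ must be the one induced by the basepoint of $X$; this choice is functorial in based maps of $X$, giving the naturality clause in the statement.
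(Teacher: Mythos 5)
Your proof is correct and is essentially the paper's argument: the paper checks that $\unstop(\alpha)$ dies under restriction along the basepoint inclusion $\ast \to X$, which is exactly your evaluation on the basepoint class in $H_0(X)$, and both conclude from basedness of $\alpha$ that the value is $[0]-[0]=0$.
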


\begin{proof}
  There is a restriction map $\rH_* E_n \ctens H^*(X) \to \rH_* E_n
  \tens \mb F_p$ induced by the inclusion of the basepoint $* \to X$. An
  element $\alpha \in E^n(X)$ which restricts to an element $c \in
  E^n(*)$ at the basepoint is sent to the element $\unstop(\alpha) =
  \Unstop(\alpha) - [0]$ which restricts to $[c] - [0] \in \rH_*
  E_n$. If the map is based, then $c=0$ and so $\unstop(\alpha)$ lifts
  to the tensor with reduced cohomology.
\end{proof}

\subsection{Unit groups}

For a ring spectrum $E$, the space $SL_1(E) \subset \Omega^\infty E$
of strict units is the path component of the multiplicative unit $1
\in \pi_0(E)$. This construction is functorial in $E$. If we define
$\wt E_0 \subset E_0$ to be the path component of $0$, then there is a
homotopy equivalence $\wt E_0 \to SL_1(E)$ given by applying $[1] \#
(-)$. In particular, there are canonical isomorphisms $\pi_k(SL_1(E))
\cong \pi_k(E)$ for $k > 0$ and $H_k (SL_1(E)) \cong H_k(\wt E_0)$.
When $E$ is an $E_\infty$-algebra, the space of units inherits a
corresponding structure.
\begin{thm}[{\cite[IV.1.8]{may-quinn-ray-ringspectra}}]
  \label{thm:sl1multiplicative}
  For $E$ an $E_\infty$-algebra, the space $SL_1(E)$ has a
  natural structure of an infinite loop space such that the map
  \[
  \Sigma^\infty_+ SL_1(E) \to E
  \]
  is a natural map of $E_\infty$-algebras.
\end{thm}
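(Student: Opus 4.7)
The plan is to exploit the fact that the $E_\infty$-algebra structure on $E$ equips $\Omega^\infty E$ with a multiplicative $E_\infty$-space structure, distinct from its additive one, and then combine this with the strong symmetric monoidal structure on $\Sigma^\infty_+$. First, the multiplication maps $E_\infty(k)_+ \sma E^{\sma k} \to E$ witnessing the $E_\infty$-algebra structure on $E$ adjoint to maps $E_\infty(k) \times (\Omega^\infty E)^{\times k} \to \Omega^\infty E$, making $\Omega^\infty E$ into an $E_\infty$-space under multiplication with strict unit $1 \in \pi_0 E$. The subspace $GL_1(E) \subset \Omega^\infty E$ of points whose class in $\pi_0 E$ is a unit is closed under this action, since operations performed on invertible elements again land in invertible components. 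Hence $GL_1(E)$ inherits a multiplicative $E_\infty$-structure, which is moreover grouplike because $\pi_0 GL_1(E) = (\pi_0 E)^\times$ is a group. By the infinite loop space recognition principle, any grouplike $E_\infty$-space is equivalent to $\Omega^\infty$ of a connective spectrum $gl_1(E)$; the unit component corresponds to a connective spectrum $sl_1(E)$ with $\Omega^\infty sl_1(E) \simeq SL_1(E)$, which is the desired infinite loop space structure.

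Second, to produce the map of $E_\infty$-algebras, I use that $(X \times Y)_+ \cong X_+ \sma Y_+$, so $\Sigma^\infty_+$ is strong symmetric monoidal from unpointed spaces (under $\times$) to spectra (under $\sma$), and thus sends $E_\infty$-spaces to $E_\infty$-algebras. Furthermore it is left adjoint to $\Omega^\infty$, and the adjunction respects multiplicative $E_\infty$-structures. The inclusion $SL_1(E) \hookrightarrow \Omega^\infty E$ is by construction a map of multiplicative $E_\infty$-spaces, so its adjoint $\Sigma^\infty_+ SL_1(E) \to E$ is a map of $E_\infty$-algebras. Naturality in $E$ is automatic from the naturality of every construction above.

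The main technical obstacle is rectification: to make sense of the ``multiplicative $E_\infty$-space structure on $\Omega^\infty E$'' as an honest algebra over a simplicial operad (and not merely a homotopy-coherent action), and of $\Sigma^\infty_+$ as a genuine symmetric monoidal left adjoint passing multiplicative $E_\infty$-structures to $E_\infty$-algebra structures, one must work in a suitable model-theoretic framework. In the Elmendorf--Mandell setup of \cite{elmendorf-mandell-loopspace} used elsewhere in this paper, one replaces $E$ by a positive-fibrant $E_\infty$-algebra so that $\Omega^\infty E$ carries a strictly unital action of the chosen $E_\infty$-operad, and uses that $\Sigma^\infty_+$ is a strong symmetric monoidal left Quillen functor to transfer this structure through the adjunction; the original argument of \cite{may-quinn-ray-ringspectra} is formally parallel in a more classical category of spectra.
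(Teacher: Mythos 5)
The paper does not prove this statement; it is quoted verbatim from May--Quinn--Ray \cite[IV.1.8]{may-quinn-ray-ringspectra}, so there is no internal proof to compare against. Your sketch is correct and is the standard modern repackaging of that result: restrict the multiplicative $E_\infty$-space structure on $\Omega^\infty E$ to the invertible components, apply the recognition principle to the grouplike $E_\infty$-space $GL_1(E)$ (hence to its unit component $SL_1(E)$), and obtain the $E_\infty$-algebra map $\Sigma^\infty_+ SL_1(E) \to E$ as the composite of $\Sigma^\infty_+$ applied to the inclusion with the counit of the symmetric monoidal adjunction $\Sigma^\infty_+ \dashv \Omega^\infty$. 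This is essentially the route of Ando--Blumberg--Gepner--Hopkins--Rezk \cite{abghr-infinity-bundles}; the original argument in \cite{may-quinn-ray-ringspectra} is phrased instead in terms of $E_\infty$ ring spaces and operad pairs, where the zeroth space of an $E_\infty$ ring spectrum carries a strict multiplicative operad action and the unit components are fed directly into May's delooping machine. The two are parallel, and your version has the advantage of slotting into the Elmendorf--Mandell framework used elsewhere in the paper.

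Two small points of wording, neither a gap. First, the multiplication $E_\infty(k)_+ \sma E^{\sma k} \to E$ is not literally adjoint to a map $E_\infty(k) \times (\Omega^\infty E)^{\times k} \to \Omega^\infty E$; one first uses the lax symmetric monoidal structure map $(\Omega^\infty E)^{\times k} \to \Omega^\infty(E^{\sma k})$ (which exists because $\Omega^\infty$ is right adjoint to the strong symmetric monoidal $\Sigma^\infty_+$) and then applies $\Omega^\infty$ to the action map. Second, the assertion that the counit $\Sigma^\infty_+ \Omega^\infty E \to E$ is a map of $E_\infty$-algebras is exactly the general fact that a strong monoidal left adjoint with lax monoidal right adjoint induces an adjunction on categories of operad algebras whose counit is the original one; it is worth saying this explicitly, since it is the whole content of the second half of the statement. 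With the fibrancy caveats you already flag, the argument is complete.
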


\begin{prop}
  \label{prop:sl1exists}
  Suppose $E$ is an $E_\infty$-algebra, $HR$ is an Eilenberg-Mac
  Lane spectrum for a commutative ring $R$, and $E \to HR$ is a map of
  $E_\infty$-algebras. Then there is a natural suspension map
  \[
  \sigma\co SL_1(E) \to \Omega SL_1(HR \sma_E HR),
  \]
  of infinite loop spaces realizing, for $k > 0$, the natural map
  $\pi_k E \to \Tor^{E_*}_{1,k}(R, R)$ in the K\"unneth spectral
  sequence
  \[
    \Tor^{E_*}_{**}(R, R) \Rightarrow \pi_* \left(HR \sma_E HR\right)
  \]
  of \cite[IV.4.1]{ekmm}.
\end{prop}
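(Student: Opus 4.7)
The plan is to lift the suspension construction of Proposition~\ref{prop:suspension} to the level of unit infinite loop spaces by applying the functor $SL_1$ of Theorem~\ref{thm:sl1multiplicative}, and then to identify the resulting map on homotopy groups via the functoriality of the secondary composite formalism.

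First, I would observe that the canonical tethering $p \teth{t} i$ in augmented commutative $HR$-algebras constructed in the proof of Proposition~\ref{prop:suspension} lies in a pointed category of $E_\infty$-algebras, so that $SL_1$ converts it, via Proposition~\ref{prop:functorialbrackets}, into a canonical tethering in pointed infinite loop spaces of
\[
SL_1(HR \sma E) \to SL_1(HR \sma HR) \to SL_1(HR \sma_E HR).
\]
I would then precompose with the natural map $SL_1(E) \to SL_1(HR \sma E)$ induced by the unit $E \to HR \sma E$. Because $HR$ is concentrated in degree zero, $SL_1(HR)$ is contractible, and the composite $SL_1(E) \to SL_1(HR \sma E) \to SL_1(HR \sma HR)$ factors through $SL_1(HR)$ via $SL_1(f)$, carrying a second canonical nullhomotopy. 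Concatenating the two nullhomotopies---equivalently, extracting the $1$-skeleton of $SL_1$ applied to the bar construction $B_\bullet(HR, E, HR)$ as in the argument for Corollary~\ref{cor:geomreal}---produces a natural loop in $SL_1(HR \sma_E HR)$ based at the identity for each point of $SL_1(E)$, assembling into the desired map
\[
\sigma\co SL_1(E) \to \Omega SL_1(HR \sma_E HR)
\]
of infinite loop spaces.

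To identify $\pi_k \sigma$ for $k > 0$, I would use the canonical isomorphism $\pi_k SL_1 \cong \pi_k$ in positive degrees together with Proposition~\ref{prop:functorialbrackets}: the image on homotopy groups of the secondary composite constructed above coincides with $\sigma x = \langle p \teth{t} i, x\rangle$ computed at the spectrum level in Proposition~\ref{prop:suspension}. That proposition already identifies $\sigma x$ as detected by the class of $x$ in filtration $1$ of the bar-construction spectral sequence, which---after invoking the Quillen equivalences of the Framework subsection to pass between the symmetric spectra model and $\mb S$-modules---agrees with the image of $x$ under the edge map to $\Tor^{E_*}_{1,k}(R, R)$ in the K\"unneth spectral sequence of \cite[IV.4.1]{ekmm}.

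The main obstacle is cross-framework coherence. The infinite loop space structure on $SL_1(E)$ is provided by Theorem~\ref{thm:sl1multiplicative} in symmetric spectra, the secondary-composite formalism of Section~\ref{sec:secondary-operations} is generic, and the K\"unneth spectral sequence is stated in $\mb S$-modules. Verifying that all three descriptions correspond under the relevant Quillen equivalences, and that $SL_1$ is sufficiently well-behaved as a space-enriched functor for Proposition~\ref{prop:functorialbrackets} to apply in the form used, is the main verification work.
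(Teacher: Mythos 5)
Your construction of the map $\sigma$ is essentially the paper's argument in different clothing: the two nullhomotopies you glue (one from the tethering of $p\circ i$, one from the factorization of $E \to HR \sma HR$ through the contractible space $SL_1(HR)$) are the same two nullhomotopies the paper obtains by applying $SL_1$ to the pushout square defining $HR \sma_E HR$. That half is fine, modulo the point you already flag that $SL_1$ is not literally an enriched functor of pointed categories, so one should argue with maps through contractible spaces rather than invoking Proposition~\ref{prop:functorialbrackets} verbatim.

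The genuine gap is in the identification of $\pi_k\sigma$. Proposition~\ref{prop:suspension} detects the bracket $\langle p \teth{t} i, x\rangle$ in the two-sided bar construction spectral sequence $H_p\pi_q(HR \sma E^{\sma \star} \sma HR) \Rightarrow \pi_{p+q}(HR\sma_E HR)$, whose $E_2$-term (under flatness) is $\Tor$ over $\pi_*(HR\sma E)$, i.e.\ over the $R$-homology of $E$. The statement you must prove asserts detection in the K\"unneth spectral sequence of \cite[IV.4.1]{ekmm}, whose $E_2$-term is $\Tor^{E_*}_{**}(R,R)$, i.e.\ $\Tor$ over the homotopy of $E$. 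These are different spectral sequences with different filtrations (the paper itself treats them as such and compares them by hand in Proposition~\ref{prop:mudetection}), so ``agrees with the image under the edge map'' is precisely the assertion that requires proof, not a citation. Worse, your route only sees the image of $\beta \in \pi_k E$ in $\pi_k(HR\sma E) = H_k(E;R)$, and this image can vanish while the class of $\beta$ in $\Tor^{E_*}_{1,k}(R,R)$ is nonzero --- take $E = \mb S$, $R = \mb F_2$, $\beta = \eta$, where $H_1(\mb S;\mb F_2) = 0$ but $[\eta]$ gives a nonzero class in $\Tor^{\pi_*\mb S}_{1,1}(\mb F_2,\mb F_2)$. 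In such cases your argument says nothing about $\sigma(\beta)$. The paper's proof avoids this by working directly with the geometric resolution $M_{i+1} \to F_i \to M_i$ of $HR$ by free $E$-modules that defines the EKMM spectral sequence, choosing the unit $E \to HR$ as a wedge summand of $F_0 \to M_0$ so that $\beta$ lifts canonically through $\pi_k M_1$ to $\pi_k F_1$, whose image in $\pi_k(HR \sma_E F_1)$ is by definition the representative of $\beta$ in $\Tor^{E_*}_{1,k}(R,R)$. You would need either to reproduce that argument or to supply an explicit comparison of the two filtrations in homological degrees $\leq 1$.
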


\begin{proof}
  Since $SL_1$ only depends on connective covers, without loss of
  generality we can assume that $E$ is connective.  We consider the
  commutative diagram
  \[
  \xymatrix{
    E \ar[r] \ar[d] & HR \ar[d] \\
    HR \ar[r] & HR \sma_E HR.
  }
  \]
  We then apply $SL_1$ to this diagram. The space $SL_1(HR)$ is
  contractible, so the commutative diagram of infinite loop spaces
  \[
  \xymatrix{
    SL_1E \ar[r] \ar[d] & SL_1(HR) \ar[d] \\
    SL_1(HR) \ar[r] & SL_1(HR \sma_E HR)
  }
  \]
  determines (up to contractible indeterminacy) two nullhomotopies of
  the diagonal map as infinite loop space maps. Gluing these
  nullhomotopies together gives a map of infinite loop spaces
  \[
  SL_1(E) \to \Omega SL_1\left(HR \sma_E HR\right).
  \]

  To show compatibility with the K\"unneth spectral sequence, we begin
  by recalling its construction. Setting $HR = M_0$, we iteratively
  find fiber sequences $M_{i+1} \to F_i \to
  M_{i}$ of $E$-modules which are exact on homotopy groups, where $F_i
  \simeq \vee_\alpha \Sigma^{n_\alpha} E$ is a free graded $E$-module, and
  smash over $E$ with $HR$; the resulting long exact sequences
  assemble into an exact couple that calculates $\pi_*(HR \sma_E HR)$
  with $E_2$-term the desired $\Tor$-groups. In particular, we may
  choose the unit map $E \to HR$ as one of the factors in the map $F_0
  \to M_0$, which gives us a map $\Sigma^\infty_+ SL_1(E) \to E \to F_0$.

  Let $\beta\co S^k \to SL_1(E)$ represent an element in $\pi_k
  SL_1(E)$ for $k > 0$, and consider the diagram
  \[
  \xymatrix{
    & \Sigma^\infty SL_1(E) \ar[r] \ar[d] \ar@{.>}[ddl]&
    \Sigma^\infty_+ SL_1(E) \ar[r] \ar[d] &
    \Sigma^\infty_+ SL_1(HR) \simeq \mb S \ar[d] \\
    & M_1 \ar[r] \ar[d] & F_0 \ar[r] \ar[d] & H R \ar[d] \\
    \Omega (HR \sma_E HR) \ar[r] &
    HR \sma_E M_1 \ar[r] & HR \ar[r] & HR \sma_E HR,
  }
  \]
  whose rows are fiber sequences and where the dotted arrow is the map
  induced by the map $\sigma$. The composite map $S^k \to
  \Sigma^\infty SL_1(E) \to M_1$ represents the element $\beta \in
  \ker(\pi_k E \to \pi_k HR)$, and lifts to a map $S^k \to F_1$. The
  image in $\pi_k (HR \sma_E F_1)$ is the element corresponding to
  $\beta$ in $\Tor_{1,k}^{E_*}(R,R)$. However, this also coincides
  with the suspension of $\beta$ under the dotted arrow that uses the
  two nullhomotopies of $\Sigma^\infty SL_1(E) \to HR$.
\end{proof}

\begin{cor}
  \label{cor:adjointsuspension}
  For a ring $R$, there are {\em suspension} maps
  \[
  \sigma\co \rH_*(SL_1(E); R) \to H_{*+1}(BSL_1 E; R) \to \pi_{*+1}\left(HR
    \sma_E HR\right).
  \]
  These are natural in maps $E \to HR$ of $E_\infty$-algebras, and on
  the Hurewicz image of $\pi_* BSL_1(E)$ these are given by the
  suspension map. When $R = \mb F_2$, this map commutes with the
  Dyer--Lashof operations.
\end{cor}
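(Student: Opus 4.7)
The first map is the classical homology suspension. Since $SL_1(E)$ is an infinite loop space, it is in particular a homotopy-associative $H$-space with classifying space $BSL_1(E)$, and the natural map
\[
\Sigma SL_1(E) \to BSL_1(E)
\]
induces on reduced homology the degree-raising homomorphism $\rH_*(SL_1(E);R) \to H_{*+1}(BSL_1(E);R)$.

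For the second map, I would adjoin the infinite-loop-space map $\sigma\co SL_1(E) \to \Omega SL_1(HR\sma_E HR)$ of Proposition~\ref{prop:sl1exists} to obtain an infinite-loop-space map
\[
B\sigma\co BSL_1(E) \to SL_1(HR\sma_E HR),
\]
and then compose with the inclusion $SL_1(HR\sma_E HR) \subset \Omega^\infty(HR\sma_E HR)$ of a path component. The resulting composite is adjoint to a map of spectra
\[
\Sigma^\infty_+ BSL_1(E) \to HR\sma_E HR,
\]
and, since $HR\sma_E HR$ is an $HR$-module, this factors through a map $HR\sma \Sigma^\infty_+ BSL_1(E) \to HR\sma_E HR$. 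Taking homotopy groups yields the second map $H_{*+1}(BSL_1(E);R) \to \pi_{*+1}(HR\sma_E HR)$. Naturality in $E \to HR$ is inherited directly from the naturality statement in Proposition~\ref{prop:sl1exists}. The identification on the Hurewicz image of $\pi_*BSL_1(E)$ is immediate from the construction: the Hurewicz image factors through the effect of $B\sigma$ on $\pi_*$, which Proposition~\ref{prop:sl1exists} identifies with $\pi_*E \to \pi_{*+1}(HR\sma_E HR)$.

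The one genuine content is the claim that the composite commutes with Dyer--Lashof operations when $R = \mb F_2$, and this is the main step I want to be careful about. I would argue it in three pieces. First, the homology suspension $\rH_*(Y) \to H_{*+1}(BY)$ for an infinite loop space $Y$ commutes with Dyer--Lashof operations, since these are detected by the infinite-loop-space structure and the suspension map is itself a map of infinite loop spaces up to homotopy; this is standard from \cite{cohen-lada-may-homology}. Second, the map $B\sigma$ is a map of infinite loop spaces by construction, so the induced map on $H_*(-;\mb F_2)$ preserves Dyer--Lashof operations. Third, the composite
\[
H_*(\Omega^\infty(HR\sma_E HR);\mb F_2) \to \pi_*(HR\sma_E HR)
\]
coming from the counit $\Sigma^\infty_+\Omega^\infty X \to X$ (combined with the $HR$-module action) also commutes with Dyer--Lashof operations: the source has operations from the infinite-loop structure on $\Omega^\infty(HR\sma_E HR)$, the target has operations from the $E_\infty$-algebra structure on $HR\sma_E HR$ (inherited because $HR$ is $E_\infty$ and the relative smash product of $E_\infty$-algebras over an $E_\infty$-base is $E_\infty$), and these two are compatible under the structure map by Theorem~\ref{thm:sl1multiplicative} applied in the ambient category of $E_\infty$ $HR$-algebras. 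Composing these three compatibilities gives the desired commutation.
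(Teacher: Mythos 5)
Your construction of the two maps and the overall architecture match the paper's proof: the paper likewise takes the adjoint $BSL_1(E) \to SL_1(HR \sma_E HR)$ of the map from Proposition~\ref{prop:sl1exists}, forms the composite of $E_\infty$-ring maps $\Sigma^\infty_+ BSL_1(E) \to \Sigma^\infty_+ SL_1(HR\sma_E HR) \to HR \sma_E HR$ using Theorem~\ref{thm:sl1multiplicative}, and then extends scalars along $HR$ to obtain a single map $HR \sma BSL_1(E)_+ \to HR\sma_E HR$ of $E_\infty$ $HR$-algebras, from which the Dyer--Lashof compatibility of the second arrow is immediate. (The compatibility of the homology suspension $\rH_*(SL_1 E;\mb F_2) \to H_{*+1}(BSL_1 E;\mb F_2)$, your step one, is in the paper deferred to Proposition~\ref{prop:DLstability}.)

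The one place your write-up goes wrong is your third step. You describe the source of the final map as $H_*(\Omega^\infty(HR\sma_E HR);\mb F_2)$ equipped with ``operations from the infinite-loop structure on $\Omega^\infty(HR\sma_E HR)$.'' That infinite loop structure is the \emph{additive} one, and the Dyer--Lashof operations it induces on homology are not the ones that Theorem~\ref{thm:sl1multiplicative} relates to the ring operations on $\pi_*(HR\sma_E HR)$; the counit $\Sigma^\infty_+\Omega^\infty X \to X$ is not a map of ring spectra for the group-ring structure coming from addition, so as literally stated the claimed compatibility is unsupported. What you need---and what your second step actually delivers, since $B\sigma$ lands in $SL_1(HR\sma_E HR)$ with its \emph{multiplicative} $E_\infty$-space structure---is that $\Sigma^\infty_+ SL_1(X) \to X$ is a map of $E_\infty$-rings for the multiplicative structure, which is exactly Theorem~\ref{thm:sl1multiplicative}. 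Replacing ``$\Omega^\infty$'' by ``$SL_1$'' and ``infinite-loop structure'' by ``multiplicative infinite-loop structure'' throughout your third step repairs the argument and brings it into agreement with the paper; the paper sidesteps the issue entirely by never passing through the homology of the target's infinite loop space, reading the compatibility off the single $E_\infty$ $H$-algebra map above instead. Given how much of this paper turns on keeping the additive operations $Q^s$ and the multiplicative operations $\mQ^s$ separate, this is a distinction worth being pedantic about.
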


\begin{proof}
  The map $SL_1(E) \to \Omega SL_1(HR \sma_E HR)$ is adjoint to a map
  $BSL_1(E) \to SL_1(HR \sma_E HR)$ of infinite loop spaces. We begin
  with the map of $E_\infty$-algebras
  \[
  \Sigma^\infty_+ BSL_1(E) \to \Sigma^\infty_+ SL_1(HR \sma_E HR) \to
  HR \sma_E HR.
  \]
  The adjunction between $E_\infty$-algebras and $E_\infty$
  $HR$-algebras (using the left unit $HR \to HR \sma_E HR$) then
  produces a natural map
  \[
  HR \sma BSL_1(E)_+ \to HR \sma_E HR
  \]
  of $E_\infty$ $HR$-algebras realizing our desired map. In
  particular, if $R = \mb F_2$ this map of $H$-algebras commutes
  with the Dyer--Lashof operations.
\end{proof}

\begin{cor}
  \label{cor:sl1detection}
  In the commutative diagram
  \[
  \xymatrix{
    \rH_*(SL_1(MU);\mb Q) \ar[d] &
    \rH_*(SL_1(MU);\mb Z) \ar@{_`->}[l] \ar@{->>}[r] \ar[d] &
    \rH_*(SL_1(MU)) \ar[d] \\
    \pi_{*+1}(H\mb Q \sma_{MU} H\mb Q) &
    \pi_{*+1}(H\mb Z \sma_{MU} H\mb Z)\ar@{_`->}[l] \ar[r] &
    \pi_{*+1}(H \sma_{MU} H), \\
  }
  \]
  where the vertical maps are suspensions, the left-hand horizontal
  arrows are injective and the right-hand top horizontal arrow is
  surjective. In particular, the suspension map in mod-$2$ homology is
  determined by the rational suspension map. In addition, the right-hand
  vertical map preserves the Dyer--Lashof operations. 
\end{cor}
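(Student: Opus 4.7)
The plan is to read off the commutativity of the diagram together with the Dyer--Lashof compatibility of the rightmost vertical map directly from Corollary~\ref{cor:adjointsuspension}: that result gives both the naturality of the suspension in $E_\infty$-algebra maps $E \to HR$ and the compatibility of the mod-$2$ suspension with Dyer--Lashof operations. Applying it to the three $E_\infty$-algebra maps $MU \to HR$ for $R \in \{\mb Q, \mb Z, \mb F_2\}$ together with the change-of-coefficient maps $H\mb Z \to H\mb Q$ and $H\mb Z \to H\mb F_2$ immediately produces the commutative diagram and the final assertion about Dyer--Lashof operations.

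To handle the injectivity and surjectivity statements in the top row, I would appeal to Theorem~\ref{thm:ravenelwilsonbasis}: at $n=0$ it expresses $H_*(MU_0;\mb Z)$ as the tensor product of the group ring $\mb Z[\pi_0 MU] = \mb Z[\mb Z]$ with a polynomial algebra, so in particular as a free $\mb Z$-module. Since $SL_1(MU) \simeq \wt{MU}_0$ is a single path component of $MU_0$, its integral homology is also $\mb Z$-free, and the universal coefficient theorem then makes the comparison map to rational homology injective and the comparison map to $\mb F_2$-homology surjective.

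For the bottom row, I would invoke the K\"unneth spectral sequence $\Tor^{MU_*}_{**}(R, R) \Rightarrow \pi_*(HR \sma_{MU} HR)$ of \cite[IV.4.1]{ekmm}. Because $MU_* \cong \mb Z[x_1,x_2,\dots]$ is a polynomial ring, a Koszul resolution identifies the integral $E_2$-term with the torsion-free exterior algebra $\Lambda_{\mb Z}[\sigma x_i]$, whose rationalization is the $E_2$-term of the rational spectral sequence---already known to degenerate by the proof of Proposition~\ref{prop:mudetection}. The injectivity of the integral-to-rational comparison map on $E_2$-pages should then force the integral spectral sequence to degenerate as well, so that the integral abutment is torsion-free and the bottom-left horizontal map, being the rationalization of a torsion-free group, is injective.

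With these four pieces in place, the concluding assertion that the mod-$2$ suspension is determined by the rational suspension falls out by a short diagram chase: given $x \in \rH_*(SL_1(MU);\mb F_2)$, top-row surjectivity lifts $x$ to an integral class $\tilde x$, the rational suspension of the image of $\tilde x$ in $\rH_*(SL_1(MU);\mb Q)$ determines the integral suspension of $\tilde x$ by bottom-left injectivity together with commutativity, and $\sigma x$ is then the image of this integral suspension under the bottom-right arrow. The step most likely to require care is the comparison between the integral and rational K\"unneth spectral sequences: one must argue that because the integral $E_2$-page is torsion-free and injects into the rational page, any nonzero integral differential would produce a nonzero rational differential, which is ruled out by rational degeneration.
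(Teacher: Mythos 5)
Your proposal is correct and follows essentially the same route as the paper: the top-row claims come from the $R$-freeness of $H_*(MU_0;R)$ in Theorem~\ref{thm:ravenelwilsonbasis} (which the paper packages as Corollary~\ref{cor:hopfringcoefficients} and simply cites), and the bottom-left injectivity comes from comparing the integral and rational K\"unneth spectral sequences, whose $E_2$-terms are the exterior algebras $\Lambda_{\mb Z}[\sigma x_i]$ and $\mb Q \tens \Lambda[\sigma x_i]$ and which both degenerate. The only difference is one of exposition: you spell out the degeneration of the integral spectral sequence (torsion-free $E_2$-page injecting into the degenerate rational page) and the concluding diagram chase, both of which the paper leaves implicit.
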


\begin{proof}
  The injectivity and surjectivity of the top rows was shown in
  Corollary~\ref{cor:hopfringcoefficients}. The injectivity of the
  bottom-left map follows because the comparison map of K\"unneth
  spectral sequences
  \[
  \Tor_{**}^{\pi_* MU}(\mb Z, \mb Z) \to
  \Tor_{**}^{\pi_* MU}(\mb Q, \mb Q)
  \]
  becomes an inclusion of exterior algebras $\Lambda[\sigma x_i] \to
  \mb Q \tens \Lambda[\sigma x_i]$, and both spectral sequences
  degenerate at the $E_2$-term. Therefore, the map $\pi_*(H\mb Z
  \sma_{MU} H\mb Z) \to \pi_*(H\mb Q \sma_{MU} H\mb Q)$ is injective.
\end{proof}

We can now examine the properties of the rational suspension map by
using the rational Hopf ring.
\begin{prop}
  \label{prop:rationalcomparison}
  In the rational Hopf ring, the suspension map 
  \[
  \rH_*(SL_1(MU);\mb Q) \to H_{*+1}(H\mb Q \sma_{MU} H\mb Q),
  \]
  in terms of the Ravenel--Wilson basis, is a composite
  \[
  \rH_*(SL_1(MU);\mb Q) \twoheadrightarrow Q^\circ Q^{\hash} \rH_*(MU_0; \mb Q) /
  (b_2,b_3,\dots) \to \pi_{*+1}(H\mb Q \sma_{MU} H\mb Q)
  \]
  that kills $\hash$-decomposables, $\circ$-decomposables, and $b_i$ for
  $i > 1$, and sends any of the remaining basis elements $[\alpha]
  \circ b_1^{\circ s}$ to the suspension class $\sigma \alpha$ in
  the K\"unneth spectral sequence from
  Proposition~\ref{prop:geomdetection}.
\end{prop}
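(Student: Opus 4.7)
The plan is to combine the general principle that suspension of an $H$-space factors through indecomposables with the rational splitting $MU_{\mb Q} \simeq \prod_{i \geq 1} \Sigma^{2i} H\mb Q$. This splitting identifies $SL_1(MU)_{\mb Q}$ as an infinite loop space equivalent to $\prod_{i \geq 1} K(\mb Q, 2i)$ and, as in the proof of Proposition~\ref{prop:mudetection}, presents the target $\pi_{*+1}(H\mb Q \sma_{MU} H\mb Q)$ as the exterior algebra $\Lambda_{\mb Q}[\sigma x_1, \sigma x_2, \dots]$ with $|\sigma x_i| = 2i + 1$, where $\sigma x_i$ is detected in filtration $1$ of the K\"unneth spectral sequence. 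Moreover, the rational exponential/logarithm equivalence of infinite loop spaces between $\wt{MU}_0$ and $SL_1(MU)$ identifies the $\hash$- and $\circ$-$H$-space structures on $\rH_*(SL_1(MU); \mb Q)$, so $\hash$- and $\circ$-decomposability coincide rationally.

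By Corollary~\ref{cor:adjointsuspension} the suspension map factors through the classical $H$-space suspension $\rH_*(SL_1(MU); \mb Q) \to H_{*+1}(BSL_1(MU); \mb Q)$, and since $SL_1(MU)$ deloops using its multiplicative structure $\circ$, the standard fact that $H$-space suspension factors through indecomposables kills $\circ$-decomposables, and therefore (by the preceding remark) also $\hash$-decomposables. For the claim that $b_i$ with $i > 1$ are killed, I would use the Ravenel--Wilson relation $b(s+t) = \sum [a_{i,j}] \circ b(s)^{\circ i} \circ b(t)^{\circ j}$: modulo $\circ$-decomposables only the terms $(i,j) = (1,0)$ and $(0,1)$ survive (with coefficient $[a_{1,0}] = [a_{0,1}] = [1]$), reducing the relation to $b(s+t) \equiv b(s) + b(t)$. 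Comparing coefficients of $s^p t^q$ for $p, q \geq 1$ gives $\binom{p+q}{p} b_{p+q} \equiv 0$ modulo $\circ$-decomposables, and invertibility of $\binom{p+q}{p}$ over $\mb Q$ forces $b_n \equiv 0$ for $n \geq 2$ in the indecomposable quotient.

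To identify the value on $[\alpha] \circ b_1^{\circ s}$, I would apply naturality of the suspension to the Hurewicz map $\pi_{2s}(MU) \to \rH_{2s}(SL_1(MU); \mb Q)$. For $\alpha \in \pi_{2s}(MU)$, the map $\alpha\co S^{2s} \to MU_0$ factors as $S^{2s} \to MU_{2s} \too{\alpha} MU_0$, where the first map is the top-cell restriction of the $s$-fold orientation $b^s\co \mb{CP}^\infty \to MU_{2s}$ (whose Hurewicz image is $b_1^{\circ s}$ modulo the $b_i$ with $i \geq 2$, by the unstable-invariant calculation $\Unstop(x^s) = b(t)^s$ paired with $\beta_s$) and the second acts on homology by $\circ$-multiplication with $[\alpha]$. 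Thus the Hurewicz image of $\alpha$ equals $[\alpha] \circ b_1^{\circ s}$ in the indecomposable quotient, and Corollary~\ref{cor:adjointsuspension} sends this to $\sigma \alpha$ in the K\"unneth spectral sequence. The main obstacle will be the precise compatibility between the $\hash$- and $\circ$-structures, in particular justifying well-definedness of the factorization through $Q^\circ Q^{\hash} \rH_*(MU_0; \mb Q)/(b_2, b_3, \dots)$; the rational splitting reduces this to a degree-by-degree linear-algebra matching between polynomial and exterior algebras in which the surjectivity and identifications are transparent.
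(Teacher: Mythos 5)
There is a genuine gap at the step where you kill $\hash$-decomposables. The claim that the rational exponential identifies the $\hash$- and $\circ$-structures ``so $\hash$- and $\circ$-decomposability coincide rationally'' is false. The exponential is an equivalence of infinite loop spaces between $\wt{MU}_{0}$ and $SL_1(MU)$ rationally, but it is not the translation map $[1]\hash(-)$; it conjugates one product into the other rather than making the two decomposable subspaces of a fixed copy of $\rH_*(SL_1(MU);\mb Q)$ agree. Concretely, let $h = [x_1]\circ b_1 \in \rH_2(MU_0;\mb Q)$ be the (primitive) Hurewicz image of $x_1$, so that $\rH_2(SL_1(MU);\mb Q)$ is spanned by $[1]\hash h$. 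Distributivity gives
\[
([1]\hash h)\circ([1]\hash h) \;=\; [1]\hash\left(h\hash h + h\circ h\right),
\qquad h\circ h = [x_1^2]\circ b_1^{\circ 2},
\]
and $[x_1^2]\circ b_1^{\circ 2}$ is $\hash$-indecomposable (it represents $x_1^2 \neq 0$ in $Q^{\hash}\rH_4(MU_0;\mb Q)\cong \pi_4 MU\otimes\mb Q$). So in degree $4$ the line of $\circ$-decomposables and the line of $\hash$-decomposables are distinct, and neither contains the other. Your argument via $BSL_1(MU)$ therefore only kills $\circ$-decomposables; nothing you have written forces $h\hash h$ to die, and the dimension count (the quotient by $\circ$-decomposables alone is $2$-dimensional in degree $4$, while $\pi_5(H\mb Q\sma_{MU}H\mb Q)$ is $1$-dimensional) shows the missing step is not cosmetic. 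The deferred ``linear-algebra matching'' at the end of your proposal is exactly this unproved point.

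The paper closes this gap by a different factorization: the square of $E_\infty$-rings relating $\Sigma^\infty_+ SL_1(MU)\to MU$ and $H\mb Q\sma SL_1(MU)_+\to H\mb Q\sma MU$ shows that the suspension is the composite $\rH_*(SL_1(MU);\mb Q)\to H_*(MU;\mb Q)\to \pi_{*+1}(H\mb Q\sma_{MU}H\mb Q)$. The first map is stabilization for the \emph{additive} infinite loop structure, so it kills $\hash$-decomposables, while being a ring map carrying $\circ$ to the product on $H_*(MU;\mb Q)$; the second map is the suspension into $\Tor_1$, which kills products. Thus the two kinds of decomposables are killed by two different stages of the composite, and the $b_i$ for $i>1$ die because the Ravenel--Wilson relation makes them $\circ$-decomposable ($b_i\equiv [a_i]\circ b_1^{\circ i}$). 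Your remaining ingredients are essentially sound: the binomial-coefficient argument that $b(s+t)\equiv b(s)+b(t)$ mod $\circ$-decomposables forces $b_n\equiv 0$ for $n\geq 2$ rationally is a correct (and pleasant) variant of the paper's use of the relation, and the identification of the Hurewicz image of $\alpha$ with $[\alpha]\circ b_1^{\circ s}$ and its suspension with $\sigma\alpha$ matches the paper. But the proof as written does not establish the factorization through $Q^{\circ}Q^{\hash}\rH_*(MU_0;\mb Q)/(b_2,b_3,\dots)$.
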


\begin{proof}
  There is a commutative diagram of $E_\infty$ rings over $H\mb Q$:
  \[
  \xymatrix{
  \Sigma^\infty_+ SL_1(MU) \ar[r] \ar[d] & H\mb Q \sma SL_1(MU)_+
  \ar[d] \\
  MU \ar[r] & H\mb Q \sma MU
  }
  \]
  Applying the natural map $\rH_*(SL_1(-);\mb Q) \to H\mb Q \sma_{(-)}
  H\mb Q$, we find that the suspension map
  \[
  \rH_*(SL_1(MU);\mb Q) \to \pi_{*+1}(H\mb Q \sma_{MU} H\mb Q) \cong
  \pi_{*+1}(H\mb Q \sma_{H\mb Q \sma MU} H\mb Q)
  \]
  can be computed as the composite
  \[
  \rH_*(SL_1(MU);\mb Q) \to H_*(MU;\mb Q) \to H_{*+1}(H\mb Q \sma_{MU}
  H\mb Q).
  \]
  The first map, under the isomorphism $[-1]\hash (-)\co \rH_*(SL_1(MU)) \cong
  \rH_*(\wt{MU}_0)$, sends $\hash$-decomposables to zero, carries
  $\circ$-products to products, and takes the elements $b_i$ for $i >
  1$ to $\circ$-decomposable elements $b_i \equiv [a_i] \circ b_1^{\circ
    i}$ due to the Ravenel--Wilson relation \eqref{eq:rwrelation}. The
  second is the suspension map $\sigma$, which carries
  $\circ$-decomposables to zero. The element $[\alpha] \circ
  b_1^{\circ s}$ is the Hurewicz image of $\alpha$ which, by
  definition, is carried to the suspension $\sigma \alpha$.
\end{proof}

Taking this together with Corollary~\ref{cor:sl1detection}, we find
the following.
\begin{cor}
  \label{cor:integralcomparison}
  The suspension map
  \[
  \rH_*(SL_1(MU)) \to \pi_{*+1}\left(H \sma_{MU} H\right)
  \]
  on mod-$2$ homology, in terms of the Ravenel--Wilson basis, is a
  composite
  \[
  \rH_*(SL_1(MU)) \twoheadrightarrow Q^\circ Q^{\hash} H_*(MU_0) /
  (b_2,b_3,\dots) \to \pi_{*+1}\left(H \sma_{MU} H\right)
  \]
  that kills $\hash$-decomposables, $\circ$-decomposables, and $b_i$ for
  $i > 1$, and sends any of the remaining elements $[\alpha]
  \circ b_1^{\circ s}$ in the Ravenel--Wilson basis to the suspension
  class $\sigma \alpha$ from the K\"unneth spectral sequence.
\end{cor}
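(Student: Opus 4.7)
The plan is to deduce the mod-$2$ statement from its rational analogue in Proposition~\ref{prop:rationalcomparison} by using the compatibility square of Corollary~\ref{cor:sl1detection}. By Theorem~\ref{thm:ravenelwilsonbasis} and Corollary~\ref{cor:hopfringcoefficients}, the integral Hopf-ring homology $H_*(MU_0;\mb Z)$ is free abelian on the Ravenel--Wilson basis, and the same is true after restricting along $[1]\hash(-)\co \wt{MU}_0 \simeq SL_1(MU)$. Thus the Ravenel--Wilson basis elements for $\rH_*(SL_1(MU);\mb Z)$ map injectively to a $\mb Q$-basis of $\rH_*(SL_1(MU);\mb Q)$ and surjectively to an $\mb F_2$-basis of $\rH_*(SL_1(MU))$; in particular the notions of ``$\hash$-decomposable,'' ``$\circ$-decomposable,'' and ``$b_i$ for $i>1$'' are well-defined and compatible across the change of coefficients $\mb Q \hookleftarrow \mb Z \twoheadrightarrow \mb F_2$.

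First I would lift the three vanishing statements from rational to mod-$2$ coefficients. Proposition~\ref{prop:rationalcomparison} states that, rationally, the suspension map kills every $\hash$-decomposable, every $\circ$-decomposable, and every $b_i$ with $i > 1$. The bottom row of Corollary~\ref{cor:sl1detection} contains the injection $\pi_{*+1}(H\mb Z \sma_{MU} H\mb Z) \hookrightarrow \pi_{*+1}(H\mb Q \sma_{MU} H\mb Q)$, so those classes, viewed as integral lifts, already vanish in $\pi_{*+1}(H\mb Z \sma_{MU} H\mb Z)$. Composing with the right-hand map $\pi_{*+1}(H\mb Z \sma_{MU} H\mb Z) \to \pi_{*+1}(H \sma_{MU} H)$ yields the three claimed vanishings mod $2$, so the mod-$2$ suspension factors through the quotient $Q^\circ Q^{\hash} H_*(MU_0)/(b_2,b_3,\dots)$.

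Next I would identify the suspension of the surviving basis elements $[\alpha] \circ b_1^{\circ s}$. Rationally, Proposition~\ref{prop:rationalcomparison} computes this as $\sigma\alpha$ in the K\"unneth spectral sequence. The same identification holds integrally: the integral K\"unneth spectral sequence degenerates at its $E_2$-term, an exterior algebra $\Lambda[\sigma x_i]$ which embeds into its rational counterpart (this degeneration and injection are exactly what was used inside the proof of Corollary~\ref{cor:sl1detection}), so the integral suspension of $[\alpha]\circ b_1^{\circ s}$ is already pinned down to $\sigma\alpha$ by its image in $\pi_{*+1}(H\mb Q \sma_{MU} H\mb Q)$. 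Applying the surjection to mod-$2$ coefficients then gives the stated formula.

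The proof is ultimately a diagram chase around the square of Corollary~\ref{cor:sl1detection}, and the only point that demands care is the ``integral book-keeping'' step: one must know that the Ravenel--Wilson basis and the generators $\sigma x_i$ on the $E_2$-page are each compatible with mod-$2$ reduction, so that both the domain-side filtration and the target-side identification descend from $\mb Z$ to $\mb F_2$. This is precisely what the freeness statements in Theorem~\ref{thm:ravenelwilsonbasis} and the spectral-sequence comparison inside Corollary~\ref{cor:sl1detection} supply; once they are in hand the conclusion is immediate and no genuine obstacle remains.
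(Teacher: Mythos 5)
Your argument is correct and is essentially the paper's own: the paper deduces this corollary in a single line by combining Proposition~\ref{prop:rationalcomparison} with the comparison square of Corollary~\ref{cor:sl1detection}, which is exactly the diagram chase you carry out. The integral book-keeping you spell out (lifting the Ravenel--Wilson basis and the classes $\sigma x_i$ across $\mb Q \hookleftarrow \mb Z \twoheadrightarrow \mb F_2$) is implicit in the paper's appeal to Theorem~\ref{thm:ravenelwilsonbasis} and Corollary~\ref{cor:hopfringcoefficients}.
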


\begin{prop}
  \label{prop:DLstability}
  The suspension map $\sigma$ on mod-$2$ homology commutes with
  Dyer--Lashof operations.
\end{prop}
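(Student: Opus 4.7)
The plan is to exploit the factorization of $\sigma$ already produced in the proof of Corollary~\ref{cor:adjointsuspension}, namely
\[
  \rH_*(SL_1(MU)) \too{\sigma_1} H_{*+1}(BSL_1(MU)) \too{\sigma_2}
  \pi_{*+1}\left(H \sma_{MU} H\right),
\]
and to verify separately that each of $\sigma_1$ and $\sigma_2$ commutes with the Dyer--Lashof operations.

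For $\sigma_1$, I would appeal to the classical fact that the homology suspension from an infinite loop space to its delooping commutes with the Dyer--Lashof operations $Q^s$. Since $SL_1(MU)$ has been equipped with an infinite loop space structure in Theorem~\ref{thm:sl1multiplicative}, $BSL_1(MU)$ is another term of the associated $\Omega$-spectrum and the structure maps relating them preserve the $E_\infty$-structure on chains; hence the induced homology suspension $\sigma_1$ intertwines $Q^s$ with $Q^s$. This is the standard statement for operations on the homology of infinite loop spaces in Cohen--Lada--May \cite{cohen-lada-may-homology}.

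For $\sigma_2$, the argument given in Corollary~\ref{cor:adjointsuspension} realized this map as the effect on homotopy groups of a map of $E_\infty$ $H$-algebras
\[
  H \sma BSL_1(MU)_+ \to H \sma_{MU} H,
\]
under the identification $\pi_{*+1}(H \sma BSL_1(MU)_+) \cong H_{*+1}(BSL_1(MU))$. Maps of $E_\infty$ $H$-algebras induce $Q^s$-equivariant maps on homotopy groups by naturality of Dyer--Lashof operations (Theorem~\ref{thm:bmmsops}), so $\sigma_2$ likewise commutes with the $Q^s$.

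Composing the two steps yields the proposition. The only point deserving some care is to ensure that the Dyer--Lashof operations appearing on $\rH_*(SL_1(MU))$ (coming from its infinite loop structure via Theorem~\ref{thm:sl1multiplicative}) and on $\pi_*(H \sma_{MU} H)$ (coming from its $E_\infty$ $H$-algebra structure) are identified under our factorization with the operations on $H_{*+1}(BSL_1(MU))$ consistently on both sides; this is exactly the compatibility built into the construction of $\sigma$ via the map $\Sigma^\infty_+ SL_1(MU) \to MU$ of $E_\infty$-algebras, so no extra work is required.
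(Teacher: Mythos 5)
Your proposal is correct and follows exactly the paper's own argument: factor $\sigma$ through $H_{*+1}(BSL_1(MU))$, use stability of the Dyer--Lashof operations on infinite loop space homology for the first map, and invoke the $E_\infty$ $H$-algebra map from Corollary~\ref{cor:adjointsuspension} for the second. The extra care you take at the end about consistent identification of the operations is a reasonable elaboration but does not change the substance.
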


\begin{proof}
  This map is the composite
  \[
  \rH_*(SL_1(MU)) \to H_{*+1}(BSL_1(MU)) \to
  \pi_{*+1}\left(H \sma_{MU} H\right).
  \]
  The Dyer--Lashof operations on the homology of infinite loop spaces
  are stable, and hence preserved by the first map; the compatibility
  of the second map is Corollary~\ref{cor:adjointsuspension}.
\end{proof}

\section{Power operations}

\subsection{Power operations in complex oriented theories}
\label{sec:cpxor}

In this section we will recall the work from \cite{bmms-hinfty} on
power operations in cohomology theories, and specifically results on
$H_\infty^2$-algebra structures from \cite[VIII]{bmms-hinfty}.

For an $E_\infty$ (and hence $H_\infty$) ring spectrum $E$, the
$E$-cohomology of a (based) space $X$ has natural {\em power
  operations} as follows. Fix $m > 0$ and write $D_m$ for the extended power
functor given by
\[
D_m(Y) = (Y^{\sma m})_{h\Sigma_m}.
\]
Representing an element $\alpha \in E^0(X)$ as a map $\alpha\co
\Sigma^\infty X \to E$, we form the commutative diagram
\[
\xymatrix{
& \Sigma^\infty D_m X \ar[r]^-{D_m \alpha}
\ar[dr]_{\mathcal{P}_m(\alpha)}
& D_m E \ar[d]\\
\Sigma^\infty X \sma (B\Sigma_m)_+ \ar[ur]^\Delta \ar[rr]_{P_m(\alpha)} && E,
}
\]
where the right-hand map is induced by the multiplicative structure on $E$. In
particular, this produces natural power operations:
\begin{align*}
  \mathcal{P}_m\co &\wt E^0(X) \to \wt E^0(D_m X)\\
  P_m\co &\wt E^0(X) \to \wt E^0(X \sma (B\Sigma_m)_+)
\end{align*}
These are multiplicative in an appropriate sense, and by replacing $X$
with $X_+$ we obtain compatible unbased versions:
\begin{align*}
  \mathcal{P}_m\co &E^0(X) \to E^0((X^m)_{h\Sigma_m})\\
  P_m\co &E^0(X) \to E^0(X \times B\Sigma_m)
\end{align*}

Outside degree $0$, we cannot draw conclusions which are as strong in
general. Given an element $\alpha \in \wt E^n(X)$ represented by a
map $\Sigma^\infty X \to E \sma S^n$, we can only define part of the
desired diagram:
\[
\xymatrix{
& \Sigma^\infty D_m X \ar[r]^-{D_m \alpha}
\ar@{.>}[dr]_{\mathcal{P}_m(\alpha)} & D_m(E \sma S^n) \ar@{.>}[d]^{?}\\
\Sigma^\infty X \sma (B\Sigma_m)_+ \ar[ur]\ar@{.>}[rr]_{P_m(\alpha)}
&& E \sma S^{nm}
}
\]
With extra structure on $E$ we can complete this diagram when $n$ is a
multiple of some fixed constant $d$: this is the case where $E$ is
$H_\infty^d$-algebra \cite[I.4]{bmms-hinfty}. An $H_\infty^d$-algebra
is an algebra equipped with explicit extra structure maps $D_{m}(E
\sma S^{dn}) \to E \sma S^{dmn}$, multiplicative and compatible across
$n$ and $m$. These allow us to obtain power operations:
\begin{align*}
  \mathcal{P}_m\co &\wt E^{dk}(X) \to \wt E^{dmk}(D_m X)\\
  P_m\co &\wt E^{dk}(X) \to \wt E^{dmk}(X \sma (B\Sigma_m)_+)
\end{align*}
These are multiplicative, and replacing $X$ with $X_+$ gives
compatible unbased versions:
\begin{align*}
  \mathcal{P}_m\co &E^{dk}(X) \to E^{dmk}((X^m)_{h\Sigma_m})\\
  P_m\co &E^{dk}(X) \to E^{dmk}(X \times B\Sigma_m).
\end{align*}

Cohomology is representable, so we may apply the Yoneda lemma. 
Restricting to the case where $m$ is a chosen prime $p$ and $d=2$,
we get the following.
\begin{thm}
  \label{thm:powerops}
  If $E$ is an $H_\infty^2$-algebra, there are natural based and
  unbased power operations for $n \in \mb Z$:
  \begin{align*}
    P\co &\wt E^{2n}(X) \to \wt E^{2pn}(X \sma (B\Sigma_p)_+)\\
    P\co &E^{2n}(X) \to E^{2pn}(X \times B\Sigma_p)
  \end{align*}
  These are universally represented by maps of based spaces $E_{2n}
  \sma (B\Sigma_p)_+ \to E_{2pn}$, and satisfy $P(x) P(y) = P(xy)$.
\end{thm}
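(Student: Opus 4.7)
The plan is to specialize the general construction already sketched in Section~\ref{sec:cpxor} to the case $m = p$, $d = 2$, and then invoke the Yoneda lemma to extract the universal representing maps. The main work is really bookkeeping: the $H_\infty^2$-structure on $E$ provides exactly the data needed to complete the dotted arrows in the diagram just before the theorem statement.

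First I would construct the based power operation. Given $\alpha \in \wt E^{2n}(X)$, represented by a map of spectra $\alpha\co \Sigma^\infty X \to E \sma S^{2n}$, form the composite
\[
D_p(\Sigma^\infty X) \too{D_p\alpha} D_p(E \sma S^{2n}) \too{\xi_{p,n}} E \sma S^{2pn},
\]
where $\xi_{p,n}$ is the $H_\infty^2$-structure map. Precomposing with the natural map $\Sigma^\infty X \sma (B\Sigma_p)_+ \to D_p(\Sigma^\infty X)$ induced by the diagonal $X \to X^{\sma p}$ (which is $\Sigma_p$-equivariant for the trivial action on the source) gives the class $P(\alpha) \in \wt E^{2pn}(X \sma (B\Sigma_p)_+)$. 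Naturality in $X$ is immediate, and the unbased version is obtained by substituting $X_+$ for $X$ and rewriting $X_+ \sma (B\Sigma_p)_+ \cong (X \times B\Sigma_p)_+$.

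Next, to extract the universal representing map, I would apply the Yoneda lemma: by the adjunction
\[
[\Sigma^\infty X \sma (B\Sigma_p)_+, E \sma S^{2pn}] \cong [X \sma (B\Sigma_p)_+, E_{2pn}]_*,
\]
the natural transformation $P$ on representable functors $[-,E_{2n}]_*$ is determined by its value on the universal class $\mathrm{id} \in \wt E^{2n}(E_{2n})$. This value is a based map $E_{2n} \sma (B\Sigma_p)_+ \to E_{2pn}$ whose adjoint recovers $P$ on arbitrary $\alpha$ by naturality; concretely, $P(\alpha) = (\alpha \sma 1) \circ P(\mathrm{id})$ at the level of representatives.

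Finally, for the multiplicative identity $P(x) P(y) = P(xy)$, the key input is that the structure maps $\xi_{p,n}$ of an $H_\infty^2$-algebra are required to be multiplicative across $n$ in the sense of \cite[I.4]{bmms-hinfty}: the square
\[
\xymatrix{
D_p(E \sma S^{2n}) \sma D_p(E \sma S^{2m}) \ar[r] \ar[d] & D_p(E \sma S^{2(n+m)}) \ar[d] \\
(E \sma S^{2pn}) \sma (E \sma S^{2pm}) \ar[r] & E \sma S^{2p(n+m)}
}
\]
commutes up to coherent homotopy, where the top arrow is the canonical map on extended powers and the bottom uses the ring structure of $E$. Combining this with the fact that the diagonal $X \to X \sma X$ is compatible with the $D_p$-diagonals yields the desired identity. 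The only step that requires real care, and which I would treat as the main obstacle, is verifying the $\Sigma_p$-equivariance and coherence needed to lift from $D_p(X) = (X^{\sma p})_{h\Sigma_p}$ to $X \sma (B\Sigma_p)_+$ without losing multiplicativity; this is handled by the standard computation in \cite[VIII.1]{bmms-hinfty}, which I would cite rather than reproduce.
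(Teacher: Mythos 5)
Your proposal is correct and follows essentially the same route as the paper, which gives no separate proof of this theorem: it is exactly the specialization of the preceding $H_\infty^d$ discussion to $m=p$, $d=2$, followed by the Yoneda lemma, with multiplicativity inherited from the $H_\infty^2$-structure maps of \cite[I.4, VIII]{bmms-hinfty}. (Only a cosmetic slip: with the paper's composition conventions the representative is $P(\mathrm{id}) \circ (\alpha \sma 1)$, not $(\alpha \sma 1) \circ P(\mathrm{id})$.)
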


For instance, the complex bordism spectrum $MU$ is an
$H_\infty^2$-algebra \cite[VIII.5.1]{bmms-hinfty}, giving us power
operations on even-degree classes previously studied by tom Dieck and
Quillen \cite{tomdieck-cobordismoperations,quillen-elementaryproofs}
that extend the power operations in degree zero. The spectrum $MU$,
which is complex oriented and has canonical Thom classes for complex
vector bundles, also has the special property that these operations are
compatible with the Thom isomorphism, as described by Quillen.
\begin{prop}[{\cite{quillen-elementaryproofs}}]
  \label{prop:thomcompatibility}
  For any complex vector bundle $\xi \to X$ of dimension $k$, write
  $t(\xi) \in MU^{2k}(Th(\xi))$ for the canonical Thom class of $\xi$
  and $e(\xi) \in MU^{2k}(X_+)$ for the Euler class.
  
  The based operation $\mathcal{P}_m$ preserves Thom classes: we have
  \[
  \mathcal{P}_m (t(\xi)) = t(D_m \xi),
  \]
  where $D_m \xi$ is the extended power bundle over
  $(X^m)_{h\Sigma_m}$. Restricting along the diagonal, we have
  \[
  P_m t(\xi) = e(\xi \boxtimes \overline\rho) t(\xi)
  \]
  where $\overline\rho$ is the bundle on $B\Sigma_m$ induced by the
  reduced permutation representation of $\Sigma_m$ and $\xi \boxtimes
  \overline\rho$ is the exterior tensor bundle on $X \times
  B\Sigma_m$. In particular, the Thom isomorphism fits into a
  commutative diagram
  \[
  \xymatrix{
    MU^{2n}(X) \ar[r]^-{P_m} \ar[d]_{t(\xi)} &
    MU^{2mn}(X \times B\Sigma_m) \ar[d]^{e(\xi \boxtimes \overline\rho) t(\xi)} \\
    \wt{MU}^{2(n+k)}(Th(\xi)) \ar[r]_-{P_m} &
    \wt{MU}^{2m(n +k)}(Th(\xi) \sma (B\Sigma_m)_+).
  }
  \]
\end{prop}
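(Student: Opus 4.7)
The plan is to reduce both identities to the naturality of the $H_\infty^2$-structure on $MU$, and then derive the restricted formula by an explicit identification of the pullback of the extended power bundle along the thin diagonal. As a preliminary, I would set up the canonical homeomorphism $D_m(Th(\xi)) \cong Th(D_m\xi)$: smashing $m$ copies of $Th(\xi)$ gives $Th(\xi^{\boxtimes m})$ as a $\Sigma_m$-equivariant Thom space over $X^m$, and smashing with $(E\Sigma_m)_+$ then taking $\Sigma_m$-orbits yields $Th(D_m\xi)$ over $(X^m)_{h\Sigma_m}$.

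For the identity $\mathcal{P}_m t(\xi) = t(D_m\xi)$, I would invoke naturality. The Thom class $t(\xi)$ is pulled back from $t(\gamma_k) \in \wt{MU}^{2k}(MU(k))$ under the bundle-classifying map $Th(\xi) \to MU(k) = Th(\gamma_k)$. By the construction of the $H_\infty^2$-structure on $MU$ in \cite[VIII.5.1]{bmms-hinfty}, the universal structure map $D_m(MU \sma S^{2k}) \to MU \sma S^{2mk}$ is, under the identification $D_m MU(k) \cong Th(D_m\gamma_k)$, precisely the one classifying the extended power bundle $D_m\gamma_k$---that is, it represents the universal Thom class of $D_m\gamma_k$. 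Naturality in $\xi$ then immediately yields $\mathcal{P}_m t(\xi) = t(D_m\xi)$.

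For the $P_m$ identity, I would restrict $\mathcal{P}_m t(\xi)$ along the thin diagonal $\Delta \co X \times B\Sigma_m \to (X^m)_{h\Sigma_m}$. The pullback $\Delta^*(D_m\xi)$ is the bundle $\xi \otimes \rho$, where $\rho$ is the regular representation bundle on $B\Sigma_m$, which splits as $\xi \oplus (\xi \boxtimes \overline{\rho})$ via $\rho \cong 1 \oplus \overline{\rho}$. Since the thin diagonal vector $(v,v,\dots,v) \in \xi^{\oplus m}$ lies in the $\Sigma_m$-invariant summand $\xi \otimes 1$, the map $Th(\xi) \sma (B\Sigma_m)_+ \to Th(D_m\xi)$ factors through $Th(\xi \oplus (\xi \boxtimes \overline{\rho}))$ by the zero-section inclusion of the $\xi \boxtimes \overline{\rho}$ summand. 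Pulling back $t(D_m\xi) = t(\xi) \cdot t(\xi \boxtimes \overline{\rho})$ (using multiplicativity of Thom classes on Whitney sums) and using that a Thom class restricts to the Euler class on the zero section, we obtain $P_m t(\xi) = t(\xi) \cdot e(\xi \boxtimes \overline{\rho})$.

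The commutative diagram is then formal: for $\alpha \in MU^{2n}(X)$, multiplicativity of power operations gives
\[
P_m(\alpha \cdot t(\xi)) = P_m(\alpha) \cdot P_m(t(\xi)) = P_m(\alpha) \cdot e(\xi \boxtimes \overline{\rho}) \cdot t(\xi),
\]
which is the image of $P_m(\alpha) \cdot e(\xi \boxtimes \overline{\rho})$ under the Thom isomorphism. The main obstacle is the first identity, which hinges on the specific construction of the $H_\infty^2$-structure on $MU$: the real content is that $MU$'s structure maps are built from classifying extended-power bundles, so that they tautologically respect Thom classes. Once this is internalized, the rest is bookkeeping.
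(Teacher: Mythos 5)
Your argument is correct and is essentially the proof of the cited source: the paper itself offers no proof of this proposition, attributing it to Quillen, and your route---Thom classes are preserved by $\mathcal{P}_m$ tautologically because the $H_\infty^2$-structure maps of $MU$ are themselves classifying maps of extended power bundles, followed by the diagonal splitting $\Delta^*(D_m\xi) \cong \xi \oplus (\xi \boxtimes \overline\rho)$ and the fact that a Thom class restricts along the zero section to the Euler class---is exactly the standard (Quillen/tom Dieck) argument, with the commuting square then following from multiplicativity of $P_m$. One terminological slip: $\Delta^*(D_m\xi)$ is $\xi$ tensored with the bundle of the $m$-dimensional \emph{permutation} representation of $\Sigma_m$, not the regular representation, though your subsequent splitting $\rho \cong 1 \oplus \overline\rho$ shows you are using the right object.
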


The cohomology of symmetric groups is closely related to formal group
law theory \cite{quillen-elementaryproofs}, and in particular the
effect of the power operation $P$ on the canonical first Chern class $x
\in \wt{MU}^2(\mb{CP}^\infty)$ was determined by Ando \cite{ando-isogenies}.
\begin{thm}
  \label{thm:quillenpower}
  The inclusion $C_p \into \Sigma_p$ induces inclusions:
  \begin{align*}
    MU^*(B\Sigma_p) &\into MU^*\pow{\alpha} /
    [p]_F(\alpha) \\
    MU^*(\mb{CP}^\infty \times B\Sigma_p) &\into MU^*\pow{x,\alpha} /
    [p]_F(\alpha)
  \end{align*}
  In these coordinates, the power operation $P$ satisfies
  $P(x) = \prod_{i=0}^{p-1} (x +_F [i]_F(\alpha))$.
\end{thm}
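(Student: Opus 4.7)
The cohomology rings on the right-hand sides are standard for complex-oriented theories: fixing an inclusion $C_p \hookrightarrow S^1$ via $p$-th roots of unity realizes $BC_p$ as the sphere bundle of the $p$-th tensor power of the tautological line bundle on $\mathbb{CP}^\infty$, or equivalently as the fiber of multiplication-by-$p$ on $\mathbb{CP}^\infty$. The restriction $\mathcal{L}$ of the tautological line bundle to $BC_p$ satisfies $\mathcal{L}^{\otimes p} \simeq 1$, and the Gysin sequence (or Atiyah--Hirzebruch argument) identifies $MU^*(BC_p)$ with $MU^*\pow{\alpha}/[p]_F(\alpha)$ where $\alpha = e(\mathcal{L})$; the K\"unneth formula then handles the product with $\mathbb{CP}^\infty$. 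The injectivity of the restrictions from $B\Sigma_p$ is the standard transfer argument: the composite $MU^*(B\Sigma_p) \to MU^*(BC_p) \to MU^*(B\Sigma_p)$ is multiplication by the index $[\Sigma_p : C_p] = (p-1)!$, a unit mod $p$, and the corresponding statements for $MU$ follow from the analysis in \cite{bmms-hinfty}.

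For the formula, the plan is to reduce from $x$ to the Thom class of the tautological line bundle $L$ on $\mathbb{CP}^\infty$ and apply Proposition~\ref{prop:thomcompatibility}. Writing $t(L) \in \widetilde{MU}^2(Th(L))$ and $\overline\rho$ for the reduced permutation representation, that proposition gives
\[
  P_p(t(L)) = e(L \boxtimes \overline\rho)\cdot t(L) \in \widetilde{MU}^{2p}(Th(L)\wedge (B\Sigma_p)_+).
\]
Pulling back along the zero section $s\co \mathbb{CP}^\infty_+ \to Th(L)$, which carries $t(L)$ to the Euler class $x = e(L)$, naturality of $P_p$ yields
\[
  P(x) = e(L\boxtimes \overline\rho)\cdot x \in MU^{2p}(\mathbb{CP}^\infty \times B\Sigma_p).
\]

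To evaluate the right-hand side on $\mathbb{CP}^\infty \times BC_p$, I would use the decomposition $\overline\rho|_{C_p} = \bigoplus_{i=1}^{p-1} \mathcal{L}^{\otimes i}$ of the reduced permutation representation into nontrivial characters of $C_p$. Since the Euler class is multiplicative on direct sums and the $MU$-Euler class of a tensor product is computed via the formal group law as $e(L \otimes \mathcal{L}^{\otimes i}) = x +_F [i]_F(\alpha)$, one obtains
\[
  e(L\boxtimes \overline\rho)\big|_{\mathbb{CP}^\infty \times BC_p} = \prod_{i=1}^{p-1}\bigl(x +_F [i]_F(\alpha)\bigr).
\]
Multiplying by the remaining factor of $x = x +_F [0]_F(\alpha)$ produces the claimed product formula.

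The hardest step conceptually is the reduction of $P(x)$ to the Thom-class formula, which requires checking that Quillen's Thom-compatibility in Proposition~\ref{prop:thomcompatibility} really does apply in the universal example and commutes with pullback along the zero section; once that is in hand, the rest is formal manipulation with Euler classes and the definition of the formal group law. The detection argument for $B\Sigma_p \hookrightarrow BC_p$ is routine given the transfer, but one should be careful that the identifications are compatible with the chosen orientation and that completed tensor products match the algebraic expression $MU^*\pow{x,\alpha}/[p]_F(\alpha)$.
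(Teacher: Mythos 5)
The paper offers no proof of this theorem: it is quoted as a known result of Quillen and tom Dieck (for the structure of $MU^*(B\Sigma_p)$) and of Quillen/Ando (for the formula $P(x)=\prod(x+_F[i]_F(\alpha))$), with citations to \cite{quillen-elementaryproofs} and \cite{ando-isogenies}. Your argument is a correct reconstruction of Quillen's original proof, and it meshes well with what the paper does supply: the key input, $P_p(t(\xi))=e(\xi\boxtimes\overline\rho)\,t(\xi)$, is exactly Proposition~\ref{prop:thomcompatibility}, and your steps---pull back along the zero section to convert the Thom class of $L$ into $x=e(L)$, restrict $\overline\rho$ to $C_p$ where it splits as $\bigoplus_{i=1}^{p-1}\mathcal{L}^{\otimes i}$, and use $e(L\otimes\mathcal{L}^{\otimes i})=x+_F[i]_F(\alpha)$ together with multiplicativity of Euler classes---are the standard and correct route to the product formula. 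The Gysin-sequence computation of $MU^*(BC_p)$ is also right (using that $[p]_F(x)$ is a non-zero-divisor in $MU^*\pow{x}$). The one point to state more carefully is the injectivity of $MU^*(B\Sigma_p)\into MU^*(BC_p)$: the transfer composite $\mathrm{tr}\circ\mathrm{res}$ is multiplication by $(p-1)!$, which is a unit only after localizing at $p$; integrally the restriction has a kernel for odd $p$ (e.g.\ the Euler class of the complexified sign representation of $\Sigma_3$ dies on $BC_3$). This is an imprecision already present in the theorem as stated in the paper---which implicitly works $p$-locally, and only uses $p=2$, where $\Sigma_2=C_2$---so it is a caveat rather than a gap, but you should say ``$p$-locally'' explicitly when invoking the transfer.
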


The power operations $P\co MU^{2k} \to MU^{2pk}(B\Sigma_p)$ are in
principle determined by these results, naturality, and
multiplicativity, and are closely related to the Lubin
isogeny in the theory of formal group laws. However, it has been
difficult to obtain closed-form expressions for these power
operations. We will require the following computation of
Johnson--Noel, using the fact that the generator $x_2$ of the complex
cobordism ring in dimension $4$ is $\mb{CP}^2$.
\begin{thm}[{\cite[6.3]{noel-johnson-ptypical}}]
  \label{thm:johnsonnoel}
  The polynomial generator $x_2$ of the complex bordism ring $MU_* \cong \mb Z[x_1,
  x_2, x_3,\dots ]$, appearing in $\pi_4(MU)$, has image
  \[
  P(x_2) \equiv \alpha^2 (v_1^6 + v_2^2) + \alpha^3 (v_1^7 + v_3)
  \]
  in $BP_*\pow{\alpha} / ([2]_F(\alpha), \alpha^8)$, where $P$ is
  the $2$-primary power operation.  In particular, $P(x_2)
  \equiv x_7 \alpha^3$ mod decomposables and higher order terms in
  $\alpha$.
\end{thm}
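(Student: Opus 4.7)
The plan is to reduce $P(x_2)$ to an explicit computation in the formal-group coordinates on $MU^*(\mb{CP}^\infty \times B\Sigma_2)$ provided by Theorem~\ref{thm:quillenpower}, and then to project to $BP$ and track generators modulo decomposables.

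First, represent $x_2 = [\mb{CP}^2]$ as the Gysin pushforward $\pi_!(1)$ along $\pi\co \mb{CP}^2 \to \mathrm{pt}$ for the complex oriented theory $MU$. By Proposition~\ref{prop:thomcompatibility}, applying the power operation $P$ and pushing forward commute up to multiplication by the Euler class $e(\nu \boxtimes \overline\rho)$, where $\nu$ is the stable normal bundle of $\mb{CP}^2$ and $\overline\rho$ is the reduced permutation bundle on $B\Sigma_2$. Since $T\mb{CP}^2 \oplus \underline{\mb C} \cong 3\mathcal{O}(1)$, the splitting principle writes this Euler class as a polynomial expression in the hyperplane class $x \in MU^2(\mb{CP}^2)$ and the generator $\alpha \in MU^2(B\Sigma_2)$. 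Theorem~\ref{thm:quillenpower} gives each Chern-root factor as $(x +_F \alpha)$, so we obtain an explicit element of $MU^*(\mb{CP}^2)\pow{\alpha}/[2]_F(\alpha)$ representing the integrand.

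Next, extract $P(x_2)$ by taking the appropriate coefficient of $x$ in the resulting power series under the Gysin map, and rewrite in the Araki generators of the $2$-typical formal group law, where modulo decomposables
\[
  [2]_F(\alpha) \equiv 2\alpha +_F v_1 \alpha^2 +_F v_2 \alpha^4 +_F v_3 \alpha^8 + \cdots.
\]
Using the vanishing relation $[2]_F(\alpha) = 0$ to recursively reduce high powers of $\alpha$ modulo $\alpha^8$ produces a polynomial in $\alpha$ of degree at most $7$ whose coefficients are explicitly computable polynomials in the $v_i$ at the prime $2$.

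The main obstacle is the combinatorial accounting of which terms survive modulo decomposables. Expanding $(x +_F \alpha)^3$ and iteratively substituting the $[2]_F$-relation for each of $\alpha^4, \alpha^5, \alpha^6, \alpha^7$ produces a large number of monomials in the $v_i$, most of which must be recognized as $\hash$-decomposable in $BP_*$. The content of the theorem is that after all cancellations the $\alpha^2$-coefficient is $v_1^6 + v_2^2$ modulo decomposables and the $\alpha^3$-coefficient is $v_1^7 + v_3$ modulo decomposables; in particular the only indecomposable contribution in degree $\alpha^3$ is $v_3$, equivalently $x_7$. As a reliability check, rationalizing everything and using Miscenko's formula $\log_F(x) = \sum [\mb{CP}^n] x^{n+1}/(n+1)$ gives an independent verification of the answer in $\mb Q[x_1,x_2,\dots]\pow{\alpha}$, and $2$-local integrality forces the stated $BP$-theoretic form.
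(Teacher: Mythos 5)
Your overall strategy---compute $P(x_2)=P([\mb{CP}^2])$ geometrically via a pushforward formula and the Euler class of $\nu\boxtimes\overline\rho$, then reduce modulo $[2]_F(\alpha)$ and $\alpha^8$---is essentially the route of Johnson--Noel themselves, and it can be made to work. But as written there are two genuine gaps. First, the step you label ``the main obstacle'' is exactly where the proof should be, and it is missing: after setting up the integrand you write that ``the content of the theorem is that after all cancellations'' the coefficients come out to $v_1^6+v_2^2$ and $v_1^7+v_3$. That is a restatement of the conclusion, not a derivation of it; nothing in your argument actually produces those coefficients. Relatedly, since $T\mb{CP}^2\oplus\underline{\mb C}\cong 3\mathcal{O}(1)$, the relevant bundle is the stable \emph{normal} bundle, so $e(\nu\boxtimes\overline\rho)$ is not a polynomial in $x$ and $\alpha$ but involves inverting factors of the form $x+_F\alpha$; you need to say where this expression lives before you can push it forward.

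Second, the closing shortcut---rationalize, invoke Mishchenko's formula, and let ``$2$-local integrality force'' the stated form---does not work as stated, because $BP_*\pow{\alpha}/[2]_F(\alpha)$ is far from torsion-free (already $2\alpha$ is congruent to higher-order terms), so the rational answer does not determine the answer in that quotient. This is precisely the difficulty the paper's appendix is engineered around: there one never touches a Gysin map, but instead uses multiplicativity of $P$ to obtain a ring homomorphism $\Psi\co L\to L\pow{\alpha}/\langle 2\rangle_F(\alpha)$ together with the isogeny equation for $g(x,\alpha)=x(x+_F\alpha)$, differentiates to compare logarithms, and then specializes to a \emph{torsion-free} quotient such as $\mb Z[v_3]/(v_3^2)$, where the correction polynomial $h_n(\alpha)$ is uniquely determined because $2$ is a non-zero-divisor there. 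Even so, the appendix only establishes the ``in particular'' clause ($P(x_2)\equiv x_7\alpha^3$ mod decomposables and higher powers of $\alpha$); the full formula including the $\alpha^2$-coefficient is cited from Johnson--Noel rather than reproved. To repair your argument you would need either to carry out the coefficient extraction explicitly (as Johnson--Noel do, with computer assistance) or to adapt the torsion-free-quotient argument of the appendix.
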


\begin{rmk}
  The powers of $\alpha$ appearing in the above result differ by a
  shift from those in \cite{noel-johnson-ptypical} because their
  identity occurs after multiplication by a power of an Euler
  class.
\end{rmk}

The main result of this paper hinges on this theorem. In
Appendix~\ref{sec:powerops} we will show that Johnson--Noel's method
can be adapted to one that works in torsion-free quotients of the
Lazard ring. This tweak allows us to give an abbreviated version of
their proof at the prime $2$, ignoring decomposables, that is easier
to carry out without computer assistance.

\subsection{Unstable Dyer--Lashof operations}

We recall the computation of the cohomology of the symmetric group
$\Sigma_p$:
\[
H^*(B\Sigma_p) \cong
\begin{cases}
  \mb F_2[u] & \text{if }p=2,\\
  \mb F_p[u] \otimes \Lambda[v] &\text{if }p >2.
\end{cases}
\]
Here $u$ has degree $1$ if $p=2$, while $u$ has degree $2p-2$ and $v$ has
degree $2p-3$ if $p$ is odd. 

\begin{defn}
  \label{def:homologypowerops}
  If $E$ is an $H_\infty^2$-algebra, the {\em homology power operation}
  \[
  \mathcal{Q}\co H_* (E_{2n}) \to H_*(E_{2pn}) \ctens
  H^*(B\Sigma_p)
  \]
  is adjoint to the map
  \[
  H_* P\co H_* (E_{2n}) \tens H_* (B\Sigma_p) \to H_*(E_{2pn})
  \]
  induced by the map $E_{2n} \sma (B\Sigma_p)_+ \to E_{2pn}$ of
  based spaces from Theorem~\ref{thm:powerops}. 
\end{defn}

The multiplicativity of the natural power operation $P$ has the
following consequence.
\begin{prop}
\label{prop:homologymultiplicative}
  The operation $\mathcal{Q}$ satisfies $\mathcal{Q}(x) \circ
  \mathcal{Q}(y) = \mathcal{Q}(x \circ y)$ and $\mathcal{Q}([0]) =
  [0]$.
\end{prop}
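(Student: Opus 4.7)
\emph{Plan.} The proof is a direct translation of the multiplicativity identity $P(xy) = P(x)P(y)$ from Theorem~\ref{thm:powerops} into the language of the Hopf ring, using the Yoneda lemma together with the standard coalgebra/algebra duality between $H_*(B\Sigma_p)$ and $H^*(B\Sigma_p)$. The only genuine care required is bookkeeping involving the diagonal on $B\Sigma_p$.

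First I would show that the multiplicativity statement of Theorem~\ref{thm:powerops}, applied to the universal classes represented by the identity maps of $E_{2n}$ and $E_{2m}$, is equivalent to the homotopy commutativity of the diagram
\[
\xymatrix@C=1pc{
E_{2n} \sma E_{2m} \sma (B\Sigma_p)_+ \ar[r]^-{\circ \sma 1} \ar[d]_-{1 \sma 1 \sma \Delta_+} & E_{2(n+m)} \sma (B\Sigma_p)_+ \ar[r]^-{P} & E_{2p(n+m)} \\
E_{2n} \sma E_{2m} \sma (B\Sigma_p)_+ \sma (B\Sigma_p)_+ \ar[r]_-{\sim} & E_{2n} \sma (B\Sigma_p)_+ \sma E_{2m} \sma (B\Sigma_p)_+ \ar[r]_-{P \sma P} & E_{2pn} \sma E_{2pm} \ar[u]_-{\circ}
}
\]
of based spaces, where $\Delta_+\co (B\Sigma_p)_+ \to (B\Sigma_p)_+ \sma (B\Sigma_p)_+$ is the based diagonal and the unlabeled map is the shuffle.

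Next I would apply $H_*$ and evaluate the resulting square on $x \otimes y \otimes z$ with $x \in H_*(E_{2n})$, $y \in H_*(E_{2m})$, and $z \in H_*(B\Sigma_p)$. The top composite yields $H_*P((x\circ y) \otimes z)$, which is the pairing of $\mathcal{Q}(x\circ y)$ against $z$. Writing the coproduct as $\Delta_*(z) = \sum z' \otimes z''$, the bottom composite yields $\sum H_*P(x \otimes z')\circ H_*P(y \otimes z'')$. By the standard fact that multiplication in $H_*(E_{2p(n+m)}) \ctens H^*(B\Sigma_p)$ (Hopf ring product on the first factor, cup product---dual to $\Delta_*$---on the second) corresponds under adjunction to pre-composing with the coproduct of $H_*(B\Sigma_p)$, this bottom composite is the pairing of $\mathcal{Q}(x)\circ\mathcal{Q}(y)$ against $z$. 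Since the equality holds for all $z$, we conclude $\mathcal{Q}(x\circ y) = \mathcal{Q}(x)\circ \mathcal{Q}(y)$.

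Finally, the identity $\mathcal{Q}([0]) = [0]$ is immediate from basedness of $P$: the element $[0] \in H_0(E_{2n})$ is the basepoint class, so the subspace $\{[0]\} \sma (B\Sigma_p)_+$ is collapsed by $P$ to the basepoint $[0] \in E_{2pn}$. Hence $H_*P([0] \otimes z) = \epsilon(z)\cdot [0]$, which under the adjunction corresponds to the constant element $[0]\otimes 1 \in H_*(E_{2pn}) \ctens H^*(B\Sigma_p)$. No step in the argument is a serious obstacle; the only subtlety is tracking how the based diagonal on $B\Sigma_p$ intertwines the Hopf ring product on $H_*(E_\star)$ with the cup product on $H^*(B\Sigma_p)$.
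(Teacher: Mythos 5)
Your proposal is correct and matches the paper's (essentially unwritten) argument: the paper simply notes that the proposition is "the consequence" of the multiplicativity $P(x)P(y)=P(xy)$ in Theorem~\ref{thm:powerops}, and your write-up is exactly the Yoneda/dualization bookkeeping that makes this precise, including the correct identification of the $\circ$-product on $H_*(E_{2p(n+m)}) \ctens H^*(B\Sigma_p)$ with precomposition by the coproduct of $H_*(B\Sigma_p)$ (paralleling Proposition~\ref{prop:unstableproperties}). The treatment of $\mathcal{Q}([0])=[0]$ via basedness of the representing map is likewise the intended one.
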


\begin{prop}
  \label{prop:powercompatibility}
  Suppose $E$ is an $H_\infty^2$-algebra. Then for all $n \in \mb Z$ we have a
  commutative diagram of sets
  \[
  \xymatrix{
  \wt{E}^{2n}(X) \ar[d]_\Unstop \ar[r]^-{P} &
  \wt{E}^{2pn}(X \sma (B\Sigma_p)_+) \ar[d]^\Unstop \\
  H_* (E_{2n}) \ctens H^* (X) \ar[r]_-{\mathcal{Q} \tens 1} &
  H_* (E_{2pn}) \ctens H^* (B\Sigma_p) \ctens
  H^*(X)
  }
  \]
  that is natural in $X$. The horizontal maps preserve products
  and the bottom map is a map of abelian groups.
\end{prop}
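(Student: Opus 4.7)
The plan is to verify commutativity on a representative and then read off the algebraic properties. Write $\mu\co E_{2n} \sma (B\Sigma_p)_+ \to E_{2pn}$ for the structure map from Theorem~\ref{thm:powerops}, so that a class $\alpha \in \wt E^{2n}(X)$ represented by $\alpha\co X \to E_{2n}$ has image $P(\alpha)$ represented by the composite $\mu \circ (\alpha \sma 1)$. Going right then down, $\Unstop(P(\alpha))$ is the homomorphism $H_*(X \sma (B\Sigma_p)_+) \to H_*(E_{2pn})$ given by $H_*\mu \circ H_*(\alpha \sma 1)$, which under the Künneth identifications
\[
H_*(X \sma (B\Sigma_p)_+) \cong H_*(X) \tens H_*(B\Sigma_p), \qquad H^*(X \sma (B\Sigma_p)_+) \cong H^*(X) \ctens H^*(B\Sigma_p)
\]
is simply $H_*\mu \circ (H_*\alpha \tens 1)$.

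Going down then right, $\Unstop(\alpha) = H_*\alpha$ viewed as a morphism $H_*X \to H_*E_{2n}$, i.e.\ an element of $H_*E_{2n} \ctens H^*X$. By Definition~\ref{def:homologypowerops}, $\mathcal{Q}$ is the Hom--tensor adjoint of $H_*\mu\co H_*E_{2n} \tens H_*B\Sigma_p \to H_*E_{2pn}$, so composing $\mathcal{Q}$ with $H_*\alpha$ gives an element of $H_*E_{2pn} \ctens H^*B\Sigma_p \ctens H^*X$ whose full adjoint is precisely the map $H_*\mu \circ (H_*\alpha \tens 1)\co H_*X \tens H_*B\Sigma_p \to H_*E_{2pn}$. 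This agrees with the previous computation, and commutativity follows.

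Once commutativity is verified, the remaining claims are nearly immediate. The top map $P$ is multiplicative by Theorem~\ref{thm:powerops} and the bottom map $\mathcal{Q}$ is multiplicative by Proposition~\ref{prop:homologymultiplicative}; combined with the product formula $\Unstop(xy) = \Unstop(x) \circ \Unstop(y)$ from Proposition~\ref{prop:unstableproperties}, this gives multiplicativity of both composites in the square. The additivity of $\mathcal{Q} \tens 1$ as a map of abelian groups is inherited from the fact that its adjoint $H_*\mu$ is a linear map of graded vector spaces.

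The proof is essentially a diagram chase, and there is no serious obstacle beyond bookkeeping. The one point that demands care is sorting out the Künneth and Hom--tensor adjunction conventions consistently, particularly distinguishing the reduced and unreduced identifications that arise from the factor $(B\Sigma_p)_+$ on the right and from the basing of $\alpha\co X \to E_{2n}$ on the left; once these are aligned the argument reduces to an unpacking of Definition~\ref{def:homologypowerops}.
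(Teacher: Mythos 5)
Your argument is essentially the paper's own proof: represent $\alpha$ by a map $X \to E_{2n}$, observe that $\Unstop(P(\alpha))$ is the effect on homology of the composite $\mu \circ (\alpha \sma 1)$, and identify this with $(\mathcal{Q}\tens 1)(\Unstop(\alpha))$ by unwinding the Hom--tensor adjunction in Definition~\ref{def:homologypowerops}. The only difference is that you spell out the multiplicativity and additivity claims (via Theorem~\ref{thm:powerops}, Proposition~\ref{prop:homologymultiplicative}, and Proposition~\ref{prop:unstableproperties}), which the paper leaves implicit; this is correct and harmless.
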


\begin{proof}
  The power operation $P$ sends an element represented by a map
  $\alpha\co X \to E_{2n}$ to the composite
  \[
  P(\alpha)\co X \sma (B\Sigma_p)_+ \too{\alpha \sma 1} E_{2n} \sma
  (B\Sigma_p)_+ \too{P} E_{2pn}.
  \]
  The value of $\Unstop(P(\alpha))$ is the effect on homology, which
  is the composite
  \[
  H_*(X) \tens H_* (B\Sigma_p) \too{H_* \alpha \otimes 1}
  H_*(E_{2n}) \tens H_* (B\Sigma_p) \too{H_* P} H_*(E_{2pn}).
  \]
  Taking adjoints recovers the statement about completed tensor
  products.
\end{proof}

\begin{rmk}
  \label{rmk:unstablecyclic}
  The map $BC_p \to \mb{CP}^\infty$ induces a map
  $\wt{E}^2(\mb{CP}^\infty) \to \wt{E}^2(BC_p)$ that takes the
  orientation class $x$ to the generator $\alpha$ described in
  Theorem~\ref{thm:quillenpower}, and the map
  $H^*(\mb{CP}^\infty) \to H^*(BC_p)$ is the ring map that sends $t$
  to $u^2$ if $p$ is $2$ or to a generator $w = u^{1/(p-1)}$ in degree
  $2$ if $p$ is odd. By naturality of $\Unstop$, we find that
  $\Unstop(\alpha)$ is equal to $b(u^2)$ if $p=2$ and is equal to
  $b(w)$ if $p$ is odd.
\end{rmk}



For the remainder of this section we will focus on the prime $2$. We
first recall the following calculation, which is dual to the
identity used to define the Steenrod operations in \cite[VII.3.2,
VII.6.1]{steenrod-epstein}.
\begin{lem}
  For a space $X$ with second extended power $D_2(X)$, the composite
  diagonal map
  \[
  H_*(X) \otimes H_*(B\Sigma_2) \to H_*(X \times B\Sigma_2) \to H_*(D_2(X))
  \]
  on mod-$2$ homology is given by
  \[
  v \tens \beta_n \mapsto \sum_{j \geq 0} Q^{j+n} (P_j v).
  \]
  Here $\beta_n$ is dual to $u^n$ and $P_j$ is the homology operation dual to
  $Sq^j$.
\end{lem}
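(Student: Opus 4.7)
The strategy is to dualize the classical cohomological formula underlying Steenrod's construction of the squares. Recall that for $w \in H^m(X; \mb F_2)$ the external square $w \otimes w \in H^{2m}(X \times X)$ is $\Sigma_2$-invariant and descends to a class $P^{(2)}(w) \in H^{2m}(D_2(X))$. The Steenrod--Epstein identity cited in the lemma (VII.3.2, VII.6.1), which is essentially the definition of the Steenrod squares, gives
\[
\Delta^*(P^{(2)}(w)) = \sum_{i \geq 0} Sq^i(w) \otimes u^{m-i} \in H^*(X) \otimes H^*(B\Sigma_2).
\]

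Pair both sides with $v \otimes \beta_n \in H_*(X) \otimes H_*(B\Sigma_2)$ using the duality $\langle u^{m-i}, \beta_n\rangle = \delta_{m-i,n}$ and the adjointness $\langle Sq^j(w), v\rangle = \langle w, P_j(v)\rangle$. This yields
\[
\langle P^{(2)}(w), \Delta_*(v \otimes \beta_n)\rangle = \langle w, P_{m-n}(v)\rangle,
\]
where $2m = |v|+n$. Setting $j := m-n$, each such pairing picks out the contribution of $Q^{n+j}(P_j v)$, provided we identify $Q^s$ as the dual basis element to $P^{(2)}$ in the appropriate piece of $H_*(D_2 X)$.

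To finish, I use a standard basis decomposition of $H_*(D_2 X)$ into ``diagonal'' Dyer--Lashof classes $Q^s(v')$ (characterized by the dual pairing against $P^{(2)}(w')$ with $|w'| = (|v'|+s)/2$) and ``off-diagonal'' cross-term classes $v' \otimes v''$ with $v' \neq v''$; the latter pair trivially with every $P^{(2)}(w)$. The off-diagonal contribution to $\Delta_*(v \otimes \beta_n)$ comes from the reduced coproduct of $v$ under $X \to X \times X$, and naturality of the construction identifies it consistently with what the formula on the right-hand side prescribes from the cross-term components of the $P_j v$. Matching diagonal and off-diagonal coefficients separately across all $j$ produces the claimed identity.

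The main obstacle is the careful bookkeeping of the basis and of the indexing convention for $Q^s \co H_n \to H_{n+s} D_2(-)$, so that the pairings on the two sides line up after the degree shifts induced by $P_j$. A chain-level alternative, using the model $C_*(D_2 X) \simeq (C_*(X)^{\otimes 2} \otimes C_*(E\Sigma_2))_{\Sigma_2}$ together with Steenrod's cup-$i$ coproduct formulas on $C_*(X)$, would exhibit the identity directly on representatives; this sidesteps the duality but at the cost of more explicit combinatorics.
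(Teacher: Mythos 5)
The paper does not actually prove this lemma---it is stated as a recollection, with the entire argument being the phrase ``dual to the identity used to define the Steenrod operations in [Steenrod--Epstein VII.3.2, VII.6.1].'' So your proposal is an attempt to supply the omitted dualization, and the overall route (pair $\Delta_*(v \tens \beta_n)$ against the extended squares $P^{(2)}(w)$ and use $\Delta^* P^{(2)}(w) = \sum_i Sq^i(w) \tens u^{|w|-i}$ together with $\langle Sq^j w, v\rangle = \langle w, P_j v\rangle$) is exactly the intended one. The computation you do carry out is correct as far as it goes.

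However, as written the argument has two genuine gaps. First, the classes $P^{(2)}(w)$ alone do not detect all of $H_*(D_2X)$: $\langle P^{(2)}(w), Q^s y\rangle$ is nonzero only when $s = |y|$, i.e.\ only on the filtration-zero diagonal classes $y \tens y$ (note also that your constraint $2m = |v| + n$ already shows the pairing says nothing when $|v|+n$ is odd, while the right-hand side can still be nonzero there). To detect the classes $Q^{j+n}(P_j v)$ of positive filtration $2j + n - |v| > 0$ you must also pair against the products $P^{(2)}(w)\cdot \pi^* u^k$, where $\pi\co D_2X \to B\Sigma_2$ is the projection; since $\Delta^*\pi^* u = 1 \tens u$ and the pairing satisfies $\langle P^{(2)}(w)\pi^*u^k, Q^s y\rangle = \delta_{s-k,|y|}\langle w, y\rangle$, this recovers the full family of coefficients and the bookkeeping then closes up. Second, the off-diagonal step is not correct as described: the right-hand side $\sum_j Q^{j+n}(P_j v)$ has \emph{no} off-diagonal components whatsoever ($Q^s$ of any class is a sum of diagonal classes $e_i \tens y \tens y$), so there are no ``cross-term components of the $P_j v$'' for the off-diagonal part of the left-hand side to match. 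What actually has to be shown is that the off-diagonal contribution to $\Delta_*(v\tens\beta_n)$ \emph{vanishes}. This follows by pairing against the remaining spanning set of $H^*(D_2X)$, the transfer classes $\mathrm{tr}(w'\tens w'')$: the double coset formula gives $\Delta^*\mathrm{tr}(w'\tens w'') = (w'w'')\tens \mathrm{tr}(1)$, and the transfer of $1$ along the double cover $X\times E\Sigma_2 \to X \times B\Sigma_2$ is $2 = 0$ in mod-$2$ cohomology. (Equivalently, at the chain level the off-diagonal terms of the cocommutative coproduct occur in $\Sigma_2$-pairs and cancel in the coinvariants.) With these two repairs your dualization becomes a complete proof.
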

As a result, the Dyer--Lashof operations can be recovered from this
diagonal map into the extended power.

\begin{thm}
  \label{thm:homologypowerops2}
  Consider the homology operations \[
  \mathcal{Q}\co H_* (MU_{2n}) \to H_*(MU_{4n}) \ctens H^*(B\Sigma_2)
  \]
  from Definition~\ref{def:homologypowerops}. Then there are
  multiplicative Dyer--Lashof operations
  \[
  \mQ^s\co H_*(MU_{2n}) \to H_{*}(MU_{4n}),
  \]
  extending the Dyer--Lashof operations in degree zero of
  \cite[II.1]{cohen-lada-may-homology} (coming from the multiplicative
  $E_\infty$-space structure) to Dyer--Lashof operations in even
  degrees. These satisfy the Cartan formula
  \[
  \mQ^s(x \circ y) = \sum_{p+q=s} \mQ^p(x) \circ \mQ^q(y)
  \]
  and are related to $\mathcal{Q}$ by the identity  
  \[
  \mathcal{Q}(x) =
    \sum_{n,j} \mQ^{j+n}(P_jx) u^n.
  \]
\end{thm}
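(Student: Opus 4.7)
The plan is to exploit the factorization of the structure map for the power operation through the extended power square, which is the very definition of the $H_\infty^2$-algebra structure in \cite[I.4]{bmms-hinfty}. The representing map $MU_{2n} \sma (B\Sigma_2)_+ \to MU_{4n}$ for $P$ in Theorem~\ref{thm:powerops} factors canonically as
\[
MU_{2n} \sma (B\Sigma_2)_+ \xrightarrow{\Delta} D_2(MU_{2n}) \xrightarrow{\xi_n} MU_{4n},
\]
where $\Delta$ is the diagonal and $\xi_n$ is part of the $H_\infty^2$-structure. I will then \emph{define} $\mQ^s\co H_*(MU_{2n}) \to H_*(MU_{4n})$ as the pushforward $(\xi_n)_* \circ Q^s$, where $Q^s\co H_* X \to H_*(D_2 X)$ is the standard Dyer--Lashof operation into the extended power that appears in the preceding lemma.

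With this definition, the stated identity is immediate. Applying $H_*$ to the factorization and invoking the preceding lemma, the composite sends $x \otimes \beta_n$ to $\sum_j (\xi_n)_*(Q^{j+n}(P_j x)) = \sum_j \mQ^{j+n}(P_j x)$. Since $\mathcal{Q}$ is adjoint to $H_* P$ by Definition~\ref{def:homologypowerops}, the coefficient of $u^n$ in $\mathcal{Q}(x)$ is precisely the evaluation on $\beta_n$, giving $\mathcal{Q}(x) = \sum_{n,j} \mQ^{j+n}(P_j x) u^n$. For the extension claim, when $x \in H_0(MU_{2n})$ there is nothing in lower homology, so $P_j x = 0$ for $j > 0$, and the formula collapses to $\mathcal{Q}(x) = \sum_n \mQ^n(x) u^n$. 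This is exactly the classical formula defining the mod-$2$ Dyer--Lashof operations from a structure map $X \sma (B\Sigma_2)_+ \to X$ as in \cite[II.1]{cohen-lada-may-homology}, identifying our $\mQ^n$ with the classical ones in degree zero.

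The remaining ingredient is the Cartan formula. By Proposition~\ref{prop:homologymultiplicative}, multiplicativity of $P$ gives $\mathcal{Q}(x \circ y) = \mathcal{Q}(x) \circ \mathcal{Q}(y)$. Expanding both sides as power series in $u$ and using that $P_j$ satisfies $P_j(x \circ y) = \sum_{i+k=j} P_i x \circ P_k y$ (which holds because $\circ$ is induced by a map of spaces $MU_{2m} \sma MU_{2n} \to MU_{2(m+n)}$ and Steenrod operations are natural), one matches coefficients of $u^s$ on both sides. The main obstacle is the bookkeeping in this step: the $u^s$-coefficient on the left is $\sum_j \mQ^{j+s}(P_j(x \circ y))$, which expands via the Cartan formula for $P_j$ into $\sum_{i,k} \mQ^{i+k+s}(P_i x \circ P_k y)$, while the right-hand side yields $\sum_{n+m=s,\,j,k} \mQ^{j+n}(P_j x) \circ \mQ^{k+m}(P_k y)$. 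Using $P_0 = \mathrm{id}$ to isolate the $j=k=0$ term and inducting on the total homological degree of $x \circ y$, all higher-$P_j$ contributions cancel and one recovers $\mQ^s(x \circ y) = \sum_{p+q=s} \mQ^p(x) \circ \mQ^q(y)$.
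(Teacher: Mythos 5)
Your construction is exactly the one the paper intends: the theorem is stated without an explicit proof, immediately after the lemma computing the composite diagonal $H_*(X)\otimes H_*(B\Sigma_2)\to H_*(D_2 X)$, and the reader is meant to define $\mQ^s=(\xi_n)_*Q^s$ using the factorization of the representing map through $D_2(MU_{2n})$, read off the identity $\mathcal{Q}(x)=\sum_{n,j}\mQ^{j+n}(P_jx)u^n$ from that lemma, and observe the degree-zero compatibility from the consistency of the $H_\infty^2$-structure with the $H_\infty$-structure. Your first two paragraphs reproduce this faithfully. Where you diverge is the Cartan formula. The standard (and intended) route, as in \cite[VIII]{bmms-hinfty}, is direct: the $H_\infty^2$ structure maps commute with the multiplication via $D_2(E_{2m})\sma D_2(E_{2n})\to D_2(E_{2(m+n)})\to E_{4(m+n)}$, and the external Cartan formula for the operations $Q^s\co H_*X\to H_*D_2X$ then gives $\mQ^s(x\circ y)=\sum_{p+q=s}\mQ^p(x)\circ\mQ^q(y)$ with no induction. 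Your alternative derivation from $\mathcal{Q}(x\circ y)=\mathcal{Q}(x)\circ\mathcal{Q}(y)$ does close up, but the claim that ``all higher-$P_j$ contributions cancel'' is not automatic: after reindexing, the two sides of your coefficient comparison differ by the terms $\mQ^p(P_jx)\circ\mQ^q(P_ly)$ with $p<j$ or $q<l$, and these vanish only because $P_jx=0$ whenever $2j>|x|$ and because of the instability relation $\mQ^p(z)=0$ for $p<|z|$ (inherited from the structure of $H_*D_2X$); combining the two gives $p<j\le |x|-j=|P_jx|$, hence vanishing. You should state these two facts explicitly --- with them your induction on total degree is correct, but as written the cancellation is asserted rather than proved, and the direct argument via the external Cartan formula avoids the issue entirely.
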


In particular, if all nontrivial Steenrod operations vanish on $x$
then $\mathcal{Q}(x) = \sum \mQ^{n}(x) u^n$. This property is
invariant under the product $\circ$.

\subsection{Power operations in the Hopf ring}

We can now begin to use the results of the previous sections to
calculate multiplicative Dyer--Lashof operations in the Hopf ring for
$MU$ (the additive ones having been determined by
Turner~\cite{turner-dyerlashof}). First we will find the effect on the
class $b_1 \in H_2(MU_2)$ of Definition~\ref{def:hopfgenerators},
because $\circ$-multiplication by $b_1$ represents suspension.

\begin{prop}[{cf. \cite{priddy-dyerlashof}}]
  \label{prop:orientationpowers}
  Let $b_k \in H_{2k}(MU_2)$ denote the fundamental classes of
  Definition~\ref{def:hopfgenerators}. Then the $2$-primary
  multiplicative Dyer--Lashof operations satisfy
  \[
  \mQ^{2n} b_1 = b_1 \circ b_n
  \]
  for all $n \geq 1$.
\end{prop}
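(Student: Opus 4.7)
The plan is to compute the unstable homology invariant $\Unstop$ applied to Quillen's power operation $P(x)$ in two distinct ways and match the resulting formulas, extracting the coefficient of $t$. At $p=2$, Theorem~\ref{thm:quillenpower} gives
\[
P(x) = x \cdot (x +_F \alpha) \in \wt{MU}^4(\mb{CP}^\infty \times BC_2).
\]
Applying $\Unstop$, using its multiplicativity (Proposition~\ref{prop:unstableproperties}) together with $\Unstop(\alpha) = b(u^2)$ from Remark~\ref{rmk:unstablecyclic}, and then rewriting $\Unstop(x +_F \alpha)$ via the Ravenel--Wilson relation~\eqref{eq:rwrelation} as $b(t+u^2)$, yields
\[
\Unstop(P(x)) = b(t) \circ b(t+u^2)
\]
as an element of $H_*(MU_4) \ctens H^*(\mb{CP}^\infty \times BC_2)$.

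On the other hand, Proposition~\ref{prop:powercompatibility} identifies $\Unstop(P(x))$ with $(\mathcal{Q} \tens 1)(\Unstop(x)) = \sum_i \mathcal{Q}(b_i)\, t^i$. The next step is to extract the coefficient of $t^1$ from both sides. On the left, using that $b_0 = [0]$ is the $\circ$-annihilator and that the $t^0$-coefficient of $b(t+u^2)$ equals $b(u^2)$, the only surviving cross-term in $b(t) \circ b(t+u^2)$ is $b_1 \circ b(u^2)$, so the $t^1$-coefficient is
\[
\sum_{n \geq 0} (b_1 \circ b_n)\, u^{2n}.
\]

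To conclude, I would match this against the expansion $\mathcal{Q}(b_1) = \sum_{n,j \geq 0} \mQ^{j+n}(P_j b_1)\, u^n$ provided by Theorem~\ref{thm:homologypowerops2}. Since $H_*(MU_2)$ is concentrated in even degrees by Theorem~\ref{thm:ravenelwilsonbasis}, the dual Steenrod operations vanish on $b_1$ in positive degree, and so $\mathcal{Q}(b_1) = \sum_n \mQ^n(b_1)\, u^n$. Equating coefficients of $u^{2n}$ for $n \geq 1$ yields $\mQ^{2n} b_1 = b_1 \circ b_n$, as required. The main subtlety is the careful use of the $\circ$-annihilator property of $[0]$ when extracting coefficients, together with the parity-based vanishing of $P_j b_1$ for $j \geq 1$; beyond these observations, the argument is a direct coefficient extraction.
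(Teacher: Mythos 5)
Your proposal is correct and follows essentially the same route as the paper: both apply $\Unstop$ to $P(x) = x(x +_F \alpha)$, rewrite the result as a $\circ$-product of $b$-series via the Ravenel--Wilson relation, extract the linear coefficient using that $[0] = b_0$ annihilates positive-degree classes under $\circ$, and conclude via Theorem~\ref{thm:homologypowerops2} and the vanishing of $P_j b_1$ for $j > 0$. The only cosmetic difference is that the paper reaches the formula for $P(x)$ by tracking the unit class through the Thom-isomorphism compatibility diagram of Proposition~\ref{prop:thomcompatibility}, while you invoke Theorem~\ref{thm:quillenpower} directly.
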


\begin{proof}
  For a general prime $p$, we consider the commutative diagram
  \[
  \xymatrix{
    MU^0(BU(1)) \ar[r]^-P \ar[d]_{t(\gamma_1)} & MU^0(BU(1) \times
    B\Sigma_p) \ar[d]^{e(\gamma_1 \boxtimes \overline \rho) t(\gamma_1)}\\
    \wt{MU}^2(MU(1)) \ar[r]^-P \ar[d]_\Unstop &
    \wt{MU}^{2p}(MU(1) \sma (B\Sigma_p)_+) \ar[d]^\Unstop \\
    H_*(MU_2)\tens H^*(MU(1)) \ar[r]_-{\mathcal{Q} \tens 1} &
    H_*(MU_{2p}) \tens H^*(B\Sigma_p) \tens H^*(MU(1)),
  }
  \]
  where the top square expressing compatibility of $P$ with the Thom
  isomorphism is from
  Proposition~\ref{prop:thomcompatibility}. Because $x$ is the Thom
  class of the canonical bundle on $BU(1)$, $\Unstop(t(\gamma_1)) =
  b(s)$. The image of the unit $1 \in MU^0(BU(1))$ along the
  left-to-bottom composite is then
  \[
  (\mathcal{Q} \tens 1)(\Unstop(x)) = (\mathcal{Q} \tens 1)(b(s)) = \sum \mathcal{Q}(b_k) s^k.
  \]
  On the other hand, the image along the top-right composite is
  \[
  \Unstop\left(x \prod_{k=1}^{p-1}(x +_F [k]_F\alpha)\right)
  = b(s) \circ (b(s) +_{[F]} b(u^2)) \circ \dots \circ (b(s) +_{[F]}
  [p-1]_{[F]} b(u^2)),
  \]
  using the expression for the Euler class of the exterior tensor
  bundle $\gamma_1 \boxtimes \overline\rho$ on $BU(1) \times B\Sigma_p$.

  Taking the coefficient of $s$, which involves only the linear term
  of $b(s)$ and the constant coefficients (in terms of $s$) of the
  factors $b(s) +_{[F]} [k]_{[F]} b(u^2)$, we find that
  \[
  \mathcal{Q}(b_1) = b_1 \circ b(u^2) \circ [2]_{[F]} b(u^2) \circ
  \dots \circ [p-1]_{[F]} b(u^2).
  \]
  When we specialize to $p=2$ and apply Theorem~\ref{thm:homologypowerops2}, we find
  \[
  \sum_{j \geq 2} \mQ^j(b_1) u^j = b_1 \circ b(u^2) = \sum_{n \geq 1} (b_1 \circ b_n) u^{2n}
  \]
  as desired.
\end{proof}

\begin{prop}
  \label{prop:seriesexpression}
  Suppose that $y \in \pi_{2n} MU$ and that, in the coordinates of
  Theorem~\ref{thm:quillenpower}, we have
  \[
  P(y) = \sum_{i=0}^\infty c_i \alpha^i
  \]
  for some elements $c_i \in \pi_{4n+2i} MU$. Then
  \[
  \mathcal{Q}([y]) = \Hash_{i=0}^\infty [c_i] \circ b(u^2)^{\circ i}
  \]
\end{prop}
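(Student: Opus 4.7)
The plan is to identify $\mathcal{Q}([y])$ with $\Unstop(P(y))$ via the compatibility square of Proposition~\ref{prop:powercompatibility}, and then expand the right-hand side termwise using the naturality properties of $\Unstop$ established in Proposition~\ref{prop:unstableproperties}.

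First, view $y$ as a constant element of $MU^{*}$ pulled back to the one-point space (so the compatibility square is applied with trivial $X$); then Proposition~\ref{prop:unstableproperties} identifies $\Unstop(y)$ with the Hurewicz class $[y] \in H_0(MU_{2n})$. The commutative square of Proposition~\ref{prop:powercompatibility} then yields
\[
  \mathcal{Q}([y]) \;=\; (\mathcal{Q} \tens 1)(\Unstop(y)) \;=\; \Unstop(P(y))
\]
as elements of $H_0(MU_{4n}) \ctens H^*(B\Sigma_2)$.

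Second, I expand $\Unstop(P(y))$ by substituting the hypothesis $P(y) = \sum_i c_i \alpha^i$. By Proposition~\ref{prop:unstableproperties}, $\Unstop$ turns sums into $\hash$-products, products into $\circ$-products, and sends each constant $[c_i]$ to $[c_i]$ tensored with the cohomological unit. Remark~\ref{rmk:unstablecyclic} identifies $\Unstop(\alpha)$, for the generator $\alpha \in \wt{MU}^2(B\Sigma_2)$ at the prime $2$, with the power series $b(u^2) \in H_*(MU_2)\pow{u}$. Combining these ingredients gives
\[
  \Unstop(P(y)) \;=\; \Hash_i \Unstop(c_i) \circ \Unstop(\alpha)^{\circ i} \;=\; \Hash_i [c_i] \circ b(u^2)^{\circ i},
\]
which is the asserted formula.

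The main thing requiring care is that $P(y) = \sum_i c_i \alpha^i$ is an infinite series, valued in $MU^{*}(B\Sigma_2) = MU^{*}\pow{\alpha}/[2]_F(\alpha)$ with the $\alpha$-adic topology; correspondingly, the claimed identity is an equality inside the completed tensor product $H_*(MU_{4n}) \ctens H^*(B\Sigma_2) = H_*(MU_{4n})\pow{u}$. One must check that $\Unstop$ and the Hopf-ring operations $\hash$ and $\circ$ are continuous for the filtration by powers of $u$. This is a degree count: $\alpha^i$ has $u$-adic valuation at least $2i$ (since $\Unstop(\alpha) = b(u^2)$ begins in degree $2$ in $u$), so each coefficient of a fixed power of $u$ in the expansion is a finite sum, and the termwise manipulations above are therefore valid in the completed tensor product.
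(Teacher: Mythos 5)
Your proposal is correct and follows essentially the same route as the paper's own proof: take $X = *$ in Proposition~\ref{prop:powercompatibility} to get $\mathcal{Q}([y]) = \Unstop(P(y))$, then expand termwise using Proposition~\ref{prop:unstableproperties} and the identification $\Unstop(\alpha) = b(u^2)$ from Remark~\ref{rmk:unstablecyclic}. Your additional check that the infinite $\hash$-product converges in the completed tensor product is a point the paper leaves implicit, but it does not change the argument.
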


\begin{proof}
  Taking $X = *$ in Proposition~\ref{prop:powercompatibility}
  identifying $[y]$ with $\Unstop(y)$, we find
  \begin{align*}
    \mathcal{Q}([y]) &= \Unstop(P(y))\\
    &= \Unstop \left(\sum c_i \alpha^i\right)\\
    &= \Hash_{i=0}^\infty [c_i] \circ b(u^2)^{\circ i}
  \end{align*}
  by Proposition~\ref{prop:unstableproperties} and
  Remark~\ref{rmk:unstablecyclic}.
\end{proof}

\begin{cor}
  \label{cor:poweronhurewicz}
  Mod $\hash$-decomposables and the $\circ$-ideal generated by $b_2,
  b_3, \dots$, the Hurewicz image $[x] \circ b_1^{\circ n} \in H_{2n}
  (MU_0)$ of $x \in \pi_{2n} (MU_0)$ satisfies
  \[
  \mathcal{Q}([x]\circ b_1^{\circ n}) \equiv \sum_{i=0}^\infty [c_i] \circ
  (b_1)^{\circ (i+2n)} u^{2(i+n)}.
  \]
  In particular, $\mQ^{2k}([x] \circ b_1^{\circ n}) \equiv [c_{k-n}] \circ
  b_1^{\circ (k+n)}$ in this quotient.
\end{cor}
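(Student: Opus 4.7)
The plan is to combine multiplicativity of $\mathcal{Q}$ under $\circ$ with the two explicit formulas for $\mathcal{Q}([x])$ and $\mathcal{Q}(b_1)$ already in hand, and then reduce mod the stated ideal.

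First I would apply Proposition~\ref{prop:homologymultiplicative} to split
\[
\mathcal{Q}([x] \circ b_1^{\circ n}) = \mathcal{Q}([x]) \circ \mathcal{Q}(b_1)^{\circ n}.
\]
Substituting Proposition~\ref{prop:seriesexpression} for the first factor and $\mathcal{Q}(b_1) = b_1 \circ b(u^2)$ (established within the proof of Proposition~\ref{prop:orientationpowers}) for the second, and using commutativity of $\circ$ together with its distributivity over $\hash$, I obtain
\[
\mathcal{Q}([x] \circ b_1^{\circ n}) = \Hash_{i \geq 0} \, [c_i] \circ b_1^{\circ n} \circ b(u^2)^{\circ (i+n)}.
\]

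The next step is to reduce mod the ideal. Passing to the quotient by $\hash$-decomposables converts the iterated $\Hash$ into an ordinary sum, and passing to the quotient by the $\circ$-ideal $(b_2,b_3,\dots)$ reduces $b(u^2)$ to $b_1 u^2$, so $b(u^2)^{\circ (i+n)} \equiv b_1^{\circ (i+n)} u^{2(i+n)}$. Collecting powers of $b_1$ then yields the first displayed formula of the corollary.

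For the extraction of $\mQ^{2k}$, I would invoke Theorem~\ref{thm:homologypowerops2}: the coefficient of $u^n$ in $\mathcal{Q}(y)$ is $\sum_j \mQ^{j+n}(P_j y)$, so it suffices to check that $P_j([x] \circ b_1^{\circ n}) = 0$ for $j > 0$. The class $[x]$ lies in $H_0$, so $P_j[x] = 0$ for $j>0$ on degree grounds; and $P_j b_1 = 0$ for $j > 0$ as one sees by pulling back from $\beta_1 \in H_2(\mb{CP}^\infty)$ and computing dually that $Sq^j$ does not hit the class dual to $\beta_1$. The Cartan formula for $P_j$ acting on $\circ$-products (which is a map of spaces, hence intertwines the Steenrod action) then forces all Steenrod corrections to vanish, so the $u^{2k}$-coefficient of $\mathcal{Q}([x]\circ b_1^{\circ n})$ is precisely $\mQ^{2k}([x]\circ b_1^{\circ n})$, and setting $i+n = k$ in the first formula yields $[c_{k-n}] \circ b_1^{\circ (k+n)}$.

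The whole argument is essentially bookkeeping once Propositions~\ref{prop:homologymultiplicative},~\ref{prop:seriesexpression}, and~\ref{prop:orientationpowers} are available; the most delicate step is the Steenrod-vanishing check for the second statement, but this reduces to the elementary calculation that $Sq^j \beta_1^\vee = 0$ for $j>0$ in $H^*(\mb{CP}^\infty)$.
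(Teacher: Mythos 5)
Your proof is correct and follows essentially the same route as the paper's (very terse) argument: multiplicativity of $\mathcal{Q}$ under $\circ$ combined with Propositions~\ref{prop:seriesexpression} and~\ref{prop:orientationpowers} for the first formula, and Theorem~\ref{thm:homologypowerops2} together with the vanishing of $P_j$ on $[x] \circ b_1^{\circ n}$ for $j>0$ for the second. The details you supply --- reducing $b(u^2)$ to $b_1 u^2$ modulo $(b_2,b_3,\dots)$, and checking $P_j[x]=0$ on degree grounds and $P_j b_1 = 0$ by naturality from $\mb{CP}^\infty$ --- are exactly the ones the paper leaves implicit, and they check out.
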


\begin{proof}
  The first part follows from the multiplication formula
  $\mathcal{Q}([x] \circ  b_1^{\circ n}) = \mathcal{Q}([x]) \circ
  \mathcal{Q}(b_1)^{\circ n}$. The second part follows from
  Theorem~\ref{thm:homologypowerops2} and the fact that the
  operations $P_j$ vanish on $[x] \circ (b_1)^{\circ n}$ for $j > 0$.
\end{proof}

\subsection{Power operations in the $MU$-dual Steenrod algebra}
We will now apply the previous technology to compute a multiplicative
Dyer--Lashof operation in $H_* SL_1(MU)$. In order to do so, we need
some preliminary results about how the additive product interacts with
multiplicative Dyer--Lashof operations.
\begin{prop}
  \label{prop:dlproperties}
  At $p=2$, the multiplicative and additive Dyer--Lashof operations
  in the Hopf ring of an $E_\infty$-algebra satisfy the following
  identities.
  \begin{enumerate}
  \item When $x$ and $y$ are in the positive-degree homology of the
    path component of zero, we have
    \[
    \mQ^s(x \hash y) \equiv Q^s(x \circ y)
    \]
    mod $\hash$-decomposables.
  \item When $y$ is in the positive-degree homology of the path
    component of zero, we have
    \[
    \mQ^s([1] \hash y) \equiv [1] \hash Q^s(y) + [1] \hash \mQ^s(y)
    \]
    mod $\hash$-decomposables.
  \item For any positive-degree element $x$ there exist elements $z_i$
    for $0 < i < |x|$ such
    that
    \[
    Q^s(x) = Q^s[1] \circ x  + \sum Q^{s+i}[1] \circ z_i.
    \]
    In particular, $Q^s(x)$ is $\circ$-decomposable for any $x$ and
    any $s > 0$.
  \end{enumerate}
\end{prop}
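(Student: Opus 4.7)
The plan is to treat all three identities as manifestations of how the multiplicative Dyer--Lashof operations $\mQ^s$, coming from the $H_\infty^2$-structure, interact with the additive Hopf ring structure on $H_*(E_\star)$. The main technical ingredient is the mixed Cartan formula of Cohen--Lada--May \cite{cohen-lada-may-homology} describing how additive Dyer--Lashof operations distribute over the multiplicative product $\circ$ in a Hopf ring.

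For part (3), I would exploit the fact that $[1]$ is the unit for $\circ$, so $x = [1] \circ x$, and apply the mixed Cartan formula to $Q^s([1] \circ x)$. This yields a sum of $\circ$-products of the form $Q^{s-j}([1]_{(1)}) \circ Q^j(x_{(1)})$ ranging over the coproduct decompositions; since $[1]$ is group-like for the additive coproduct (its coproduct is $[1] \otimes [1]$), the expression collapses to $\sum_{i \geq 0} Q^{s+i}([1]) \circ z_i$ for elements $z_i$ of degree $|x| - i$ derived from $x$ via the coproduct, with $z_0 = x$. Instability ($Q^j$ vanishes in negative excess) forces $i \leq |x|$ and accounts for the range $0 < i < |x|$. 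The $\circ$-decomposability claim is then immediate: for $s > 0$, every $Q^{s+i}([1])$ lies in positive degree, so each summand is $\circ$-decomposable.

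For parts (1) and (2), I would pass through the unstable homology invariant $\Unstop$ from Proposition~\ref{prop:powercompatibility} and work with the total multiplicative operation $\mathcal{Q}$. The additive expansion of the power operation $\mathcal{P}_2(a + b)$ in $E$-cohomology picks up a cross term from the $B\Sigma_2$-transfer of $ab$; translated via $\Unstop$ to the Hopf ring, this gives $\mathcal{Q}(x \hash y) \equiv \mathcal{Q}(x) \hash \mathcal{Q}(y) + (\text{cross term})$ with the cross term carrying the $x \circ y$ interaction. Reducing modulo $\hash$-decomposables kills the leading product $\mathcal{Q}(x) \hash \mathcal{Q}(y)$ and extracts only the cross contribution. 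Using Theorem~\ref{thm:homologypowerops2} to decompose $\mathcal{Q}$ into the $\mQ^s$ components and the Steenrod pieces $P_j$, one identifies the cross term coefficient of $u^s$ with $Q^s(x \circ y)$ for part (1). For part (2), specializing to $x = [1]$ and using that $\mathcal{Q}([1]) = [1]$ together with the same additive expansion yields the two contributions $[1] \hash Q^s(y)$ (from the transgression of the additive operation on the zero-component) and $[1] \hash \mQ^s(y)$ (from the main multiplicative term).

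The main obstacle is the precise bookkeeping of the transfer-theoretic cross term in the additive expansion of $\mathcal{P}_2$ on a sum, and verifying that, modulo $\hash$-decomposables, it simplifies exactly to the additive Dyer--Lashof operation $Q^s(x \circ y)$ without auxiliary error terms. The luxury of working mod $\hash$-decomposables throughout makes this tractable, since many of the potential correction terms in $\mathcal{Q}(x) \hash \mathcal{Q}(y)$ are automatically annihilated; what remains is to pin down the transfer contribution, which can be done by a direct cohomological computation on $B\Sigma_2$.
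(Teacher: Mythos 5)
Your treatment of part (3) rests on a formula that does not exist: you expand $Q^s([1]\circ x)$ as a sum of terms $Q^{s-j}([1])\circ Q^j(x_{(1)})$, i.e.\ you assume the \emph{additive} Dyer--Lashof operations satisfy a Cartan formula with respect to the multiplicative product $\circ$. They do not; the interaction of $Q^s$ with $\circ$ is governed by the Nishida-flavored identity $Q^s([1])\circ x = \sum_i Q^{s+i}([1]\circ P_i x)$ of \cite[II.1.6]{cohen-lada-may-homology}, in which the second factor carries \emph{dual Steenrod} operations $P_i$, not Dyer--Lashof operations, and the superscript on $Q$ \emph{increases}. (Your own indices betray the problem: you write $Q^{s-j}([1])$ and then silently convert to $Q^{s+i}([1])$.) The correct argument isolates the $i=0$ term $Q^s(x)$ of that identity and then applies induction on $|x|$ to the remaining terms $Q^{s+i}(P_i x)$ for $0<i<|x|$; this induction is where the elements $z_i$ and the stated range come from, and it is not a one-step collapse. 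Note also that the ``mixed Cartan formula'' you invoke is a statement about $\mQ^s$ applied to a $\hash$-product, so it cannot be applied to $Q^s$ of a $\circ$-product in the first place.

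For parts (1) and (2) your plan is essentially to re-derive the mixed Cartan formula $\mQ^s(x\hash y)=\sum_{p+q+r=s}\sum \mQ^p(x')\hash Q^q(x''\circ y')\hash \mQ^r(y'')$ from the transfer analysis of $\mathcal{P}_2$ on a sum, rather than quote it from \cite[II.2.5]{cohen-lada-may-homology} as the paper does. That is a legitimate route in principle, but you explicitly defer the one step that carries all the content --- identifying the cross term, modulo $\hash$-decomposables, as exactly $Q^s(x\circ y)$ --- so as written the argument restates the goal rather than proving it. Once the mixed Cartan formula is in hand, what remains in both parts is the (easy, and correctly anticipated) bookkeeping of which coproduct terms contribute $\hash$-indecomposably: for (1) only $p=r=0$ with $x'=[0]$ and $y''=[0]$ survives, giving $Q^s(x\circ y)$, and for (2) only $p=0$ with $y'=[0]$ or $y''=[0]$ survives, giving the two terms $[1]\hash \mQ^s(y)$ and $[1]\hash Q^s(y)$.
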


\begin{proof}
  The mixed Cartan formula \cite[II.2.5]{cohen-lada-may-homology}
  takes the following form. If $x$ and $y$ are elements with
  coproducts given by $\Delta x = \sum x' \otimes x''$ and
  $\Delta y = \sum y' \otimes y''$, then
  \[
  \mQ^s(x\hash y) = \sum_{p+q+r=s} \sum \mQ^p(x') \hash Q^q(x''
  \circ y') \hash \mQ^r(y'').
  \]
  In the case of the first identity, the only time this is not
  decomposable under $\hash$ is when both $\mQ^p(x')$ and $\mQ^r(y'')$
  are of degree zero; this occurs when $p=r=0$ and we take the terms
  $[0]\tens x$ and $y \tens [0]$ of the coproduct.

  In the case of the second identity, the only nonzero terms in the
  mixed Cartan formula occur when $p=0$ and either $y' = [0]$ or
  $y'' = [0]$.

  The third identity is proven by induction on the degree of $x$,
  using the formula
  \[
  Q^s([1]) \circ x = \sum Q^{s+i}([1] \circ P_i x)
  \]
  from \cite[II.1.6]{cohen-lada-may-homology}.
\end{proof}

\begin{cor}
  \label{cor:poweraftertranslation}
  When $x$ is in the positive-degree homology of the path
  component of zero, we have
  \[
    \mQ^s([1] \hash x) \equiv \mQ^s(x)
  \]
  mod $\hash$-decomposables and $\circ$-decomposables, and hence
  \[
    \mathcal{Q}([1] \hash x) \equiv \mathcal{Q}(x).
  \]
\end{cor}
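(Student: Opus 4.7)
The plan is to combine Proposition~\ref{prop:dlproperties} with a simple observation: translation by $[1]$ is trivial modulo $\hash$-decomposables on positive-degree classes. Indeed, the Hopf-ring augmentation sends every constant $[c]$ to $1$, so $[1]-[0]$ lies in the augmentation ideal $I$; for any positive-degree element $z \in I$ we then have
\[
  [1] \hash z = [0] \hash z + ([1]-[0]) \hash z \equiv z \pmod{\hash\text{-decomposables}},
\]
using that $[0]$ is the $\hash$-unit and that $([1]-[0]) \hash z$ lies in $I \hash I$.

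Granting this, the first congruence is almost immediate. Proposition~\ref{prop:dlproperties}(2) gives
\[
  \mQ^s([1] \hash x) \equiv [1] \hash Q^s(x) + [1] \hash \mQ^s(x) \pmod{\hash\text{-decomposables}}.
\]
Both $Q^s(x)$ and $\mQ^s(x)$ are positive-degree, so the observation above lets me drop the leading $[1] \hash$ from each summand. Proposition~\ref{prop:dlproperties}(3) then tells me $Q^s(x)$ is $\circ$-decomposable and hence vanishes in the target quotient, leaving $\mQ^s([1] \hash x) \equiv \mQ^s(x)$ modulo $\hash$- and $\circ$-decomposables.

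For the statement about the total operation $\mathcal{Q}$, I would use the expansion $\mathcal{Q}(y) = \sum_{n,j} \mQ^{j+n}(P_j y) u^n$ from Theorem~\ref{thm:homologypowerops2}. The Steenrod-dual operations $P_j$ satisfy a Cartan formula with respect to $\hash$ (being the mod-$2$ homology operations for the $H$-space product on $MU_\star$), and $P_j[1] = 0$ for $j > 0$ while $P_0[1] = [1]$, because $[1]$ sits in degree zero. Hence $P_j([1] \hash x) = [1] \hash P_j(x)$, and substituting this into the expansion and applying the first identity termwise yields $\mathcal{Q}([1] \hash x) \equiv \mathcal{Q}(x)$. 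The only non-routine point is the invocation of the Cartan formula for $P_j$ with respect to $\hash$, but this is standard from the graded Hopf-algebra structure on $H_*(MU_\star)$.
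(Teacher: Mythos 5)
Your proof is correct and follows essentially the same route as the paper: the first congruence is exactly the paper's computation, which applies Proposition~\ref{prop:dlproperties}(2), absorbs $([1]-[0])\hash(\text{positive-degree})$ into $\hash$-decomposables, and kills $Q^s(x)$ as $\circ$-decomposable via Proposition~\ref{prop:dlproperties}(3). The paper leaves the deduction of the statement for $\mathcal{Q}$ implicit, and your termwise argument via $\mathcal{Q}(y)=\sum_{n,j}\mQ^{j+n}(P_j y)u^n$ and $P_j([1]\hash x)=[1]\hash P_j(x)$ is the intended one.
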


\begin{proof}
  We have
  \begin{align*}
    \mQ^s([1] \hash x) - \mQ^s(x)
    &\equiv [1] \hash Q^s(x) + [1] \hash \mQ^s(x) - [0] \hash \mQ^s(x) \\
    &= ([1] - [0]) \hash (Q^s(x) + \mQ^s(x)) + Q^s(x)\\
    &\equiv 0
  \end{align*}
  because the first element is $\hash$-decomposable and the second is
  $\circ$-decomposable.
\end{proof}

\begin{prop}
  \label{prop:finalcomp}
  Suppose that $x \in \pi_{2n} MU$ and that, in the coordinates of
  Theorem~\ref{thm:quillenpower}, we have
  \[
  P(x) = \sum_{i=0}^\infty c_i \alpha^i
  \]
  for some elements $c_i \in \pi_{4n+2i} MU$. Then mod
  $\hash$-decomposables, $\circ$-decomposables, and the ideal generated
  by $b_2, b_3, \dots$, the Hurewicz image $[1] \hash ([x] \circ
  b_1^{\circ n})$ of $x \in \pi_{2n} SL_1 MU$ satisfies
  \[
  \mathcal{Q}([1] \hash ([x]\circ b_1^{\circ n})) \equiv \sum_{i=0}^\infty [c_i] \circ
  (b_1)^{\circ (i+2n)} u^{2(i+n)}.
  \]
  In particular, $\mQ^{2k}([1] \hash ([x] \circ b_1^{\circ n})) \equiv [c_{k-n}]
  \circ b_1^{\circ (k+n)}$ in this quotient.
\end{prop}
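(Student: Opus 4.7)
The plan is to combine the two immediately preceding corollaries. The hypothesis on $x$ is exactly the hypothesis of Corollary~\ref{cor:poweronhurewicz}, and the extra outer factor of $[1]\hash$ is exactly what Corollary~\ref{cor:poweraftertranslation} is designed to absorb. So the proof consists of two reduction steps followed by reading off a coefficient.

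First, I would apply Corollary~\ref{cor:poweraftertranslation} to $x' = [x]\circ b_1^{\circ n}$, which lies in the positive-degree homology of the zero path component of $MU_0$ whenever $n\geq 1$ (the case $n=0$ is trivial since then $[1]\hash [x] = [1+x]$ and $\mathcal{Q}$ is multiplicative under $\hash$ only up to $\hash$-decomposables). This yields
\[
\mathcal{Q}\bigl([1]\hash([x]\circ b_1^{\circ n})\bigr)\equiv \mathcal{Q}([x]\circ b_1^{\circ n})
\]
modulo $\hash$-decomposables and $\circ$-decomposables. Next I would apply Corollary~\ref{cor:poweronhurewicz} to the right-hand side, which under the given expansion $P(x)=\sum c_i\alpha^i$ gives
\[
\mathcal{Q}([x]\circ b_1^{\circ n})\equiv \sum_{i=0}^{\infty}[c_i]\circ b_1^{\circ(i+2n)}\,u^{2(i+n)}
\]
modulo $\hash$-decomposables and the $\circ$-ideal generated by $b_2,b_3,\dots$. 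Both congruences are valid in the joint quotient by $\hash$-decomposables, $\circ$-decomposables, and $(b_2,b_3,\dots)$, so composing them gives the first displayed formula of the proposition.

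For the ``in particular'' clause, I would extract the coefficient of $u^{2k}$ using Theorem~\ref{thm:homologypowerops2}, which expresses $\mathcal{Q}(y)=\sum_{n,j}\mQ^{j+n}(P_jy)u^n$. On the class $[1]\hash([x]\circ b_1^{\circ n})$ the higher Steenrod duals $P_j$ (for $j>0$) vanish, for the same reason invoked in the proof of Corollary~\ref{cor:poweronhurewicz} together with the fact that $P_j[1]=0$ for $j>0$ and the Cartan formula. Hence the series in $u$ collapses to $\sum_s\mQ^s(\cdot)u^s$, and matching the coefficient of $u^{2k}$ with the right-hand side selects $i=k-n$ and produces the claimed formula $\mQ^{2k}([1]\hash([x]\circ b_1^{\circ n}))\equiv [c_{k-n}]\circ b_1^{\circ(k+n)}$.

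There is no real obstacle here; the work was done in the preceding propositions. The only point requiring a little care is the bookkeeping that the three quotients (by $\hash$-decomposables, by $\circ$-decomposables, and by the ideal $(b_2,b_3,\dots)$) are all simultaneously imposed in the statement, so the weaker congruences produced by each corollary individually are automatically valid in the joint quotient.
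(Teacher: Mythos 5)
Your proposal is correct and follows essentially the same route as the paper's proof: apply Corollary~\ref{cor:poweraftertranslation} to strip off the $[1]\hash$ factor, then Corollary~\ref{cor:poweronhurewicz} to evaluate $\mathcal{Q}([x]\circ b_1^{\circ n})$, and finally read off the coefficient of $u^{2k}$ via Theorem~\ref{thm:homologypowerops2}. The extra remarks about the $n=0$ edge case and the vanishing of the $P_j$ are harmless elaborations of points the paper leaves implicit.
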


\begin{rmk}
  We are working mod $\circ$-decomposables in $H_*(MU_0)$,
  and not in the entire Hopf ring, and so the right-hand side is not
  necessarily $\circ$-decomposable unless $c_{k-n}$ is.
\end{rmk}

\begin{proof}
  By Corollary~\ref{cor:poweraftertranslation}, we have
  \[
    \mathcal{Q}([1] \hash ([x] \circ b_1^{\circ n})) \equiv
    \mathcal{Q}([x] \circ b_1^{\circ n}),
  \]
  and by Corollary~\ref{cor:poweronhurewicz} this is congruent to
  \[
    \sum_{i=0}^{\infty} [c_i] \circ (b_1)^{i + 2n} u^{2(i+n)}.
  \]
  In particular, taking coefficients of both sides gives us that
  \[
    \mQ^{2k}([1] \hash ([x] \circ b_1^{\circ n})) \equiv [c_{k-n}]
    \circ b_1^{\circ (k+n)}
  \]
  in this quotient.
\end{proof}

\begin{rmk}
  The expression for $P(x)$ as a series in $\alpha$ is not unique due
  to the fact that it takes place in a quotient ring, and it is not
  immediately clear that the identity in this proposition is
  independent of this choice. However, any indeterminacy is a multiple
  of the identity $[2]_F(\alpha) = 0$, whose image in the Hopf ring
  under the total unstable invariant translates into an identity in
  terms of the Ravenel--Wilson relations.
\end{rmk}

We can now apply the results of Johnson--Noel from
Theorem~\ref{thm:johnsonnoel}, as well as
Corollary~\ref{cor:integralcomparison} and
Proposition~\ref{prop:DLstability}.
\begin{cor}
  \label{cor:gotcha}
  The Dyer--Lashof operations in $H_* SL_1(MU)$ satisfy
  \[
  \mQ^{10}([1]\hash ([x_2] \circ b_1^{\circ 2})) \equiv [1] \hash ([x_7]
  \circ b_1^{\circ 7})
  \]
  mod $\hash$-decomposables, $\circ$-decomposables, and the ideal
  $(b_2, b_3, \dots)$.

  The Dyer--Lashof operations in $\pi_* (H \sma_{MU} H)$ satisfy
  \[
  Q^{10} (\sigma x_2) = \sigma x_7.
  \]
\end{cor}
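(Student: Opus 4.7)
The plan is to deduce the first congruence from Proposition~\ref{prop:finalcomp} combined with the Johnson--Noel computation of Theorem~\ref{thm:johnsonnoel}, and then to obtain the second identity by applying the suspension map $\sigma$. For the first identity, I would take $x = x_2 \in \pi_4 MU$ (so $n=2$) and $k=5$ in Proposition~\ref{prop:finalcomp}, which yields
\[
\mQ^{10}\bigl([1] \hash ([x_2] \circ b_1^{\circ 2})\bigr) \equiv [c_3] \circ b_1^{\circ 7}
\]
modulo $\hash$-decomposables, $\circ$-decomposables, and the ideal $(b_2, b_3, \dots)$, where $P(x_2) = \sum_i c_i \alpha^i$ is the expansion afforded by Theorem~\ref{thm:quillenpower}. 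Theorem~\ref{thm:johnsonnoel} supplies the critical input $c_3 \equiv x_7$ modulo decomposables in $\pi_{14} MU$. A decomposable element of $\pi_* MU$ has Hurewicz image that is $\circ$-decomposable, so $c_3$ may be replaced by $x_7$; and the difference between $[c_3]\circ b_1^{\circ 7}$ and $[1] \hash ([x_7] \circ b_1^{\circ 7})$ is $\hash$-decomposable since $[1]-[0]$ lies in the $\hash$-augmentation ideal. This gives the first identity.

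For the second identity, I would apply the suspension map $\sigma\co \rH_*(SL_1(MU)) \to \pi_{*+1}(H \sma_{MU} H)$ to both sides of the first. By Corollary~\ref{cor:integralcomparison}, $\sigma$ carries $[1] \hash ([\alpha] \circ b_1^{\circ s})$ to the class $\sigma \alpha$ in the $MU$-dual Steenrod algebra and annihilates precisely the ideal of indeterminacy in the first identity. By Proposition~\ref{prop:DLstability}, $\sigma$ commutes with Dyer--Lashof operations. So applying $\sigma$ turns the first congruence into the exact equality
\[
Q^{10}(\sigma x_2) = \sigma x_7
\]
in $\pi_{15}(H \sma_{MU} H)$.

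The main obstacle is entirely upstream: the only nontrivial ingredient is the Johnson--Noel computation, which is imported wholesale as Theorem~\ref{thm:johnsonnoel}. Everything else is a direct substitution of indices into the machinery of Hopf-ring power operations and the suspension map assembled in the preceding sections. The one mild bookkeeping task is verifying that the ambiguity produced by Proposition~\ref{prop:finalcomp} matches the kernel of $\sigma$ described in Corollary~\ref{cor:integralcomparison}; this is transparent from the explicit descriptions of both.
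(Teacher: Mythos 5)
Your proposal is correct and follows exactly the route the paper intends: Proposition~\ref{prop:finalcomp} with $n=2$, $k=5$ together with the Johnson--Noel input $c_3 \equiv x_7$ from Theorem~\ref{thm:johnsonnoel} gives the Hopf-ring congruence, and Corollary~\ref{cor:integralcomparison} plus Proposition~\ref{prop:DLstability} transport it along the suspension to $\pi_*(H \sma_{MU} H)$. The index bookkeeping ($k-n=3$, $k+n=7$) and the observations that decomposables in $\pi_* MU$ become $\circ$-decomposable and that the $[1]\hash(-)$ discrepancy is $\hash$-decomposable are all as the paper's surrounding results require.
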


\begin{rmk}
  \label{rmk:noorientation}
  We can take a brief pause to sketch why no map $MU \to BP$ can be
  given the structure of a map of $E_7$-algebras at the prime $2$,
  extending \cite{noel-johnson-ptypical}. If it could, then we can
  obtain a map of $E_6$-algebras $H \sma_{MU} H \to H \sma_{BP} H$, on
  homotopy given by a map of exterior algebras
  $\Lambda[\sigma x_i] \to \Lambda[\sigma x_{2^i - 1}]$. However, this
  map would be zero on the element $\sigma x_2$ and nonzero on the
  element $\sigma x_7 = Q^{10} (\sigma x_2)$. (Here we use that
  $Q^{10}$, on a class in degree $5$, is realized by an operation for
  $E_6$-algebras---see Remark~\ref{rmk:topopsucks}.) This argument has
  been expanded in \cite{senger-bp}.
\end{rmk}

\section{Calculations with $MU$, $H\Bbb Z/2$, and $BP$}

In order to begin with more specific computations of secondary
operations, we will use the following convenient definitions.
\begin{defn}
  For a symbol $a$ and an integer $k$, we define $\mb P_H^{E_n}(a_k)$ to be the
  free $E_n$ $H$-algebra $\mb P_H^{E_n}(S^k)$, writing $a_k \in \pi_k
  \mb P_H^{E_n}(a_k)$ for the generator represented by the unit map
  $S^k \to \mb P_H^{E_n}(S^k)$.

  Similarly, we use the coproduct in $E_n$ $H$-algebras to define
  \[
  \mb P_H^{E_n}(a_{k_1}, b_{k_2},\dots) = \mb P_H^{E_n}(a_{k_1})
  \amalg \dots \mb P_H^{E_n}(b_{k_2}) \amalg \dots \cong \mb
  P_H^{E_n}(\vee S^{k_i})
  \]
  for a sequence $(a_{k_1}, b_{k_2},\dots)$.
  If a generator has a known, fixed, degree, we will leave off the
  subscript.
\end{defn}

\begin{defn}
  Let $\mathcal{D}$ be the category of $E_\infty$ $H$-algebras under
  $\mb P_H(x)$, where $x$ has degree $2$, and $\mathcal{D}^n$ the
  category of $E_n$ $H$-algebras under $\mb P_H^{E_n}(x)$.

  Let $\mathcal{C} = \ppa{\mathcal{D}}$ and $\mathcal{C}^n =
  \ppa{(\mathcal{D}^n)}$ as in Definition~\ref{def:ppa}.
\end{defn}

There are forgetful functors between these categories, using the
compatible maps $\mb P_H^{E_n} S^2 \to \mb P_H^{E_m} S^2$ that are
adjoint to the units $S^2 \to \mb P_H^{E_m} S^2$. The generator of $H_2
MU \cong \mb Z/2$ determines a map $\mb P_H(x) \to H \sma MU$ up to
equivalence, lifting it to an object of $\mathcal{C}$.

\subsection{Power operations for $MU$}

The $2$-primary power operations in $H_* MU$ are known by work of Kochman
\cite{kochman-dyerlashof}, but the following closed-form formula
is due to Priddy.
\begin{thm}[{\cite{priddy-dyerlashof}}]
  \label{thm:MU-DL}
  The Dyer--Lashof operations in $H_* MU \cong H_* BU$ are determined
  by the following identity:
  \[
  \sum Q^j b_k = \left(\sum_{n=k}^\infty \sum_{u=0}^k \binom{n-k+u-1}{u}
    b_{n+u} b_{k-u}\right)\left(\sum_{n=0}^\infty b_n\right)^{-1}
  \]
  Here $b_0 = 1$ by convention. In particular, we have
  \[
  \sum Q^j b_1 = \left(\sum_{n=1}^\infty (b_n b_1 + (n-1) b_{n+1})\right)
  \left(\sum_{n=0}^\infty b_n\right)^{-1}.
  \]
\end{thm}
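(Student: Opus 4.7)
The plan is to derive this formula from the multiplicative power operation on the orientation class, via the unstable invariant of Section~3.2 and Quillen's formula for power operations on Thom classes. First, since the canonical class $x \in \wt{MU}^2(\mathbb{CP}^\infty)$ is the Thom class of the canonical line bundle $\gamma_1$, Theorem~\ref{thm:quillenpower} specialized at $p=2$ (via Proposition~\ref{prop:thomcompatibility}) gives the total power operation
\[
P(x) \;=\; x\,(x +_F \alpha)
\]
in $\wt{MU}^4(\mathbb{CP}^\infty \wedge (B\Sigma_2)_+)$. Applying the unstable invariant $\Unstop$ and using its multiplicativity from Proposition~\ref{prop:unstableproperties}, the power-series expression of Remark~\ref{rmk:unstablepowerseries}, and the compatibility with $P$ in Proposition~\ref{prop:powercompatibility}, this converts into the Hopf-ring identity
\[
\mathcal{Q}\bigl(b(s)\bigr) \;=\; b(s) \circ \bigl(b(s) +_{[F]} b(u^2)\bigr)
\]
in $H_*(MU_4)\pow{s}\pow{u}$. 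This generalizes the linear-in-$s$ computation already performed in Proposition~\ref{prop:orientationpowers}.

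Next, I would extract the stable Dyer--Lashof operations on $H_*MU$ from this unstable multiplicative calculation. Under the Thom isomorphism $H_*MU \cong H_*BU$, the classes $b_k$ appearing in the theorem statement are identified with the images of $\beta_k \in H_{2k}\mathbb{CP}^\infty$ along $\mathbb{CP}^\infty \hookrightarrow BU$, and they also correspond (after repeated $\circ$-multiplication by $b_1$ to stabilize) with the unstable Hopf-ring generators $b_k \in H_{2k}MU_2$. Since $MU$ is $E_\infty$, the stable Dyer--Lashof operations $Q^j$ on $H_*MU$ are compatible with the unstable multiplicative operations $\mQ^j$ on the Hopf ring after this stabilization, and Theorem~\ref{thm:homologypowerops2} identifies the coefficient of $u^j$ in $\mathcal{Q}$ with $\mQ^j$ up to Steenrod corrections $P_i$; since the relevant $P_i$ vanish on the generators in play, these corrections can be controlled. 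Combining this with the Hopf-ring identity above yields a generating-function identity for $\sum_j Q^j b_k$.

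The final step is the combinatorial expansion. Expanding $b(s) +_{[F]} b(u^2)$ via the Ravenel--Wilson relation~\eqref{eq:rwrelation} at $p=2$, computing $b(s) \circ \bigl(b(s) +_{[F]} b(u^2)\bigr)$ as a formal series in $s$ and $u^2$, and dividing by the normalization $b(s) = \sum b_k s^k$ (which implements the passage from the unstable expression to the stable one) produces both the denominator $\bigl(\sum b_n\bigr)^{-1}$ and the binomial coefficients $\binom{n-k+u-1}{u}$: these arise from expanding powers $b(s)^{\circ i} \circ b(u^2)^{\circ j}$ and reindexing.

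The main obstacle will be making the passage from $\mathcal{Q}$ on the Hopf ring to the stable $Q$ on $H_*MU$ fully precise, in particular justifying the appearance of $(\sum b_n)^{-1}$ as the correct normalization factor rather than merely showing that both sides agree modulo some filtration; this amounts to understanding how $\circ$-multiplicativity of $\mathcal{Q}$ descends to the additive stable operations after trivializing the Hopf-ring unit. Once this is set up, the remaining content is routine manipulation of power series, and recovers the direct computation originally carried out by Priddy.
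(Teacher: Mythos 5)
You should first note that the paper does not prove this statement at all: Theorem~\ref{thm:MU-DL} is imported from Priddy with a citation, and the paper's Hopf-ring machinery is used in the opposite direction (it takes Priddy's stable formula as known input and combines it with the unstable calculation to get at $\pi_*(H \sma_{MU} H)$), so there is no internal proof to compare against. Your proposal is a reasonable program, but as written it has a genuine gap at exactly the step you defer. The asserted compatibility --- that the stable $Q^j$ on $H_* MU$ agree with the unstable multiplicative $\mQ^j$ on the Hopf ring ``after stabilization'' --- is false in the form you state it. Concretely, Proposition~\ref{prop:orientationpowers} gives $\mQ^{2n}(b_1) = b_1 \circ b_n$, which stabilizes to $b_{n-1} \neq 0$ in $H_{2n-2}(MU)$, whereas $b_1 \in H_2(MU_2)$ stabilizes to $1 \in H_0(MU)$ and $Q^j(1) = 0$ for $j > 0$. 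The operations $\mQ^j$ genuinely fail to commute with the stabilization $z \mapsto b_1 \circ z$; by the Cartan formula one gets $\mQ^j(b_1 \circ z) = b_1 \circ \sum_m b_m \circ \mQ^{j-2m}(z)$, so the descent from $H_*(MU_{2n})$ to $H_* MU$ twists the total operation by the series $\sum b_m u^{2m}$ (equivalently, by the Dyer--Lashof action on the Thom class). That twist is not a ``normalization factor'' to be justified at the end: it is the source of the $(\sum b_n)^{-1}$ and of essentially the entire shape of Priddy's formula, so deferring it as ``the main obstacle'' leaves the proof without its actual content.

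There is a second, smaller error: the claim that ``the relevant $P_i$ vanish on the generators in play'' is false for $k \geq 2$. Since $Sq^{2i}(x^k) = \binom{k}{i}x^{k+i}$ in $H^*(\mb{CP}^\infty)$, dually $P_{2i}\beta_k = \binom{k-i}{i}\beta_{k-i}$, so for example $P_2 b_2 = b_1 \neq 0$ in $H_*(MU_2)$. Hence the identity $\mathcal{Q}(z) = \sum_{n,j}\mQ^{j+n}(P_j z)u^n$ of Theorem~\ref{thm:homologypowerops2} carries nontrivial Steenrod correction terms on every $b_k$ with $k \geq 2$ --- which is precisely the range the general formula is about (the case $k=1$, where the corrections do vanish, is the only one the paper itself needs and the only one your argument handles cleanly). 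To complete your route you would need both (i) the precise descent formula relating $\mQ^{j+2n}$ on $H_*(MU_{2n})$ to $Q^j$ on $H_* MU$, including the Thom-isomorphism twist, and (ii) an inversion of the triangular system coupling $\mathcal{Q}(b_k)$ to $\sum_m \mQ^m(b_{k'})$ for $k' \leq k$. With both in hand the generating-function identity $\text{(stabilized) } b(s)\circ(b(s) +_{[F]} b(u^2))$ does appear to reproduce Priddy's answer, so the strategy is salvageable, but neither ingredient is routine and neither is supplied.
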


This allows the following direct computation. (Compare
\cite[2.5]{priddy-dyerlashof}, which carries out this computation for
$MO$).
\begin{prop}
  \label{prop:MU-identities}
  We have the following Dyer--Lashof operations in $H_* MU$:
  \begin{align*} 
    Q^2 b_1 &= b_1^2\\
    Q^4 b_1 &= b_3 + b_1 b_2 + b_1^3\\
    Q^6 b_1 &= b_1^4\\
    Q^8 b_1 &= b_5 + b_1 b_4 + b_2 b_3 + b_1^2 b_3 + b_1 b_2^2 + b_1^3
    b_2 + b_1^5\\
    Q^{10} b_1 &= b_3^2 + b_1^2 b_2^2 + b_1^6\\
    Q^6 b_2 &= b_5 + b_1 b_4 + b_2 b_3 + b_1 b_2^2 \\
    Q^{10} b_2 &= b_1^2 b_5 + b_1^3 b_4 + b_1^2 b_2 b_3 + b_1^3 b_2^2
  \end{align*}
  In particular, the following identities hold:
  \begin{align*} 
    0 &= Q^6 b_1 + b_1^4\\
    0 &= Q^{10}b_1 + (Q^4 b_1)^2\\
    Q^6 b_2 &= Q^8 b_1 + b_1^2 Q^4 b_1\\
    0 &= Q^{10} b_2 + b_1^2 Q^6 b_2\\
  \end{align*}
\end{prop}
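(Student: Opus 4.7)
The plan is to grind through Theorem~\ref{thm:MU-DL} directly. Rather than inverting the series $B = \sum_{n\geq 0} b_n$ (with $b_0 = 1$), I would rewrite Priddy's identity in the cleared form
\[
\left(\sum_{j \geq 0} Q^j b_k \right)\cdot B \;=\; \sum_{n=k}^\infty \sum_{u=0}^k \binom{n-k+u-1}{u}\, b_{n+u}\, b_{k-u},
\]
and then extract coefficients in each total degree. Since $|b_m| = 2m$ and $|Q^j b_k| = 2k + j$, the degree-$d$ component of this equation recursively expresses $Q^{d-2k} b_k$ in terms of $Q^{j'} b_k$ with $j' < d - 2k$ (times positive-degree $b_m$), together with the explicit numerator on the right; the instability relations $Q^s b_k = 0$ for $s < 2k$ and $Q^{2k} b_k = b_k^2$ provide the base of this recursion.

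For $k=1$ the numerator, reduced mod $2$, is $b_1^2 + b_1(b_2+b_3+\cdots) + (b_3 + b_5 + b_7 + \cdots)$, using that $(n-1)$ is odd precisely when $n$ is even. Solving the degree-$d$ equations for $d = 4,6,8,10,12$ successively produces $Q^2 b_1 = b_1^2$, then $Q^4 b_1$, and so on up through $Q^{10} b_1$; the only inputs needed at each stage are the previously computed values and the already-tabulated degree-$d$ part of the numerator. For $k=2$, I would expand the three terms of the numerator, reducing each binomial coefficient mod $2$ by Lucas's theorem, and then run the same recursion in degrees $10$ and $14$ to obtain $Q^6 b_2$ and $Q^{10} b_2$. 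This is the step where care is needed: $B^{-1}$ is implicitly being expanded, and one must keep track of all contributions $(Q^{j'} b_k)\,b_m$ with $j' + 2m = d - 2$, but nothing conceptually new arises.

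The four displayed identities follow by inspection once the individual operations are in hand. The first, $Q^6 b_1 = b_1^4$, is literally on the list; the second is Frobenius in characteristic $2$ applied to $Q^4 b_1 = b_3 + b_1 b_2 + b_1^3$, which squares to $b_3^2 + b_1^2 b_2^2 + b_1^6 = Q^{10} b_1$; the third and fourth are verified by subtracting $b_1^2 \cdot Q^4 b_1$ from $Q^8 b_1$ and $b_1^2 \cdot Q^6 b_2$ from $Q^{10} b_2$ respectively and matching monomials. The main obstacle is purely organizational bookkeeping in the mod-$2$ expansion of Priddy's generating function; there is no conceptual difficulty, and this is why I expect the identities themselves to be an immediate corollary rather than a separate argument.
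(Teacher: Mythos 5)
Your proposal is correct and matches the paper's intent exactly: the paper gives no separate argument for this proposition beyond invoking Priddy's formula (Theorem~\ref{thm:MU-DL}) and calling it a ``direct computation,'' and your cleared-denominator recursion, seeded by the instability relations, is precisely that computation carried out explicitly (I checked the degree-by-degree extractions for $k=1,2$ and they reproduce every listed value, and the four identities do follow by inspection as you say).
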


\subsection{Power operations for $H$}

The power operations in the dual Steenrod algebra are known by work of 
Steinberger.
\begin{thm}[{\cite[{III.2.2, III.2.4}]{bmms-hinfty}}]
  \label{thm:A-DL}
  The $2$-primary Dyer--Lashof operations in the dual Steenrod algebra
  satisfy the following identities:
  \[
  1 + \xi_1 + Q^1 \xi_1 + Q^2 \xi_1 + Q^3 \xi_1 + \dots = (1 + \xi_1 + \xi_2 + \dots)^{-1}
  \]
  \[
  Q^s \xx_i = \begin{cases}
    Q^{s+{2^i-2}} \xi_1&\text{if }s \equiv 0,-1 \mod 2^i,\\
    0&\text{otherwise.}
  \end{cases}
  \]
  \[
  Q^{2^i} \xx_i = \xx_{i+1}
  \]
\end{thm}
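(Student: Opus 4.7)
The three identities can be proved sequentially: the generating function for $Q^s \xi_1$ implies the base case $Q^2 \xi_1 = \xx_2$ of the recursion $Q^{2^i}\xx_i = \xx_{i+1}$, and together with instability and the Adem relations these determine $Q^s \xx_i$ in general. My plan is to proceed in that order.

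For the first identity, I would use the Nishida relations. Since $\xi_1$ is dual to $\Sq^1$, we have $\Sq^k_* \xi_1 = \delta_{k,1}$. Substituting into the mod-$2$ Nishida relation
\[
\Sq^r_* Q^s x = \sum_i \binom{s-r}{r-2i} Q^{s-r+i} \Sq^i_* x,
\]
and using $Q^a(1) = 0$ for $a > 0$, one obtains $\Sq^r_*(Q^s \xi_1) = \binom{s-r}{r} Q^{s-r}\xi_1$. The same recursion is satisfied by the degree-$(s+1)$ component of the inverse series $(1 + \xi_1 + \xi_2 + \dots)^{-1}$, as can be verified directly from the Milnor coproduct $\psi(\xi_n) = \sum \xi_{n-j}^{2^j} \otimes \xi_j$, which controls the action of $\Sq^r_*$ on monomials. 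Matching the base cases (the series begins $1 + \xi_1 + \cdots$, and by instability $Q^1 \xi_1 = \xi_1^2$), induction on $s$ identifies $Q^s \xi_1$ with the asserted component of the inverse series.

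For the second identity, the case $i = 1$ is immediate: $Q^2 \xi_1$ equals the degree-$3$ component of the inverse series, which a direct expansion gives as $\xi_1^3 + \xi_2 = \xx_2$. For the inductive step, I would apply $Q^{2^{i+1}}$ to both sides of $\xx_{i+1} = Q^{2^i} \xx_i$, use the Adem relation to rewrite the composite $Q^{2^{i+1}} Q^{2^i}$, and match the result against the degree-$(2^{i+2}-1)$ component of the inverse series, which by Step 1 is $\xx_{i+2}$. The general formula for $Q^s \xx_i$ then follows by iteration: writing $\xx_i = Q^{2^{i-1}} \cdots Q^2 \xi_1$, the Adem relations collapse $Q^s \xx_i = Q^s Q^{2^{i-1}} \cdots Q^2 \xi_1$ to the single term $Q^{s + 2^i - 2} \xi_1$ precisely when $s \equiv 0$ or $-1 \pmod{2^i}$, and to zero otherwise, because the relevant binomial coefficients vanish modulo $2$ in the complementary residue classes by Lucas' theorem.

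The main obstacle is the combinatorial verification in Step 1, namely that the inverse-series components satisfy the same Nishida-derived recursion as $Q^s \xi_1$. This reduces to a bookkeeping exercise with the Milnor coproduct and mod-$2$ binomial coefficients, but it is the only place in the argument requiring genuine calculation; the remaining steps are formal induction using the Adem relations and instability.
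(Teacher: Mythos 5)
This theorem is not proved in the paper at all: it is quoted from Steinberger's calculation in \cite[III.2.2, III.2.4]{bmms-hinfty}, so the only ``proof'' here is the citation, and your proposal should be judged as a reconstruction of that source. Your Step 1 is essentially Steinberger's actual argument and is sound in outline: the Nishida relations plus $\Sq^1_*\xi_1 = 1$ determine $Q^s\xi_1$ by induction, using the fact that a positive-degree element of $H_*\HF_2$ is detected by the collection of all $\Sq^r_*$ with $r>0$ (because the Steenrod algebra is generated by the $\Sq^r$, so an element killed by all of them pairs trivially with the augmentation ideal). Two points you should not elide: the recursion $\Sq^r_*Q^s\xi_1 = \binom{s-r}{r}Q^{s-r}\xi_1$ acquires an extra term $\binom{s-r}{r-2}Q^{s-r+1}(1)$, which survives precisely when $r=s+1$ and is needed to fix the degree-zero pairing; and the detection statement itself deserves a sentence, since the induction is really ``both sides have the same image under every $\Sq^r_*$, hence are equal.''

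Steps 2--3 contain a genuine gap in the stated mechanism. The Adem relations do \emph{not} collapse $Q^sQ^{2^{i-1}}\cdots Q^2\xi_1$ to the single term $Q^{s+2^i-2}\xi_1$ or to zero; they only rewrite the composite in admissible form, and the surviving admissible monomials are generally not of that shape. Concretely, for $i=2$ one has $Q^7Q^2 = Q^5Q^4$ and $Q^6Q^2 = Q^5Q^3$ as operations. In the first case $Q^5Q^4\xi_1 = (Q^4\xi_1)^2$ by instability, and identifying this with $Q^9\xi_1$ requires the first identity together with the observation that squaring the inverse series $\bigl(\sum\xi_j\bigr)^{-1}$ gives $\bigl(\sum\xi_j^2\bigr)^{-1}$, whose degree-$10$ component happens to agree with that of the original series; in the second case $Q^5Q^3\xi_1 = Q^5(\xi_1^4) = 0$ by the Cartan formula, not because an Adem coefficient vanishes. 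So the dichotomy over residues of $s$ modulo $2^i$ is not a Lucas-theorem statement about Adem coefficients alone, and your Step 3 as written would not close. The standard repair is to run the Step 1 induction again, directly on $Q^s\xx_i$: compute $\Sq^r_*\xx_i$ from the conjugate coproduct $\psi(\xx_n) = \sum_{j} \xx_j \otimes \xx_{n-j}^{2^j}$, feed this into the Nishida relations, and match against the right-hand side of the second identity; the third identity is then the case $s = 2^i$ once one checks that the degree-$(2^{i+1}-1)$ component of the inverse series is $\xx_{i+1}$. (For the purposes of this paper only the mod-decomposables consequence $Q^{16}\xi_4 \equiv \xi_5$ is needed, as noted in Remark~\ref{rmk:xixx}.)
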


This, again, allowed direct computation.
\begin{prop}[{\cite[III.5]{bmms-hinfty}}]
  \label{prop:A-identities} 
  We have the following Dyer--Lashof operations in the $2$-primary
  dual Steenrod algebra:
  \begin{align*}
    Q^2 \xx_1 &= \xx_2\\
    Q^3 \xx_1 &= \xx_1^4\\
    Q^4 \xx_1 &= \xx_1^2 \xx_2\\
    Q^5 \xx_1 &= \xx_2^2\\
    Q^{16} \xx_4 &= \xx_5
  \end{align*}
  In particular, the Cartan formula implies that the following identities hold:
  \begin{align*} 
    0 &= Q^6 \xx_1^2 + \xx_1^8\\
    0 &= Q^8 \xx_1^2 + \xx_1^4 Q^4 \xx_1^2\\
    0 &= Q^{10} \xx_1^2 + (Q^4 \xx_1^2)^2
  \end{align*}
\end{prop}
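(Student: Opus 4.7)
The strategy is to read off each identity from Steinberger's formulas in Theorem~\ref{thm:A-DL}. The identity $Q^{16}\xx_4 = \xx_5$ is immediate from the third clause $Q^{2^i}\xx_i = \xx_{i+1}$ with $i = 4$. The remaining seven identities split naturally into two groups: four formulas for $Q^s \xx_1$ with $s \in \{2,3,4,5\}$, which I handle via the generating-function identity, and three identities involving $\xx_1^2$, which I handle via the Cartan formula.

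For the formulas $Q^s \xx_1$ with $s \leq 5$: since $\xx_1 = \xi_1$ at $p = 2$ and $|Q^s \xi_1| = s + 1$, matching the degree-$n$ parts of
\[
1 + \xi_1 + Q^1 \xi_1 + Q^2 \xi_1 + \dots = (1 + \xi_1 + \xi_2 + \dots)^{-1}
\]
gives $Q^{n-1}\xi_1 = c_n$, where $c_n$ is the degree-$n$ component of the right-hand inverse. I would compute $c_2, \dots, c_6$ recursively from $B \cdot B^{-1} = 1$, using $|\xi_i| = 2^i - 1$, to obtain $c_3 = \xi_1^3 + \xi_2$, $c_4 = \xi_1^4$, $c_5 = \xi_1^5 + \xi_1^2 \xi_2$, and $c_6 = \xi_1^6 + \xi_2^2$. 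Converting to the conjugate basis via the antipode relation $\xi_2 = \xx_2 + \xx_1^3$ (a one-line consequence of $\xx_2 + \xx_1^2 \xi_1 + \xi_2 = 0$) cancels the extra $\xi_1$-monomials and leaves precisely the four stated expressions $\xx_2$, $\xx_1^4$, $\xx_1^2\xx_2$, $\xx_2^2$.

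For the three identities involving $\xx_1^2$, the Cartan formula applied to $x \cdot x$ in the graded-commutative algebra $H_* \HF_2$ collapses to $Q^{2s}(x^2) = (Q^s x)^2$, with the odd operations vanishing, because for $p \neq q$ the two equal terms $Q^p x \cdot Q^q x$ cancel mod $2$. Setting $x = \xx_1$ and substituting the values just computed yields $Q^6 \xx_1^2 = (Q^3 \xx_1)^2 = \xx_1^8$, $Q^8 \xx_1^2 = (Q^4 \xx_1)^2 = \xx_1^4 \xx_2^2$, and $Q^{10}\xx_1^2 = (Q^5 \xx_1)^2 = \xx_2^4$. Recognizing $Q^4 \xx_1^2 = (Q^2 \xx_1)^2 = \xx_2^2$ rewrites the last two as $\xx_1^4 \cdot Q^4 \xx_1^2$ and $(Q^4 \xx_1^2)^2$ respectively. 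The whole argument is essentially bookkeeping; the only substantive input beyond Theorem~\ref{thm:A-DL} is the single antipode translation $\xi_2 = \xx_2 + \xx_1^3$, which never has to be iterated further, so there is no real obstacle to overcome.
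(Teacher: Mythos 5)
Your proposal is correct and matches what the paper intends: the paper simply cites \cite[III.5]{bmms-hinfty} and notes that the values follow by ``direct computation'' from Steinberger's formulas in Theorem~\ref{thm:A-DL}, which is exactly the computation you carry out (inverting the series $1+\xi_1+\xi_2+\cdots$ degree by degree, translating via $\xi_2=\xx_2+\xx_1^3$, and then applying the mod-$2$ collapse $Q^{2s}(x^2)=(Q^sx)^2$ of the Cartan formula). All of your intermediate values $c_3,\dots,c_6$ and the final substitutions check out.
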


\begin{rmk}
  \label{rmk:xixx}
  While the identity $Q^{16} \xx_4 = \xx_5$ is valid, the results of
  this paper only require us to know the easier statement that $Q^{16}
  \xi_4 \equiv \xi_5$ mod decomposable elements.
\end{rmk}

\subsection{Functional operations for $MU \to H\Bbb Z/2$}

Recall that the category $\mathcal{C}$ is the category of
$E_\infty$ $H$-algebras under $\mb P_H(x)$, where $x$ has degree $2$.
\begin{thm} 
  \label{thm:firstjuggle}
  Consider the maps
  \[
  \mb P_H(x,z_{14}) \too{\bcQ} \mb P_H(x,y_4) \too{f} H \sma MU
  \too{p} H \sma H
  \]
  in the category $\mathcal{C}$, where $\bcQ$ sends $z_{14}$ to
  $Q^{10} y_4 + x^2 Q^6 y_4$ and $f$ sends $(x,y_4)$ to
  $(b_1,b_2)$. Then a functional homotopy operation $\langle p, f,
  \bcQ\rangle$ is defined in $\mb P_H(x)$-algebras and satisfies
  \[
  \langle p, f, \bcQ \rangle \equiv \xi_4
  \]
  mod decomposables.
\end{thm}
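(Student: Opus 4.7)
The plan is to verify that $\langle p, f, \bcQ\rangle$ is defined, post-compose with the canonical map $i\co H \sma H \to H \sma_{MU} H$ from Proposition~\ref{prop:mudetection}, invoke Peterson--Stein to reduce to a computation already carried out in $H \sma_{MU} H$, and finally lift back using injectivity on indecomposables in degree $15$.

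For the bracket to be defined, both successive composites must be null. Indeed $pf(y_4) = p(b_2) = 0$ in $H_\ast H$ since $2$ is not of the form $2^k - 1$, and $f\bcQ(z_{14}) = Q^{10} b_2 + b_1^2 Q^6 b_2 = 0$ in $H_\ast MU$ by Proposition~\ref{prop:MU-identities}.

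Applying the Peterson--Stein identity of Lemma~\ref{lem:tetheredps1}, with signs trivial at the prime $2$,
\[
  i \cdot \langle p, f, \bcQ\rangle = \langle i, p, f\rangle \cdot \bcQ
\]
in $\pi_{15}(H \sma_{MU} H)$. Proposition~\ref{prop:mudetection} with $n = 2$ identifies $\langle i, p, f\rangle$, viewed as an element of $\pi_5(H \sma_{MU} H)$ via evaluation on $y_4$, with $\sigma b_2 \equiv \sigma x_2$ modulo decomposables. Post-composition with $\bcQ$ acts on $\pi_1$ of the mapping space as the suspended operation $\sigma \bcQ$; by Proposition~\ref{prop:powerstability}, suspension takes $Q^{10}$ to $Q^{10}$ and annihilates the binary multiplication term $x^2 Q^6 y_4$, so $\sigma \bcQ = Q^{10}$. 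Applying Corollary~\ref{cor:gotcha}, we obtain $i \cdot \langle p, f, \bcQ\rangle \equiv Q^{10}(\sigma x_2) = \sigma x_7$ modulo decomposables.

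Finally, I compare the two K\"unneth spectral sequences converging to $\pi_\ast(H \sma_{MU} H)$. The spectral sequence $\Tor^{\pi_\ast MU}(\mb F_2,\mb F_2) = \Lambda[\sigma x_i]$ has $\sigma x_7$ as its unique indecomposable generator in degree $15$, while $\Tor^{H_\ast MU}(\mb F_2, H_\ast H) = \Lambda[\xi_k] \otimes \Lambda[\sigma b_n \mid n \neq 2^k - 1]$ has $\xi_4$ as its unique indecomposable generator in degree $15$ (the index $n = 7 = 2^3 - 1$ being excluded). Both degenerate, forcing $\sigma x_7 \equiv \xi_4$ modulo decomposables in $\pi_{15}(H \sma_{MU} H)$. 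Since $i_\ast$ on the one-dimensional indecomposable quotient in degree $15$ sends $\xi_4 \mapsto \xi_4$, and the bracket's indeterminacy mod decomposables vanishes (because $H_{15} MU = 0$ and every element of $H_5 H$ is a product, e.g. $\xi_1^5$ or $\xi_1^2 \xi_2$, so values of $\sigma\bcQ = Q^{10}$ are decomposable by the Cartan formula), we conclude $\langle p, f, \bcQ\rangle \equiv \xi_4$ modulo decomposables in $\pi_{15}(H \sma H)$. The most delicate step is tracking the action of $\bcQ^\ast$ on $\pi_1$ of mapping spaces in the relative category over $\mb P_H(x)$ and justifying the suspension compatibility of the Dyer--Lashof operations in this setting.
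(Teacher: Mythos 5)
Your proposal follows the paper's own proof essentially step for step: verify that both double composites vanish (via Proposition~\ref{prop:MU-identities} and $p(b_2)=0$), apply the Peterson--Stein juggle to rewrite $i\langle p,f,\bcQ\rangle$ as $\langle i,p,f\rangle\bcQ$, identify $\langle i,p,f\rangle(y_4)$ with $\sigma b_2\equiv\sigma x_2$, feed this into Corollary~\ref{cor:gotcha} to get $\sigma x_7$, match $\sigma x_7$ with $i(\xi_4)$ via the K\"unneth comparison underlying Proposition~\ref{prop:mudetection}, and finish by checking that the indeterminacy is decomposable and that $i$ is injective on indecomposables in degree $15$. All of this is the paper's argument.

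One step is justified incorrectly, though the conclusion survives. You claim $\sigma\bcQ=Q^{10}$ because ``suspension annihilates the binary multiplication term $x^2Q^6y_4$.'' But the bracket lives in the category $\mathcal{C}$ of algebras under $\mb P_H(x)$, where $x$ is a \emph{scalar from the base} $B=\mb P_H(x)$, not a second suspended variable; Proposition~\ref{prop:powerstability} states explicitly that suspension \emph{preserves} multiplication by scalars from $B$ (it is only the genuine binary multiplication $\pi_p\times\pi_q\to\pi_{p+q}$ of two variables that dies). So $\sigma\bcQ=Q^{10}+x^2Q^6$, not $Q^{10}$. The term $x^2Q^6(\sigma b_2)$ nevertheless vanishes, but for the reason the paper gives: $x$ acts by zero on $\pi_*(H\sma_{MU}H)$, since $\pi_2(H\sma_{MU}H)=0$. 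The same slip propagates into your indeterminacy analysis, where the image of $\sigma\bcQ$ in $\pi_{15}(H\sma H)$ consists of elements $Q^{10}(y_5')+\xi_1^2Q^6(y_5')$ rather than just $Q^{10}(y_5')$ (here $x=\xi_1^2\neq 0$); fortunately both terms are decomposable since $H_5H$ has no indecomposables, so your final conclusion is unaffected. With that one justification repaired, the proof is the paper's.
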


\begin{proof}
  The identities $Q^{10} b_2 + b_1^2 Q^{6} b_2 = 0$ and $p(b_2) = 0$
  ensure that there is a homotopy commutative diagram of $E_\infty$ $\mb
  P_H(x)$-algebras over $H$:
  \[
  \xymatrix{
    \mb P_H(x,z_{14}) \ar[d]_-\bcQ \ar[r] & \mb P_H(x) \ar[d] \ar[dr]\\
    \mb P_H(x,y_4) \ar[r]^-f \ar[d]
    & H \sma MU \ar[r] \ar[d]_p & H \ar[d] \\
    \mb P_H(x) \ar[r] & H \sma H\ar[r]^-i & H \sma_{MU} H
  }
  \]
  In particular, $\bcQ$ is a map of augmented objects and $H \sma_{MU}
  H$ is a pointed object, ensuring that the secondary operation is
  defined. As a result, we can define $\langle p, f, \bcQ\rangle$ and
  apply the Peterson--Stein formula of
  Proposition~\ref{prop:bracketjuggles} to find that there is an
  identity
  \[
  \langle i, p, f\rangle \bcQ = i\langle p, f, \bcQ\rangle.
  \]
  (Note that there is no inversion in this Peterson--Stein formula
  because the target group is a vector space over $\mb F_2$.)

  The bracket $\langle i, p, f\rangle$ takes $y_4$, which maps under
  $f$ to the Hurewicz image $b_2$ of $x_2 \in \pi_2 MU$, to the
  suspension class $\sigma b_2$ up to indeterminacy by
  Proposition~\ref{prop:geomdetection}. The operation $\bcQ$ sends
  this to $Q^{10} (\sigma b_2) + x^2 Q^6(\sigma b_2) = Q^{10}(\sigma
  b_2)$ because $x$ acts by $0$ on $H \sma_{MU} H$. Then
  Corollary~\ref{cor:gotcha} implies that $Q^{10}(\sigma b_2) \equiv
  \sigma x_7$ mod decomposables, and the proof of
  Proposition~\ref{prop:mudetection} shoes that $\sigma x_7 \equiv
  i(\xi_4)$ mod decomposables. Thus we find that $i \langle p,f,\bcQ
  \rangle = \langle i, p,
  f\rangle \bcQ \equiv i(\xi_4)$ mod decomposables.

  The indeterminacy in the functional homotopy operation $\langle p, f,
  \bcQ\rangle$ consists of elements in the image of $p$ and elements
  in the image of $\sigma \bcQ$, which are of the form $Q^{10}(y_5') +
  \xi_1^2 Q^6(y_5')$. However, there are no indecomposables in the image
  of $p$ and no indecomposables in the dual Steenrod algebra in degree
  $5$, and so the indeterminacy consists completely of decomposable
  elements. The map $i$ is an isomorphism on homotopy in degree $15$
  mod decomposables, and hence  $\langle p, f, \bcQ\rangle \equiv
  \xi_4$ mod decomposables.
\end{proof}

\subsection{A secondary operation in the dual Steenrod algebra}

\begin{prop}
  \label{prop:bigrelation} 
  Suppose that $R$ is an $E_{12}$ $H$-algebra and $x \in
  \pi_2(R)$. Define the following classes:
  \begin{align*}
    y_5 &= Q^3 x\\
    y_7 &= Q^5 x\\
    y_9 &= Q^7 x\\
    y_{13} &= Q^{11} x\\
    y_8 &= Q^6 x + x^4\\
    y_{10} &= Q^8 x + x^2 Q^4 x\\
    y_{12} &= Q^{10}x + (Q^4 x)^2
  \end{align*}
  Then there is an identity
  \begin{align*}
    0 ={}& Q^{20} y_{10} + Q^{18} y_{12} + Q^{17} y_{13}  + x^4(Q^{12}
    y_{10}) + y_9^2 (Q^4 x)^2 +\\
    &y_7^2 Q^9Q^5 x + y_8^2 Q^8 Q^4 x + (Q^9 y_9) (Q^4x)^2 + (Q^{10} y_8) (Q^4 x)^2 + \\
    &y_5^2 (Q^{11} Q^7 x + Q^{10}Q^8 x + x^4 Q^6 Q^4 x)
  \end{align*}
\end{prop}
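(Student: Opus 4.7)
The plan is to verify the identity in the free $E_{12}$ $H$-algebra $\mb P_H^{E_{12}}(x)$ on a single degree-$2$ generator. Since the $y_k$'s are universally defined by Dyer--Lashof expressions in $x$, by the Yoneda lemma the stated relation is a natural transformation whose vanishing is equivalent to vanishing on the tautological class. The computation then proceeds entirely via the axioms of Theorem~\ref{thm:bmmsops}: additivity, the Cartan formula, the Adem relations, and the consequence (at the prime $2$) that $Q^s(z^2) = (Q^{s/2}z)^2$ when $s$ is even and $0$ when $s$ is odd.

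The organizing observation is the Adem relation
\[
Q^{20}Q^8 = Q^{18}Q^{10} + Q^{17}Q^{11}.
\]
Substituting the definitions of $y_{10}$, $y_{12}$, $y_{13}$ and using $Q^{18}(Q^4 x)^2 = (Q^9 Q^4 x)^2$, this yields
\[
Q^{20}y_{10} + Q^{18}y_{12} + Q^{17}y_{13} = Q^{20}(x^2 Q^4 x) + (Q^9 Q^4 x)^2.
\]
This reduces the problem to expanding the right-hand side by Cartan as $\sum_{k \geq 2}(Q^k x)^2 \, Q^{20-2k} Q^4 x$, and then admissibilizing each nested composition $Q^a Q^4$ via further Adem relations such as $Q^{16}Q^4 = Q^{12}Q^8 + Q^{10}Q^{10} + Q^9 Q^{11}$, $Q^{12}Q^4 = Q^{10}Q^6 + Q^9 Q^7$, and $Q^{10}Q^4 = Q^9 Q^5$.

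Once each composition is in admissible form, one regroups using the defining equations $y_8 = Q^6 x + x^4$, $y_{10} = Q^8 x + x^2 Q^4 x$, $y_{12} = Q^{10}x + (Q^4 x)^2$ together with the identifications $Q^3 x = y_5$, $Q^5 x = y_7$, $Q^7 x = y_9$, $Q^{11}x = y_{13}$. The remaining terms of the claimed relation should then emerge one by one: the $x^4 Q^{12}y_{10}$ contribution from the $(Q^2 x)^2 = x^4$ summand of the Cartan expansion combined with the Adem expansion of $Q^{16}Q^4$; the $y_9^2(Q^4 x)^2$ term as the top instability case $k=7$ of the Cartan sum; the $(Q^9 y_9)(Q^4 x)^2$ and $(Q^{10}y_8)(Q^4 x)^2$ terms from $(Q^4 x)^2 Q^{12}Q^4 x$, using $Q^{10}Q^6 x = Q^{10}y_8$ (since $Q^{10}x^4 = 0$); and analogously for the $y_7^2$, $y_8^2$, and $y_5^2$ summands.

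The main obstacle is bookkeeping rather than conceptual difficulty: about two dozen Cartan-expanded terms must each be tracked through an Adem admissibilization, and the specific definitions of $y_8$, $y_{10}$, $y_{12}$ (whose obstruction-theoretic origin is sketched in the introduction) have been engineered precisely so that the exotic $(Q^k x)^2$-weighted contributions either cancel in pairs or regroup into the listed $y_i$-expressions. The instability relations $Q^k x = 0$ for $k < 2$ and $Q^2 x = x^2$, together with the vanishing $Q^s z^2 = 0$ for odd $s$, cut the Cartan expansion down to a manageable number of terms, after which the combinatorics is tedious but finite.
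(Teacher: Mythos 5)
Your proposal is correct and follows essentially the same route as the paper's proof: a direct verification using the Adem relation $Q^{20}Q^8 = Q^{18}Q^{10}+Q^{17}Q^{11}$, the Cartan expansion of $Q^{20}(x^2 Q^4 x)$ into $\sum_{k=2}^{7}(Q^k x)^2 Q^{20-2k}Q^4 x$, and the admissibilizations of $Q^{16}Q^4$, $Q^{14}Q^4$, $Q^{12}Q^4$, $Q^{10}Q^4$, followed by regrouping into the $y_i$; these are exactly the identities the paper lists, and like the paper you leave the final term-by-term cancellation as a finite check.
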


\begin{proof}
  The following table breaks this down term-by-term, substituting in
  the values of the $y_i$.
  \begin{align*} 
    Q^{20} y_{10} &= Q^{20} Q^8 x + Q^{20} (x^2 Q^4 x)\\%
    Q^{18} y_{12} &= Q^{18} Q^{10} x + Q^{18} ((Q^4 x)^2)\\%
    Q^{17} y_{13} &= Q^{17} Q^{11} x\\%
    x^4(Q^{12} y_{10}) &= x^4(Q^{12} Q^8 x) + x^8 Q^8 Q^4 x + x^4 (Q^3
    x)^2 Q^6 Q^4 x\\%
    y_9^2 (Q^4 x)^2 &= (Q^7 x)^2 (Q^4 x)^2\\%
    y_7^2 Q^9Q^5 x  &= (Q^5 x)^2 Q^9 Q^5 x\\%
    y_8^2 Q^8 Q^4 x &= (Q^6 x)^2 Q^8 Q^4 x + x^8 Q^8 Q^4 x\\%
    (Q^9 y_9) (Q^4x)^2 &= (Q^9 Q^7 x) (Q^4 x)^2 \\%
    (Q^{10} y_8) (Q^4 x)^2 &= (Q^{10} Q^6 x) (Q^4 x)^2 \\%
    y_5^2 (Q^{11} Q^7 x)&= (Q^3 x)^2(Q^{11} Q^7 x)\\%
    y_5^2(Q^{10} Q^8 x) &= (Q^3 x)^2 Q^{10} Q^8 x\\%
    y_5^2(x^4 Q^6 Q^4 x) &= (Q^3 x)^2 (x^4 Q^6 Q^4 x)%
  \end{align*}
  The reader who is interested in ensuring that these cancel is
  encouraged to do so with the aid of a pen. To assist this, we list
  the following needed identities deduced from the Cartan formula,
  Adem relations, and instability relations where appropriate.
  \begin{align*} %
    Q^{20} Q^8 x ={} &Q^{18} Q^{10} x + Q^{17} Q^{11} x\\%
    Q^{20}(x^2 Q^4 x) ={} &x^4 Q^{16} Q^4 x + (Q^3 x)^2 Q^{14} Q^4 x +
    (Q^4 x)^2 Q^{12} Q^4 x + \\
    &(Q^5 x)^2 Q^{10} Q^4 x + (Q^6 x)^2 Q^8 Q^4 x + (Q^7 x)^2 (Q^4 x)^2\\
    Q^{18}((Q^4 x)^2) ={} &0\\%
    x^4 Q^{16} Q^4 x ={} &x^4 Q^{12} Q^8 x\\%
    (Q^3 x)^2 Q^{14} Q^4 x ={} &(Q^3 x)^2 Q^{11} Q^7 x + (Q^3 x)^2 Q^{10}Q^8
    x\\%
    (Q^4 x)^2 Q^{12} Q^4 x ={} &(Q^4 x)^2 Q^{10} Q^6 x + (Q^4 x)^2
    Q^9 Q^7 x\\%
    (Q^5 x)^2 Q^{10} Q^4 x ={} &(Q^5 x)^2 Q^9 Q^5 x%
  \end{align*}
  To apply $Q^r$ to an element in degree $s$, as well as make use of
  the Adem relations, Cartan formula, and instability relations, we
  require the presence of an $E_n$-algebra for $n \geq r-s+2$. The
  greatest value of $n$ required from the equations above is when we
  take $Q^{20} y_{10}$, and in particular use additivity for $Q^{20}$,
  which requires an $E_{12}$-algebra.
\end{proof}

We can use this relation to build secondary operations.

\begin{prop}
  \label{prop:bigoperation}
  Suppose $n \geq 12$ and let $R$ be an object of $\mathcal{C}^n$,
  corresponding to an $E_n$ $H$-algebra with an element
  $x \in \pi_2(R)$, such that the classes $y_i$ of
  Proposition~\ref{prop:bigrelation} vanish. Then there is a secondary
  operation on $x$ given by
  $\langle x, Q \teth{h} R\rangle \in \pi_{31}
  R$. The indeterminacy in this secondary operation consists of
  elements of the form
  \[
    Q^{20} y'_{11}  + Q^{18} y'_{13} + Q^{17} y'_{14}
  \]
  and decomposables. This secondary operation is preserved by the
  forgetful functors $\mathcal{C}^m \to \mathcal{C}^n$ for $m > n$.
\end{prop}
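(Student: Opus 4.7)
The plan is to package the relation of Proposition~\ref{prop:bigrelation} as the data required by Definition~\ref{def:secondarypower}. First I would define maps of augmented $E_n$ $H$-algebras under $\mb P_H^{E_n}(x)$
\[
\mb P_H^{E_n}(x) \amalg \mb P_H^{E_n}(z_{30}) \too{R} \mb P_H^{E_n}(x) \amalg \mb P_H^{E_n}(y_5,y_7,y_8,y_9,y_{10},y_{12},y_{13}) \too{Q} \mb P_H^{E_n}(x),
\]
where $Q$ sends each generator $y_k$ to the homogeneous element of the same name from Proposition~\ref{prop:bigrelation} (so $y_5 \mapsto Q^3 x$, $y_{10} \mapsto Q^8 x + x^2 Q^4 x$, and so on), and $R$ sends $z_{30}$ to the right-hand side of the big relation. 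The Dyer--Lashof operations appearing in the definitions of $Q$ and $R$ are available in $E_n$ $H$-algebras precisely when $n \geq 12$; the binding constraint, as noted at the end of the proof of Proposition~\ref{prop:bigrelation}, is additivity of $Q^{20}$ applied to a degree-$10$ class in the summand $Q^{20} y_{10}$. Proposition~\ref{prop:bigrelation} says exactly that $R \circ Q$ is the zero homotopy operation in $\pi_{30}$, and stringing together the Adem, Cartan, and instability rewrites enumerated in its proof specifies a nullhomotopy $h$.

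With the tuple $(R, Q, h)$ in hand, I invoke Definition~\ref{def:secondarypower} to produce the secondary operation $\langle -, Q \teth{h} R\rangle$ on objects of $\mathcal{C}^n$. By Proposition~\ref{prop:secondarypowerprops} its domain of definition consists of those $x \in \pi_2 R$ for which every output of $Q$ vanishes, matching our hypothesis that $y_5, y_7, y_8, y_9, y_{10}, y_{12}, y_{13}$ all vanish, and its values form a subset of $\pi_{31} R$.

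Next I analyze the indeterminacy. By Proposition~\ref{prop:secondarypowerprops} the indeterminacy is the image of the suspended operation $\sigma R$, and by Proposition~\ref{prop:powerstability} the suspension operator preserves Dyer--Lashof operations, preserves addition, and kills binary multiplication. Scanning the right-hand side of Proposition~\ref{prop:bigrelation} term by term, every summand other than $Q^{20} y_{10}$, $Q^{18} y_{12}$, and $Q^{17} y_{13}$ carries a nontrivial multiplicative factor and therefore goes to zero under $\sigma$. Modulo decomposable contributions from the scalar action of $\pi_* \mb P_H^{E_n}(x)$, we conclude that $\sigma R$ is the operation $(y'_{11}, y'_{13}, y'_{14}) \mapsto Q^{20} y'_{11} + Q^{18} y'_{13} + Q^{17} y'_{14}$, exactly matching the indeterminacy in the statement.

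Finally, preservation under the forgetful functor $\mathcal{C}^m \to \mathcal{C}^n$ for $\infty \geq m > n \geq 12$ is immediate from the remark following Proposition~\ref{prop:secondarypowernatural}: the left adjoint $F$ to the forgetful functor takes free $E_n$-algebras to free $E_m$-algebras, so applying $F$ to $(R, Q, h)$ realizes the analogous data for $\mathcal{C}^m$, and Corollary~\ref{cor:adjunctionbracket} supplies the canonical identification between the resulting operations. The main obstacle is not any single step but the careful bookkeeping required to pin down the nullhomotopy $h$ so that the suspension analysis cleanly recovers the stated indeterminacy; with the detailed identities already in the proof of Proposition~\ref{prop:bigrelation}, this reduces to routine applications of the Peterson--Stein juggling formulas from Section~\ref{sec:secondary-operations}.
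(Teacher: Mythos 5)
Your proposal is correct and follows essentially the same route as the paper: realize the relation of Proposition~\ref{prop:bigrelation} as a tethered pair $(Q,R)$ of maps of augmented free $E_n$ $H$-algebras under $\mb P_H^{E_n}(x)$ via Definition~\ref{def:secondarypower}, read off the domain and indeterminacy from Proposition~\ref{prop:secondarypowerprops} and Proposition~\ref{prop:powerstability}, and get compatibility with the forgetful functors from the adjunction and Corollary~\ref{cor:adjunctionbracket}. One point to tighten: it is not true that every summand carrying a multiplicative factor dies under $\sigma$ --- Proposition~\ref{prop:powerstability} kills only the binary multiplication of the adjoined variables (so $y_5^2(\cdots)$, $y_7^2(\cdots)$, $y_8^2(\cdots)$, $y_9^2(\cdots)$ vanish), whereas multiplication by scalars from the base $\mb P_H^{E_n}(x)$ is \emph{preserved}, so the terms $x^4(Q^{12}y_{10})$, $(Q^9 y_9)(Q^4x)^2$, and $(Q^{10}y_8)(Q^4x)^2$ survive suspension and must instead be absorbed into the ``and decomposables'' clause because their values are products of positive-degree classes; your hedge about the scalar action gestures at this, but the paper separates the two mechanisms explicitly and you should too.
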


\begin{proof}
  For any $n \geq 12$, Proposition~\ref{prop:bigrelation} describes a
  relation between homotopy operations, in the form of a homotopy
  commutative diagram of $E_{n}$ $H$-algebras
  \[
  \xymatrix{
    \mb P^{E_n}_H(z_{30}) \ar[d]_\epsilon \ar[r]^-{R} &
    \mb P^{E_n}_H(x, y_5, y_7, y_9, y_{13}, y_8, y_{10}, y_{12})
    \ar[d]^{Q} \\
    H \ar[r] & \mb P^{E_n}_H(x),
  }
  \]
  adjoint to a commutative diagram of $E_n$-algebras under $\mb
  P^{E_n}_H(x)$ of the form
  \[
  \xymatrix{
    \mb P^{E_n}_H(x,z_{30}) \ar[dr]_-\epsilon \ar[r]^-{R} &
    \mb P^{E_n}_H(x, y_5, y_7, y_9, y_{13}, y_8, y_{10}, y_{12}) \ar[d]^{Q} \\
    & \mb P^{E_n}_H(x).
  }
  \]
  Here the maps $Q$ and $R$ are defined by the equations of
  Proposition~\ref{prop:bigrelation}. The map $R$ is a map of
  augmented objects, the domain by the map $\epsilon$ sending $z_{30}$
  to $0$ and the range by the map sending all $y_i$ to zero. In
  particular, the homotopy commutativity of the above diagrams shows
  that there exists a tethering $Q \teth{h} R$ in the category
  $\mathcal{C}^n$.

  The indeterminacy in this secondary operation consists of elements
  in the image of the suspended operation
  \[
    \sigma R\co \mb P_H^{E_n}(x,z_{31}') \to \mb P^{E_n}_H(x, y_6',
    y_8', y_{10}', y_{14}', y_9', y_{11}', y_{12}').
  \]
  Proposition~\ref{prop:powerstability} implies that $\sigma R$ is
  given by
  \[
    z_{31}' \mapsto 
    Q^{20} y_{11}' + Q^{18} y_{13}' + Q^{17} y_{14}'  + x^4(Q^{12}
    y_{11}') + (Q^9 y_{10}') (Q^4x)^2 + (Q^{10} y_9') (Q^4 x)^2,
  \]
  since the other terms involve binary products that map to
  zero. However, the terms other than $Q^{20} y_{11}' + Q^{18} y_{13}'
  + Q^{17} y_{14}'$ always take decomposable values.
\end{proof}

\begin{prop}
  \label{prop:A-detects}
  In the $2$-primary dual Steenrod algebra
  \[
  H_* H \cong \mb F_2[\xi_1, \xi_2, \dots],
  \]
  viewed as the homotopy of the $H$-algebra $H \sma H$, the bracket
  $\langle \xi_1^2, Q, R\rangle$ is defined, and
  the indeterminacy is zero mod decomposables.
\end{prop}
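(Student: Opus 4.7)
The plan is to verify two things separately: that the bracket $\langle \xi_1^2, Q, R\rangle$ is defined at $\xi_1^2 \in H_2(H) = \pi_2(H \sma H)$, and that the indeterminacy described in Proposition~\ref{prop:bigoperation} lies in the decomposable ideal. Both reductions are almost entirely bookkeeping, once the classical computations in the dual Steenrod algebra are used.

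To verify the bracket is defined, I would check that each of the seven classes $y_5, y_7, y_9, y_{13}, y_8, y_{10}, y_{12}$ of Proposition~\ref{prop:bigrelation} vanishes when $x = \xi_1^2$. For the odd-index classes $y_5 = Q^3\xi_1^2$, $y_7 = Q^5\xi_1^2$, $y_9 = Q^7\xi_1^2$, and $y_{13} = Q^{11}\xi_1^2$, the Cartan formula gives
\[
Q^{2k+1}(\xi_1 \cdot \xi_1) = \sum_{\substack{p+q=2k+1 \\ p,q \geq 1}} Q^p\xi_1 \cdot Q^q\xi_1,
\]
which vanishes at the prime 2 because the sum is symmetric with no middle term (there is no integer solution $p=q$). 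For $y_8 = Q^6\xi_1^2 + \xi_1^8$, $y_{10} = Q^8\xi_1^2 + \xi_1^4 Q^4\xi_1^2$, and $y_{12} = Q^{10}\xi_1^2 + (Q^4\xi_1^2)^2$, the required vanishings are precisely the three identities already recorded in Proposition~\ref{prop:A-identities} (derived from Steinberger's formulas of Theorem~\ref{thm:A-DL} together with the Cartan formula). In particular, the tethering $Q \teth{h} R$ descends to the map $\mb P_H(x) \to H \sma H$ sending $x$ to $\xi_1^2$, and the secondary operation is defined.

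For the indeterminacy, by Proposition~\ref{prop:bigoperation} any indeterminacy element is, mod decomposables, of the form $Q^{20} y'_{11} + Q^{18} y'_{13} + Q^{17} y'_{14}$ for some classes $y'_i \in H_i(H)$. The dual Steenrod algebra $H_*(H) \cong \mb F_2[\xi_1, \xi_2, \dots]$ has indecomposable generators only in the degrees $|\xi_k| = 2^k - 1$, i.e.\ in degrees $1, 3, 7, 15, 31, \dots$. Since none of $11, 13, 14$ is of this form, the groups $H_{11}(H)$, $H_{13}(H)$, $H_{14}(H)$ consist entirely of decomposable elements. Writing any such $y'_i$ as a sum of products $ab$ with $|a|, |b| > 0$, the Cartan formula
\[
Q^s(ab) = \sum_{\substack{p+q=s \\ p, q \geq 1}} Q^p a \cdot Q^q b
\]
(where the range $p, q \geq 1$ uses the instability relation $Q^0 = 0$ on positive-degree classes) expresses $Q^{20} y'_{11}$, $Q^{18} y'_{13}$, $Q^{17} y'_{14}$ as sums of products of two positive-degree classes, hence as decomposables. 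Therefore the indeterminacy is zero modulo decomposables.

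There is no serious obstacle here; the work has been done upstream, both in the explicit Steenrod-algebra identities of Proposition~\ref{prop:A-identities} and in the structural identification of the indeterminacy in Proposition~\ref{prop:bigoperation}. The only nontrivial observation is the elementary one that the three degrees $11$, $13$, $14$ avoid the Mersenne sequence $2^k - 1$, so that decomposability is automatic. If anything requires care, it is simply matching the indexing conventions between Propositions~\ref{prop:bigrelation} and~\ref{prop:bigoperation} when transferring the $y_i$'s to their suspended counterparts $y'_i$.
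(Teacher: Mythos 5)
Your verification that the bracket is defined matches the paper's argument: the odd-degree operations $Q^{2k+1}\xi_1^2$ vanish by the Cartan formula (the symmetric sum with no middle term), and the three even-degree identities are precisely those recorded in Proposition~\ref{prop:A-identities}. That half is correct and essentially identical to the paper.

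The indeterminacy argument, however, has a genuine gap: you account for only half of it. Proposition~\ref{prop:bigoperation} describes the indeterminacy of the secondary operation $\langle -, Q\teth{h}R\rangle$ with the tethering $h$ of $Q\teth{}R$ held fixed; that is the coset by the image of $\sigma R$, which is the part you treat (and your Cartan-formula justification that $Q^s$ of a decomposable is decomposable is correct). But the statement concerns the full bracket $\langle \xi_1^2, Q, R\rangle$, whose indeterminacy is the double coset of Corollary~\ref{cor:bracketindeterminacy}: besides $\pi_1\Map(X_1,X_3)\cdot R$ (the $\sigma R$ part), it also contains $\xi_1^2\cdot\pi_1\Map(X_0,X_2)$, i.e.\ the values of arbitrary degree-$29$ homotopy operations applied to the class $\xi_1^2$. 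These land in $H_{31}H$, exactly where the indecomposable $\xi_5$ lives, so this piece must also be shown to be decomposable. The paper does this by invoking Proposition~\ref{prop:gotalloperations} (every nonconstant homotopy operation is a composite of sums, products, and the $Q^s$, all of which preserve the decomposable ideal) together with the observation that $\xi_1^2$ is itself decomposable. Your write-up never addresses this half, and without it the claim that the indeterminacy vanishes mod decomposables is not established; the fix is short, but it is a missing step rather than a matter of bookkeeping.
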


\begin{proof}
  The Cartan formula for Dyer--Lashof operations immediately implies
  that $Q^{2k+1} (\xi_1^2) = 0$ for all $k$. The remaining identities
  \begin{align*}
    0 &= Q^6 \xi_1^2 + \xi_1^8\\
    0 &= Q^8 \xi_1^2 + \xi_1^4 Q^4 \xi_1^2\\
    0 &= Q^{10} \xi_1^2 + (Q^4 \xi_1^2)^2
  \end{align*}
  were determined in Proposition~\ref{prop:A-identities}. Therefore,
  $\xi_1^2 Q = 0$ and the bracket $\langle \xi_1^2, Q, R\rangle$ is
  defined.

  We now consider the indeterminacy. The indeterminacy is generated by
  adding the results of degree-29 homotopy operations applied to
  $\xi_1^2$, Dyer--Lashof operations applied to elements in degrees
  $11$, $13$, and $14$, and
  decomposables. Proposition~\ref{prop:gotalloperations} showed that
  all nonconstant homotopy operations are generated by multiplication,
  addition, and the operations $Q^n$, all of which preserve
  decomposables. The dual Steenrod algebra contains no indecomposables
  in degrees $11$, $13$, and $14$, and so any operation applied to
  such an element is decomposable.
\end{proof}

The operations $Q$ and $R$, while complex, can be related to simpler
operations using the following diagram.
\begin{prop}
  \label{prop:secondjuggle}
  Consider the maps
  \begin{align*}
    \mu\co &\mb P_H(x,y_5, y_7, y_9, y_{13}, y_8, y_{10}, y_{12}) \to \mb P_H(x,y_4) \\
    \nu\co &\mb P_H(x, z_{30}) \to \mb P_H(x,z_{14})\\
    \alpha\co &\mb P_H(x,z_{30}) \to \mb P_H(x,z_{15})\\
    \beta\co &\mb P_H(x,z_{15}) \to \mb P_H(x,y_4)
  \end{align*}
  of augmented objects, defined by the identities
  \begin{align*}
    \mu(y_i) &= 0\text{ for }i \neq 10\\
    \mu(y_{10}) &= Q^6 y_4\\
    \nu(z_{30}) &= Q^{16} z_{14}\\
    \alpha(z_{30}) &= (z_{15})^2\\
    \beta(z_{15}) &= Q^3 x Q^6 y_4
  \end{align*}
  Then there is an identity $\mu R = \bcQ \nu + \beta \alpha$ and a
  homotopy commutative diagram in $\mathcal{C}$ of the form
  \[
  \xymatrix{
    \mb P_H(x, y_i) \ar[d]^\mu \ar[r]^-{Q} &
    \mb P_H(x) \ar[d]_{b_1} \ar[dr]^{\xi_1^2} \\
    \mb P_H(x,y_4) \ar[r]_-f &
    H \sma MU \ar[r]^p & H \sma H,\\
  }
  \] 
  where $f$ and $\bcQ$ are from Theorem~\ref{thm:firstjuggle}.
\end{prop}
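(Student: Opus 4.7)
Both claims are identities between maps out of free $E_\infty$ $H$-algebras under $\mb P_H(x)$. Since a map out of such a free algebra is determined up to homotopy by the images of its generators in the homotopy of the target, the plan is to reduce each claim to a finite list of point-wise verifications and then grind using the identities in Theorem~\ref{thm:bmmsops} and Propositions~\ref{prop:MU-identities} and~\ref{prop:mudetection}. Concretely, the identity $\mu R = \bcQ \nu + \beta \alpha$ is an identity of maps under $\mb P_H(x)$ whose source has a single non-$x$ generator $z_{30}$, so it reduces to a single computation; the homotopy commutative square reduces to checking agreement on each of $x, y_5, y_7, y_9, y_{13}, y_8, y_{10}, y_{12}$.

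For the identity I would first apply $\mu$ term by term to the expression for $R(z_{30})$ from Proposition~\ref{prop:bigrelation}. Every summand containing some $y_i$ with $i \neq 10$ is killed, leaving
\[
  \mu R(z_{30}) = Q^{20}(Q^6 y_4) + x^4 Q^{12}(Q^6 y_4).
\]
On the other side, $\bcQ\nu(z_{30}) = Q^{16}(Q^{10} y_4 + x^2 Q^6 y_4)$ and $\beta\alpha(z_{30}) = (Q^3 x)^2 (Q^6 y_4)^2$. The Adem relation for $Q^{20} Q^6$ in the presence of instability---which annihilates the $Q^{14} Q^{12} y_4$ and $Q^{13} Q^{13} y_4$ terms since $14 < |Q^{12} y_4| = 16$ and $13 < |Q^{13} y_4| = 17$---collapses $Q^{20}(Q^6 y_4)$ to $Q^{16} Q^{10} y_4$. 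The Cartan expansion of $Q^{16}(x^2 \cdot Q^6 y_4)$, using $Q^4(x^2) = x^4$, $Q^5(x^2) = 0$, $Q^6(x^2) = (Q^3 x)^2$, together with the instability identity $Q^{10}(Q^6 y_4) = (Q^6 y_4)^2$, yields $x^4 Q^{12}(Q^6 y_4) + (Q^3 x)^2 (Q^6 y_4)^2$. The two copies of $(Q^3 x)^2 (Q^6 y_4)^2$ then cancel on the right and everything matches.

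For the diagram, the triangle on the right commutes because $p$ sends $b_1$ to $\xi_1^2$ in degree $2$ (Proposition~\ref{prop:mudetection}). For the square, evaluation on $x$ gives $b_1$ along both paths; on each odd-degree generator $y_{2k+1}$ the composite $b_1 Q$ produces $Q^{2k+1} b_1 \in H_{2k+3}(MU) = 0$, which matches $f\mu(y_{2k+1}) = 0$; and on $y_8, y_{10}, y_{12}$ the comparisons reduce respectively to the three identities $Q^6 b_1 = b_1^4$, $Q^6 b_2 = Q^8 b_1 + b_1^2 Q^4 b_1$, and $Q^{10} b_1 = (Q^4 b_1)^2$ recorded in Proposition~\ref{prop:MU-identities}. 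The main obstacle is really just the bookkeeping; the conceptual content is that the Cartan expansion of $Q^{16}(x^2 Q^6 y_4)$ inevitably produces a $(Q^3 x)^2 (Q^6 y_4)^2$ cross term, and this is precisely why the decomposition must include $\beta\alpha$ rather than being $\mu R = \bcQ \nu$ alone.
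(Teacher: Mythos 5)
Your proposal is correct and follows essentially the same route as the paper's proof: reduce to generators, kill the $Q^{14}Q^{12}$ and $Q^{13}Q^{13}$ terms of the Adem relation for $Q^{20}Q^6$ by instability, expand $Q^{16}(x^2\, Q^6 y_4)$ by the Cartan formula to produce the $(Q^3 x)^2(Q^6 y_4)^2$ cross term accounted for by $\beta\alpha$, and verify the square on generators using the vanishing of odd-degree classes in $H_*MU$ together with the three identities of Proposition~\ref{prop:MU-identities}. The only cosmetic difference is that you verify the identity by expanding the right-hand side rather than rewriting the left-hand side, which is the same computation.
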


\begin{proof}
  It is classical that the map $p\co H_2 MU \to H_2 H$ takes $b_1$ to
  $\xi_1^2$, making the right-hand triangle commute.

  To verify that the map $\mu$ makes the square diagram commute in the
  homotopy category, we need to know that the Dyer--Lashof operations
  on $b_1$ satisfy
  \begin{align*} 
    0 &= Q^3 b_1\\
    0 &= Q^5 b_1\\
    0 &= Q^7 b_1\\
    0 &= Q^{11} b_1\\
    0 &= Q^6 b_1 + b_1^4\\
    0 &= Q^{10}b_1 + (Q^4 b_1)^2\\
    Q^6 b_2 &= Q^8 b_1 + b_1^2 Q^4 b_1
  \end{align*}
  The odd operations vanish automatically because $H_* MU$ is
  concentrated in even degrees, and the remaining three identities
  were proven in Proposition~\ref{prop:MU-identities}.

  Finally we need to verify the identity $\bcQ \nu = \beta \alpha +
  \mu R$. Using the definition of $\mu$ and the formula from
  Proposition~\ref{prop:bigrelation}
  for $R$, we find that
  \[
    \mu (R(z_{30})) = Q^{20} Q^6 y_4 + x^4 Q^{12} Q^6 y_4
  \]
  In the Adem relation
  $Q^{20} Q^6 = Q^{16} Q^{10} + Q^{14} Q^{12} + Q^{13} Q^{13}$, 
  the last two terms automatically vanish on classes in degree
  four. Therefore, we can continue to simplify, finding
  \begin{align*}
    \mu (R(z_{30})) &= Q^{16} Q^{10} y_4 + x^4 Q^{12} Q^6 y_4\\
    &= Q^{16} Q^{10}y_4 + Q^{16}(x^2 Q^6 y_4) + (Q^3 x)^2 (Q^6 y_4)^2\\
    &= Q^{16} (Q^{10}y_4 + x^2 Q^6 y_4) + (Q^3 x Q^6 y_4)^2\\
    &= Q^{16}(\bcQ (z_{14})) + \beta(z_{15})^2\\
    &= \bcQ \nu (z_{30}) + \beta \alpha(z_{30}),
  \end{align*}
  as desired.
\end{proof}

\begin{cor}
  \label{cor:A-detectsagain}
  In the dual Steenrod algebra, any element in the bracket $\langle
  \xi_1^2, Q, R\rangle$ is congruent to $\xi_5$
  mod decomposables.
\end{cor}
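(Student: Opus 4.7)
My plan is to reduce the bracket $\langle \xi_1^2, Q, R\rangle$ by juggling down to an application of the Dyer--Lashof operation $Q^{16}$ to the simpler bracket $\langle p, f, \bcQ\rangle$, which has already been computed in Theorem~\ref{thm:firstjuggle} to be $\xi_4$ mod decomposables. Because Proposition~\ref{prop:A-detects} has established that $\langle \xi_1^2, Q, R\rangle$ has decomposable indeterminacy, it suffices to exhibit a single representative congruent to $\xi_5$ modulo decomposables.

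First I would write $\xi_1^2$ as the composite $p b_1$ in $\mathcal{C}$, and apply the juggling formulas of Proposition~\ref{prop:bracketjuggles}, using the identity $b_1 Q = f \mu$ from Proposition~\ref{prop:secondjuggle}, to obtain the chain
\[
\langle \xi_1^2, Q, R\rangle \subset \langle p, b_1 Q, R\rangle = \langle p, f\mu, R\rangle \supset \langle p, f, \mu R\rangle.
\]
Next I would exploit the decomposition $\mu R = \bcQ \nu + \beta\alpha$ of Proposition~\ref{prop:secondjuggle}, using additivity of brackets (Proposition~\ref{prop:addbrackets}) to split the rightmost bracket into contributions from $\bcQ\nu$ and from $\beta\alpha$. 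The $\beta\alpha$ contribution is decomposable: since $\alpha(z_{30}) = z_{15}^2$ is a square, the functional operation $\langle p, f, \beta\alpha\rangle$ takes values in products in $\pi_*(H \sma H)$.

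For the $\bcQ\nu$ contribution, one more juggling step yields $\langle p, f, \bcQ\nu\rangle \supset \langle p, f, \bcQ\rangle\nu$. Since $\nu(z_{30}) = Q^{16} z_{14}$, pre-composing the secondary operation $\langle p, f, \bcQ\rangle$ with $\nu$ amounts to applying the Dyer--Lashof operation $Q^{16}$ to its value. Theorem~\ref{thm:firstjuggle} gives this value as $\xi_4$ mod decomposables, and $Q^{16}\xi_4 \equiv \xi_5$ mod decomposables by Remark~\ref{rmk:xixx} (or Proposition~\ref{prop:A-identities}). This exhibits $\xi_5$ plus a decomposable as a representative of the outer bracket $\langle p, f\mu, R\rangle$.

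To close the loop, I would track the specific tetherings supplied by Propositions~\ref{prop:A-identities}, \ref{prop:MU-identities}, \ref{prop:bigrelation}, and the factorization in Proposition~\ref{prop:secondjuggle}, and verify that they assemble into a compatible tethering of the inner bracket $\langle \xi_1^2, Q, R\rangle$ itself. The hard part will be precisely this last step: the juggling containments point in both directions ($\subset$ and $\supset$), so a representative of an outer bracket does not automatically lift to a representative of an inner one. What saves the day is the decomposable-indeterminacy computation of Proposition~\ref{prop:A-detects}, together with the observation that the suspended maps introduced in each juggling step (the suspensions of $b_1$, $\mu$, $\beta\alpha$, and so on) all land in decomposables. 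This lets the mixed containments collapse to a single equality of cosets modulo decomposables, and yields $\langle \xi_1^2, Q, R\rangle \equiv \xi_5$ mod decomposables, as required.
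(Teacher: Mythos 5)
Your proposal is correct and follows essentially the same route as the paper: the identical juggling chain through $\langle p, f\mu, R\rangle$, the splitting $\mu R = \bcQ\nu + \beta\alpha$, the identification of the $\bcQ\nu$ term with $Q^{16}\xi_4 \equiv \xi_5$ via Theorem~\ref{thm:firstjuggle}, and the observation that all indeterminacies (checked at the largest brackets in the chain) are decomposable, so the two-way containments collapse to congruences mod decomposables. No substantive differences to report.
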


\begin{proof}
  We first observe that three types of elements in degree $31$ are
  decomposable in the dual Steenrod algebra.
  \begin{itemize}
  \item The first are elements in the image of $p\co H_* MU \to
    H_* H$: the only indecomposable element in the image of $p$ is $1
    \in H_0 H$.
  \item The second are elements in the image of $\sigma R$, which (as
    in Proposition~\ref{prop:bigoperation}) consists of multiples of
    Dyer--Lashof operations applied to elements in degrees $11$, $13$,
    and $14$. Degrees $11$, $13$ and $14$ contain no indecomposables,
    and so the Cartan formula for Dyer--Lashof operations implies that
    any elements in the image of $\sigma R$ are decomposable.
  \item The third are elements in the image of $\sigma(\bcQ \nu)$ or
    $\sigma(\beta \alpha)$, both of which are multiples of
    Dyer--Lashof operations applied to classes in degree $5$. Degree
    $5$ contains no indecomposables, and thus similarly the images of
    these elements are indecomposable.
  \end{itemize}

  Multiple applications of Proposition~\ref{prop:bracketjuggles} and
  Proposition~\ref{prop:addbrackets} give us the following string of
  identities.
  \begin{align*}
    \langle \xi_1^2, Q, R\rangle
    &= \langle p b_1, Q, R\rangle \\
    &\subset \langle p, b_1 Q, R\rangle \\
    &= \langle p, f \mu,  R\rangle \\
    &\supset \langle p, f, \mu R\rangle\\
    &= \langle p, f, \bcQ \nu + \beta \alpha\rangle\\
    &\subset \langle p, f, \bcQ \nu\rangle + \langle p, f,
      \beta \alpha\rangle \\
    &\supset \langle p, f, \bcQ\rangle \nu + \langle p, f, \beta\rangle\alpha.
  \end{align*}
  We note that in all of these brackets, the indeterminacy is
  contained in the three types mentioned above: the image of $p$, the
  image of $\sigma R$, and the images of $\sigma(\bcQ \nu)$ or
  $\sigma(\beta \alpha)$. It suffices to check at the local maxima
  for indeterminacy in this chain of containments: the brackets $\langle
  p, b_1 Q, R\rangle$ and $\langle p, f, \bcQ \nu\rangle + \langle p,
  f, \beta \alpha\rangle$. Therefore, if we work mod decomposables we
  get unambiguous values and these containments become equalities.
  We find
  \[
  \langle \xi_1^2, Q, R\rangle \equiv \langle p, f, \bcQ\rangle \nu +
  \langle p, f, \beta \rangle \alpha.
  \]

  By Theorem~\ref{thm:firstjuggle}, we have
  \[
  \langle p, f, \bcQ\rangle(\nu z_{30}) = Q^{16} (\langle p, f,
  \bcQ\rangle (z_{14})) \equiv Q^{16} \xi_4 \equiv \xi_5
  \]
  mod decomposables. On the other hand,
  \[
  \langle p, f, \beta\rangle \alpha(z_{30}) = (\langle p, f,
  \beta\rangle(z_{15}))^2
  \]
  which is automatically decomposable. Therefore, every element in
  $\langle \xi_1^2, Q, R\rangle$ is congruent to $\xi_5$ mod
  decomposables.
\end{proof}

\begin{thm}
  \label{thm:mainthm-redux}
  Suppose that $n \geq 12$ and $R$ is an $E_n$ ring spectrum with a
  map $g\co R \to H$ and an element $x \in H_2(R)$ such that $g(x) =
  \xi_1^2$ in $H_2 H$. If the element $x$ makes the classes $y_i$ of
  Proposition~\ref{prop:bigrelation} zero, then the map $H_{31} R
  \to H_{31} H$ has $\xi_5$ in its image mod decomposables.

  In particular, if $H_* R \to H_* H$ is injective through degree 13,
  this result holds.
\end{thm}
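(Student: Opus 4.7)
The plan is to transfer the secondary operation constructed in the dual Steenrod algebra back to $H\sma R$ using naturality, then exploit injectivity to conclude. Concretely, I would first view $H\sma R$ as an $E_n$ $H$-algebra (lifting the element $x\in H_2(R)=\pi_2(H\sma R)$ to a map $\mb P_H^{E_n}(x)\to H\sma R$), so that $H\sma R$ becomes an object of $\mathcal{C}^n$. The multiplication map $g\co R\to H$ smashes with $H$ to give a morphism $H\sma R\to H\sma H$ in $\mathcal{C}^n$ sending $x$ to $\xi_1^2$.

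Next I would verify that the hypotheses of Proposition~\ref{prop:bigoperation} are satisfied for $x\in\pi_2(H\sma R)$: the classes $y_5,y_7,y_8,y_9,y_{10},y_{12},y_{13}$ are assumed to vanish, so the secondary operation $\langle x,Q\teth{h}R\rangle\subset\pi_{31}(H\sma R)=H_{31}R$ is defined. By the naturality statement of Proposition~\ref{prop:secondarypowernatural}, the map $H\sma R\to H\sma H$ sends this value into the secondary operation $\langle\xi_1^2,Q\teth{h}R\rangle$ in the dual Steenrod algebra. By Corollary~\ref{cor:A-detectsagain} every element of this bracket is congruent to $\xi_5$ modulo decomposables. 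Therefore $\xi_5$ lies in the image of $H_{31}R\to H_{31}H$ mod decomposables, which is the first conclusion.

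For the second clause, the main remaining point is that the injectivity hypothesis forces the $y_i$ to vanish. Each $y_i$ sits in degree between $5$ and $13$ and is built from $x$ using the Dyer--Lashof operations and multiplication available in an $E_{12}$-algebra. Since $g$ is a map of $E_n$ $H$-algebras and $g(x)=\xi_1^2$, the image $g(y_i)$ is the same polynomial expression evaluated on $\xi_1^2$; but Proposition~\ref{prop:A-identities} records precisely the identities $Q^{2k+1}\xi_1^2=0$ (from the Cartan formula, since $H_*H$ is concentrated in nonnegative even internal degrees) together with $Q^6\xi_1^2=\xi_1^8$, $Q^8\xi_1^2=\xi_1^4 Q^4\xi_1^2$, and $Q^{10}\xi_1^2=(Q^4\xi_1^2)^2$, so that $g(y_i)=0$ for each $i$. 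Injectivity of $g_*$ in degrees $5$ through $13$ then forces $y_i=0\in H_*R$, and the first part of the theorem applies.

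The main obstacle is essentially conceptual rather than computational: one must be sure that the secondary operation constructed in $\mathcal{C}^n$ really is natural under the map induced by $g$, which is why Proposition~\ref{prop:secondarypowernatural} (and implicitly the juggling formulas of Proposition~\ref{prop:secondaryjuggles}) is phrased for $E_n$ $A$-algebras under $B$ and not merely in the homotopy category. The verification that $g(y_i)=0$ in the dual Steenrod algebra is routine once Proposition~\ref{prop:A-identities} is in hand, and no further bookkeeping about indeterminacy is needed because Proposition~\ref{prop:bigoperation} already confines the indeterminacy of $\langle x,Q\teth{h}R\rangle$ to decomposables plus elements of the form $Q^{20}y'_{11}+Q^{18}y'_{13}+Q^{17}y'_{14}$, which is compatible with the mod-decomposables statement desired in the conclusion.
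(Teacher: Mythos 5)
Your proposal is correct and follows essentially the same route as the paper: lift $H\sma R \to H\sma H$ to a map in $\mathcal{C}^n$ carrying $x$ to $\xi_1^2$, invoke naturality of the secondary operation $\langle x, Q, R\rangle$ from Proposition~\ref{prop:bigoperation}, and conclude via Corollary~\ref{cor:A-detectsagain}. Your explicit verification that injectivity in degrees $5$ through $13$ forces the $y_i$ to vanish (via the identities of Proposition~\ref{prop:A-identities}) is the same argument the paper leaves implicit, so nothing is missing.
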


\begin{proof}
  Under these conditions, $H \sma R \to H \sma H$ is a map of
  $E_n$ $H$-algebras, and (up to equivalence) the map $\mb P_H^{E_n}(x) \to
  H \sma H$ lifts to a map $\mb P_H^{E_n}(x) \to H \sma R$. Thus, $H
  \sma R \to H \sma H$ can be lifted to a map in $\mathcal{C}^n$
  which, on homotopy groups, gives the map $g\co H_* R \to H_* H$.

  Then the secondary operation $\langle x, Q, R\rangle$ is defined and
  the map $g$ carries $\langle x, Q, R\rangle$ into a subset of
  $\langle \xi_1^2, Q, R \rangle$, all of whose elements are congruent
  to $\xi_5$ mod decomposables.
\end{proof}

\subsection{Application to the Brown--Peterson spectrum}

Using Theorem~\ref{thm:mainthm-redux}, we can now exclude the
existence of $E_n$-algebra structures on spectra related to the
Brown--Peterson spectrum. We first recall the homology of the
Brown--Peterson spectrum, dual to the cohomology described in
\cite{brown-peterson-bp}.
\begin{prop}
  The Brown--Peterson spectrum $BP$ is connective, with $\pi_0 BP
  \cong \mb Z_{(2)}$. The map $BP \to \HF_2$ induces an
  inclusion $H_* BP \into H_* H\mb F_2$ whose image is the subalgebra
  \[
    \mb F_2[\xi_1^2, \xi_2^2, \dots] \subset \mb F_2[\xi_1, \xi_2,
    \dots]
  \]
  of the dual Steenrod algebra. The image in positive degrees consists
  entirely of decomposables.
\end{prop}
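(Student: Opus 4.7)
The plan is to deduce this proposition from the identification of $BP$ as a summand of $MU_{(2)}$ via the Quillen idempotent. First I would recall that $MU$ is connective with $\pi_0 MU \cong \mb Z$, so $2$-localization yields a connective spectrum $MU_{(2)}$ with $\pi_0 MU_{(2)} \cong \mb Z_{(2)}$. The Quillen idempotent is a homotopy-ring map $e\co MU_{(2)} \to MU_{(2)}$ acting as the identity on $\pi_0$ whose image on homotopy picks out the summand $\pi_* BP \subset \pi_* MU_{(2)}$. Consequently $BP$ inherits connectivity and $\pi_0 BP \cong \mb Z_{(2)}$.

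For the calculation of $H_* BP$, I recall (as used in the proof of Proposition~\ref{prop:mudetection}) that the map on mod-$2$ homology
\[
\mb F_2[b_1, b_2, \dots] \cong H_* MU \to H_* H \cong \mb F_2[\xi_1, \xi_2, \dots]
\]
sends $b_{2^k - 1}$ to $\xi_k^2$ and every other polynomial generator to zero; this map factors through $H_* MU_{(2)}$. Applying mod-$2$ homology to the splitting $MU_{(2)} \simeq BP \vee (\cdots)$ produced by the Quillen idempotent, $H_* BP$ is identified with the polynomial subalgebra of $H_* MU_{(2)}$ generated by $\{b_{2^k - 1}\}_{k \geq 1}$; its image in $H_* H$ is therefore exactly $\mb F_2[\xi_1^2, \xi_2^2, \dots]$, and algebraic independence of these squares in the dual Steenrod algebra shows the map is injective.

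Finally, decomposability in positive degrees is immediate: every positive-degree element of $\mb F_2[\xi_1^2, \xi_2^2, \dots]$ is a polynomial of positive total degree in the generators $\xi_k^2$, and each generator $\xi_k^2 = \xi_k \cdot \xi_k$ is already a product of two positive-degree elements of $H_* H$.

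The main (indeed essentially only) obstacle is being careful about which ingredients to cite rather than to reprove: the existence of the Quillen idempotent splitting $BP$ off $MU_{(2)}$, the classical Milnor computation of $p\co H_* MU \to H_* H$, and the fact that the Quillen idempotent respects the multiplicative structure on homology. Each of these is standard, and the formula $p(b_{2^k-1}) = \xi_k^2$ has already been recalled in the proof of Proposition~\ref{prop:mudetection}, so the present argument is a matter of assembling established facts rather than proving anything substantially new.
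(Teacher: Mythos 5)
Your argument is correct, but it is worth noting that the paper does not actually prove this proposition: it is stated as a recollection, with the computation of $H^* BP$ credited to the original Brown--Peterson construction (where $H^* BP$ is identified as the cyclic module $A/A(Q_0, Q_1, \dots)$ over the Steenrod algebra, whose dual is exactly $\mb F_2[\xi_1^2, \xi_2^2, \dots]$). Your route instead goes through the Quillen idempotent splitting of $MU_{(2)}$ together with Milnor's computation of $H_* MU \to H_* H$, which is a standard and perfectly valid alternative; it has the advantage of reusing the formula $b_{2^k-1} \mapsto \xi_k^2$ already quoted in the proof of Proposition~\ref{prop:mudetection}, and of making the connectivity and $\pi_0$ statements transparent. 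One small imprecision: the image of $H_* BP$ in $H_* MU_{(2)}$ under the splitting is not canonically \emph{the} polynomial subalgebra on the chosen generators $b_{2^k-1}$ --- the idempotent may send generators to these classes only modulo decomposables and modulo the ideal generated by the remaining $b_j$. This is harmless for your purposes, since the projection $p$ kills that ideal and preserves decomposables, so the image in $H_* H$ is still a polynomial algebra on classes congruent to $\xi_k^2$ modulo decomposables, and a counting argument (comparing Poincar\'e series of the polynomial algebras) upgrades this to the exact identification of the image as $\mb F_2[\xi_1^2, \xi_2^2, \dots]$ and gives injectivity. The decomposability claim is read correctly: what is needed downstream (Theorem~\ref{thm:nobp-redux}) is decomposability in $H_* H$, and each $\xi_k^2 = \xi_k \cdot \xi_k$ is indeed decomposable there even though it is indecomposable in $H_* BP$ itself.
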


Similarly, we have truncated Brown--Peterson spectra $\BP\langle
k\rangle$ and their generalized versions.
\begin{prop}[{\cite[4.3]{tmforientation}}]
  Any generalized truncated Brown--Peterson spectrum $BP\langle
  k\rangle$ is connective, with $\pi_0 BP\langle k\rangle= \mb 
  Z_{(2)}$. The map $BP\langle k\rangle \to \HF_2$
  induces an inclusion $H_* BP \into H_* \HF_2$ whose image is the
  subalgebra
  \[
    \mb F_2[\xi_1^2, \xi_2^2, \dots \xi^2_{k+1}, \xi_{k+2}, \xi_{k+3}, 
    \dots] \subset \mb F_2[\xi_1, \xi_2, \dots]
  \]
  of the dual Steenrod algebra. The image in positive degrees consists
  entirely of decomposables until dimension $2^{k+2}-2$.
\end{prop}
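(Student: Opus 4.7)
The plan is to realize $BP\langle k\rangle$ as a sequential homotopy colimit of quotients of $BP$ by a (possibly generalized) regular sequence $v_{k+1}, v_{k+2}, \ldots$, and then trace the effect on mod-$2$ homology through the resulting cofiber sequences. Connectivity and the identification of $\pi_0$ with $\mb Z_{(2)}$ follow immediately from this construction, since $BP$ is connective with $\pi_0 BP = \mb Z_{(2)}$ and every class killed has strictly positive degree.

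For the homology, I would begin with the classical calculation $H_* BP = \mb F_2[\xi_1^2, \xi_2^2, \ldots] \subset H_* H$, and observe that the $BP$-module structure on $H_* BP$ factors through $\pi_0 H = \mb F_2$, so every $v_i$ with $i \geq 1$ acts as zero on the mod-$2$ homology of any $BP$-module. Building $M_i$ inductively from $M_k := BP$ via cofiber sequences $\Sigma^{|v_i|} M_{i-1} \xrightarrow{v_i} M_{i-1} \to M_i$, the induced long exact sequence in $H_*(-)$ collapses into a short exact sequence
\[
  0 \to H_* M_{i-1} \to H_* M_i \to \Sigma^{|v_i|+1} H_* M_{i-1} \to 0,
\]
introducing a new class $\sigma v_i \in H_* M_i$ in degree $|v_i|+1 = 2^{i+1}-1$, which is exactly the degree of $\xi_{i+1}$.

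The hard part is identifying $\sigma v_i$, modulo decomposables in $H_* H$, with $\xi_{i+1}$ itself. One route is to compute $H^* BP\langle k\rangle$ directly from the construction as a quotient of the Steenrod algebra by the Milnor primitives $Q_0, \ldots, Q_k$ (essentially a Postnikov-tower argument, as in the cited paper) and dualize; another is to trace $\sigma v_i$ through $M_i \to H$ and exploit that the connecting homomorphism identifies it with the class dual to $Q_i$. Either way, the polynomial generator $\xi_{i+1}^2$ already present in $H_* BP$ becomes the square of the newly introduced $\xi_{i+1}$ in $H_* M_i$, so the polynomial algebra tightens accordingly.

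Taking the homotopy colimit $BP\langle k\rangle = \hocolim_i M_i$, the homology stabilizes in each fixed degree because the new classes $\sigma v_i$ appear in unboundedly large degrees, yielding $H_* BP\langle k\rangle = \mb F_2[\xi_1^2, \ldots, \xi_{k+1}^2, \xi_{k+2}, \xi_{k+3}, \ldots]$. The final claim is then immediate: every $\xi_i^2$ is a square and hence decomposable in $H_* H$, while the first genuinely indecomposable generator $\xi_{k+2}$ lies in degree $2^{k+2}-1$, leaving the image entirely in the ideal of decomposables in all positive degrees up through $2^{k+2}-2$.
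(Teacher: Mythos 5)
The paper does not prove this proposition at all: it is quoted from the literature, with the citation to \cite[4.3]{tmforientation} serving as the proof, and the argument there runs through the Postnikov tower to show $H^*BP\langle k\rangle \cong \mathcal{A}\otimes_{E(Q_0,\dots,Q_k)}\mb F_2$ for \emph{any} form of $BP\langle k\rangle$, after which one dualizes. Your route --- building a $BP$-module model by cofiber sequences killing $v_{k+1}, v_{k+2},\dots$ and splitting the resulting long exact sequences --- is the other standard approach and is essentially correct in outline, but two points deserve care. First, your justification that each $v_i$ acts trivially on mod-$2$ homology is misstated: the self-map $v_i$ of a $BP$-module $M$ induces on $\pi_*(H\sma M)$ multiplication by the Hurewicz image of $v_i$ in $H_*BP = \pi_*(H \sma BP)$ (the right unit), not by its image in $\pi_*H$, and these two module structures do not coincide. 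The needed vanishing is still true --- $H_*(BP;\mb Z_{(2)})$ is torsion-free and $v_i \equiv 2m_i$ modulo terms themselves divisible by $2$ there, so the mod-$2$ Hurewicz image vanishes on the nose --- but that is an argument, not a formality. Second, you correctly isolate the identification of $\sigma v_i$ with $\xi_{i+1}$ (mod decomposables) in $H_*H$ as the hard step, but the route you then gesture at (computing $H^*BP\langle k\rangle$ as $\mathcal{A}/\mathcal{A}(Q_0,\dots,Q_k)$ and dualizing) \emph{is} the content of the cited result, so your proof is not independent of it; the alternative of chasing $\sigma v_i$ through the connecting map to the class dual to $Q_i$ would need to be written out. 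Note also that your construction only directly covers forms of $BP\langle k\rangle$ realized as $BP$-modules, whereas the Postnikov-tower argument of the reference applies to arbitrary generalized forms, which is the generality the proposition (and its use in Theorem~\ref{thm:nobp-redux}) requires. The degree bookkeeping at the end --- $|\sigma v_i| = 2^{i+1}-1 = |\xi_{i+1}|$, squares being decomposable, and the first indecomposable $\xi_{k+2}$ appearing in degree $2^{k+2}-1$ --- is all correct.
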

In particular, the element $\xi_5$ is not in the image mod
decomposables for $k \geq 4$. These spectra are also of finite type,
and their $2$-adic completions have the same homology groups.

By considering the cohomology in degree zero, we find that there is a
{\em unique} nontrivial map of spectra $BP \to H\mb F_2$, and
similarly for $BP\langle k\rangle$. (At odd primes, this map is unique
up to scalar.) As $E_n$-algebras have Postnikov towers, there is the
following consequence.
\begin{cor}
  If $BP$ or $BP\langle k\rangle$ admits the structure of an
  $E_n$-algebra, then the unique nontrivial map to $\HF_2$ lifts to
  a map of $E_n$-algebras.
\end{cor}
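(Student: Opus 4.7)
The strategy is to exploit Postnikov truncation within the category of $E_n$-algebras. Given a hypothetical $E_n$-algebra structure on $R = BP$ (or on $R = BP\langle k\rangle$), the standard machinery for connective $E_n$-algebras---available in our framework from \cite{elmendorf-mandell-loopspace} via cellular attachment of free $E_n$-algebras---produces a Postnikov tower
\[
R \to \cdots \to \tau_{\leq m} R \to \tau_{\leq m-1} R \to \cdots \to \tau_{\leq 0} R
\]
inside $E_n$-algebras whose stages agree, as spectra, with the usual Postnikov sections. The key intermediate object is the zeroth stage.

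First I would identify $\tau_{\leq 0} R$ as an $E_n$-algebra with $H\mb Z_{(2)}$. On the underlying spectrum this is immediate from $\pi_0 R \cong \mb Z_{(2)}$ and the vanishing of higher homotopy groups after truncation. On the multiplicative level I would invoke the essential uniqueness of $E_\infty$-algebra (and hence $E_n$-algebra) structures on an Eilenberg--Mac Lane spectrum of a commutative ring: the Eilenberg--Mac Lane functor from commutative rings to commutative ring spectra produces a canonical structure, and any other $E_n$-structure on $H\mb Z_{(2)}$ is equivalent to it. Second, the ring homomorphism $\mb Z_{(2)} \to \mb F_2$ yields, via the same Eilenberg--Mac Lane functor, a map $H\mb Z_{(2)} \to \HF_2$ of commutative ring spectra and hence of $E_n$-algebras. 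Composing gives an $E_n$-algebra map
\[
R \to \tau_{\leq 0} R \simeq H\mb Z_{(2)} \to \HF_2.
\]

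To finish I would argue that this composite, considered as a map of spectra, is the unique nontrivial map. Because $\HF_2$ has homotopy concentrated in degree zero, the set of spectrum maps $R \to \HF_2$ is in bijection with $\Hom(\pi_0 R, \mb F_2)$; the composite above realizes the unique nonzero ring homomorphism $\mb Z_{(2)} \to \mb F_2$ on $\pi_0$, so it agrees with the nontrivial spectrum map in the statement.

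The principal technical ingredient---and the main potential obstacle---is the existence, functoriality, and spectrum-level recognition of Postnikov sections for $E_n$-algebras, together with the uniqueness of the $E_n$-structure on $H\mb Z_{(2)}$. For $E_\infty$-algebras this is the foundation of the TAQ-based obstruction theory referenced in the introduction (\cite{basterra-andrequillen, goerss-hopkins-moduliproblems}), and for finite $n$ the analogous construction proceeds by the same cell-attachment argument using free $E_n$-algebras, so the passage from the $E_n$-structure on $R$ to an $E_n$-structure on $\HF_2$ compatible with a map from $R$ is entirely formal once these foundations are in place.
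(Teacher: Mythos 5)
Your proposal is correct and follows essentially the same route as the paper, which justifies the corollary with the single remark that ``$E_n$-algebras have Postnikov towers'': you have simply spelled out the Postnikov truncation $R \to \tau_{\leq 0}R \simeq H\mb Z_{(2)} \to \HF_2$, the uniqueness of the $E_n$-structure on the Eilenberg--Mac Lane spectrum, and the identification of the composite with the unique nontrivial spectrum map via $H^0(R;\mb F_2) \cong \Hom(\pi_0 R, \mb F_2)$. No gaps; your write-up is just a more detailed version of the paper's one-line argument.
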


We can now apply Theorem~\ref{thm:mainthm-redux}.
\begin{thm}\label{thm:nobp-redux}
  The $2$-local Brown--Peterson spectrum $BP$, the (generalized)
  truncated Brown--Peterson spectra $BP\langle k\rangle$ for
  $k \geq 4$, and their $2$-adic completions do not admit the
  structure of $E_n$-algebras for any $12 \leq n \leq \infty$.
\end{thm}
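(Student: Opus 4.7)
The plan is to reduce Theorem~\ref{thm:nobp-redux} to a direct application of Theorem~\ref{thm:mainthm-redux}, with the bulk of the work being the verification of the hypotheses of the latter for each of the spectra at hand. Write $R$ for any one of $BP$, $BP\langle k\rangle$ with $k \geq 4$, or their $2$-adic completions, and suppose toward a contradiction that $R$ admits an $E_n$-algebra structure for some $12 \leq n \leq \infty$.

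First I would produce a map $R \to H$ of $E_n$-algebras. Since $\pi_0 R \cong \mb Z_{(2)}$ (or $\mb Z_2$ in the completed case) and $\pi_0 H \cong \mb F_2$, there is a unique nontrivial map of spectra $g\co R \to H$, arising from the ring map on $\pi_0$; $E_n$-Postnikov theory then allows this map to be lifted to a map of $E_n$-algebras (this is the content of the corollary just before the theorem). Next I would observe that for each such $R$ the Hurewicz map $H_* R \to H_* H$ is injective through degree $13$: for $BP$ the image is the subalgebra $\mb F_2[\xi_1^2,\xi_2^2,\dots]$ and for $BP\langle k\rangle$ with $k \geq 4$ the image agrees with that of $BP$ through degree $2^{k+2}-2 \geq 62$, and in either case the containment in $H_*H$ is injective everywhere, so a fortiori through degree $13$. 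In particular the element $\xi_1^2 \in H_2 H$ lies in the image, so there exists $x \in H_2 R$ with $g(x) = \xi_1^2$.

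Now I can invoke Theorem~\ref{thm:mainthm-redux}: since $H_* R \to H_* H$ is injective through degree $13$, the classes $y_i \in H_* R$ of Proposition~\ref{prop:bigrelation} (living in degrees $5,7,8,9,10,12,13$) automatically vanish. Indeed, $g(y_i)$ is the corresponding Dyer--Lashof combination applied to $\xi_1^2$ in $H_* H$; these combinations were checked to vanish in the dual Steenrod algebra using the Cartan formula together with Proposition~\ref{prop:A-identities}, so by injectivity in those degrees each $y_i = 0$. The theorem then guarantees that $\xi_5 \in H_{31} H$ lies in the image of $g_*\co H_{31} R \to H_{31} H$ modulo decomposables.

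The final step is to observe that this conclusion is in direct conflict with the description of the image of $g_*$ in degree $31$. For $BP$ the image is generated by the classes $\xi_i^2$, all of which lie in even degree, so the image in degree $31$ is zero. For $BP\langle k\rangle$ with $k \geq 4$ the image is $\mb F_2[\xi_1^2,\dots,\xi_{k+1}^2,\xi_{k+2},\xi_{k+3},\dots]$; the only odd-degree generators are $\xi_{k+2},\xi_{k+3},\dots$, all of degree at least $2^{k+2}-1 \geq 63 > 31$, so again the image in degree $31$ is zero. In particular $\xi_5$ is certainly not in the image mod decomposables, contradicting the conclusion of Theorem~\ref{thm:mainthm-redux}. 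The $2$-adic completions are handled identically, as they have the same mod-$2$ homology. The main (very modest) obstacle is making sure that one may translate the hypotheses of Theorem~\ref{thm:mainthm-redux}, phrased for an $E_n$-algebra in the sense of the Barratt--Eccles operad acting on symmetric spectra, into a statement about an arbitrary $E_n$-algebra structure on $R$; this is taken care of by the framework choice discussed in the introduction and the fact that the conclusion depends only on homology.
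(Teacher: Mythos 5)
Your proposal is correct and follows essentially the same route as the paper: the paper likewise deduces the theorem by combining the description of $H_*BP$ and $H_*BP\langle k\rangle$ as subalgebras of the dual Steenrod algebra (decomposable in positive degrees through degree $31$ when $k \geq 4$), the lifting of the unique nontrivial map to $\HF_2$ to a map of $E_n$-algebras via Postnikov towers, and a direct application of Theorem~\ref{thm:mainthm-redux}. Your added verifications (injectivity through degree $13$, vanishing of the $y_i$, and the degree/decomposability count ruling out $\xi_5$) are exactly the implicit content of the paper's one-line proof.
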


\begin{rmk}
  The above results can also be applied to appropriate truncations in
  the Postnikov tower for $BP$.
\end{rmk}

\appendix
\section{Power operations in the Lazard ring}
\label{sec:powerops}

In this section we will extend Johnson--Noel's proof of
Theorem~\ref{thm:johnsonnoel} to a proof that works in torsion-free
quotients of the Lazard ring. The following calculations are
specialized to the prime $2$.

The power operation $P$ of Section~\ref{sec:cpxor} takes the form of a
natural transformation
\[
P\co MU^{2n}(X) \to MU^{4n}(X \times B\Sigma_2) \into
MU^{4n}(X)\pow{\alpha} / [2]_F(\alpha).
\]
Writing the $2$-series as
\[
[2]_F(\alpha) = \alpha \cdot \langle 2\rangle_F(\alpha),
\]
we have the following properties.
\begin{itemize}
\item The identity $P(uv) = P(u) P(v)$ holds.
\item The identity $P(u) \equiv u^2$ holds mod $\alpha$.
\item The identity $P(u+v) \equiv P(u) + P(v)$ holds mod $\langle
  2\rangle_F(\alpha)$. In particular, $P$ becomes a ring
  homomorphism in this quotient.
\item On the canonical orientation $x \in \wt{MU}^2(\mb{CP}^\infty)$,
  we have $P(x) = x(x +_F \alpha)$.
\end{itemize}

Let $L = MU^*$ be the Lazard ring, and define $g(x,\alpha) = x(x +_F
\alpha)$ in the power series ring $L\pow{x,\alpha}$.  Applying the
identities for $P$ to the spaces $(\mb{CP^\infty})^n$ and the natural
maps between them, we deduce the following.

\begin{prop}
  The map $P$ induces a ring homomorphism
  \[
  \Psi\co L \to L\pow{\alpha} / \langle 2\rangle_F(\alpha)
  \]
  and the power series $g(x,\alpha)$ defines an {\em isogeny} $F \to
  \Psi^*F$:
  \begin{equation}\label{eq:isogeny}
  g(x,\alpha) \mathop{+}_{\Psi^* F} g(y,\alpha) \equiv g(x +_F y,\alpha).
  \end{equation}
\end{prop}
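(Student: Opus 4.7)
The first claim that $\Psi$ is a ring homomorphism is essentially a repackaging of the properties listed just above the proposition. Applied at $X = \text{pt}$, the power operation lands in $MU^*(B\Sigma_2)$, which by Theorem \ref{thm:quillenpower} (using $C_2 = \Sigma_2$) is identified with $L\pow{\alpha}/[2]_F(\alpha)$. Multiplicativity $P(uv) = P(u)P(v)$ is already the ring condition on products. Quotienting further by $\langle 2\rangle_F(\alpha)$, the stated identity $P(u+v) \equiv P(u) + P(v)$ becomes additivity, so $P$ descends to a ring homomorphism $\Psi\co L \to L\pow{\alpha}/\langle 2\rangle_F(\alpha)$. (One should briefly note that $\Psi$ is well-defined independent of representatives, which follows from naturality of $P$ applied to the basepoint inclusion.)

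For the isogeny identity, the key move is to apply $P$ to the pullback of the orientation class $x$ along the multiplication $m\co \mb{CP}^\infty \times \mb{CP}^\infty \to \mb{CP}^\infty$ and compute it in two different ways. Naturality of the power operation, together with the stated formula $P(x) = x(x+_F \alpha) = g(x,\alpha)$, gives
\[
P(m^*x) \;=\; (m\times 1)^* P(x) \;=\; (m^*x)\bigl((m^*x) +_F \alpha\bigr) \;=\; g(x+_F y,\,\alpha).
\]
On the other hand, $m^*x = x +_F y = \sum a_{ij} x^i y^j$ is a power series with coefficients in $L$, and applying the ring homomorphism $\Psi$ together with multiplicativity of $P$ gives
\[
P(m^*x) \;\equiv\; \sum \Psi(a_{ij})\,P(x)^i P(y)^j \;=\; \sum \Psi(a_{ij})\, g(x,\alpha)^i g(y,\alpha)^j \;=\; g(x,\alpha) +_{\Psi^*F} g(y,\alpha)
\]
modulo $\langle 2\rangle_F(\alpha)$, the last equality being the definition of the pullback formal group law $\Psi^*F$. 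Comparing the two expressions yields the desired identity \eqref{eq:isogeny}.

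The main technical nuisance, rather than a genuine obstacle, is bookkeeping: one must be careful that the additivity of $P$ only holds modulo $\langle 2\rangle_F(\alpha)$ (not modulo $[2]_F(\alpha)$), so the isogeny identity is only claimed in the smaller quotient, and one must extend multiplicativity and additivity of $P$ from finite sums to the formal power series $m^*x$. The latter is standard: $MU^*(\mb{CP}^\infty \times \mb{CP}^\infty)$ is the $(x,y)$-adic completion of $MU^*[x,y]$, and $P$ is continuous in this topology because it is a natural transformation between representable functors. Once this is in hand, the calculation proceeds as above with no further complications.
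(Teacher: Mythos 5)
Your proposal is correct and is essentially the paper's own argument: the paper offers no written proof beyond the sentence ``Applying the identities for $P$ to the spaces $(\mb{CP}^\infty)^n$ and the natural maps between them, we deduce the following,'' and your computation of $P(m^*x)$ two ways---via naturality and $P(x)=g(x,\alpha)$ on one side, and via additivity/multiplicativity mod $\langle 2\rangle_F(\alpha)$ applied termwise to $\sum a_{ij}x^iy^j$ on the other---is exactly the intended expansion of that sentence, with the continuity point handled appropriately.
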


The rings $L$ and $L\pow{\alpha} / \langle 2\rangle_F(\alpha)$ are
torsion-free, and so the formal group laws $F$ and $\Psi^* F$ have
logarithms:
\begin{align*}
  \ell_F(x) &= \sum \frac{\mb{CP}^{n-1} x^n}{n} &
  \ell_{\Psi^* F}(x, \alpha) &= \sum \frac{\Psi(\mb{CP}^{n-1}) x^n}{n}
\end{align*}
By choosing any lifts of $\Psi(\mb{CP}^n)$ to $L\pow{\alpha}$, we
can view these formulas as defining power series $(\ell_F)'(x) \in
L\pow{x}$ and $(\ell_{\Psi^* F})'(x, \alpha) \in L\pow{x,\alpha}$.

Taking derivatives of \eqref{eq:isogeny} with respect to $y$
and evaluating at $y=0$, we find
\[
\frac{g'(0,\alpha)}{(\ell_{\Psi^* F})'(g(x,\alpha),\alpha)} \equiv \frac{g'(x,\alpha)}{(\ell_F)'(x)}
\]
in $L\pow{x,\alpha} / \langle 2\rangle_F(\alpha)$, and thus
\begin{equation}
\label{eq:deriv}
g'(x,\alpha) (\ell_{\Psi^* F})'(g(x,\alpha),\alpha) = \alpha \cdot (\ell_F)'(x,\alpha) + h(x,\alpha)
\cdot \langle 2\rangle_F(\alpha)
\end{equation}
for some power series $h(x,\alpha) \in L\pow{x,\alpha}$.

We now substitute $x = \alpha y$ and observe that
\[
g(\alpha y,\alpha) = \alpha^2 k(y,\alpha),
\]
where the power series $k(y,\alpha)$ has the form $y + O(y^2)$. Hence,
there a composition inverse: a series $k^{-1}(y,\alpha)$ of the same
form such that $k(k^{-1}(y,\alpha),\alpha) = y$.

Substituting $x = \alpha y$ in to \eqref{eq:deriv}, we obtain an identity
\[
\alpha k'(y,\alpha) (\ell_{\Psi^* F})'(\alpha^2 k(y,\alpha),\alpha) = \alpha \cdot
(\ell_F)'(\alpha y) + h(\alpha y,\alpha)
\cdot \langle 2\rangle_F(\alpha)
\]
which can be simplified to the statement
\[
(\ell_{\Psi^* F})'(\alpha^2 y, \alpha) = 
(\ell_F)'(\alpha k^{-1}(y,\alpha))(k^{-1})'(y,\alpha) + \tilde h(y,\alpha)
\cdot \langle 2\rangle_F(\alpha)
\]
for some series $\tilde h(y,\alpha) \in L\pow{y,\alpha}$. If we write
$f_n(\alpha)$ for the coefficient of $y^n$ in $(\ell_F)'(\alpha
k^{-1}(y,\alpha))(k^{-1})'(y,\alpha)$, we then find that
\[
\Psi(\mb{CP}^n) \alpha^{2n} = f_n(\alpha) + \tilde h_n(\alpha) \cdot \langle 2\rangle_F(\alpha)
\]
for some series $\tilde h_n(\alpha)$. If $f\co L \to S$ is any ring
homomorphism, there is a degree-$2n$ polynomial $h_n(\alpha) \in
S[\alpha]$ such that
\[
f_n(\alpha) - h_n(\alpha) \cdot \langle 2\rangle_F(\alpha) \equiv 
f(\mb{CP}^n)^2
\]
in $S\pow{\alpha} / (\alpha^{2n+1})$. If $2$ is not a zero divisor in
the ring $S$, this determines $h_n(\alpha)$ uniquely and so it
can be calculated in $S$. We deduce that
\[
f(P(\mb{CP}^n)) \equiv \alpha^{-2n}\left(f_n(\alpha) - h_n(\alpha)\cdot \langle 2\rangle_F(\alpha)\right)
\]
in $S\pow{\alpha} / [2]_F(\alpha)$.

In particular, we may take $S = \mb Z[v_3]/(v_3^2)$, which has
logarithm $x + \tfrac{v_3}{2} x^8$. We can then expand out the
definitions in this ring.
\begin{align*}
  x +_F y &= x + y + \tfrac{v_3}{2} (x^8 + y^8 - (x+y)^8)\\
  \langle 2\rangle_F(\alpha) &= 2 - 127 v_3 \alpha^7\\
  (\ell_F)'(x) &= 1 + 4 v_3 x^7\\
  g(x,\alpha) &= \alpha x + x^2 + \tfrac{v_3}{2} (\alpha^8 + x^8 -
  (\alpha + x)^8)\\
  &= \alpha x + (1-4v_3\alpha^7)x^2 -14 v_3 \alpha^6 x^3 + O(x^4)\\
  k(y,\alpha) &= y + (1 - 4v_3\alpha^7)y^2 - 14 v_3 \alpha^7 y^3 + O(y^4)\\
  k^{-1}(y,\alpha) &= y + (4v_3\alpha^7 - 1)y^2 + (2 - 2v_3 \alpha^7) y^3
                     + O(y^4)\\
  (\ell_F)'(\alpha k^{-1}(y,\alpha)) &= 1 + O(y^7)\\
  (k^{-1})'(y,\alpha) &= 1 + (8v_3 \alpha^7 - 2)y + (6 - 6v_3
                        \alpha^7)y^2 + O(y^3)\\
  f_2(\alpha) &= 6 - 6 v_3 \alpha^7\\
  h_2(\alpha) &= 3\\
  f(P(\mb{CP}^2)) &= 375 v_3 \alpha^3 \\
  &\equiv v_3 \alpha^3.
\end{align*}
Here the last congruence follows because, in the ring
$S[\alpha]/[2]_F(\alpha)$,
\[
  2\alpha v_3 \equiv 127 v_3^2 \alpha^8 \equiv 0
\]
because $v_3^2 = 0$. Finally, $P(\mb{CP}^2)$, in
$BP_*\pow{\alpha} / [2]_F(\alpha)$ mod decomposables, can only involve
$v_3$, $v_4$, and higher, so we find
$P(\mb{CP}^2) \equiv v_3 \alpha^7$ mod decomposables and higher-order
terms in $\alpha$ as desired.

\bibliography{../masterbib}
\end{document}